\newcommand{\bC}{\mathbb{C}}
\newcommand{\bG}{\mathbb{G}}
\newcommand{\bH}{\mathbb{H}}
\newcommand{\bP}{\mathbb{P}}
\newcommand{\bQ}{\mathbb{Q}}
\newcommand{\bR}{\mathbb{R}}
\newcommand{\bS}{\mathbb{S}}
\newcommand{\bZ}{\mathbb{Z}}
\newcommand{\ZZ}{\mathbb{Z}}
\newcommand{\QQ}{\mathbb{Q}}
\newcommand{\RR}{\mathbb{R}}
\newcommand{\CC}{\mathbb{C}}
\newcommand{\GG}{\mathbb{G}}
\newcommand{\cA}{\mathcal{A}}
\newcommand{\cF}{\mathcal{F}}
\newcommand{\cH}{\mathcal{H}}
\newcommand{\fp}{\mathfrak{p}}
\newcommand{\fS}{\mathfrak{S}}
\newcommand{\gG}{\mathbf{G}}
\newcommand{\gGL}{\mathbf{GL}}
\newcommand{\gGSp}{\mathbf{GSp}}
\newcommand{\gH}{\mathbf{H}}
\newcommand{\gM}{\mathbf{M}}
\newcommand{\gO}{\mathbf{O}}
\newcommand{\gP}{\mathbf{P}}
\newcommand{\gQ}{\mathbf{Q}}
\newcommand{\gS}{\mathbf{S}}
\newcommand{\gSL}{\mathbf{SL}}
\newcommand{\gSO}{\mathbf{SO}}
\newcommand{\gSp}{\mathbf{Sp}}
\newcommand{\gT}{\mathbf{T}}
\newcommand{\gU}{\mathbf{U}}
\newcommand{\gZ}{\mathbf{Z}}
\newcommand{\rH}{\mathrm{H}}
\newcommand{\rM}{\mathrm{M}}
\newcommand{\rO}{\mathrm{O}}
\DeclareMathOperator{\Aut}{Aut}
\DeclareMathOperator{\diag}{diag}
\DeclareMathOperator{\disc}{disc}
\DeclareMathOperator{\End}{End}
\DeclareMathOperator{\GL}{GL}
\DeclareMathOperator{\Gr}{Gr}
\DeclareMathOperator{\im}{im}
\DeclareMathOperator{\Lie}{Lie}
\DeclareMathOperator{\Nm}{Nm}
\DeclareMathOperator{\Nrd}{Nrd}
\DeclareMathOperator{\OGr}{OGr}
\DeclareMathOperator{\Res}{Res}
\DeclareMathOperator{\rk}{rk}
\DeclareMathOperator{\Sh}{Sh}
\DeclareMathOperator{\SL}{SL}
\DeclareMathOperator{\SO}{SO}
\DeclareMathOperator{\Stab}{Stab}
\DeclareMathOperator{\tr}{tr}
\DeclareMathOperator{\Tr}{Tr}
\DeclareMathOperator{\Trd}{Trd}
\newcommand{\der}{\mathrm{der}}
\newcommand{\op}{\mathrm{op}}
\newcommand{\Quat}{\mathrm{Quat}}
\newcommand{\extpower}{\bigwedge\nolimits}
\newcommand{\extpowerdec}{\bigwedge\nolimits_{\mathrm{dec}}}
\newcommand{\abs}[1]{\lvert #1 \rvert}
\newcommand{\length}[1]{\lVert #1 \rVert}
\newcommand{\bs}{\backslash}
\newcommand{\eps}{\varepsilon}
\newcommand{\ov}{\overline}
\newcommand{\half}{\tfrac{1}{2}}
\newcommand{\fullmatrix}[4]{\left( \begin{matrix} #1 & #2 \\ #3 & #4 \end{matrix} \right)}
\newcommand{\fullsmallmatrix}[4]{\bigl( \begin{smallmatrix} #1 & #2 \\ #3 & #4 \end{smallmatrix} \bigr)}
\newcommand{\defterm}[1]{\textbf{#1}}
\newtheorem{lemma}{Lemma}[section]
\newtheorem{proposition}[lemma]{Proposition}
\newtheorem{theorem}[lemma]{Theorem}
\newtheorem{corollary}[lemma]{Corollary}
\newtheorem{conjecture}[lemma]{Conjecture}
\newtheorem{properties}[lemma]{Properties}
\Crefname{conjecture}{Conjecture}{Conjectures} % Work around bug in cleveref
\Crefname{claim}{Claim}{Claims}
\newtheorem*{lemma*}{Lemma}
\newtheorem*{proposition*}{Proposition}
\newtheorem*{theorem*}{Theorem}
\newtheorem*{corollary*}{Corollary}
\newtheorem*{claim*}{Claim}
\theoremstyle{definition}
\newtheorem*{definition}{Definition}
\newtheorem{remark}[lemma]{Remark}
\newcounter{constant}
\newcommand{\newC}[1]{%
   \refstepcounter{constant} c_{\theconstant}%
   \ifthenelse{\equal{#1}{*}} { } {%
      \label{c:#1}%
   }%
}
\newcommand{\refC}[1]{c_{\ref*{c:#1}}}
\newcommand{\Qbar}{\overline \bQ}
\newcommand{\AAA}{\mathbb{A}}
\newcommand{\fA}{\mathfrak{A}}
\newcommand{\ExCM}{\( E \, \times \! \) CM}
\title{Quantitative reduction theory and unlikely intersections}
\author{Christopher Daw}
\author{Martin Orr}
\address{Daw: Department of Mathematics and Statistics, University of Reading,
    White\-knights,  PO Box 217,  Reading,  Berkshire RG6 6AH,  United Kingdom}
\email{chris.daw@reading.ac.uk}
\address{Orr: Department of Mathematics, The University of Manchester, Alan Turing Building, Oxford Road, Manchester M13 9PL, United Kingdom}
\email{martin.orr@manchester.ac.uk}
\subjclass[2010]{11F06, 11G18}
\keywords{Reduction theory, arithmetic groups, Zilber--Pink conjecture, unlikely intersections}
\begin{document}

\begin{abstract}
We prove quantitative versions of Borel and Harish-Chandra's theorems on reduction theory for arithmetic groups. Firstly, we obtain polynomial bounds on the lengths of reduced integral vectors in any rational representation of a reductive group. Secondly, we obtain polynomial bounds in the construction of fundamental sets for arithmetic subgroups of reductive groups, as the latter vary in a real conjugacy class of subgroups of a fixed reductive group.

Our results allow us to apply the Pila--Zannier strategy to the Zilber--Pink conjecture for the moduli space of principally polarised abelian surfaces. Building on our previous paper, we prove this conjecture under a Galois orbits hypothesis. Finally, we establish the Galois orbits hypothesis for points corresponding to abelian surfaces with quaternionic multiplication, under certain geometric conditions.
\end{abstract}

\maketitle

This is a pre-copyedited, author-produced version of an article accepted for publication in International Mathematics Research Notices following peer review. The version of record (published online 16 July 2021) is available online under the CC BY licence at:
\url{https://doi.org/10.1093/imrn/rnab173}

\section{Introduction}

Reduction theory is concerned with finding small representatives for each orbit in actions of arithmetic groups, for example through constructing fundamental sets.
It began with the study of the action of \( \SL_n(\bZ) \) on quadratic forms, which was described by Siegel in terms of a fundamental set for \( \SL_n(\bZ) \) in \( \SL_n(\bR) \).
Borel and Harish-Chandra generalised this to arithmetic lattices in arbitrary semisimple Lie groups.
This theory has had wide-ranging applications in areas such as the theory of automorphic forms and locally symmetric spaces \cite{AMRT}, the arithmetic of algebraic groups \cite{PR94} and finiteness theorems for abelian varieties~\cite{Mil86}.

The first goal of this paper is to prove quantitative bounds for the group elements used in Borel and Harish-Chandra's construction of fundamental sets.
These bounds are polynomial in terms of suitable input parameters, although they are not fully effective.
They generalise the polynomial bounds of Li and Margulis for the reduction theory of quadratic forms \cite{LM16} and complement the second-named author's polynomial bounds for the Siegel property \cite{Orr18}. 
It should be noted that while Borel and Harish-Chandra's reduction theory is algorithmic in nature, as made explicit by Grunewald and Segal \cite{GS80}, their arguments give no bounds for the running time or output size of these algorithms.

Our primary theorems on reduction theory are as follows. The first is a quantitative version of \cite[Lemma~5.4]{BHC62}.  See section~\ref{sec:preliminaries} for the relevant definitions and section~\ref{subsec:BHC62i} for discussion of how this theorem is related to \cite{BHC62}.

\begin{theorem} \label{siegel-intersection}
Let \( \gG \) be a reductive \( \bQ \)-algebraic group and let \( \fS \subset \gG(\bR) \) be a Siegel set.
Let \( \rho \colon \gG \to \gGL(V) \) be a representation of \( \gG \) defined over \( \bQ \).
Let \( \Lambda \subset V \) be a \( \bZ \)-lattice.
Let \( v_0 \in V_\bR \) be such that:
\begin{enumerate}[(i)]
\item \( \rho(\gG(\bR))v_0 \) is closed in \( V_\bR \);
\item the stabiliser \( \Stab_{\gG(\bR),\rho} (v_0) \) is self-adjoint.
\end{enumerate}
Then there exist constants \( \newC{siegel-intersection-multiplier} \), \( \newC{siegel-intersection-exponent} \) such that,
for every \( v \in \Aut_{\rho(\gG)}(V_\bR)v_0 \) and every \( w \in \rho(\fS) v \cap \Lambda \), we have \( \abs{w} \leq \refC{siegel-intersection-multiplier} \abs{v}^{\refC{siegel-intersection-exponent}} \).
\end{theorem}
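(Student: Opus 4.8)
The plan is to use the structure of Siegel sets together with Kempf--Ness theory to reduce to a bound on the ``torus part'' of the element of $\fS$ realising $w$, and then to control that part using the arithmetic input that $w\in\Lambda$. Because $\Aut_{\rho(\gG)}(V_\bR)$ centralises $\rho(\gG)$, for any $v$ in the orbit of $v_0$ the set $\rho(\gG(\bR))v$ is a linear image of $\rho(\gG(\bR))v_0$, hence closed, and $\Stab_{\gG(\bR),\rho}(v)=\Stab_{\gG(\bR),\rho}(v_0)$ is self-adjoint; so hypotheses (i) and (ii) pass to $v$, and it is enough to bound $\abs{w}$ by a power of $\abs{v}$ for an arbitrary $v$ satisfying them, with constants depending only on $\gG$, $\fS$, $\rho$, $\Lambda$. (For a single such $v$ the set $\rho(\fS)v\cap\Lambda$ is already finite by \cite[Lemma~5.4]{BHC62}; what is new is the uniformity as $v$ ranges over the orbit of $v_0$.) Fix a maximal compact $K\subset\gG(\bR)$ adapted to a maximal $\bQ$-split torus $\gS$ and a $K$-invariant inner product on $V_\bR$ inducing $\abs{\cdot}$ up to a constant; by Kempf--Ness theory a vector lying in a closed orbit with self-adjoint stabiliser is a minimal vector, so $\abs{\rho(g)v}\geq\abs{v}$ for every $g\in\gG(\bR)$.

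Writing $\fS\subseteq\Omega\, A\, K$ with $\Omega$ compact and $A$ the relevant shifted chamber in $\gS(\bR)^\circ$, decompose $s=uak$ accordingly. Boundedness of $\rho(\Omega)$ and $\rho(K)$ gives $\abs{w}\asymp\abs{\rho(a)v_1}$, where $v_1:=\rho(k)v$ is again a minimal vector with $\abs{v_1}=\abs{v}$ and $\rho(a)v_1=\rho(u)^{-1}w$ is a bounded linear image of a point of $\Lambda$. With respect to $\gS$ the weight decomposition $V_\bR=\bigoplus_\chi V_\chi$ is orthogonal, so $\abs{\rho(a)v_1}^2=\sum_\chi\chi(a)^2\abs{(v_1)_\chi}^2$, and the whole problem comes down to bounding $\chi(a)$ for the weights $\chi$ occurring in $v_1$.

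For \emph{lower} bounds: if $\chi_0$ is minimal among the weights occurring in $v_1$, then $\rho(u)$, being unipotent, only raises weights, so the $V_{\chi_0}$-component of $w=\rho(u)\rho(a)v_1$ is exactly $\chi_0(a)(v_1)_{\chi_0}$, a nonzero vector of a fixed lattice in $V_{\chi_0}$; hence $\chi_0(a)\geq c\,\abs{(v_1)_{\chi_0}}^{-1}\geq c\,\abs{v}^{-1}$, and combining with the defining inequalities of the Siegel chamber (bounding $\alpha(a)$ below by a constant for simple roots $\alpha$) and writing any occurring weight as a minimal one plus a sum of positive roots yields $\chi(a)\geq c''\abs{v}^{-1}$ for \emph{every} occurring $\chi$. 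For \emph{upper} bounds one exploits minimality of $v_1$: differentiating $b\mapsto\abs{\rho(\exp b)v_1}^2$ at $b=0$ along $\Lie(\gS)(\bR)$ shows the weighted centroid $\sum_\chi\abs{(v_1)_\chi}^2\,\chi$ vanishes in $X^*(\gS)\otimes\bR$, so the convex function $b\mapsto\log\abs{\rho(\exp b)v_1}^2$ is minimised at $0$ and has gradient a convex combination of the $2\chi$, hence is Lipschitz with constant $2\max_\chi\lVert\chi\rVert$. Fed the lower bounds above, the vanishing centroid forces $\log a$ to lie, in the directions spanned by the occurring weights, in a region bounded in terms of $\abs{v}$ — at least once the sizes of the components $(v_1)_\chi$ are controlled — and the Lipschitz estimate then converts this into $\abs{w}\asymp\abs{\rho(a)v_1}\leq c_1\abs{v}^{c_2}$.

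The step I expect to be the main obstacle is precisely that parenthetical caveat: turning the vanishing centroid into a \emph{polynomial}, \emph{uniform} upper bound on the torus part. A component $(v_1)_\chi$ of the $K$-translate $v_1=\rho(k)v$ can be very small for an unfavourable $k$, and naively combining the centroid identity with the arithmetic lower bounds only produces bounds exponential in $\abs{v}$. Getting past this is where hypothesis (i) must be used beyond merely supplying a minimal vector: since $\rho(\gG(\bR))v$ is Zariski-closed, the $\gG$-invariant polynomials on $V$ separate it, and their values $f_i(w)=f_i(v)$ — each at most a power of $\abs{v}$ — restrict how far $\rho(a)v_1$ can degenerate towards the boundary of the orbit, in the spirit of the Hilbert--Mumford criterion. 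I would also expect to lean on the quantitative Siegel property of \cite{Orr18} and to take the quadratic-forms case of \cite{LM16} as the template to imitate.
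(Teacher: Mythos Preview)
Your setup is sound: hypotheses (i) and (ii) do pass to $v$, the minimal-vector interpretation via Kempf--Ness/Richardson--Slodowy is correct, and the lower bound on $\chi_0(a)$ from the lattice component is essentially right (though one must be careful that the weight spaces for $\gS$ are not $\bQ$-defined --- only those for a $\bQ$-split conjugate $\gT$ are, so one has to pass through $\gT$ first). But you have correctly located a genuine gap, and neither of your proposed fixes closes it. The vanishing-centroid condition $\sum_\chi \abs{(v_1)_\chi}^2 \chi = 0$ constrains the \emph{weight distribution of $v_1$}, not the element $a$; it does not prevent $\chi(a)$ from being enormous in a direction where $(v_1)_\chi$ happens to be tiny, and since $v_1 = \rho(k)v$ depends on the unknown $k$ you cannot rule this out uniformly. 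The invariant-polynomial idea also falls short: the values $f_i(w)=f_i(v)$ control only the distance of $\rho(a)v_1$ from the boundary of the orbit closure, not its size \emph{along} the orbit, which is precisely what you need.

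The paper's argument avoids bounding $\chi(a)$ altogether. Write $x = n_x a_x k_x \in \fS$ and set $z_x = a_x^{-2} n x$, where $n \in \gU(\bR)$ conjugates a $\bQ$-split torus $\gT$ to $\gS$. The identity
\[
\abs{\pi_\chi(\rho(z_x)v_0)} \;=\; \frac{\abs{\pi_\chi(\rho(a_x^{-1}nx)v_0)}^2}{\abs{\pi_\chi(\rho(nx)v_0)}}
\]
has numerator uniformly bounded (because $a_x^{-1} n n_x a_x$ ranges over a compact set) and denominator bounded below by your lattice argument; hence $\abs{\rho(z_x)v_0}$ is polynomially bounded. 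The closed-orbit hypothesis is then used \emph{quantitatively} via an orbit-growth theorem of Eberlein (Riemannian geometry on $\gGL_n(\bR)^+$), which produces $g \in \gG(\bR)$ with $\rho(g)v_0 = \rho(z_x)v_0$ and $\length{\rho(g^{\pm 1})}$ polynomially bounded in $\abs{\rho(z_x)v_0}$; this replaces the soft \cite[Prop.~5.2]{BHC62} and is not a Hilbert--Mumford argument. Finally the self-adjointness enters through the Cartan involution $\theta$: writing $g = a_x^{-1}k_x h$ with $h \in \Stab_{\gG(\bR),\rho}(v_0)$, one checks $\theta(a_x^{-1}k_x) \in \Phi x$ for a fixed compact $\Phi$, while $\theta(h)$ again stabilises $v_0$ by self-adjointness; thus $\length{\rho(x\theta(h))}$ is controlled by $\length{\rho(\theta(g))} \asymp \length{\rho(g^{-1})}$, and $w = \tau(\rho(x\theta(h))v_0)$ gives the bound. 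The $a_x^{-2}$ trick and the appeal to Eberlein are the two ideas you are missing.
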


We use Theorem \ref{siegel-intersection} to prove our second theorem on quantitative reduction theory, which is a quantitative version of Borel and Harish-Chandra's construction of fundamental sets for arithmetic groups \cite[Thm.~6.5]{BHC62}.  See section~\ref{subsec:BHC62ii} for discussion of the details of this theorem, including how it relates to \cite{BHC62}.

\begin{theorem} \label{fund-set-bound}
Let \( \gG \) be a reductive \( \bQ \)-algebraic group.
Let \( \Gamma \subset \gG(\bQ) \) be an arithmetic subgroup.
Let \( \fS \subset \gG(\bR) \) be a Siegel set such that \( C\fS \) is a fundamental set for \( \Gamma \) in \( \gG(\bR) \), for some finite set \( C \subset \gG(\bQ) \).

Let \( \rho \colon \gG \to \gGL(\Lambda_\bQ) \) be a \( \bQ \)-algebraic representation of \( \gG \), where \( \Lambda \) is a finitely generated free \( \bZ \)-module.
Let \( \gH_0 \subset \gG \) be a self-adjoint reductive \( \bQ \)-algebraic subgroup and let \( v_0 \in \Lambda \) be a vector such that:
\begin{enumerate}[(i)]
\item \( \Stab_{\gG,\rho}(v_0) = \gH_0 \);
\item the orbit \( \rho(\gG(\bR)) v_0 \) is closed in \( \Lambda_\bR \).
\end{enumerate}

Then there exist positive constants \( \newC{fund-bound-multiplier} \) and \( \newC{fund-bound-exponent} \) (depending only on \( \gG \), $\Gamma$, $\fS$, $C$, \( \rho \), \( \gH_0 \), and~\( v_0 \)) with the following property:
for every \( u \in \gG(\bR) \) and \( v_u \in \Aut_{\rho(\gG)}(\Lambda_\bR) v_0 \) such that \( \gH_u = u \gH_{0,\bR} u^{-1} \) is defined over~\( \bQ \) and \( \rho(u)v_u \in \Lambda \), there exists a fundamental set for \( \Gamma \cap \gH_u(\bR) \) in \( \gH_u(\bR) \) of the form
\[ B_u C \fS u^{-1} \cap \gH_u(\bR), \]
where \( B_u \subset \Gamma \) is a finite set such that every \( b \in B_u \) satisfies
\[ \abs{\rho(b^{-1}u) v_u}  \leq  \refC{fund-bound-multiplier} \abs{v_u}^{\refC{fund-bound-exponent}}. \]
\end{theorem}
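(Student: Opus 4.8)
The plan is to transplant Borel and Harish-Chandra's orbit construction of fundamental sets (\cite[Thm.~6.5]{BHC62}) from the base vector \( v_0 \) to the integral vector \( w_u := \rho(u) v_u \in \Lambda \), feeding \cref{siegel-intersection} in at the one place where the classical argument needs to know that certain integral points lie in a bounded region. First I would record the structural facts. Since \( \Aut_{\rho(\gG)}(\Lambda_\bR) \) is the centraliser of \( \rho(\gG) \) in \( \gGL(\Lambda_\bR) \), writing \( v_u = a v_0 \) with \( a \in \Aut_{\rho(\gG)}(\Lambda_\bR) \) gives \( \Stab_{\gG(\bR),\rho}(v_u) = \Stab_{\gG(\bR),\rho}(v_0) = \gH_0(\bR) \) and shows \( \rho(\gG(\bR)) v_u = a \cdot \rho(\gG(\bR)) v_0 \) is closed by hypothesis~(ii). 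Conjugating by \( \rho(u) \) and using hypothesis~(i), \( w_u \) satisfies \( \Stab_{\gG(\bR),\rho}(w_u) = u \gH_0(\bR) u^{-1} = \gH_u(\bR) \) and has closed \( \rho(\gG(\bR)) \)-orbit; in particular \( \Gamma \cap \gH_u(\bR) = \Stab_\Gamma(w_u) \), which is arithmetic in \( \gH_u \) because \( \gH_u \) is defined over \( \bQ \).

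Next I would extract a polynomial radius from \cref{siegel-intersection}. Given \( h \in \gH_u(\bR) \), use that \( C\fS \) is a fundamental set for \( \Gamma \) in \( \gG(\bR) \) to write \( hu = \gamma c s \) with \( \gamma \in \Gamma \), \( c \in C \), \( s \in \fS \); since \( \rho(h) w_u = w_u \) this rearranges to \( \rho(\gamma^{-1}) w_u = \rho(cs) v_u \). The left-hand side lies in \( \rho(\Gamma)\Lambda \), which is contained in a fixed lattice \( \Lambda_1 \supseteq \Lambda \) (the denominators of \( \rho(\Gamma) \) being bounded since \( \Gamma \) is arithmetic and \( \rho \) is defined over \( \bQ \)); hence \( \rho(s) v_u = \rho(c)^{-1}\rho(\gamma^{-1}) w_u \) lies in the fixed lattice \( \Lambda_2 := \sum_{c \in C} \rho(c)^{-1}\Lambda_1 \). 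As \( v_u \in \Aut_{\rho(\gG)}(\Lambda_\bR) v_0 \) and the two hypotheses of \cref{siegel-intersection} hold for \( v_0 \) (its stabiliser \( \gH_0(\bR) \) is self-adjoint, and its orbit is closed), \cref{siegel-intersection}, applied to \( \gG \), \( \fS \), \( \rho \), the lattice \( \Lambda_2 \) and the vector \( v_0 \), bounds \( \abs{\rho(s) v_u} \) by a fixed multiple of a fixed power of \( \abs{v_u} \); multiplying by \( \max_{c \in C}\length{\rho(c)} \) then yields \( \abs{\rho(cs) v_u} \le \refC{fund-bound-multiplier}\abs{v_u}^{\refC{fund-bound-exponent}} \). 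Crucially, because \( \Lambda_1 \) and \( \Lambda_2 \) are chosen once and for all, the constants \( \refC{fund-bound-multiplier}, \refC{fund-bound-exponent} \) depend only on \( \gG, \Gamma, \fS, C, \rho, \gH_0, v_0 \), not on \( u \) or \( v_u \).

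Finally I would run the construction and read off the statement. Set \( S_u := \{\, w \in \rho(\Gamma) w_u : \abs{w} \le \refC{fund-bound-multiplier}\abs{v_u}^{\refC{fund-bound-exponent}} \,\} \), which is finite because \( \rho(\Gamma) w_u \subseteq \Lambda_1 \) is discrete, choose for each \( w \in S_u \) an element \( \gamma_w \in \Gamma \) with \( \rho(\gamma_w^{-1}) w_u = w \), and put \( B_u := \{ \gamma_w : w \in S_u \} \subseteq \Gamma \) and \( F_u := B_u C \fS u^{-1} \cap \gH_u(\bR) \). For \( h \) as above, \( w' := \rho(cs) v_u = \rho(\gamma^{-1}) w_u \) lies in \( S_u \) by the previous step, and \( \delta := \gamma_{w'}\gamma^{-1} \) fixes \( w_u \), so \( \delta \in \Gamma \cap \gH_u(\bR) \); then \( \delta h = \gamma_{w'} c s u^{-1} \in F_u \), so \( h \in (\Gamma \cap \gH_u(\bR)) F_u \). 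That \( \{\, \delta \in \Gamma \cap \gH_u(\bR) : \delta F_u \cap F_u \neq \emptyset \,\} \) is finite follows from \( F_u \subseteq B_u C\fS \), the finiteness of \( \{\gamma \in \Gamma : \gamma C\fS \cap C\fS \neq \emptyset\} \), and the finiteness of \( B_u \). Hence \( F_u \) is a fundamental set for \( \Gamma \cap \gH_u(\bR) \) in \( \gH_u(\bR) \) of the required form, and for each \( b = \gamma_w \in B_u \) one has \( \rho(b^{-1}u) v_u = \rho(\gamma_w^{-1}) w_u = w \in S_u \), so \( \abs{\rho(b^{-1}u) v_u} \le \refC{fund-bound-multiplier}\abs{v_u}^{\refC{fund-bound-exponent}} \).

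The genuinely hard work is already contained in \cref{siegel-intersection}; granting it, the main point to watch is \emph{uniformity} — ensuring the auxiliary lattices \( \Lambda_1, \Lambda_2 \), and hence the constants pulled from \cref{siegel-intersection}, can be fixed independently of the varying data \( u \) and \( v_u \), and that \( \rho(\Gamma)\Lambda \) really does sit inside one fixed lattice. Everything else is the classical Borel--Harish-Chandra orbit argument plus routine verification that \( B_u C\fS u^{-1} \cap \gH_u(\bR) \) meets the definition of a fundamental set.
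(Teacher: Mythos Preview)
Your proposal is correct and follows essentially the same approach as the paper: both enlarge the lattice to absorb \( \rho(\Gamma) \) and \( \rho(C)^{-1} \), write \( hu = \gamma cs \) via the fundamental set in \( \gG(\bR) \), use the stabiliser condition to place \( \rho(s)v_u \) in a fixed lattice, invoke \cref{siegel-intersection} for the polynomial bound, and then define \( B_u \) by representatives of the resulting finite set of short orbit points. The only cosmetic differences are that the paper enlarges \( \Lambda \) itself (via \cite[Cor.~6.3]{BHC62}) rather than introducing an auxiliary \( \Lambda_1 \), and defines \( B_u \) via \( \rho(\fS)v_u \cap \rho(c^{-1}\Gamma)v \) rather than via a ball in the \( \Gamma \)-orbit; your inclusion \( F_u \subseteq B_u C\fS \) should read \( F_u \subseteq B_u C\fS u^{-1} \), but this is harmless since \( u^{-1} \) cancels in the Siegel-property verification.
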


We apply Theorem \ref{fund-set-bound} to the Zilber--Pink conjecture on unlikely intersections.
We prove the Zilber--Pink conjecture for \( \cA_2 \), the (coarse) moduli space of principally polarised abelian surfaces over~$\CC$, subject to a large Galois orbits conjecture (Conjecture \ref{GOSh}), which is stated in section~\ref{sec:unlikely-int-proofs}.

We recall that $\mathcal{A}_2$ is a Shimura variety of dimension~$3$ associated with the group $\gGSp_4$.
The Zilber--Pink conjecture predicts that an irreducible algebraic curve $C\subset\cA_2$ that is Hodge generic (that is, not contained in any proper special subvariety) contains only finitely many points of intersection with the special subvarieties of $\mathcal{A}_2$ having dimension $1$ or $0$ (see \cite[Conjecture 1.3]{pink:generalisation} for the most relevant formulation of the Zilber--Pink conjecture).

Pila and Tsimerman's proof of the Andr\'e--Oort conjecture for $\cA_2$ \cite{PT13} shows that $C$ contains only finitely many special points. (A special point on $\cA_2$ is a point associated with an abelian surface with complex multiplication.) Therefore, in order to prove the Zilber--Pink conjecture for $\cA_2$, it suffices to show that $C$ contains only finitely many non-special points belonging also to a special curve.

The special curves in \( \cA_2 \) are of three types:
\begin{enumerate}
\item curves parametrising abelian surfaces with quaternionic multiplication (we refer to these as quaternionic curves);
\item curves parametrising abelian surfaces isogenous to the square of an elliptic curve (``$E^2$ curves'');
\item curves parametrising abelian surfaces isogenous to the product of two elliptic curves, at least one of which has complex multiplication (``\ExCM{} curves'').
\end{enumerate}
In this paper, we study intersections with the quaternionic and $E^2$ curves. Let $\Sigma_{\Quat}$ (resp. $\Sigma_{E^2}$) denote the set of points of $\mathcal{A}_2$ which are Hodge generic in some quaternionic (resp. $E^2$) curve. Our first main result on unlikely intersections is the following.

\begin{theorem}\label{ZPSh}
Let $\Sigma$ denote $\Sigma_{\Quat}$ or $\Sigma_{E^2}$ and let $C\subset\mathcal{A}_2$ denote an irreducible Hodge generic algebraic curve. 

If $C$ satisfies Conjecture \ref{GOSh} for $\Sigma$, then $C\cap\Sigma$ is finite.
\end{theorem}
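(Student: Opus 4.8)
The plan is to run the Pila--Zannier strategy for $C$ inside the Shimura variety $\cA_2$, the essential new input being Theorem~\ref{fund-set-bound}, which supplies the height bounds that, because a quaternionic or $E^2$ curve carries no special point, cannot be obtained from the Andr\'e--Oort machinery (this is precisely what distinguishes the present situation from the \ExCM{} curves treated in our previous paper). First I would set up the o-minimal framework: after a standard specialisation argument one reduces to the case where $C$ is defined over a number field $F$, so that $C\cap\Sigma\subset\cA_2(\Qbar)$; let $\pi\colon\mathcal{H}_2\to\cA_2$ be the uniformisation by $\Gamma=\gGSp_4(\bZ)$ and let $\fF\subset\mathcal{H}_2$ be a fundamental set of Borel--Harish-Chandra type (of the form $C'\fS\cdot x_0$ for a Siegel set $\fS\subset\gGSp_4(\bR)$ and a finite set $C'$), chosen by the theorem of Peterzil--Starchenko so that $\pi|_{\fF}$ is definable in $\Ranexp$. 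Then $Z:=(\pi|_{\fF})^{-1}(C)$ is a definable curve.

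Next I would organise the special curves and their complexities. Every curve contributing to $\Sigma$ has the form $\pi(\gH_u(\bR)^{+}x_{\gH_u})$ for a reductive $\bQ$-subgroup $\gH_u\subset\gGSp_4$ --- the unit group of an indefinite quaternion $\bQ$-algebra when $\Sigma=\Sigma_{\Quat}$, and a $\bQ$-form of $\GL_2$ when $\Sigma=\Sigma_{E^2}$ --- and, being self-adjoint up to conjugacy and isomorphic over $\bR$ to a single fixed group, these fall into finitely many $\gGSp_4(\bR)$-conjugacy classes. I would fix a representative $\gH_0$ of each class and choose a faithful $\bQ$-representation $\rho\colon\gGSp_4\to\gGL(\Lambda_\bQ)$ and a vector $v_0\in\Lambda$ with $\Stab_{\gGSp_4,\rho}(v_0)=\gH_0$ and $\rho(\gGSp_4(\bR))v_0$ closed in $\Lambda_\bR$ (possible since $\gH_0$ is reductive); then both hypotheses of Theorem~\ref{fund-set-bound} hold for $(\rho,\gH_0,v_0)$. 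Writing $\gH_u=u\gH_0u^{-1}$ with $u\in\gGSp_4(\bR)$, and letting $v_u$ be the scaling of $v_0$ for which $\rho(u)v_u$ is a primitive vector of $\Lambda$, the curve is encoded by the integral vector $\rho(u)v_u$; I would define its complexity $\Delta$ as a positive power of $\abs{\rho(u)v_u}$, arranged so that $\abs{v_u}\le\Delta$ and so that Conjecture~\ref{GOSh} yields a lower bound $\gg\Delta^{\delta}$ (some $\delta>0$) for the size of the Galois orbit of a point of $\Sigma$ lying on that curve.

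The heart of the matter is the height estimate. Given $s\in C\cap W$ with $W=\pi(\gH_u(\bR)^{+}x_{\gH_u})$, fix $\tilde s\in Z$ with $\pi(\tilde s)=s$ and apply Theorem~\ref{fund-set-bound} to $u$ and $v_u$ to obtain the fundamental set $B_uC'\fS u^{-1}\cap\gH_u(\bR)$ for $\Gamma\cap\gH_u(\bR)$ in $\gH_u(\bR)$. Taking $x_{\gH_u}=ux_0$ and unwinding this fundamental set, one extracts $b\in B_u$ and $\gamma\in\Gamma$ such that $y:=(\gamma b)^{-1}\tilde s$ lies in $\fF$, satisfies $\pi(y)=s$, and lies on the $\gGSp_4(\bR)$-translate of $Y_0:=\gH_0(\bR)^{+}x_0$ attached to the integral vector $w:=\rho(b^{-1}u)v_u\in\Lambda$. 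By the bound in Theorem~\ref{fund-set-bound}, $\abs{w}\le c\abs{v_u}^{c'}\le c\Delta^{c''}$, so $w$ is an integral point, of height polynomial in $\Delta$, of the fixed definable set
\[
\mathcal{D}:=\bigl\{\,v\in\Lambda_\bR:\ v\in\Aut_{\rho(\gGSp_4)}(\Lambda_\bR)\,\rho(\gGSp_4(\bR))v_0\ \text{and}\ Y(v)\cap Z\neq\emptyset\,\bigr\},
\]
where $Y(v)$ is the canonical $\gGSp_4(\bR)$-translate of $Y_0$ determined by $v$. Note that $\mathcal{D}$ is definable in $\Ranexp$ but (because it involves $Z$) is not semialgebraic.

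To conclude, suppose $C\cap\Sigma$ were infinite. Since there are only finitely many special curves of any given bounded complexity and $C$, being Hodge generic, meets each of them in finitely many points, the complexities $\Delta$ of the special curves meeting $C$ in a point of $\Sigma$ are unbounded. For a curve of complexity $\Delta$, running the previous step over $\sigma\in\Gal(\Qbar/F)$ and invoking Conjecture~\ref{GOSh} together with the finiteness of the fibres of $s\mapsto w$ produces at least $c_1\Delta^{\delta'}$ distinct integral points of $\mathcal{D}$ of height at most $c\Delta^{c''}$. Fixing $\epsilon$ with $c''\epsilon<\delta'$ and letting $\Delta\to\infty$, the Pila--Wilkie theorem forces the algebraic part of $\mathcal{D}$ to be nonempty, so $\mathcal{D}$ contains a connected semialgebraic set of positive dimension; a standard argument using the hyperbolic Ax--Lindemann theorem for $\cA_2$ (Pila--Tsimerman) then shows that $\pi^{-1}(C)$ contains a positive-dimensional algebraic subvariety, so that $C$ is weakly special. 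As $C$ is Hodge generic and $\Sp_4$ is almost simple, this forces $C=\cA_2$, contradicting $\dim C=1$; hence $C\cap\Sigma$ is finite. The main obstacle is the height estimate above: converting the abstract conclusion of Theorem~\ref{fund-set-bound} into an honest integral point of polynomially bounded height lying in the single fixed definable set $\mathcal{D}$, which requires a careful choice of $(\rho,v_0)$ realising each $\gH_0$ as a stabiliser with closed real orbit, a comparison of $\abs{v_u}$ with the intrinsic complexity $\Delta$, and a verification that $s\mapsto w$ has fibres small enough to carry over the lower bound of Conjecture~\ref{GOSh}; the other inputs (definability in the sense of Peterzil--Starchenko, Pila--Wilkie, hyperbolic Ax--Lindemann for $\cA_2$) are standard.
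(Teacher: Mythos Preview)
Your overall Pila--Zannier outline matches the paper's, but the end game fails as written. Your set $\mathcal{D}\subset\Lambda_\bR$ is invariant under nonzero scalars (they lie in $\Aut_{\rho(\gG)}(\Lambda_\bR)$ and $\gG(\lambda v)=\gG(v)$), so every point of $\mathcal{D}$ lies on a line contained in $\mathcal{D}$; hence $\mathcal{D}^{\mathrm{alg}}=\mathcal{D}$ and ordinary Pila--Wilkie is vacuous. Even absent this, a semialgebraic curve of $v$'s does not by itself yield an algebraic subvariety of $\pi^{-1}(C)$, and hyperbolic Ax--Lindemann runs in the wrong direction for that conclusion. The paper instead works with the set $D=\{(v,z)\in\Lambda_\bR\times\mathcal{C}: v\in\Aut_{\rho(\gG)}(\Lambda_\bR)\rho(\gG(\bR))v_0,\ z(\mathbb{S})\subset\gG(v)\}$, counts distinct $z$'s in $\pi_2(\Theta)$ (which equals the Galois-orbit size directly, sidestepping your fibre issue for $s\mapsto w$), and applies the block counting theorem \cite[Theorem~9.1]{DR}. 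The output is a path $\beta\colon[0,1]\to D$ with $\pi_1\circ\beta$ semialgebraic and $\pi_2\circ\beta$ non-constant; lifting $\beta_1(t)=g_t v_0$ by definable choice gives $g_t^{-1}\beta_2(t)\in X_0$, so an irreducible component $\tilde C$ of $\pi^{-1}(C)$ lies in a set $\tilde B\cdot X_0^\vee\subset\cH_2^\vee$ of dimension at most~$2$. The contradiction comes from the \emph{inverse} Ax--Lindemann theorem \cite{uy:algebraic-flows}, which forces the Zariski closure of $\tilde C$ to be all of the $3$-dimensional $\cH_2^\vee$.

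The other genuine gap is the height input. You cannot take $v_u$ to be a mere scaling of $v_0$ (Theorem~\ref{fund-set-bound} needs $v_u\in\Aut_{\rho(\gG)}(\Lambda_\bR)v_0$ with $\rho(u)v_u\in\Lambda$), nor can you \emph{define} $\Delta$ as a power of $\abs{\rho(u)v_u}$, since Conjecture~\ref{GOSh} is stated for the intrinsic quantity $\Delta(Z)=\abs{\disc(R_u)}$ with $R_u=\End_{\gH_u}(\bZ^4)$. The inequality $\abs{v_u}\le c\,\abs{\disc(R_u)}^{c'}$ you need is precisely Proposition~\ref{quaternion-sp4-rep}(iii)(a), and its proof (all of section~\ref{sec:effective-algebras}) requires an explicit $\rho$ built on $\bigwedge^4\rM_4(\bQ)$, a nontrivial verification that the real orbit is closed, and a Minkowski-type bound for ideal classes in quaternion orders (Lemma~\ref{quat-alg-minkowski}) to produce a controlled $v_u$. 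You rightly flag this as ``the main obstacle''; it is not a formality. (As an aside, quaternionic and $E^2$ curves do contain CM points, so your opening motivational remark is incorrect, though this does not affect the argument.)
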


Combined with our previous work \cite{DO19}, in which we study intersections with the \ExCM{} curves, and Pila and Tsimerman's proof of the André--Oort conjecture for~\( \cA_2 \), this completes the proof of the Zilber--Pink conjecture for \( \cA_2 \), subject to Conjecture~\ref{GOSh} (a large Galois orbits conjecture). As in \cite{DO19}, the general strategy follows the proof of \cite[Theorem 14.2]{DR}, which was an application of the so-called Pila--Zannier method to the Zilber--Pink conjecture for general Shimura varieties. However, we will have to make several modifications, and the end of the proof is closer to \cite[Proposition~3.5]{OrrUI}.

Finally, we show that the large Galois orbits conjecture holds for $\Sigma=\Sigma_{\Quat}$ when the curve under consideration satisfies a multiplicative reduction hypothesis at the boundary of the moduli space. We proved the analogous result for intersections with \ExCM{} curves in \cite{DO19}.

\begin{theorem}\label{ZPShUn}
Let $C\subset\mathcal{A}_2$ denote an irreducible Hodge generic algebraic curve defined over \( \Qbar \) such that the Zariski closure of \( C \) in the Baily--Borel compactification of \( \cA_2 \) intersects the \( 0 \)-dimensional stratum of the boundary.

Then $C$ satisfies Conjecture \ref{GOSh} for $\Sigma_{\Quat}$, and so \( C \cap \Sigma_{\Quat} \) is finite.
\end{theorem}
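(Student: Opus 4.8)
The goal is to verify Conjecture~\ref{GOSh} for $\Sigma_{\Quat}$ itself: for a fixed number field~$k$ over which $C$ is defined, one wants a bound $[k(s):k]\gg\Delta(s)^{\delta}$ (with $\delta>0$ depending only on~$C$) for the size of the Galois orbit of any $s\in C\cap\Sigma_{\Quat}$, where $\Delta(s)$ is the complexity of the quaternionic curve $Z_s\ni s$ --- essentially the discriminant~$D$ of the quaternion algebra attached to~$Z_s$. Granting such a bound, Theorem~\ref{ZPSh} immediately yields the finiteness of $C\cap\Sigma_{\Quat}$, so all the content is in the Galois orbit estimate, and the mechanism is the $G$-function transcendence argument of Andr\'e, in the form adapted to abelian surfaces in \cite{DO19} and \cite{OrrUI}.

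The first step is to pass to the boundary. By hypothesis the Zariski closure of~$C$ in the Baily--Borel compactification meets the $0$-dimensional boundary stratum at a point~$c_0$; fix an analytic branch of~$C$ through~$c_0$ and a coordinate~$t$ on a finite cover of it with $t=0\leftrightarrow c_0$. The universal family of abelian surfaces restricts over~$C$ to a polarised weight-$1$ variation of Hodge structure of rank~$4$ whose local monodromy at~$c_0$ is \emph{unipotent} --- this is exactly what the $0$-dimensional stratum supplies, and is the only place the boundary hypothesis is used. Its Picard--Fuchs equations arise from geometry, so by a theorem of Andr\'e their solutions --- the period functions --- are, in a suitable frame, $G$-functions (with the expected powers of $\log t$); since~$C$ is Hodge generic the monodromy is Zariski dense, so these $G$-functions satisfy no nontrivial algebraic relation identically on the branch.

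Next, for $s\in C\cap\Sigma_{\Quat}$ the quaternionic multiplication on the abelian surface~$A_s$ is an additional endomorphism structure; written out on~$H^1(A_s)$ relative to a standard $\ZZ$-model of the relevant quaternion order it imposes a nontrivial system of $\Qbar$-algebraic relations among the period values at~$s$, with height and degree bounded polynomially in~$D$. One then plays these relations against the Andr\'e--Bombieri height inequality for values of $G$-functions at algebraic points. The non-archimedean side is handled by the bounded denominators of the $G$-functions together with the classical fact that abelian surfaces with quaternionic multiplication have potentially good reduction at every finite place; the archimedean side requires, for each Galois conjugate~$s^{\sigma}$, a bound on the size of a representative of~$s^{\sigma}$ in a Siegel set for~$\cA_2$, and this is precisely where the quantitative reduction theory of the first part of the paper enters --- $s^{\sigma}$ lies on a quaternionic curve of the same discriminant~$D$, and Theorem~\ref{fund-set-bound}, applied to the inclusion of the quaternionic sub-datum into~$\gGSp_4$, bounds such a representative by a fixed power of~$D$. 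Comparing the two sides, the nontriviality of the quaternionic relation forces the number of conjugates, hence $[k(s):k]$, to exceed a fixed power of~$D$, which is the desired bound.

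The main obstacle I anticipate is this last step: carrying out the $G$-function bookkeeping for the $\gGSp_4$-variation rather than for elliptic curves --- identifying and normalising the relevant period functions, keeping every height, degree and denominator estimate polynomial in~$D$ as it passes through the quaternionic relation, and combining the Andr\'e--Bombieri inequality with the polynomial reduction-theory bounds on archimedean sizes of conjugates with an exponent that is still favourable (one must also ensure the evaluation points lie in the region where these estimates apply). A secondary point is that, since the reduction-theoretic input is polynomial but not fully effective, and the Andr\'e--Bombieri estimates are of the same nature, the resulting Galois orbit bound --- and hence the finiteness in Theorem~\ref{ZPShUn} --- is ineffective, just as for the \ExCM{} curves treated in \cite{DO19}.
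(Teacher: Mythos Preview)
Your identification of Andr\'e's $G$-function method as the engine is correct, but there is a genuine misconception in your proposal, and the paper's route differs from yours in a way worth noting.

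The misconception: you write that bounding the archimedean sizes of the Galois conjugates $s^\sigma$ ``is precisely where the quantitative reduction theory of the first part of the paper enters''. This is wrong. \Cref{fund-set-bound} plays \emph{no role} in the proof of \cref{ZPShUn}. The quantitative reduction theory is used only in the proof of \cref{ZPSh}, where it supplies the parametrisation of \cref{param} that feeds into the Pila--Wilkie counting argument; the Galois orbit bound itself is established by entirely separate means. Moreover, \cref{fund-set-bound} does not do what you claim: it controls the finite set $B_u\subset\Gamma$ used to build a fundamental set for $\Gamma\cap\gH_u(\bR)$ in $\gH_u(\bR)$, it does not bound representatives of individual moduli points inside a Siegel set of~$\gG$.

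The paper's route: rather than carrying out the Andr\'e--Bombieri bookkeeping directly, the paper reduces (via \cite[Proposition~9.4]{DO19}) to a statement about an abelian scheme $\fA\to C$ with a semiabelian degeneration whose special fibre is a torus, and then invokes the packaged result \cite[Theorem~8.1]{DO19}: since $\End(\fA_s)\otimes\bQ$ is a non-split quaternion algebra, $s$ is an exceptional point, and Andr\'e's theorem gives that the Weil height $h(s)$ is polynomially bounded in terms of $[L(s):L]$. After the standard comparison with the Faltings height, the passage from $h_F(\fA_s)$ to $\abs{\disc(\End(\fA_s))}$ is \emph{not} done via period relations at all but via the Masser--W\"ustholz endomorphism estimate \cite{MW:endo}, which your proposal omits entirely. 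Chaining the two bounds yields $\Delta(Z)\leq c\,[L(s):L]^{\kappa}$, which is Conjecture~\ref{GOSh}. Your direct approach --- extracting the discriminant bound from the coefficients of the quaternionic period relation and plugging it into the Andr\'e--Bombieri inequality --- is not obviously impossible, but it is substantially more work than what is actually done here, and without Masser--W\"ustholz or an explicit substitute the height-to-discriminant step is simply missing from your sketch.
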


\cref{ZPShUn} follows from a result of André \cite[Ch.~X, Thm.~1.3]{And89} and the Masser--Wüstholz isogeny theorem.
% This is the source of the condition about the intersection with the boundary of the Baily--Borel compactification.
We have not been able to prove an analogue of \cref{ZPShUn} for $\Sigma=\Sigma_{E^2}$ because the result of André does not apply to abelian surfaces isogenous to the square of an elliptic curve.

We note that the results on quantitative reduction theory in this paper will be an important tool for proving the Zilber--Pink conjecture for other Shimura varieties, which will be the subject of future work by the authors.
We expect these results to have further applications, for example, a uniform version of the second-named author's bounds for polarisations and isogenies of abelian varieties \cite{Orr17} and bounds for the heights of generators of arithmetic groups by combining them with some of the techniques of homogeneous dynamics from \cite{LM16}.

\subsection{Outline of the paper}

In section~\ref{sec:background}, we give some background on reduction theory to put our \cref{siegel-intersection,fund-set-bound} into context. 

In section~\ref{sec:preliminaries}, we define notation to be used throughout the paper. We state the various (equivalent) definitions of a Cartan involution that exist in the literature, and we define the notion of a Siegel set.
 
In section~\ref{sec:qrt}, we prove our main theorems on quantitative reduction theory, namely, \cref{siegel-intersection,fund-set-bound}. The proof follows the stategy of \cite{BHC62} but, in order to obtain a quantitative result, it is necessary to replace ``soft'' topological ingredients in the former, notably \cite[Prop. 5.2]{BHC62}, with results from elsewhere.

In order to apply \cref{fund-set-bound} to a specific situation, we must construct a representation $\rho$ of the ambient reductive group and show that the vectors $v_u$ can be chosen suitably bounded.
This is the topic of section~\ref{sec:effective-algebras} -- we construct a representation of \( \gGSp_4 \) with the properties required to apply \cref{fund-set-bound} to the subgroups associated with quaternionic and \( E^2 \) curves.

In section~\ref{sec:unlikely-int-proofs}, we prove our theorems on unlikely intersections, namely, \cref{ZPSh,ZPShUn}. 
The strategy for the former is to parametrise the unlikely intersections by integral vectors of suitably bounded length (or, equivalently, height). This parametrisation is obtained using the results of sections \ref{sec:qrt} and~\ref{sec:effective-algebras}. Then, as in all versions of the Pila--Zannier method, we consider the parameters for unlikely intersections as a set definable in an o-minimal structure and apply a Pila--Wilkie counting theorem to control the number of such points in terms of their height.  \Cref{ZPSh} follows by comparing this upper bound with the lower bound of the large Galois orbits conjecture. 

We emphasise that sections~\ref{sec:background}--\ref{sec:effective-algebras} require no knowledge of Shimura varieties and may be read independently of section~\ref{sec:unlikely-int-proofs}, while the results of section~\ref{sec:background} are not used elsewhere in the paper.

\section{Background on reduction theory}\label{sec:background}

In this section we outline some of the history of reduction theory, focussing on quantitative results and on the work of Borel and Harish-Chandra, and explain how our \cref{siegel-intersection,fund-set-bound} fit into this theory.
The results of this section are not used later in the paper.

\subsection{Reduction theory for quadratic forms}

The group \( \gSL_n(\bZ) \) acts on the set of integral quadratic forms in \( n \) variables via its natural action on the variables.
The classical reduction theory of quadratic forms defines a set of \emph{reduced quadratic forms} with the following properties.

\begin{properties} \label{prop:reduced} \leavevmode
\begin{enumerate}[(i)]
\item Each \( \gSL_n(\bZ) \)-orbit of non-degenerate integral quadratic forms contains at least one reduced form.
\item Each \( \gSL_n(\bZ) \)-orbit of non-degenerate integral quadratic forms contains only finitely many reduced forms.
\end{enumerate}
\end{properties}

A variety of definitions of reduced quadratic forms are used, possessing varying properties in addition to \cref{prop:reduced}.
The most important definitions are due to Lagrange and Gauss for binary quadratic forms, and to Hermite, Minkowski and Siegel for quadratic forms in any number of variables.
In these definitions (except some of Hermite's definitions), the reduced quadratic forms can be defined by finitely many polynomial inequalities in the coefficients of the forms.
% % FIXME: Lagrange reduced forms do not satisfy (i) for isotropic orbits!
% In other words, the sets of reduced quadratic forms with real coefficients are semialgebraic.

% Property~\ref{prop:reduced}(i) remains true for non-degenerate quadratic forms with real coefficients, whether definite or indefinite.
% Property~\ref{prop:reduced}(ii) holds for positive definite real quadratic forms;
% however there is no topologically reasonable definition of reduced quadratic forms which satisfies Property~\ref{prop:reduced}(ii) for indefinite real quadratic forms.

Reduction theory behaves better for positive (or negative) definite quadratic forms than for indefinite forms.
Definite quadratic forms satisfy a much stronger version of Property~\ref{prop:reduced}(ii) called the Siegel property.
% When we generalise to real quadratic forms, positive (or negative) definite forms behave better than indefinite forms.
% \Cref{prop:reduced}(i) and~(ii) both remain true for definite real quadratic forms.
% Indeed definite real quadratic forms satisfy a much stronger version of~(ii) called the Siegel property.
This guarantees that there is a uniform bound on the number of reduced quadratic forms in each \( \gSL_n(\bZ) \)-orbit (for fixed~\( n \)).
In the nicest case of all, definite binary quadratic forms using Gauss's definition of reduced forms, each \( \gSL_2(\bZ) \)-orbit contains \emph{exactly one} reduced form.

% For indefinite quadratic forms, Property~\ref{prop:reduced}(i) holds for real forms.
% However there is no topologically reasonable definition of reduced indefinite quadratic forms which satisfies the generalisation of Property~\ref{prop:reduced}(ii) to real forms.
% Furthermore, an \( \gSL_n(\bZ) \)-orbit of integral indefinite quadratic forms can contain arbitrarily many reduced forms.

\subsubsection*{Quantitative reduction theory for quadratic forms}

The discriminant of a quadratic form is invariant under the action of \( \gSL_n(\bZ) \).  A form is non-degenerate if and only if its discriminant is non-zero.
Hence the following lemma implies Property~\ref{prop:reduced}(i).

\begin{lemma} \label{disc-finite-reduced}
For each integer~\( \Delta \neq 0 \), there are only finitely many reduced integral quadratic forms in \( n \) variables of discriminant~\( \Delta \).
\end{lemma}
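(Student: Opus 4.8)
The plan is to prove the stronger, quantitative statement that a reduced \emph{integral} quadratic form of discriminant \( \Delta \neq 0 \) has all of its coefficients bounded in absolute value by a constant depending only on \( n \) and \( \abs{\Delta} \); the lemma then follows at once, since a bounded region of coefficient space contains only finitely many integer points. I would set up coordinates by writing a form in \( n \) variables as \( Q(x) = \sum_{1 \le i \le j \le n} a_{ij} x_i x_j \), so that the coefficients \( (a_{ij}) \) run over \( \bR^{N} \) with \( N = n(n+1)/2 \); the discriminant is then a fixed polynomial in the \( a_{ij} \), equal up to a fixed power of \( 2 \) to the determinant of the associated symmetric (Gram) matrix \( A = A(Q) \), and the real reduced forms constitute the closed semialgebraic subset of \( \bR^{N} \) cut out by the reduction inequalities.

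It must be stressed that integrality enters in an essential way, not merely at the end: the \emph{real} reduced forms of a fixed discriminant are in general \emph{not} bounded. Already for \( n = 2 \), the definite form \( t^{-1} x_1^2 + t\, x_2^2 \) is Minkowski-reduced of determinant \( 1 \) for every \( t \ge 1 \), so its coefficients are unbounded along this family. Thus the coefficient bound must be extracted from the reduction inequalities \emph{together} with the integrality of the \( a_{ij} \).

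In the positive (or negative) definite case I would argue as follows. If \( Q \) is Minkowski-reduced with Gram matrix \( A \), the ordering part of the reduction conditions gives \( A_{11} \le A_{22} \le \dots \le A_{nn} \); since \( A_{ii} = Q(e_i) \) is a positive integer we have \( A_{11} \ge 1 \); Hermite's inequality gives \( A_{11} A_{22} \cdots A_{nn} \le \gamma_n \abs{\det A} \) for an explicit constant \( \gamma_n \); and the remaining conditions bound \( 2\abs{A_{ij}} \) by a diagonal entry for \( i \neq j \). Because every \( A_{jj} \ge 1 \), the Hermite inequality forces each \( A_{ii} \le \gamma_n \abs{\det A} \), hence every coefficient satisfies \( \abs{a_{ij}} \le C_n \abs{\Delta} \), which is the desired bound. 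As \( n \) is fixed there are only finitely many signatures, so it remains to treat each indefinite signature; there one runs the same scheme using the Hermite--Siegel reduction inequalities for indefinite forms in place of the Minkowski conditions, the role of the lower bound \( A_{11} \ge 1 \) now being played by integrality of the \( a_{ij} \) (a non-degenerate integral reduced form cannot have its leading coefficient arbitrarily small).

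The main obstacle is precisely this indefinite case. In the definite case the reduced domain is relatively compact modulo scaling and the estimate above is routine; but for indefinite forms the reduced domain is genuinely non-compact inside the relevant symmetric space, so boundedness of a \emph{fixed-discriminant, integral} slice is not automatic and must be teased out of the specific shape of the indefinite reduction inequalities, using crucially that \( \det A \neq 0 \) and that the entries are integers. Carrying this out carefully, signature by signature, is the only step requiring real work; everything else is the lattice-point count of the first paragraph.
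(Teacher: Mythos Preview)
The paper does not supply its own proof of this lemma: it is stated as background and the paper simply points to two sources, namely the classical argument in Cassels \cite[pp.~322--324]{Cas78} and the fact (noted as \cite[Example~5.5]{BHC62}) that the lemma is a special case of \cref{svl-finite} applied to the natural representation of \( \gSL_n \) on quadratic forms with \( v = \lambda v_0^{(p,q)} \).  So there is no in-paper proof to compare with; what I can do is assess your plan against those two routes.

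Your definite case is fine and is essentially the classical one: Minkowski's reduction inequalities plus Hermite's product bound \( A_{11}\cdots A_{nn}\le \gamma_n\det A \), combined with the integer lower bound \( A_{ii}\ge 1 \), immediately bound every coefficient by \( C_n\lvert\Delta\rvert \).  Your observation that integrality is essential (the family \( t^{-1}x_1^2+t x_2^2 \)) is exactly right and worth keeping.

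The gap is the indefinite case, which you correctly flag as ``the only step requiring real work'' but then do not carry out.  The sentence ``the role of the lower bound \( A_{11}\ge 1 \) now being played by integrality of the \( a_{ij} \) (a non-degenerate integral reduced form cannot have its leading coefficient arbitrarily small)'' is not, as stated, an argument: for indefinite forms the leading coefficient can vanish (e.g.\ \( x_1x_2 \)), so integrality alone gives no positive lower bound on it, and you have not said which reduction inequalities you intend to use, nor how they interact with integrality to produce a bound.  For Siegel reduction in signature \( (p,q) \) the reduced domain is genuinely non-compact and the argument is not a mechanical transcription of the definite one; it requires exploiting the specific structure of \( \fS_0 v_0^{(p,q)} \).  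This is exactly what Cassels does in the cited pages, and it is what you would need to reproduce.

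It is worth noting the second route the paper emphasises, which you do not mention: once one knows that the orbit of \( v_0^{(p,q)} \) is closed and its stabiliser \( \gO(p,q) \) self-adjoint, \cref{svl-finite} gives finiteness of \( \fS_0 \lambda v_0^{(p,q)}\cap\Lambda \) uniformly over all signatures, with no separate definite/indefinite analysis.  That approach trades the explicit combinatorics of reduction inequalities for the topological input that closed orbits are proper, and is the viewpoint the rest of the paper develops quantitatively.
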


The following quantitative version of \cref{disc-finite-reduced} is classical for anisotropic quadratic forms \cite[p.~287, Cor.~1]{Cas78}, using Hermite and Minkowski's definitions of reduction (note that anisotropic forms are always definite when \( n \geq 5 \)).
Indeed, for binary anisotropic forms, it goes back to Lagrange.
For Siegel reduced forms (definite or indefinite), it can be proved by adapting the proof of \cref{disc-finite-reduced} found on \cite[pp.~322--324]{Cas78} but we are unsure whether this was classically known.

\begin{proposition} \label{reduced-disc-bound}
For each positive integer~\( n \), there exists a positive real number \( \newC{reduced-disc-bound}(n) \) such that, for every integer~\( \Delta \neq 0 \), all reduced integral \( n \)-ary quadratic forms of discriminant \( \Delta \) have coefficients with absolute values at most \( \refC{reduced-disc-bound}(n) \abs{\Delta} \).
\end{proposition}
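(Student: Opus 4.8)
The plan is to reduce everything to the case of a definite form by passing to an associated positive definite form, and then use the classical inequalities that define Minkowski (or Siegel) reduced positive definite forms to bound the coefficients in terms of the discriminant. Concretely, a Siegel reduced $n$-ary quadratic form $q$ corresponds to a point in a Siegel set $\fS \subset \gSL_n(\bR)$, via writing the Gram matrix of $q$ as $g^{\mathsf{T}} g$ for the definite case, or more generally as $g^{\mathsf{T}} J g$ for a fixed signature matrix $J$ in the indefinite case. The key point is that the entries of $g$ (in the $KAN$ coordinates on the Siegel set) are controlled above and below: the torus part $a = \diag(a_1,\dots,a_n)$ satisfies $a_i/a_{i+1}$ bounded below by a constant depending only on $n$, the unipotent part $u$ has entries bounded by a constant depending only on $n$, and the compact part contributes nothing to sizes. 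Hence every entry of the Gram matrix is bounded above by a constant (depending on $n$) times $\max_i a_i^2$ and below, in a suitable sense, by a constant times $\min_i a_i^2$.

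First I would make precise the bijection between reduced forms and points in a Siegel set, and record that the discriminant equals $\pm \prod_i a_i^2$ (up to the fixed factor $\det J$), so $\abs{\Delta} = \prod_i a_i^2$. Second, I would invoke the defining inequalities of reduction: $a_i \le \refC{*}(n)\, a_{i+1}$ for all $i$, which telescope to give $a_j \le \refC{*}(n)^{\,j-i} a_i$ for $j > i$; combined with $\abs{\Delta} = \prod a_i^2$ this yields $a_1^2 \le \refC{*}(n)\,\abs{\Delta}$, and hence every $a_i^2 \le \refC{*}(n)\,\abs{\Delta}$ since $a_1$ is the largest. Third, I would read off from the Iwasawa-type coordinates that each coefficient of $q$ is a sum of terms of the shape (bounded unipotent entry)$\times$(bounded unipotent entry)$\times a_i^2$, hence bounded by $\refC{reduced-disc-bound}(n)\,\abs{\Delta}$. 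In the definite case this is exactly the standard estimate found in \cite[pp.~322--324]{Cas78}; in the indefinite Siegel case one argues the same way after replacing the Gram matrix by $g^{\mathsf{T}} g$, using that the signature matrix $J$ has bounded entries and determinant $\pm 1$.

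The main obstacle is the indefinite case: for indefinite forms the "reduced" locus is not simply the intersection of a Siegel set with the space of forms of a given signature, and the passage from $g^{\mathsf{T}} J g$ to a genuinely positive definite companion form must be done carefully so that one still controls $a_i$ from the discriminant alone (rather than from the discriminant together with some other invariant that could be large). The resolution is to use that Siegel's reduction domain is contained in a Siegel set $\fS = \fS_{t,u}$ with $t,u$ depending only on $n$, together with the fact that the lower bound $a_i/a_{i+1} \ge $ const is built into the definition of $\fS$; this is precisely the place where one needs to "adapt the proof of \cref{disc-finite-reduced}" as indicated in the text, rather than quote Cassels verbatim. Once that containment is in hand, the coefficient bound follows uniformly in $\Delta$, with the constant $\refC{reduced-disc-bound}(n)$ depending only on $n$ through the parameters of the Siegel set and the number of terms in the expansion of a Gram matrix entry.
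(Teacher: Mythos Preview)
Your proposal has a genuine gap at the central step. You claim that from the Siegel-set ratio inequalities \( a_i \le c(n)\,a_{i+1} \) together with \( \abs{\Delta} = \prod_i a_i^2 \) one can deduce \( a_1^2 \le c(n)\,\abs{\Delta} \). This is false: a one-sided bound on the successive ratios \( a_i/a_{i+1} \), combined with a fixed product, does \emph{not} bound \( \max_i a_i \). Concretely, for \( n=2 \) with \( a_1 a_2 = \abs{\Delta}^{1/2} \) and (say) \( a_1/a_2 \ge t \), the choice \( a_1 = M \), \( a_2 = \abs{\Delta}^{1/2}/M \) satisfies the Siegel condition for all large \( M \), so \( a_1 \) is unbounded. (There is also a sign of confusion: with the inequality you wrote, \( a_1 \) is comparable to the \emph{smallest} \( a_i \), not the largest.) In short, the Siegel set alone is a fundamental set precisely because the \( a_i \) are allowed to degenerate toward the cusp; nothing in your chain of inequalities prevents that.

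The missing ingredient, and the one the paper (and Cassels, pp.~322--324) actually uses, is the \emph{integrality} of the form. In the definite case this enters as \( q_{ii} \ge 1 \): Minkowski reduction gives \( \prod_i q_{ii} \le c_n \abs{\Delta} \), and since each diagonal entry is a positive integer one obtains \( q_{nn} \le c_n \abs{\Delta} \), whence the bound on all coefficients. In the indefinite (Siegel-reduced) case one cannot use \( q_{ii} \ge 1 \), and the adaptation the paper alludes to is exactly the mechanism behind \cref{lattice-lower-bound}: integrality of \( q \) forces each nonzero projection onto a rational weight space to have length bounded below by a positive constant, and it is this lower bound, not the ratio inequalities, that caps the torus coordinates. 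Your ``main obstacle'' paragraph circles near this point but misidentifies the difficulty as one of Siegel-set containment rather than the absence of any lower bound on the \( a_i \); once you insert the integrality argument in place of your ``Second'' step, the rest of the outline goes through.
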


\Cref{disc-finite-reduced} also implies the following lemma, which does not mention reduced forms, and which was historically one of the most important consequences of reduction theory.
% Indeed \cref{disc-finite-reduced} is equivalent to the conjunction of Property~\ref{prop:reduced}(ii) and \cref{orbits-finite}.

\begin{lemma} \label{orbits-finite}
For each integer~\( \Delta \neq 0 \), there are only finitely many \( \gSL_n(\bZ) \)-orbits of integral quadratic forms in \( n \) variables of discriminant~\( \Delta \).
\end{lemma}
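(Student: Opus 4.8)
The plan is to deduce this directly from \cref{disc-finite-reduced} together with Property~\ref{prop:reduced}(i), so that \cref{orbits-finite} becomes a purely formal consequence. First I would note that the discriminant is invariant under the \( \gSL_n(\bZ) \)-action on quadratic forms, so every form in a given orbit of discriminant~\( \Delta \) again has discriminant~\( \Delta \); since \( \Delta \neq 0 \), all forms in that orbit are non-degenerate. Hence Property~\ref{prop:reduced}(i) applies and each such orbit contains at least one reduced form.

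Next I would build an injection from the set of \( \gSL_n(\bZ) \)-orbits of integral \( n \)-ary quadratic forms of discriminant~\( \Delta \) into the set \( R_\Delta \) of reduced integral \( n \)-ary quadratic forms of discriminant~\( \Delta \): send an orbit \( O \) to a chosen reduced representative \( f_O \in O \), which exists by the previous step. Distinct orbits are disjoint as subsets of the set of all integral quadratic forms, so \( O \mapsto f_O \) is injective. Therefore the number of such orbits is at most \( \# R_\Delta \), which is finite by \cref{disc-finite-reduced}, and the lemma follows.

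The only point requiring a moment's care is the implication ``discriminant nonzero \( \Rightarrow \) non-degenerate'', which is what licenses the use of Property~\ref{prop:reduced}(i); this is immediate from the definition of the discriminant as (up to a constant) the determinant of the associated symmetric matrix. Beyond that I expect no genuine obstacle: once one has the existence of reduced representatives in each non-degenerate orbit and the finiteness of reduced forms of fixed discriminant, the counting argument is automatic.
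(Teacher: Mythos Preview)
Your proposal is correct and matches the paper's approach: the paper does not give a detailed proof of \cref{orbits-finite} but simply states that it is implied by \cref{disc-finite-reduced}, and your argument (choose a reduced representative in each orbit via Property~\ref{prop:reduced}(i), then inject into the finite set of reduced forms of discriminant~\( \Delta \)) is exactly the intended deduction.
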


Li and Margulis have proved a quantitative version of \cref{orbits-finite}.
The bound is stronger than in \cref{reduced-disc-bound}, but it applies only to at least one form in each \( \gSL_n(\bZ) \)-orbit, rather than to all reduced forms.

\begin{proposition} \cite[Theorem~3]{LM16}
For each integer~\( n \geq 3 \), there exists a constant \( \newC{lm-indef-bound}(n) \) such that every \( \gSL_n(\bZ) \)-orbit of indefinite quadratic forms in \( n \) variables of discriminant~\( \Delta \neq 0 \) contains a form whose coefficients have absolute value at most \( \refC{lm-indef-bound}(n) \abs{\Delta}^{1/n} \).
\end{proposition}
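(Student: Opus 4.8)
The plan is to reinterpret the statement as a question about closed orbits in $X_n := \gSL_n(\bR)/\gSL_n(\bZ)$ and then feed in an equidistribution theorem for such orbits. Fix a signature $(p,q)$ with $p,q \geq 1$ and $p+q=n$, together with a reference form $Q_0$ of this signature and with $\disc Q_0 = \pm 1$; put $\gH = \gSO(Q_0)$, a semisimple $\bQ$-group whose real points are non-compact. Any non-degenerate integral form $Q$ of signature $(p,q)$ and discriminant $\Delta$ can be written $Q = \abs{\Delta}^{1/n}\,(Q_0 \circ g)$ for some $g \in \gSL_n(\bR)$, uniquely up to replacing $g$ by $hg$ with $h \in \gH(\bR)$. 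Identifying $X_n$ with the space of unimodular lattices in $\bR^n$ (so that $g\,\gSL_n(\bZ) \leftrightarrow g\bZ^n$), the $\gSL_n(\bZ)$-orbit of $Q$ corresponds to the $\gH(\bR)$-orbit of the point $y_Q := g\,\gSL_n(\bZ) \in X_n$; and, once the scalar $\abs{\Delta}^{1/n}$ is divided out, the assertion ``the orbit of $Q$ contains a form all of whose coefficients are $\leq \refC{lm-indef-bound}(n)\abs{\Delta}^{1/n}$ in absolute value'' turns into ``the $\gH(\bR)$-orbit of $y_Q$ meets a fixed open set $U_n \subset X_n$'', namely the image in $X_n$ of the set of $g' \in \gSL_n(\bR)$ for which the Gram matrix $(g')^{\mathsf T} A_{Q_0}\, g'$ has all entries $< \refC{lm-indef-bound}(n)$. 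For $\refC{lm-indef-bound}(n)$ large this $U_n$ is nonempty and open, hence has positive Haar measure; thus the scaling factor $\abs{\Delta}^{1/n}$ is built into the reformulation, the exponent $1/n$ is automatic, and it remains to show that every such $\gH(\bR)$-orbit meets the fixed open set $U_n$.

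Next one records the basic structure of these orbits. Since $Q$ is integral, $\gSO(Q) = g^{-1}\gH g$ is a $\bQ$-algebraic group, so by Borel and Harish-Chandra $\gSO(Q)(\bZ)$ is a lattice in $\gSO(Q)(\bR)$; hence the stabiliser of $y_Q$ in $\gH(\bR)$ is the lattice $g\,\gSO(Q)(\bZ)\,g^{-1}$, the orbit $\gH(\bR)\cdot y_Q$ carries a finite $\gH(\bR)$-invariant measure of total mass $V(Q)$, and an orbit of finite invariant volume is closed in $X_n$. Up to constants depending only on $n$, $V(Q)$ equals the covolume of $\gSO(Q)(\bZ)$ in $\gSO(Q)(\bR)$, and a soft argument (a fixed arithmetic subgroup occurs as $\gSO(Q)(\bZ)$ for only finitely many discriminants, and there are finitely many arithmetic subgroups of bounded covolume up to conjugacy) shows that $V(Q) \to \infty$ as $\abs{\Delta} \to \infty$, uniformly over the forms $Q$ of discriminant $\Delta$. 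One cannot conclude by a crude volume estimate: the orbit measure is supported on a measure-zero submanifold and is not dominated by the ambient Haar measure, so the mere smallness of the volume of the $\varepsilon$-cusp of $X_n$ does not prevent a closed orbit from escaping into it. The real difficulty is to show that our orbits, as $V(Q)$ grows, do not escape and in fact pass through the specific fixed open set $U_n$.

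The decisive ingredient, and the main obstacle, is equidistribution of these closed orbits: as $V(Q) \to \infty$, the normalised $\gH(\bR)$-invariant measures on the orbits $\gH(\bR)\cdot y_Q$ converge weak-$*$ to the Haar probability measure on $X_n$ (the theorem of Mozes--Shah and Eskin--Mozes--Shah; its effective form, with a polynomial rate in $V(Q)$, is what \cite{LM16} establishes). This is precisely where the hypothesis $n \geq 3$ is used: it forces $\gH(\bR) \cong \mathrm{SO}(p,q)$ to be semisimple of real rank $\min(p,q) \geq 1$, hence generated by unipotent one-parameter subgroups, which brings Ratner's theorem and the uniform non-divergence estimates of Dani--Margulis (ruling out escape of mass, and focusing onto smaller orbits) into play. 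Granting the equidistribution, once $V(Q)$ — equivalently $\abs{\Delta}$ — exceeds a threshold depending only on $n$, the orbit measure assigns positive mass to the open set $U_n$ (which has positive Haar measure), so the closed orbit $\gH(\bR)\cdot y_Q$ meets $U_n$ and the desired small form exists. The finitely many remaining discriminants contribute, by \cref{orbits-finite}, only finitely many $\gSL_n(\bZ)$-orbits in total; picking one form in each of these and enlarging $\refC{lm-indef-bound}(n)$ to absorb these finitely many representatives completes the proof. (The hypothesis $n \geq 3$ cannot be dropped from this method: for $n = 2$ the group $\gH(\bR) \cong \mathrm{SO}(1,1)$ is a one-dimensional torus, not generated by unipotents, and its closed orbits — individual closed geodesics — need not equidistribute, so the correct statement in that range belongs to the circle of ideas around Duke's theorem.)
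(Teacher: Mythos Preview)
The paper does not prove this proposition at all; it is quoted verbatim from Li--Margulis \cite[Theorem~3]{LM16} in the background section, which the authors explicitly say is not used elsewhere. So there is no ``paper's own proof'' to compare against.

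Your outline is a faithful sketch of the homogeneous-dynamics argument that underlies the Li--Margulis result: normalise by \( \abs{\Delta}^{1/n} \) to reduce the question to whether a closed \( \gSO(p,q)(\bR) \)-orbit in \( \gSL_n(\bR)/\gSL_n(\bZ) \) meets a fixed open set, then invoke equidistribution of such orbits (Eskin--Mozes--Shah) as their volume tends to infinity. The identification of the hypothesis \( n \geq 3 \) with unipotent generation of \( \gH(\bR) \), and the remark that the binary case lies outside this method, are both correct.

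One point deserves more care. Your ``soft argument'' that \( V(Q) \to \infty \) as \( \abs{\Delta} \to \infty \) is not quite complete as written: Borel's finiteness of conjugacy classes of arithmetic lattices of bounded covolume does not by itself bound the number of closed \( \gH(\bR) \)-orbits of bounded volume, since distinct closed orbits can have conjugate stabilisers. What is actually needed is the finiteness of closed \( \gH(\bR) \)-orbits of volume \( \leq C \) in \( X_n \), which is itself a consequence of the Dani--Margulis non-divergence estimates (or can be read off from Siegel's mass formula). This is not a serious gap --- it lies within the same circle of ideas you are already invoking --- but the argument as stated elides it.
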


\subsection{Siegel sets}

Siegel shifted the emphasis in reduction theory from quadratic forms to arithmetic groups.
There is a direct link between the reduction theory of definite quadratic forms and fundamental sets for \( \gSL_n(\bZ) \) in \( \gSL_n(\bR) \).
% FIXME: Siegel never actually talked about "Siegel reduced" forms.  Was the concept "Siegel domain" due to Borel and Harish-Chandra or was it found in e.g. Weil/Ramanathan?
% FIXME: Had Hermite used "Siegel reduced" positive definite forms?

Let \( v_0 \) denote the standard positive definite quadratic form in \( n \) variables:
\[ v_0(x_1, \dotsc, x_n) = x_1^2 + \dotsb + x_n^2. \]
If \( \cF \) is a fundamental set of \emph{positive definite real} quadratic forms in \( n \) variables (that is, a set which satisfies the generalisations of \cref{prop:reduced} for positive definite real forms), then
\[ \fS = \{ g \in \gSL_n(\bR) : gv_0 \in \cF \} \]
is a fundamental set in \( \gSL_n(\bR) \) for the action of \( \gSL_n(\bZ) \) by multiplication on the left (that is, every right \( \gSL_n(\bZ) \)-coset intersects \( \fS \) in at least one, and at most finitely many, elements).
Conversely, if \( \fS \subset \gSL_n(\bR) \) is a fundamental set for \( \gSL_n(\bZ) \) which is invariant under right multiplication by \( \SO_n(\bR) = \Stab_{\gSL_n(\bR)}(v_0) \), then \( \bR_{>0} \fS v_0 \) is a fundamental set of positive definite real quadratic forms.

Siegel defined a family of sets \( \fS = \fS_{t,u} \subset \gSL_n(\bR) \), depending on two parameters \( t, u \in \bR_{>0} \).
The set \( \fS_{t,u} \) is a fundamental set for \( \gSL_n(\bZ) \) whenever \( t \leq \sqrt{3}/2 \) and \( u \geq 1/2 \) (according to the conventions used in this paper).
We call \( \fS_0 = \fS_{\sqrt{3}/2,1/2} \) the \defterm{standard Siegel set} in \( \gSL_n(\bR) \).
Using the construction described in the previous paragraph, we obtain a fundamental set of positive definite real quadratic forms, namely \( \bR_{>0} \fS_0 v_0 \).
We say that a positive definite quadratic form is \defterm{Siegel reduced} if it lies in \( \bR_{>0} \fS_0 v_0 \).

We also say that an indefinite quadratic form of signature \( (p,q) \) (with $p+q=n$) is \defterm{Siegel reduced} if it lies in \( \bR_{>0} \fS_0 v_0^{(p,q)} \), where
\begin{equation} \label{eqn:standard-quadratic-form}
v_0^{(p,q)}(x_1, \dotsc, x_{p+q}) = x_1^2 + \dotsb + x_p^2 - x_{p+1}^2 - \dotsb - x_{p+q}^2.
\end{equation}
The Siegel reduced indefinite \textit{integral} quadratic forms satisfy \cref{prop:reduced}.
However the set of Siegel reduced indefinite real quadratic forms is not a fundamental set because it does not satisfy the generalisation of Property~\ref{prop:reduced}(ii) to real forms.
% The Siegel reduced indefinite quadratic forms are the same as those obtained from Hermite's recipe for reduced indefinite quadratic forms, providing we use Siegel's definition for reduced positive definite forms.

Borel and Harish-Chandra generalised the notion of Siegel set from \( \gSL_n(\bR) \) to all reductive Lie groups \cite[sec.~4.1]{BHC62} (which they called \defterm{Siegel domains}).
However these Siegel domains are not always fundamental sets for arithmetic subgroups --- in general, one can only say that there is a fundamental set \emph{contained in} a finite union of translates of Siegel domains \cite[Thm.~6.5, Lemma~7.5]{BHC62}.

Borel subsequently gave a new definition of Siegel sets for reductive \( \bQ \)-algebraic groups \cite[12.3]{Bor69}, taking into account the \( \bQ \)-algebraic group structure and not just the Lie group structure.
For each reductive \( \bQ \)-algebraic group~\( \gG \) and each arithmetic subgroup \( \Gamma \subset \gG(\bQ) \), there is a finite union of \( \gG(\bQ) \)-translates of a Siegel set which forms a fundamental set for \( \Gamma \) in \( \gG(\bR) \) \cite[Thm.~13.1]{Bor69} (this is a consequence of \cref{bhc-fundamental-sets} below).
% Borel's definition agrees with Borel and Harish-Chandra's when the \( \bQ \)-rank of \( \gG \) is equal to the \( \bR \)-rank, but Borel's Siegel sets are smaller when this condition is not met.
In this paper we shall use a minor modification of Borel's definition of Siegel sets, described in section~\ref{ssec:siegel-sets}.
% We will say more about the proof that Siegel sets give rise to fundamental sets below.

\subsection{Reduction theory for representations of reductive groups}\label{subsec:BHC62i}

The following result is a key step in Borel and Harish-Chandra's construction of fundamental sets for arithmetic groups.

\begin{theorem} \cite[Lemma~5.4]{BHC62} \label{svl-finite}
Let \( \gG \) be a reductive \( \bQ \)-algebraic group whose \( \bQ \)-rank is equal to its \( \bR \)-rank.
Let \( \fS \subset \gG(\bR) \) be a Siegel set.
Let \( \rho \colon \gG \to \gGL(V) \) be a representation of \( \gG \) defined over \( \bQ \).
Let \( \Lambda \subset V \) be a \( \bZ \)-lattice.
Let \( v \in V_\bR \) be such that:
\begin{enumerate}[(i)]
\item the orbit \( \rho(\gG(\bR))v \) is closed in \( V_\bR \);
\item the stabiliser \( \Stab_{\gG(\bR),\rho} (v) \) is self-adjoint.
\end{enumerate}
Then \( \rho(\fS) v \cap \Lambda \) is finite.
\end{theorem}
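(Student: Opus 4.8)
The quickest route is to note that this is the non‑quantitative shadow of \cref{siegel-intersection}. Apply that theorem with $v_0$ taken to be the present vector $v$ (its hypotheses (i) and (ii) are exactly the ones here, and it does not even need $\rk_\bQ(\gG)=\rk_\bR(\gG)$), and specialise its conclusion to $v=v_0$: one obtains a single bound $\abs{w}\le \refC{siegel-intersection-multiplier}\abs{v}^{\refC{siegel-intersection-exponent}}$ valid for all $w\in\rho(\fS)v\cap\Lambda$. Hence $\rho(\fS)v\cap\Lambda$ is a bounded subset of the discrete set $\Lambda$, so it is finite. I would present \cref{svl-finite} this way; below I sketch, for context, the self‑contained argument of \cite{BHC62} that \cref{siegel-intersection} quantifies.

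For that argument, fix a Cartan involution $\theta$ of $\gG(\bR)$ under which $\Stab_{\gG(\bR),\rho}(v)$ is stable, put $K=\gG(\bR)^\theta$, and equip $V_\bR$ with a $\theta$‑adapted inner product; then $\rho(K)$ acts orthogonally and, for a $\theta$‑stable $\bQ$‑split torus $\gS$ with $\Lie(\gS(\bR))$ in the $(-1)$‑eigenspace of $d\theta$, the weight spaces $V_\chi$ become mutually orthogonal. Since $\rk_\bQ(\gG)=\rk_\bR(\gG)$, take $\gS$ to be simultaneously a maximal $\bQ$‑split and a maximal $\bR$‑split torus, lying in a minimal parabolic $\bQ$‑subgroup with Levi decomposition $\gN\rtimes\gL$, and arrange that $\fS$ has the standard form $\Omega A_t K$ with $\Omega\subset\gN(\bR)$ compact. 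After replacing $\Lambda$ by the commensurable (and a priori larger, which only strengthens the statement) lattice $\bigoplus_\chi\pi_\chi(\Lambda)$, every weight‑space projection $\pi_\chi(\Lambda)$ of $\Lambda$ is itself a lattice, in particular discrete.

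Now let $g=nak\in\fS$ with $w=\rho(g)v\in\Lambda$ and set $v'=\rho(k)v$, which ranges over the compact set $\rho(K)v$, so that $w=\rho(n)\rho(a)v'$. Ordering weights so that $\rho(\gN)$ raises them, for any weight $\chi_0$ minimal among those occurring in $v'$ one has $\pi_{\chi_0}(w)=\chi_0(a)\pi_{\chi_0}(v')$; since this lies in the discrete set $\pi_{\chi_0}(\Lambda)$ it is either $0$ or of norm bounded below by a constant depending only on the lattice. The essential input is \cite[Prop.~5.2]{BHC62}: hypotheses (i) and (ii) force, for every $v'\in\rho(K)v$, the convex hull of the weights occurring in $v'$ to contain the relevant central character (the origin when $\gG$ is semisimple) in its relative interior. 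Hence whenever $a$ leaves a compact subset of $A_t$ along a direction $X$ in the positive chamber, a lowest weight $\chi_0$ of $v'$ minimises $\chi(X)$ over the weights of $v'$, and this minimum is $\le 0$; generically it is $<0$, so $\chi_0(a)\to 0$ while $\pi_{\chi_0}(v')$ stays bounded away from $0$, forcing $\pi_{\chi_0}(w)\to 0$ and therefore, by discreteness, $\pi_{\chi_0}(w)=0$ — contradicting $\chi_0(a)\pi_{\chi_0}(v')\ne 0$. Thus $a$, and with it $w$, stays in a bounded region, and a bounded subset of $\Lambda$ is finite.

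The step carrying the proof is this last one. Integrality by itself is far too weak — the top weight components of $\rho(a)v'$ grow without bound along the Siegel cone and large integer vectors are perfectly admissible — so one genuinely needs the closed‑orbit hypothesis, via \cite[Prop.~5.2]{BHC62}, to ensure that some \emph{nonzero} weight component is simultaneously pushed below the lattice threshold. The delicate points are to treat the compact family $\{\rho(k)v:k\in K\}$ uniformly and to deal with escape directions $X$ tangent to the stabiliser, where the displayed estimate degenerates and one must pass to a smaller reductive group and induct. In \cite{BHC62} these are absorbed into a soft compactness argument; replacing that by explicit convex geometry is what makes \cref{siegel-intersection} quantitative.
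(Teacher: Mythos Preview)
Your first paragraph is exactly right and aligns with the paper: \cref{svl-finite} is stated there only as a citation from \cite{BHC62} with no independent proof, and \cref{siegel-intersection} is explicitly presented as its quantitative refinement, so deducing \cref{svl-finite} from \cref{siegel-intersection} with $v_0=v$ (and $\tau=\id$) is the intended route.

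Your contextual sketch, however, is not the Borel--Harish-Chandra argument and misidentifies its key input. \cite[Prop.~5.2]{BHC62} is not a convex-hull-of-weights statement; it is the purely topological fact that a closed orbit meets any compact subset of $V_\bR$ in the image of a compact subset of $\gG(\bR)$ --- precisely the statement the paper replaces by Eberlein's estimate in \cref{riemannian-orbit-bound}. The actual \cite[Lemma~5.3]{BHC62} argument, which the paper follows step by step in \cref{lattice-lower-bound,yxv-bound,zxv-bound,op-norm-bound,cartan-manip}, runs differently: for $x\in\fS$ with Iwasawa factorisation $x=n_xa_xk_x$ one sets $y_x=a_x^{-1}nx$ and $z_x=a_x^{-2}nx$; the set $\{y_x:x\in\fS\}$ is relatively compact by Siegel-set geometry, and the identity
\[
\abs{\pi_\chi(\rho(z_x)v)}=\frac{\abs{\pi_\chi(\rho(y_x)v)}^2}{\abs{\pi_\chi(\rho(nx)v)}}
\]
bounds $\rho(z_x)v$ via the lattice lower bound on the denominator. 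Then \cite[Prop.~5.2]{BHC62} supplies $g$ of bounded size with $\rho(g)v=\rho(z_x)v=\rho(a_x^{-1}k_x)v$, and the Cartan involution together with self-adjointness of the stabiliser transfer this to a bound on $\rho(x)v$. Your lowest-weight/convex-hull heuristic is closer to the Kempf--Ness or Richardson--Slodowy minimal-vector picture than to the mechanism in \cite{BHC62}.
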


The restriction on the \( \bQ \)-rank of~\( \gG \) in \cref{svl-finite} can be removed with only minor alterations to the proof, provided we use the definition of Siegel sets from \cite[12.3]{Bor69} (or the definition in section~\ref{ssec:siegel-sets} of this paper) instead of the definition of Siegel domains from \cite[4.1]{BHC62}.
% Indeed, the condition that the \( \bQ \)-rank is equal to the \( \bR \)-rank is precisely the condition under which the definitions of Siegel sets in \cite{BHC62} and \cite{Bor69} are essentially equivalent. % see conj127-6 for various details of the modification

As noted in \cite[Example~5.5]{BHC62}, \cref{svl-finite} implies \cref{disc-finite-reduced}, by applying it to the representation of \( \gSL_n(\bR) \) on the vector space of real quadratic forms in \( n \) variables, with \( v = \lambda v_0^{(p,q)} \) where \( \lambda \in \bR_{>0} \) and \( v_0^{(p,q)} \) is defined by equation~\eqref{eqn:standard-quadratic-form}.
Then the orbit \( \gSL_n(\bR)v \) is the set of all quadratic forms of signature \( (p,q) \) and discriminant \( (-1)^q \lambda^n \), and \( \fS v \cap \Lambda \) is the set of Siegel reduced integral quadratic forms of given signature and discriminant.

In general, we may think of \( \rho(\fS) v \) as a set of ``reduced vectors'' in the representation \( V_\bR \).  However we should note that this set depends on~\( v \).

\subsubsection*{Quantitative reduction theory for representations}

Theorem \ref{siegel-intersection} is a quantitative version of \cref{svl-finite}, bounding the length of ``reduced integral vectors'', that is elements of the finite set \( \rho(\fS) v \cap \Lambda \), in terms of~\( v \) (for fixed group~\( \gG \) and representation~\( \rho \)).
We are not able to prove such a bound for all \( v \in V_\bR \): we must restrict to a set of \( v \) for which \( \gG(\bR) \) acts ``in a similar way'' on all of the permitted vectors~\( v \).
This is achieved through the condition that \( v \) must lie in the \( \Aut_{\rho(\gG)}(V_\bR) \)-orbit of a fixed vector.

For an example application of Theorem \ref{siegel-intersection}, let \( \rho \) be the representation of \( \gG = \gSL_n \) on the quadratic forms in \( n \) variables, and let \( v_0 = v_0^{(p,q)} \).
Noting that every scalar \( \lambda \in \bR^\times \) is in \( \Aut_{\rho(\gG)}(V_\bR) \), we deduce that there are constants \( \newC{quad-poly-multiplier} \) and \( \newC{quad-poly-exponent} \) (depending on~\( n \)) such that
\[ \abs{w} < \refC{quad-poly-multiplier} \abs{\lambda}^{\refC{quad-poly-exponent}} \text{ for all } \lambda \in \bR^\times \text{ and } w \in \fS \lambda v_0 \cap L. \]
% In other words, Siegel reduced integral quadratic forms of fixed signature have coefficients polynomially bounded in terms of their discriminant.
Thus \cref{siegel-intersection} implies a weakened version of \cref{reduced-disc-bound} -- \cref{reduced-disc-bound} is stronger because it gives a bound which is linear in the discriminant, while the constants in \cref{siegel-intersection}, even the exponent, are ineffective (see \cref{ineffectivity}).

In this example, the closed orbits in \( V_\bR \) are those which consist of non-degenerate quadratic forms.
These orbits are parameterised by their signature and discriminant, so every closed orbit intersects \( \Aut_{\rho(\gG)}(V_\bR) v_0^{(p,q)} \) for some signature~\( (p,q) \).
Thus, in this case, \cref{siegel-intersection} is sufficient to give a polynomial bound for integral elements of a reduced set in every closed orbit.
In general, however, there is no finite subset of \( V_\bR \) whose \( \Aut_{\rho(\gG)}(V_\bR) \)-orbit intersects every closed \( \gG(\bR) \)-orbit, and then \cref{siegel-intersection} does not allow us to compare all closed orbits.

In this example, the representation~\( \rho \) is absolutely irreducible so its only endomorphisms are scalars.
In general, there may be more endomorphisms of~\( \rho \), and it will be important for our applications that we allow \( v \) to be any element of \( \Aut_{\rho(\gG)}(V_\bR) v_0 \), not just a scalar multiple of~\( v_0 \).

\subsection{Fundamental sets for arithmetic groups}\label{subsec:BHC62ii}

The central result of Borel and Harish-Chandra's reduction theory was the construction of fundamental sets for \( \Gamma_\gH \bs \gH(\bR) \), where \( \gH \) is a reductive \( \bQ \)-algebraic group and \( \Gamma_\gH \subset \gH(\bQ) \) is an arithmetic subgroup.
These fundamental sets are constructed by embedding \( \gH \) into some~\( \gGL_n \), where we already know how to obtain fundamental sets using standard Siegel sets.
(Note that \cite[Thm.~6.5]{BHC62} used the notation \( \gG \) where we write \( \gH \) in this theorem.)
% We use \( \gH \) because after the theorem we will introduce a different reductive group which we will call~\( \gG \).)
% FIXME: Mention that we reverse the direction of multiplication compared to Borel and Harish-Chandra

\begin{theorem} \cite[Thm.~6.5]{BHC62} \label{bhc-fundamental-sets}
Let \( \gH \) be a reductive \( \bQ \)-algebraic subgroup of \( \gGL_{n,\bQ} \) and let \( \Gamma_\gH = \gGL_n(\bZ) \cap \gH(\bR) \).
Let \( \fS_0 \) be the standard Siegel set in \( \gGL_n(\bR) \).
Let \( u \in \gGL_n(\bR) \) be such that \( u^{-1} \gH(\bR) u \) is self-adjoint.

Then there exists a finite set \( B \subset \gGL_n(\bZ) \) such that
\[ B \fS_0 u^{-1} \cap \gH(\bR) \]
is a fundamental set for \( \Gamma_\gH \) in \( \gH(\bR) \).
\end{theorem}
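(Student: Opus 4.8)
The plan is to deduce the theorem from \cref{svl-finite}, following the argument of Borel and Harish-Chandra. The first step is to realise $\gH$ as a vector stabiliser. Since $\gH$ is reductive it is observable in $\gGL_n$, so there is a $\bQ$-representation $\rho \colon \gGL_n \to \gGL(V)$ and a vector $v_0 \in V_\bQ$ with $\Stab_{\gGL_n}(v_0) = \gH$ (this can also be extracted from Chevalley's theorem together with reductivity of $\gH$). After rescaling I may assume $v_0 \in \Lambda$, where $\Lambda \subset V$ is the $\bZ$-lattice spanned by an integral basis; since $\det$ takes values $\pm 1$ on $\gGL_n(\bZ)$, the representation $\rho$ carries $\gGL_n(\bZ)$ into $\gGL(\Lambda)$, so $\Gamma_\gH = \gGL_n(\bZ) \cap \gH(\bR) = \Stab_{\gGL_n(\bZ),\rho}(v_0)$. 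Moreover, since $\Stab_{\gGL_n}(v_0) = \gH$ is reductive, the orbit $\rho(\gGL_n(\bR))v_0$ is closed in $V_\bR$ (Matsushima's criterion for Zariski-closedness, together with the fact that a Zariski-closed orbit of a reductive group has closed set of real points).

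Next, set $v_0' := \rho(u^{-1})v_0 \in V_\bR$. Its stabiliser in $\gGL_n$ is $u^{-1}\gH u$, so $\Stab_{\gGL_n(\bR),\rho}(v_0')$ is self-adjoint by hypothesis, and the orbit $\rho(\gGL_n(\bR))v_0' = \rho(\gGL_n(\bR))v_0$ is closed. As $\gGL_n$ has equal $\bQ$- and $\bR$-rank, \cref{svl-finite} applies to $v_0'$, the Siegel set $\fS_0$, and the lattice $\Lambda$, and yields that $F := \rho(\fS_0)v_0' \cap \Lambda$ is finite. Now, given $h \in \gH(\bR)$, write $hu = \gamma s$ with $\gamma \in \gGL_n(\bZ)$ and $s \in \fS_0$, which is possible because $\fS_0$ is a fundamental set for $\gGL_n(\bZ)$ in $\gGL_n(\bR)$. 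Using $\rho(h)v_0 = v_0$ we compute $\rho(s)v_0' = \rho(su^{-1})v_0 = \rho(\gamma^{-1}h)v_0 = \rho(\gamma^{-1})v_0$, which lies in $\Lambda$ (as $\gamma^{-1} \in \gGL_n(\bZ)$) and hence in $F$. For each $w \in F$ the set $\{\gamma \in \gGL_n(\bZ) : \rho(\gamma^{-1})v_0 = w\}$ is either empty or a single left coset $\Gamma_\gH \gamma_w$; let $B \subset \gGL_n(\bZ)$ be the finite set of these representatives $\gamma_w$. Then $\gamma \in \Gamma_\gH \gamma_w$ for the relevant $w$, say $\gamma = \delta\gamma_w$ with $\delta \in \Gamma_\gH$, whence $\delta^{-1}h = \gamma_w s u^{-1} \in B\fS_0 u^{-1} \cap \gH(\bR)$, so that $h \in \Gamma_\gH \cdot (B\fS_0 u^{-1} \cap \gH(\bR))$. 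This establishes that $B\fS_0 u^{-1} \cap \gH(\bR)$ meets every coset $\Gamma_\gH h$.

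It remains to verify the finiteness half of the fundamental-set property: for fixed $g \in \gH(\bR)$ and each $b \in B$, the condition $\gamma g \in b\fS_0 u^{-1}$ with $\gamma \in \Gamma_\gH$ amounts to $b^{-1}\gamma \in \fS_0 (gu)^{-1}$ with $b^{-1}\gamma \in \gGL_n(\bZ)$, and only finitely many such group elements exist since $\fS_0$ is a fundamental set for $\gGL_n(\bZ)$; ranging over the finite set $B$ gives finiteness of $\{\gamma \in \Gamma_\gH : \gamma g \in B\fS_0 u^{-1}\}$. Hence $B\fS_0 u^{-1} \cap \gH(\bR)$ is a fundamental set for $\Gamma_\gH$ in $\gH(\bR)$. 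The one substantive input is \cref{svl-finite}; the remaining steps that need some care are the realisation of $\gH$ as a vector stabiliser over $\bQ$ and the closedness of the real orbit (both consequences of reductivity of $\gH$), and the coset bookkeeping used to extract the finite set $B$. I expect the orbit-closedness point — passing from Zariski-closedness to closedness of the real points — to be the most delicate "soft" ingredient, though it is classical.
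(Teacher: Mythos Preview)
Your proposal is correct and follows essentially the same approach as \cite[Thm.~6.5]{BHC62}, which is also the skeleton of the paper's proof of the quantitative version (\cref{fund-set-bound}): realise \( \gH \) as a vector stabiliser, apply \cref{svl-finite} to the translated vector \( v_0' = \rho(u^{-1})v_0 \), and extract the finite set \( B \) by coset bookkeeping from the finitely many lattice points in \( \rho(\fS_0)v_0' \).

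One small slip: the claim that ``since \( \det \) takes values \( \pm 1 \) on \( \gGL_n(\bZ) \), the representation \( \rho \) carries \( \gGL_n(\bZ) \) into \( \gGL(\Lambda) \)'' is not a valid argument---an arbitrary \( \bQ \)-rational representation need not map \( \gGL_n(\bZ) \) into the stabiliser of a given lattice. The fix (used both in \cite{BHC62} and in the paper's proof of \cref{fund-set-bound}) is to enlarge \( \Lambda \) to a \( \rho(\gGL_n(\bZ)) \)-stable lattice via \cite[Cor.~6.3]{BHC62}; this does not affect the rest of your argument. Your verification of the finiteness half of the fundamental-set property is also phrased as ``each orbit meets \( \Omega \) finitely often'' rather than the Siegel property (F2) as stated in the paper, but the latter follows immediately from the Siegel property of \( \fS_0 \) in \( \gGL_n(\bR) \), exactly as in the paper's proof.
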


The ambient group \( \gGL_n \) in \cref{bhc-fundamental-sets} can be replaced by an arbitrary reductive \( \bQ \)-algebraic group~\( \gG \) containg \( \gH \) with only minor alterations to the proof, where \( \fS_0 \) is replaced by a sufficiently large Siegel set in \( \gG(\bR) \).
% However in order to make use of \cref{bhc-fundamental-sets}, we need to know that some Siegel set (or the union of finitely many translates of a Siegel set) in this ambient group~\( \gG \) forms a fundamental set for an arithmetic subgroup of~\( \gG \).
% For \( \gGL_n \), this is the content of Siegel's reduction theory.
% For a general reductive group~\( \gG \), this was not known at the time of \cite{BHC62}.
% It is proved in \cite[\S\S~12--15]{Bor69} but the proof makes use of \cref{bhc-fundamental-sets}!
% This argument is not circular because the proof that we can construct a fundamental set in \( \gG(\bR) \) from a Siegel set only requires \cref{bhc-fundamental-sets} for \( \gH \) replaced by \( \gG \), considered as a subgroup of some larger \( \gGL_n \).
% (Borel also outlines a second proof that Siegel sets give rise to fundamental sets in \cite[\S~16]{Bor69}, which can be made independent of \cref{bhc-fundamental-sets}.)

\subsubsection*{Quantitative fundamental sets for arithmetic groups}

Theorem \ref{fund-set-bound} is a quantitative version of \cref{bhc-fundamental-sets}, where \( \gH \) varies over the \( \bQ \)-algebraic members of a \( \gG(\bR) \)-conjugacy class of subgroups of some fixed reductive group \( \gG \).
This theorem is ``quantitative'' in the sense that it controls a measure of the size of the elements of the finite set~\( B \).
Ideally we would like to bound the height of elements of \( B \) but we have not yet achieved this (it may be possible by combining the methods of this paper with tools of homogeneous dynamics as in \cite{LM16}).
Instead we measure the size of elements of \( B \) in terms of how they act on a vector~\( v_u \) (whose stabiliser is \( \gH \)) in a suitable representation of~\( \gG \).
This turns out to be sufficient for our applications to unlikely intersections.

The theorem will only apply to subgroups~\( \gH \subset \gG \) which are defined over \( \bQ \) because these are the subgroups for which \( \Gamma \cap \gH(\bR) \) is a lattice in \( \gH(\bR) \).
However it is very important that \( \gH \) varies over a \( \gG(\bR) \)-conjugacy class, not just a \( \gG(\bQ) \)-conjugacy class, because this allows the \( \bQ \)-algebraic subgroups in the conjugacy class to belong to more than one isomorphism class over~\( \bQ \).
A striking consequence of allowing this is that the conjugacy class may contain both \( \bQ \)-anisotropic and \( \bQ \)-isotropic groups, so the fundamental set in \( \gH(\bR) \) is sometimes compact and sometimes not compact, yet the same bounds apply to fundamental sets for all \( \gH \) in the conjugacy class. For example \( \gSL_{2,\bQ} \) and unit groups of quaternion algebras can be found in the same \( \gSL_4(\bR) \)-conjugacy class of subgroups of \( \gSL_4 \).

Note also that the semisimple subgroups of \( \gG \) belong to only finitely many \( \gG(\bR) \)-conjugacy classes \cite[Cor.~0.2]{BDR}.
This is not true for reductive subgroups, as may be seen by considering the torus \( \bG_m^2 \), which contains infinitely many non-conjugate subgroups isomorphic to \( \bG_m \) -- see \cite[Remark~1.2]{BDR}.

For an example application of Theorem \ref{fund-set-bound}, consider the case where \( \gG = \gGL_n \) and \( \gH_0 \subset \gGL_n \) is the orthogonal group of the quadratic form \( v_0^{p,q} \).
The representation \( \rho \colon \gG \to \gGL(\Lambda_\bQ) \), where \( \Lambda \) is the $\ZZ$-module of integral quadratic forms of signature \( (p,q) \), satisfies the conditions of \cref{fund-set-bound}.
In particular, (iii) holds because if \( \gH_u \) is defined over~\( \bQ \) then it is the orthogonal group of the integral quadratic form \( v = \rho(u) \lambda v_0^{(p,q)} \) for some \( \lambda \in \bR^\times \).

As noted in \cite[6.7]{BHC62}, the space of Hermite majorants of~\( v \) is 
\[ \Sigma_u = \gH_u(\bR) / (u\gO_n(\bR)u^{-1} \cap \gH_u(\bR)). \]
The image of \( B_u C \fS u^{-1} \cap \gH_u(\bR) \) in \( \Sigma_u \) is a fundamental set for \( \Gamma \cap \gH_u(\bR) \) in \( \Sigma_u \).
\Cref{fund-set-bound} allows us to control the set \( B_u \) used to construct this fundamental set in that for each \( b \in B_u \), the coefficients of the quadratic form \( \rho(b^{-1}) v \) are polynomially bounded in terms of \( \disc(v) \).
A related result can be found in \cite[\S 9.5]{LM16}, which bounds the entries of the matrices \( b \in B_u \) (stronger than bounding \( \rho(b^{-1})v \)), although its bound involves the coefficients of \( v \) as well as the discriminant.

As with \cref{siegel-intersection}, the constants in \cref{fund-set-bound} are ineffective.

\section{Preliminaries} \label{sec:preliminaries}
\subsection{Notation}

If $\Lambda$ is a $\ZZ$-module, we write $\Lambda_\QQ$ for $\Lambda\otimes_\ZZ\QQ$ and $\Lambda_\RR$ for $\Lambda\otimes_\ZZ\RR$. If $\Lambda$ is free and finitely generated and $\phi \colon \Lambda_\QQ \times \Lambda_\QQ \to \QQ$ is a $\QQ$-bilinear form, we denote by $\disc(\Lambda,\phi)$ the determinant of the matrix $(\phi(e_i,e_j))_{i,j}$ where $\{e_1,\ldots,e_n\}$ is a $\ZZ$-basis for $\Lambda$ (the determinant is independent of the choice of basis).

If $R$ is an order in a semisimple $\QQ$-algebra $D$, then we write $\disc(R)$ for the discriminant of the $\ZZ$-module $R$ with respect to the bilinear form $\phi(x,y) = \Tr_{D/\QQ}(xy)$ where $\Tr_{D/\QQ}$ is the (non-reduced) trace of the regular representation of~$D$.  See section~\ref{ssec:discriminants} for more details.

If $V=\Lambda_\QQ$ (or $V=\Lambda_\RR$) and $\gG$ is an algebraic group over $\QQ$ (or $\RR$, respectively), then, for any representation \( \rho \colon \gG \to \gGL(V) \) and \( v \in V \), we write \(\Stab_{\gG,\rho}(v)\) for the stabiliser of \( v \) in \( G \) with respect to \( \rho \), that is,
\[ \Stab_{\gG,\rho}(v) = \{ g \in \gG : \rho(g)v = v \}. \]
If $W$ is a subspace of $V$, we write \(\Stab_{\gG,\rho}(W)\) for the subgroup preserving $W$.
Similarly, we write $\End_{\gG,\rho}(V)$ and $\Aut_{\gG,\rho}(V)$ for the endomorphisms and automorphisms, respectively, of $V$ commuting with $\rho(G)$, and we also write $\End_{\gG,\rho}(\Lambda)$ for the endomorphisms of $\Lambda$ commuting with $\rho(G)$. If $\rho$ is an inclusion $\gG \hookrightarrow \gGL(V)$, then we omit it from the subscripts. If $D$ is a ring acting on $V$, we denote by $\End_D(V)$ the endomorphisms of $V$ commuting with the action of $D$. 

We denote by $\gG^\der$ the derived subgroup of $\gG$, by $Z(\gG)$ the centre of $\gG$, and by $\gG(\bR)^+$ the connected component of $\gG(\bR)$ (in the archimedean topology) containing the identity. If $\gS$ is a split $\QQ$-subtorus of $\gG$, we denote by $Z_\gG(\gS)$ the centraliser of $\gS$ in $\gG$ and by $X^*(\gS)$ the character group of $\gS$.

If $V=\Lambda_\RR$, we write \( \abs{\cdot} \) for a norm on \( V \).
Unless otherwise specified, it does not matter which norm we choose, except that the values of constants will depend on the norm.
Whenever the statement of a theorem involves a norm \( \abs{\cdot} \), we implicitly assume that such a norm has been chosen, and the constants in the theorem implicitly depend on this choice. We write \( \length{\cdot} \) for the associated operator norm on \( \End_\bR(V) \).
In other words, for \( f \in \End_\bR(V) \),
\[ \length{f} = \sup \{ \abs{f(v)} : v \in V, \abs{v} = 1 \}. \]

\subsection{Cartan involutions}

The theory of Cartan involutions is well-known for connected semisimple groups.
However, for reductive real algebraic groups, several definitions of Cartan involutions are used in the literature. The following seems to us to be the most elegant definition.

\begin{definition}
Let \( \gG \) be a reductive \( \bR \)-algebraic group.
A \defterm{Cartan involution} of~\( \gG \) is an involution \( \theta \colon \gG \to \gG \) in the category of \( \bR \)-algebraic groups such that the set of fixed points of \( \theta \) in \( \gG(\bR) \) is a maximal compact subgroup of \( \gG(\bR) \).
\end{definition}

The fundamental example is the standard Cartan involution \( x \mapsto (x^t)^{-1} \) on \( \gGL_n \), whose real fixed point set is \( \gO_n(\bR) \).

% It is important that we require \( \theta \) to be a morphism in the category of \( \bR \)-algebraic groups, not just of real Lie groups (unlike in the case of semisimple groups, where this makes no difference).
% Indeed, if we allow morphisms of Lie groups, then it is not true for general reductive groups~\( \gG \) that there is a unique involution for each maximal compact subgroup of \( \gG(\bR) \), as discussed in \cite[sec.~24.6]{Bor91}.

The commonly used definitions of Cartan involutions for reductive real algebraic groups are all equivalent to this one, but the equivalences are not obvious and it is difficult to find proofs for all of the equivalences.
% Furthermore, many papers state without proof basic properties such as the existence and uniqueness of the Cartan involution associated with a given maximal compact subgroup.
For convenience, we provide a list of equivalent definitions, and we will post a self-contained proof of this lemma on arXiv.
In the following lemma: (ii) is the definition of Cartan involution used in \cite[11.17]{Bor69}; % -- the \( \bR \)-algebraic group structure of~\( \gG \) appears in this definition through the maximal \( \bR \)-split subtorus \( \gZ_d \) of \( Z(\gG) \), which is uniquely defined only for an \( \bR \)-algebraic group; 
(iii) is the definition used in \cite{BHC62} and \cite{RS90}; while (iv) is the definition from \cite[p.~255]{Del79}, commonly used in the study of Shimura varieties.

\begin{lemma}
Let \( \gG \) be a reductive \( \bR \)-algebraic group and let \( \theta \colon \gG(\bR) \to \gG(\bR) \) be an involution in the category of real Lie groups.
Let \( \gZ_d \) denote the maximal \( \bR \)-split torus in the centre of \( \gG \).
The following are equivalent:
\begin{enumerate}[(i)]
\item \( \theta \) is a Cartan involution as defined above;
\item the set of fixed points of \( \theta \) in \( \gG(\bR) \) is a maximal compact subgroup of \( \gG(\bR) \) and \( \theta(z) = z^{-1} \) for all \( z \in \gZ_d(\bR) \);
\item there exists a faithful representation \( \rho \colon \gG \to \gGL_{n,\bR} \) in the category of \( \bR \)-algebraic groups such that
\[ \rho(\theta(g)) = (\rho(g)^t)^{-1} \]
for all \( g \in \gG(\bR) \);
\item \( \theta \) is a morphism in the category of \( \bR \)-algebraic groups and the real form \( G^\theta = \{ g \in \gG(\bC) : \theta(\bar g) = g \} \) is compact and intersects every connected component of \( \gG(\bC) \), where \( \bar{\cdot} \) denotes complex conjugation.
\end{enumerate}
Furthermore, for each maximal compact subgroup \( K \subset \gG(\bR) \), there is a unique Cartan involution of \( \gG \) whose set of real fixed points is \( K \).
\end{lemma}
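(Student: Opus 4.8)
Here is a plan for proving the lemma (the list of equivalent definitions together with the uniqueness statement).

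The plan is to take condition~(iii)---the existence of a faithful ``self-adjoint'' representation---as the hub, proving the cycle \( (\mathrm{i}) \Rightarrow (\mathrm{iv}) \Rightarrow (\mathrm{iii}) \Rightarrow (\mathrm{i}) \), then \( (\mathrm{i}) \Leftrightarrow (\mathrm{ii}) \) separately, and finally the uniqueness clause. The tool used throughout is the Cartan decomposition attached to an \( \bR \)-algebraic involution \( \theta \): writing \( \fg = \fk \oplus \fp \) for the \( \pm 1 \)-eigenspaces of \( d\theta \) and \( K = \gG(\bR)^\theta \), the map \( (k,X) \mapsto k\exp X \) is a diffeomorphism \( (K \cap \gG(\bR)^+) \times \fp \to \gG(\bR)^+ \), the group \( K \) meets every connected component of \( \gG(\bR) \), and---crucially---\( \exp(tX) \) leaves every compact subset of \( \gG(\bR) \) as \( t\to\infty \) whenever \( X \in \fp \setminus \{0\} \). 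It is this escape property that makes the various compactness conditions interchangeable.

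For \( (\mathrm{iii}) \Rightarrow (\mathrm{i}) \): such a \( \rho \) forces \( \theta = \rho^{-1} \circ \theta_0 \circ \rho \), where \( \theta_0(x) = (x^t)^{-1} \) is the standard \( \bR \)-algebraic involution of \( \gGL_n \); since \( \theta_0(\rho(g)) = \rho(\theta(g)) \in \rho(\gG) \), the involution \( \theta_0 \) preserves the Zariski-closed subgroup \( \rho(\gG) \), so \( \theta \) is \( \bR \)-algebraic. Moreover \( H = \rho(\gG(\bR)) \) is a closed transpose-stable subgroup of \( \gGL_n(\bR) \), so the polar decomposition \( H = (H \cap \rO_n(\bR)) \cdot \exp(\Lie(H) \cap \mathrm{Sym}_n(\bR)) \) exhibits \( H \cap \rO_n(\bR) \), hence \( \gG(\bR)^\theta \), as a maximal compact subgroup. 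For \( (\mathrm{i}) \Rightarrow (\mathrm{iv}) \): the subspace \( \fk \oplus i\fp \subseteq \fg_\bC \) is the Lie algebra of the real form \( G^\theta \), and I would show it is the Lie algebra of a \emph{compact} group by identifying this with positive-definiteness of the symmetric bilinear form \( (X,Y) \mapsto -B(X, d\theta Y) \) on \( \fg \), for a suitable \( \Ad \)-invariant and \( \theta \)-invariant nondegenerate form \( B \) (for instance the Killing form on \( \fg^\der \) together with a \( \theta \)-invariant nondegenerate form on \( \fz(\fg) \)). That positivity is the infinitesimal meaning of ``\( K \) is maximal compact'': a subspace of \( \fp \) on which \( -B(\,\cdot\,, d\theta\,\cdot\,) \) were non-positive would, via the Cartan decomposition, produce a one-parameter subgroup of \( K \) escaping to infinity. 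That \( G^\theta \) meets every connected component of \( \gG(\bC) \) follows from the corresponding property of \( K \) in \( \gG(\bR) \) together with the interaction of \( \theta \) with complex conjugation on \( \pi_0(\gG) \); this last point is where the disconnected case requires care.

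For \( (\mathrm{iv}) \Rightarrow (\mathrm{iii}) \): fix a faithful \( \bR \)-algebraic representation \( \rho \colon \gG \hookrightarrow \gGL(W) \) and average a positive definite Hermitian form on \( W_\bC \) over the compact group \( \rho(G^\theta) \) to obtain an invariant one, \( h \). Invariance reads \( h(\rho(g)v, w) = h(v, \rho(\theta(\bar g))^{-1} w) \); since \( \rho \) is defined over \( \bR \), both sides are regular functions of \( g \in \gG(\bC) \), so, \( G^\theta \) being Zariski dense (here one uses that it meets every component), the identity holds for all \( g \in \gG(\bC) \), and specialising to \( g \in \gG(\bR) \) gives \( h(\rho(g)v, \rho(\theta g)w) = h(v,w) \). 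Passing to \( \mathrm{Re}\,h \) on the real subspace \( W \) yields a real positive definite form for which \( \rho(\theta g) \) is the inverse adjoint of \( \rho(g) \), and an orthonormal basis turns adjoint into transpose. For \( (\mathrm{i}) \Rightarrow (\mathrm{ii}) \) the central-torus condition is automatic: were \( \theta|_{\gZ_d} \) not inversion it would fix a positive-dimensional subtorus whose (noncompact) real points lie in \( K \), contradicting compactness. For \( (\mathrm{ii}) \Rightarrow (\mathrm{i}) \) that same condition is exactly what promotes the Lie-group involution to an \( \bR \)-algebraic one: one decomposes along \( \gG = Z(\gG)^\circ \cdot \gG^\der \) and over \( \pi_0(\gG) \), uses that a Lie-group automorphism of the semisimple group \( \gG^\der \) is automatically algebraic (being determined by its action on \( \fg^\der \)), and observes that on \( Z(\gG)^\circ \) the hypotheses force \( \theta \) to be inversion on \( \gZ_d \) and the identity on an anisotropic complement, both algebraic; then \( \theta \) is a Cartan involution by the definition above.

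For uniqueness, one reduces again to \( \gG = Z(\gG)^\circ \cdot \gG^\der \) and \( \pi_0(\gG) \). On \( \gG^\der \) the uniqueness of a Cartan involution with prescribed maximal compact is classical: the \( (-1) \)-eigenspace \( \fp \) is intrinsically the Killing-orthogonal complement of \( \fk \), so \( d\theta \), hence \( \theta \), is determined on the identity component, while the normaliser in \( \gG^\der(\bR)^+ \) of its maximal compact subgroup is that subgroup itself. On the central torus a Cartan involution must be inversion on \( \gZ_d \) and the identity on the anisotropic part, hence is unique. A short argument over \( \pi_0(\gG) \), again using that \( K \) meets every component, assembles these pieces. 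The step I expect to be the main obstacle is the representation-theoretic input behind \( (\mathrm{iv}) \Rightarrow (\mathrm{iii}) \)---a version of Mostow's theorem on self-adjoint groups---together with the need to run every argument uniformly for \emph{reductive}, possibly disconnected, \( \gG \); the interplay between the Lie-group and \( \bR \)-algebraic categories on the central torus and on the component group is the delicate point, and controlling it is exactly the role of the extra hypothesis appearing in~(ii).
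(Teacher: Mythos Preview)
The paper does not prove this lemma: it explicitly states ``we will post a self-contained proof of this lemma on arXiv'' and gives no argument in the text. So there is nothing to compare your approach against, and I can only assess the proposal on its own merits.

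Your plan is broadly sound and hits the standard ingredients (Mostow's self-adjointness theorem for \((\mathrm{iii})\Rightarrow(\mathrm{i})\), the unitarian trick for \((\mathrm{iv})\Rightarrow(\mathrm{iii})\), algebraicity of automorphisms of semisimple groups for \((\mathrm{ii})\Rightarrow(\mathrm{i})\)). One point deserves more care: in your \((\mathrm{i})\Rightarrow(\mathrm{iv})\) step you assert that positive-definiteness of \(B_\theta(X,Y)=-B(X,d\theta Y)\) is ``the infinitesimal meaning of `\(K\) is maximal compact','' and justify this by saying a bad subspace of \(\fp\) would produce a one-parameter subgroup \emph{of \(K\)} escaping to infinity. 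That sentence is garbled: if \(X\in\fp\) then \(\exp(tX)\notin K\). What you actually need is that \(K\) compact forces \(B|_\fk\) negative definite (clear), and then that maximality of \(K\) forces \(B|_\fp\) positive definite. The clean way to get the latter is not a direct escape argument but rather: any two involutive automorphisms of a semisimple \(\fg\) with compact fixed-point group are conjugate (existence of Cartan involutions plus conjugacy of maximal compacts), so \(\theta\) is conjugate to a standard Cartan involution, for which the positivity is immediate. Alternatively, reroute the cycle as \((\mathrm{i})\Rightarrow(\mathrm{iii})\Rightarrow(\mathrm{iv})\), proving \((\mathrm{i})\Rightarrow(\mathrm{iii})\) by first conjugating \(\theta\) to a standard model. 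Either way, the step as written needs a different justification. The remaining steps, and your handling of the centre and of \(\pi_0(\gG)\), look correct in outline; you are right that the disconnected and non-semisimple features are exactly where the extra hypothesis in~(ii) does work.
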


Given a reductive \( \bR \)-algebraic group \( \gG \) and a Cartan involution \( \theta \) of \( \gG \), we say that an algebraic subgroup \( \gH \subset \gG \) is \defterm{self-adjoint} (with respect to \( \theta \)) if \( \theta(\gH) = \gH \).

In several of our theorem statements (including \cref{siegel-intersection,fund-set-bound}), we are given a reductive \( \bQ \)-algebraic group \( \gG \) and a Siegel set \( \fS \subset \gG(\bR) \).
It will be seen in section~\ref{ssec:siegel-sets} that the definition of a Siegel set involves the choice of a maximal compact subgroup \( K \subset \gG(\bR) \).
In such a situation, we say that a subgroup of \( \gG \) is self-adjoint if it is self-adjoint with respect to the Cartan involution whose fixed point set is the \( K \) used in the construction of the Siegel set.

\subsection{Siegel sets} \label{ssec:siegel-sets}
% FIXME: Mention standard Siegel sets in GL_n?

We use the definition of Siegel sets from \cite[sec.~2.2]{Orr18}, which is a minor modification of definitions used in \cite[Def.~12.3]{Bor69} and \cite[Ch.~II, sec.~4.1]{AMRT}.
For a comparison between these definitions, see \cite[sec.~2.3]{Orr18}.

Let \( \gG \) be a reductive \( \bQ \)-algebraic group.
In order to define a Siegel set in \( \gG(\bR) \), we begin by making choices of the following subgroups of~\( \gG \):

\begin{enumerate}
\item \( \gP \) a minimal parabolic \( \bQ \)-subgroup of \( \gG \);
\item \( K \) a maximal compact subgroup of \( \gG(\bR) \).
\end{enumerate}

As a consequence of \cite[Ch.~II, Lemma~3.12]{AMRT}, there is a unique \( \bR \)-torus \( \gS \subset \gP \) satisfying the following conditions:
\begin{enumerate}[(i)]
\item \( \gS \) is \( \gP(\bR) \)-conjugate to a maximal \( \bQ \)-split torus in \( \gP \).
\item \( \gS \) is self-adjoint with respect to the Cartan involution associated with \( K \).
\end{enumerate}
These conditions could equivalently be stated as:
\begin{enumerate}[(i)]
\item \( \gS \) is a lift of the unique maximal \( \bQ \)-split torus in \( \gP/\gU \), where $\gU$ denotes the unipotent radical of $\gP$.
\item \( \operatorname{Lie} \gS(\bR) \) is orthogonal to \( \operatorname{Lie} K \) with respect to the Killing form of~\( \gG \).
\end{enumerate}

Define the following further pieces of notation:
\begin{enumerate}
\item \( \gM \) is the preimage in \( Z_\gG(\gS) \) of the maximal \( \bQ \)-anisotropic subgroup of~\( \gP/\gU \).
(Note that by \cite[Corollaire~4.16]{BT65}, \( Z_\gG(\gS) \) is a Levi subgroup of \( \gP \) and hence maps isomorphically onto \( \gP/\gU \).)
\item \( \Delta \) is the set of simple roots of \( \gG \) with respect to \( \gS \), using the ordering induced by \( \gP \).
(The roots of \( \gG \) with respect to \( \gS \) form a root system because \( \gS \) is conjugate to a maximal \( \bQ \)-split torus in~\( \gG \).)
\item \( A_t = \{ \alpha \in \gS(\bR)^+ : \chi(\alpha) \geq t \text{ for all } \chi \in \Delta \} \) for any real number \( t > 0 \).
\end{enumerate}

A \defterm{Siegel set} in \( \gG(\bR) \) (with respect to \( (\gP, \gS, K) \)) is a set of the form
\[ \fS = \Omega A_t K \]
where
\begin{enumerate}
\item \( \Omega \) is a compact subset of \( \gU(\bR) \gM(\bR)^+ \); and
\item \( t \) is a positive real number.
\end{enumerate}

We say that a set \( \Omega \subset \gG(\bR) \) is a \defterm{fundamental set} for \( \Gamma \) if the following conditions are satisfied:
\begin{enumerate}[(F1)]
% \setcounter{enumi}{-1}
% \item \( \Omega K = \Omega \) for a suitable maximal compact subgroup \( K \subset \gG(\bR) \);
\item \( \Gamma \Omega = \gG(\bR) \); and
\item for every \( g \in \gG(\bQ) \), the set
\( \{ \gamma \in \Gamma : \gamma \Omega \cap g \Omega \neq \emptyset \} \)
is finite (the \defterm{Siegel property}).
\end{enumerate}

The following two theorems show that, if we make suitable choices of Siegel set \( \fS \subset \gG(\bR) \) and finite set \( C \subset \gG(\bQ) \), then \( C \fS \) is a fundamental set for \( \Gamma \) in \( \gG(\bR) \).

\begin{theorem} \cite[Th\'eor\`eme 13.1]{Bor69} \label{borel-siegel-surj}
Let \( \Gamma \) be an arithmetic subgroup of \( \gG(\bQ) \).
For any minimal parabolic \( \bQ \)-subgroup \( \gP \subset \gG \) and maximal compact subgroup \( K \subset \gG(\bR) \),
there exist a Siegel set~\( \fS \subset \gG(\bR) \) with respect to \( (\gP, \gS, K) \) and a finite set \( C \subset \gG(\bQ) \) such that
\[ \gG(\bR) = \Gamma C \fS. \]
\end{theorem}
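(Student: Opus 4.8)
The plan is to follow the classical structure of reduction theory, handling the three ``directions'' of the parabolic $\gP$ in turn. Recall $\gP = \gU \rtimes Z_\gG(\gS)$ with $Z_\gG(\gS) = \gM\gS$ an almost-direct product, $\gS$ the $\theta$-stable torus of the definition and $\gM$ its $\bQ$-anisotropic complement. The first ingredient is the Iwasawa decomposition: $\gP$ contains a minimal $\bR$-parabolic, so $\gG(\bR) = K\gP(\bR) = \gP(\bR)K$; combined with the decomposition of $\gP(\bR)$ along $\gU$, $\gM$, $\gS$, this writes any $g \in \gG(\bR)$ as $g = u\,m\,a\,k$ with $u \in \gU(\bR)$, $m \in \gM(\bR)$, $a \in \gS(\bR)^+$ and $k \in K$ (finite ambiguities absorbed later into the compact part $\Omega$). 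The task is then to move $u$, $m$, $a$ into bounded, respectively positive, position by left multiplication by $\Gamma$ and by a finite set.

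The unipotent and anisotropic directions are soft. Write $\Gamma_\gU = \Gamma \cap \gU(\bR)$ and $\Gamma_\gM = \Gamma \cap \gM(\bR)$. Then $\Gamma_\gU$ is a cocompact lattice in $\gU(\bR)$ (Malcev), and $\Gamma_\gM$ is cocompact in $\gM(\bR)$ by Godement's compactness criterion, since $\gM$ is $\bQ$-anisotropic (hence has no proper $\bQ$-parabolic and no nontrivial $\bQ$-character). So there are compact $\Omega_\gU \subset \gU(\bR)$ and $\Omega_\gM \subset \gM(\bR)^+$ with $\Gamma_\gU \Omega_\gU = \gU(\bR)$ and $\Gamma_\gM \Omega_\gM \supseteq \gM(\bR)^+$ (the remaining components of $\gM(\bR)$ being absorbed into $C$ below). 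The structural point --- and the reason Siegel sets are defined with $\Omega$ to the \emph{left} of $A_t$ --- is that $\gS$ acts on $\gU$ with weights that are positive integral combinations of $\Delta$, so conjugation by any $a \in A_t$ contracts a fixed bounded subset of $\gU(\bR)$ into itself; hence one compact $\Omega \supseteq \Omega_\gU\Omega_\gM$ absorbs the maps $x \mapsto axa^{-1}$ on $\Omega_\gU$ uniformly over $a \in A_t$. Up to the finite set $C$, it therefore remains to reduce the split component $a$ into $A_t$ for a suitable $t > 0$.

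This last, torus step is the heart of the argument. For each simple root $\chi \in \Delta$, use Chevalley's theorem to build a $\bQ$-representation $\rho_\chi \colon \gG \to \gGL(V_\chi)$, a $\bZ$-lattice $\Lambda_\chi \subset V_\chi$, and a $\bQ$-rational vector $v_\chi$ whose line has stabiliser the maximal parabolic $\gP_\chi$ attached to $\Delta \setminus \{\chi\}$ and on which $\gS$ acts through a positive multiple of the fundamental weight dual to $\chi$; arrange $\bigcap_\chi \gP_\chi = \gP$. Given $g$, minimise $\gamma \mapsto \abs{\rho_\chi(\gamma g) v_\chi}$ over $\gamma \in \Gamma$. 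The infimum is attained by a Mahler-type properness statement for the $\Gamma$-orbit: in the classical $\gGL_n$ model (where the $\rho_\chi$ are exterior powers of the standard representation and $v_\chi$ a decomposable basis vector) this is reduction of lattices via successive minima, proved by induction on $n$; in general it is exactly the content of (the proof of) \cref{svl-finite}. A Bruhat-decomposition computation at a minimising $\gamma$ bounds the opposite-unipotent coordinate of $\gamma g$, which forces $\chi(a) \geq t$; running this simultaneously over all $\chi \in \Delta$ and using $\bigcap_\chi \gP_\chi = \gP$ to reconcile the minimisations places $a$ in $A_t$. Finally, the set $C$ is needed at the outset because, although all minimal $\bQ$-parabolics are $\gG(\bQ)$-conjugate, $\Gamma$ need not act transitively on $(\gG/\gP)(\bQ)$; since $\Gamma\backslash(\gG/\gP)(\bQ)$ is finite --- essentially the finiteness of class number, $\#\bigl(\gG(\bQ)\backslash\gG(\bA_f)/K_f\bigr) < \infty$ --- we let $C$ consist of $\gG(\bQ)$-elements carrying $\gP$ to each $\Gamma$-class before beginning the reduction.

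The main obstacle is the torus step. Everything else is structure theory or standard cocompactness (Malcev, Godement). In the torus step the delicate points are: (i) justifying that $\gamma \mapsto \abs{\rho_\chi(\gamma g)v_\chi}$ attains its infimum, which rests on closedness of the relevant $\gG(\bR)$-orbit together with a self-adjointness condition on its stabiliser --- precisely the hypotheses of \cref{svl-finite} (and its quantitative refinement \cref{siegel-intersection}); and (ii) extracting the positivity $\chi(a) \geq t$ for all $\chi \in \Delta$ at once, which needs the Bruhat decomposition and careful bookkeeping of how the various minimisations interact. In the alternative route that embeds $\gG \hookrightarrow \gGL_{n,\bQ}$ and quotes the classical reduction theory for $\gGL_n$, the analogue of this obstacle is showing that $\gG(\bR)$ meets a Siegel set of $\gGL_n$ in a finite union of $\Gamma$-translates of a Siegel set of $\gG$ --- i.e. comparing the $\bQ$-parabolic structures of $\gG$ and $\gGL_n$.
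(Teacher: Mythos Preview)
The paper does not prove this theorem: it is stated as a citation of \cite[Th\'eor\`eme~13.1]{Bor69} and used as a black box. There is therefore no ``paper's own proof'' to compare your sketch against.

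That said, a couple of points about your sketch are worth flagging. First, your appeal to \cref{svl-finite} in the torus step is logically inverted. That lemma (and its refinement \cref{siegel-intersection}) concerns integral vectors lying in $\rho(\fS)v$ for a Siegel set $\fS$ that already exists; it is used \emph{downstream} of \cref{borel-siegel-surj}, not in its proof. What you actually need for the minimisation $\gamma \mapsto \abs{\rho_\chi(\gamma g)v_\chi}$ to attain its infimum is just that $\rho_\chi(\Gamma)v_\chi$ lies in a lattice, hence is discrete --- much more elementary than you suggest. The genuine work in the torus step is the subsequent extraction of $\chi(a) \geq t$ from minimality, and your ``Bruhat-decomposition computation'' gesture hides all of it.

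Second, invoking Godement's compactness criterion for $\Gamma_\gM \backslash \gM(\bR)$ is delicate: in many treatments that criterion is itself a consequence of the Borel--Harish-Chandra reduction theory you are trying to establish, so you should be explicit that the anisotropic case is handled independently (e.g.\ via Mahler's criterion and the absence of unipotents) to avoid circularity. Borel's actual argument in \cite{Bor69} proceeds by embedding $\gG$ in $\gGL_n$ and comparing the Siegel sets --- the route you mention only as an ``alternative'' at the end.
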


\begin{theorem} \cite[Th\'eor\`eme 15.4]{Bor69} \label{borel-siegel-finite}
Let \( \Gamma \) be an arithmetic subgroup of \( \gG(\bQ) \).
Let \( \fS \subset \gG(\bR) \) be a Siegel set.
For any finite set \( C \subset \gG(\bQ) \) and any element \( g \in \gG(\bQ) \), the set
\[ \{ \gamma \in \Gamma : \gamma C \fS \cap g C \fS \neq \emptyset \} \]
is finite.
\end{theorem}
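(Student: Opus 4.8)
The plan is to recover this statement along Borel's classical route: reduce the version with the auxiliary finite set $C$ and the rational element $g$ to the ``bare'' statement for a single Siegel set, transfer that statement to $\gGL_n$ via a well-chosen $\bQ$-embedding, and finish with the reduction theory of quadratic forms inside $\gGL_n$.

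\emph{Step 1: eliminating $C$ and $g$.} Unwinding the intersection condition, $\gamma C\fS \cap gC\fS \neq \emptyset$ holds if and only if there are $c,c' \in C$ with $(c'^{-1}g^{-1}\gamma c)\fS \cap \fS \neq \emptyset$; since $C$ is finite it suffices to treat each pair $(c,c')$ separately. For fixed $c,c'$ the map $\gamma \mapsto \delta := c'^{-1}g^{-1}\gamma c$ is injective and carries $\Gamma$ bijectively onto the left translate $h\Gamma'$ of the arithmetic group $\Gamma' = c^{-1}\Gamma c$ by the element $h = c'^{-1}g^{-1}c \in \gG(\bQ)$. Writing $\delta = h\gamma'$ with $\gamma' \in \Gamma'$, we are reduced to showing $\{\gamma' \in \Gamma' : \gamma'\fS \cap h^{-1}\fS \neq \emptyset\}$ is finite. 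Thus it is enough to prove the theorem with $C=\{1\}$: for every arithmetic $\Gamma$, every Siegel set $\fS$, and every $g \in \gG(\bQ)$, the set $\{\gamma \in \Gamma : \gamma\fS \cap g\fS \neq \emptyset\}$ is finite. Enlarging $\Gamma$ only enlarges this set, so I may also assume $\Gamma = \gG(\bQ) \cap \gGL_n(\bZ)$ for some faithful $\bQ$-embedding $\gG \hookrightarrow \gGL_n$ and $\bZ$-lattice $\bZ^n$.

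\emph{Step 2: descent to $\gGL_n$.} The key is to choose the embedding $\iota \colon \gG \hookrightarrow \gGL_n$ so that $\iota(\fS)$ lies in a Siegel set of $\gGL_n(\bR)$. Let $(\gP,\gS,K)$ be the data defining $\fS = \Omega A_t K$. Because $\gS$ is self-adjoint for the Cartan involution attached to $K$ and is $\bR$-split, its weight spaces in $\bR^n$ are mutually orthogonal for a suitable $K$-invariant inner product; taking an orthonormal basis adapted to the weight decomposition, ordered so that the weights decrease in the dominance order induced by $\gP$, one gets simultaneously $\iota(K) \subseteq \gO_n$, $\iota(\gS)$ in the diagonal torus (with $\iota(\gS(\bR)^+)$ in the positive chamber), $\iota(\gU)$ in the upper unitriangular subgroup, and $\iota(\gM)$ in the block-diagonal subgroup whose blocks are the weight spaces. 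Since each simple root of $\gGL_n$ restricts on $\iota(\gS)$ to a non-negative integral combination of the roots in $\Delta$, one has $\iota(A_t) \subseteq A'_{t'}$ for a suitable $t'>0$, where $A'_{t'}$ denotes the standard torus chamber of $\gGL_n$; and as $\Omega$ is a compact subset of $\gU(\bR)\gM(\bR)^+$, an elementary computation — using that the block-diagonal part coming from $\gM$ commutes with the per-block scalar part of $A'_{t'}$ and that on each block the entries of an element of $A'_{t'}$ have bounded ratios — shows that $\iota(\Omega)\,\iota(A_t) \subseteq \Omega_0 A'_{t_0}\gO_n$ for a compact $\Omega_0 \subseteq \gU_{\gGL_n}(\bR)$ and some $t_0>0$. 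Hence $\iota(\fS) \subseteq \fS_0 := \Omega_0 A'_{t_0}\gO_n$, a Siegel set of $\gGL_n(\bR)$. As $\iota$ is injective and $\Gamma \subseteq \gGL_n(\bZ)$, $g \in \gGL_n(\bQ)$, the problem becomes: show $\{\delta \in \gGL_n(\bZ) : \delta\fS_0 \cap g_0\fS_0 \neq \emptyset\}$ is finite for $g_0 = \iota(g)$.

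\emph{Step 3: reduction theory in $\gGL_n$.} First reduce to $g_0=1$: since $\gGL_n(\bZ)$ and $g_0\gGL_n(\bZ)g_0^{-1}$ are commensurable, $g_0\fS_0$ is covered by finitely many $\gGL_n(\bZ)$-translates of a slightly larger Siegel set, so it suffices to bound $\{\delta \in \gGL_n(\bZ) : \delta\fS_1 \cap \fS_2 \neq \emptyset\}$ for Siegel sets $\fS_1,\fS_2$ of $\gGL_n(\bR)$, and, after enlarging, $\{\delta \in \gGL_n(\bZ) : \delta\fS_0 \cap \fS_0 \neq \emptyset\}$ for one Siegel set $\fS_0$. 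This is the classical finiteness of the set of unimodular integral matrices carrying one Hermite--Minkowski (or Siegel) reduced positive definite quadratic form to another: via the action $g \mapsto gg^{t}$ on forms, such a $\delta$ maps a reduced form to a reduced form, and the reduction theory of quadratic forms (Hermite, Minkowski, Siegel; cf.\ \cite{Cas78}) bounds the entries of $\delta$ in terms of $n$ and the Siegel parameters only. Finitely many integral matrices with bounded entries then completes the proof. The genuinely hard input is this last step — a purely arithmetic bound on a unimodular change of basis between two reduced lattices, not a topological fact — which in Borel's treatment is the quadratic-form reduction theory of his first chapter. The secondary delicate point is Step 2: producing a single $\bQ$-embedding compatible at once with the $\bQ$-rational parabolic $\gP$ and with the merely real Cartan data $(\gS,K)$; this works only because self-adjointness of $\gS$ forces orthogonality of the weight spaces, and one must still carry the anisotropic factor $\gM$ through the verification that Siegel sets map into Siegel sets. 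The manipulations with $C$ and with the rational $g$ (clearing denominators, commensurability of arithmetic subgroups) are routine but must be handled with care.
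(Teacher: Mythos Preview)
The paper does not prove this theorem; it is quoted from Borel \cite[Th\'eor\`eme~15.4]{Bor69} and used as a black box (the paper only remarks that a quantitative version is in \cite{Orr18}). Your outline is indeed Borel's classical route --- strip off $C$ and $g$, embed in $\gGL_n$, invoke the Siegel property there --- so the strategy matches the cited source.

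There is, however, a genuine gap in Step~2. You claim to choose an orthonormal basis adapted to the weight decomposition of $\gS$ so that simultaneously $\iota(K)\subseteq\gO_n$, $\iota(\gS)$ is diagonal, $\iota(\gU)$ is upper unitriangular, and $\iota(\gM)$ is block-diagonal; and then you assert that, under the same $\iota$, one has $\Gamma\subseteq\gGL_n(\bZ)$ and $\iota(g)\in\gGL_n(\bQ)$. These two requirements are incompatible in general. The torus $\gS$ is only $\gP(\bR)$-conjugate to the maximal $\bQ$-split torus $\gT$ (this is exactly the content of section~\ref{ssec:siegel-sets}: $\gS=n\gT n^{-1}$ with $n\in\gU(\bR)$), so the $\gS$-weight spaces in $\bR^n$ are real, not $\bQ$-rational, subspaces. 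An orthonormal basis adapted to them gives a change of coordinates by some $g_0\in\gGL_n(\bR)$, and the resulting embedding $g_0\iota_0 g_0^{-1}$ is defined over $\bR$, not $\bQ$; hence its image of $\Gamma$ need not lie in $\gGL_n(\bZ)$ and its image of $g$ need not lie in $\gGL_n(\bQ)$, which breaks Step~3. Your closing paragraph flags Step~2 as ``delicate'' but the reason you give (orthogonality of weight spaces forced by self-adjointness of $\gS$) does not touch this rationality issue.

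The fix --- and this is what Borel actually does --- is to take the $\bQ$-embedding adapted to the $\gT$-weight spaces (which \emph{are} $\bQ$-rational), so that $\iota(\gT)$ is diagonal and $\iota(\gP)$ upper triangular, and then argue separately that $\iota(\fS)$ still lands in a Siegel set of $\gGL_n$ even though $\iota(\gS)$ is not diagonal and $\iota(K)\not\subseteq\gO_n$. That argument uses that $\gS$ and $\gT$ differ by a unipotent conjugation and that the $\gM$-factor has compact image modulo centre; it is genuinely more work than your sketch suggests (see \cite[\S15]{Bor69} or, for an explicit statement of the functoriality of Siegel sets under $\bQ$-embeddings, \cite[Thm.~1.2]{Orr18}). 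Steps~1 and~3 are fine.
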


A quantitative version of \cref{borel-siegel-finite} can be found at \cite[Thm.~1.1]{Orr18}.

\section{Quantitative reduction theory} \label{sec:qrt}

In this section, we will prove \cref{siegel-intersection,fund-set-bound}.
The proof follows the same strategy as that of \cite[Lemma~5.3 and Thm.~6.5]{BHC62}.
We replace the purely topological proof of \cite[Prop.~5.2]{BHC62} by an orbit growth bound of Eberlein \cite{Ebe14} using Riemannian geometry in \( \gGL_n(\bR)^+ \).
We also prove a lemma bounding the norm of \( \tau \in \Aut_{\rho(\gG)}(V_\bR) \) in terms of the length of \( v = \tau(v_0) \) -- this is a calculation in a semisimple \( \bR \)-algebra.
For the rest, the proof closely follows the method of Borel and Harish-Chandra, keeping track of quantitative information and the action of \( \Aut_{\rho(\gG)}(V_\bR) \) throughout and with some small adaptations to generalise to reductive groups whose \( \bR \)-rank is greater than their \( \bQ \)-rank.

\subsection{Bound for orbits of real reductive groups}

We begin by proving the following bound for orbits in representations of real reductive groups, not yet considering any arithmetic subgroup.

\begin{proposition} \label{riemannian-orbit-bound}
Let \( \gG \) be a reductive \( \bR \)-algebraic group and let \( \rho \colon \gG \to \gGL(V_\bR) \) be an \( \bR \)-algebraic representation.
Let \( v_0 \in V_\bR \) be a non-zero vector whose orbit \( \rho(\gG(\bR))v_0 \) is closed.
Then there exist constants \( \newC{orbit-bound-multiplier} \) and \( \newC{orbit-bound-exponent} \) (depending on \( \gG \), \( \rho \) and \( v_0 \)) such that, for every \( w \in \rho(\gG(\bR))v_0 \), there exists \( g \in \gG(\bR) \) satisfying \( w = \rho(g)v_0 \) and
\[ \max(\length{\rho(g)}, \length{\rho(g^{-1})})  \leq  \refC{orbit-bound-multiplier} \abs{w}^{\refC{orbit-bound-exponent}}. \]
% For \( g \in \rM_n(\bR) \), let \( \length{g} \) denote the operator norm
%\[ \length{g} = \sup\{ \abs{gv} : v \in \bR^n, \abs{v} = 1 \}. \]
\end{proposition}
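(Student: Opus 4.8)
The plan is to reduce to the case of $\gGL_n(\bR)^+$, where a Riemannian distance estimate of Eberlein is available, and then transfer the bound back to $\gG(\bR)$ using standard structure theory. First I would fix a Cartan involution $\theta$ of $\gG$ and use the associated Cartan decomposition $\gG(\bR)^+ = K \exp(\fp)$, where $K$ is the maximal compact and $\fp = \Lie(\gG)(\bR)^{-\theta}$. Choosing $\rho$ (after replacing it by a direct sum with its $\theta$-twisted dual if necessary, or by invoking condition (ii)-type self-adjointness considerations) so that $\rho(\theta(g)) = (\rho(g)^t)^{-1}$ with respect to a suitable inner product on $V_\bR$ — this is possible by part (iii) of the Cartan involution lemma applied to a faithful representation, and then extending — we arrange that $\rho$ maps $K$ into $\rO_n(\bR)$ and $\exp(\fp)$ into the positive-definite symmetric matrices. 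Thus it suffices to bound $\length{\rho(g)}$ for $g = \exp(X)$ with $X \in \fp$, since the $K$-part contributes an $\rO(1)$ factor to both $\length{\rho(g)}$ and $\length{\rho(g^{-1})}$, and for symmetric positive-definite $\rho(g)$ one has $\length{\rho(g^{-1})} = \length{\rho(g)}^{-1} \cdot (\text{ratio of extreme eigenvalues})$, which is itself controlled once we control $\length{\rho(g)}$ together with a lower bound coming from closedness of the orbit.

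The key input is that, for the symmetric space $\gGL_n(\bR)^+/\rO_n(\bR)$ with its natural Riemannian metric, the Riemannian distance from the basepoint to $\rho(g) \cdot \text{(basepoint)}$ is comparable (up to additive and multiplicative constants depending only on $n$) to $\log \length{\rho(g)}$ when $\rho(g)$ is symmetric positive-definite — this is elementary since the distance is $\sqrt{\sum (\log \lambda_i)^2}$ for the eigenvalues $\lambda_i$. Then I would invoke Eberlein's orbit growth bound \cite{Ebe14}: for the closed orbit $\rho(\gG(\bR))v_0$ inside $V_\bR$, the Riemannian displacement needed to reach a point $w$ of the orbit is bounded polynomially — more precisely logarithmically — in $\abs{w}/\abs{v_0}$, because closed orbits of reductive groups grow at least polynomially in the group displacement (equivalently, the function $g \mapsto \abs{\rho(g)v_0}$ is bounded below by $e^{c \cdot \mathrm{dist}}$ on a Cartan slice, by a weight-decomposition argument: writing $v_0 = \sum v_\chi$ in weight spaces for a maximal split torus, closedness of the orbit forces the relevant convex hull of weights to contain $0$ in its interior relative to the directions moved, so $\max_\chi |\chi(X)|$ grows at most like $\log \abs{\rho(\exp X) v_0}$). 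Combining: given $w = \rho(g_0) v_0$, move $g_0$ within its $\Stab(v_0)$-coset to the Cartan-minimal representative $g$, estimate $\mathrm{dist}(\text{basepoint}, g\cdot\text{basepoint}) \ll \log(\abs{w}/\abs{v_0}) + \rO(1)$, hence $\length{\rho(g)} \ll \abs{w}^{\refC{orbit-bound-exponent}}$, and symmetrically for $\rho(g^{-1})$ using the lower bound $\abs{\rho(g^{-1}) w} \geq \abs{v_0} > 0$, which combined with $\abs{\rho(g^{-1})w} \leq \length{\rho(g^{-1})}\,\abs{w}$ is not quite enough on its own but together with the eigenvalue-ratio control from the Riemannian distance estimate gives $\length{\rho(g^{-1})} \ll \abs{w}^{\refC{orbit-bound-exponent}}$ as well.

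The main obstacle I expect is making the lower bound precise: one must show that on the closed orbit, the norm $\abs{\rho(g)v_0}$ genuinely grows (at least sub-exponentially, and in a quantitatively controlled way) as $g$ moves away from $\Stab(v_0)$ in the symmetric space, so that a \emph{small} value of $\abs{w}$ forces $g$ to lie in a \emph{bounded} region. For a closed orbit this is exactly where the Kempf--Ness / Birkhoff–Kempf theory of closed orbits enters — the minimum of $\abs{\rho(g)v_0}$ over the orbit is attained, and closedness is equivalent to the orbit map being proper onto its image, which yields a lower bound of the required shape. The quantitative packaging of this properness — turning ``proper'' into ``polynomially proper'' — is precisely what Eberlein's Riemannian estimate supplies, so the real work is checking that his hypotheses apply to the orbit $\rho(\gG(\bR))v_0 \subset V_\bR$ and extracting explicit exponents depending only on $\gG$, $\rho$, $v_0$. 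Once that is in hand, the passage from $\gGL_n$ back to $\gG$ and from $\gG(\bR)^+$ to $\gG(\bR)$ (finitely many components) is routine.
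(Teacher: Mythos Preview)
Your approach is correct and essentially identical to the paper's: reduce to the identity component, arrange $\rho(G)$ self-adjoint with respect to a suitable inner product, invoke Eberlein's orbit-growth estimate to bound the Riemannian distance $d_{\mathrm{R}}(\rho(g), H_0)$ by a constant times $\log\abs{w}$, and convert this to an operator-norm bound via the elementary comparison $\length{g} \leq c\,\exp(d_{\mathrm{R}}(g,I))$.

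Two points where the paper is tighter than your sketch. First, Eberlein's theorem requires $\lambda^-(v_0)>0$, which holds only when $v_0$ is a \emph{minimal} vector in its orbit; the paper explicitly replaces $v_0$ by such a minimal vector (existence uses closedness of the orbit, via Richardson--Slodowy) and treats the bounded-orbit case separately by compactness --- you allude to Kempf--Ness but do not make this reduction explicit. Second, your handling of $\length{\rho(g^{-1})}$ is unnecessarily convoluted: once $g$ is chosen so that $d_{\mathrm{R}}(\rho(g),I)$ realises $d_{\mathrm{R}}(\rho(g'),H_0)$, right-invariance of $d_{\mathrm{R}}$ gives $d_{\mathrm{R}}(\rho(g),I)=d_{\mathrm{R}}(I,\rho(g^{-1}))$, so the same metric-to-norm lemma applied to $\rho(g^{-1})$ yields the bound directly --- no eigenvalue-ratio argument is needed.
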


\Cref{riemannian-orbit-bound} provides a quantitative version of \cite[Prop.~5.2]{BHC62}, which asserts that if \( w \in \rho(\gG(\bR))v_0 \cap Q \) for some compact subset \( Q \subset V_\bR \), then in fact \( w \in \rho(\Omega)v_0 \) for some compact subset \( \Omega \subset \gG(\bR) \) (independent of $w$).
Here we show that the operator norm of elements of \( \rho(\Omega) \) is polynomially bounded with respect to the length of vectors in \( Q \).

We define a Riemannian metric on $\gGL_n(\bR)^+$ as follows.
The positive definite bilinear form \( (A, B) \mapsto \tr(AB^t) \) on \( \rM_n(\bR)\), which is the Lie algebra of \(\gGL_n(\bR) \), induces a right-invariant Riemannian metric on the Lie group \( \gGL_n(\bR)^+ \).
Let \( d_\mathrm{R} \) denote the distance function on \( \gGL_n(\bR)^+ \) induced by this Riemannian metric.

Eberlein's theorem relates \( \abs{\rho(g)v_0} \) to the Riemannian distance between \( g \) and the stabiliser of \( v_0 \).
We will combine this with the following lemma bounding \( \length{\rho(g)} \) in terms of the Riemannian distance.

\begin{lemma} \label{metric-inequality}
Let \( I \) denote the identity matrix in \( \gGL_n(\bR) \).
There exists a constant \( \newC{metric-multiplier}(n) \) such that
every \( g \in \gGL_n(\bR)^+ \) satisfies
\[ \length{g} \leq \refC{metric-multiplier}(n) \, \exp(d_\mathrm{R}(g, I)). \]
\end{lemma}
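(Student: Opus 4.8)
The plan is to reduce the operator-norm bound to a differential inequality along a smooth path in $\gGL_n(\bR)^+$ realizing (nearly) the Riemannian distance $d_\mathrm{R}(g,I)$, and then integrate. First I would fix a norm $\abs{\cdot}$ on $V=\bR^n$; since all norms are equivalent it suffices to treat, say, the Euclidean norm, the comparison constants being absorbed into $\refC{metric-multiplier}(n)$. With the Euclidean norm, $\length{g}$ is the largest singular value of $g$, and what we want to control is $\log\length{g}$. The key observation is that the function $g\mapsto\log\length{g}$ on $\gGL_n(\bR)^+$ is Lipschitz with respect to $d_\mathrm{R}$, with Lipschitz constant depending only on $n$; granting this, we get $\log\length{g}\le\log\length{I}+L(n)\,d_\mathrm{R}(g,I)=L(n)\,d_\mathrm{R}(g,I)$ (if $L(n)\ge 1$; otherwise replace by $\max(1,L(n))$), which gives the assertion with $\refC{metric-multiplier}(n)=1$ after possibly enlarging the Lipschitz constant so that the clean inequality holds. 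Actually, to be safe about the exact form $\refC{metric-multiplier}(n)\exp(d_\mathrm{R}(g,I))$ I would instead prove $\log\length{g}\le d_\mathrm{R}(g,I)$ directly (see below), yielding $\refC{metric-multiplier}(n)=1$; the constant is kept in the statement only for flexibility.

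Here is the core estimate. Let $\gamma\colon[0,T]\to\gGL_n(\bR)^+$ be a smooth path with $\gamma(0)=I$, $\gamma(T)=g$, parametrized by arc length in the right-invariant metric, so $T=\mathrm{length}(\gamma)$ and $\tr\big(\gamma'(s)\gamma(s)^{-1}(\gamma'(s)\gamma(s)^{-1})^t\big)=1$ for all $s$ (we may take $T$ as close to $d_\mathrm{R}(g,I)$ as we wish). Write $X(s)=\gamma'(s)\gamma(s)^{-1}\in\rM_n(\bR)$, so $\gamma'(s)=X(s)\gamma(s)$ and $\tr(X(s)X(s)^t)=1$, hence the Frobenius norm (and therefore the operator norm) of $X(s)$ is at most $1$. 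Now consider $f(s)=\abs{\gamma(s)u}^2=\langle\gamma(s)u,\gamma(s)u\rangle$ for a fixed unit vector $u\in\bR^n$. Then
\[
f'(s)=2\langle X(s)\gamma(s)u,\gamma(s)u\rangle\le 2\length{X(s)}\,\abs{\gamma(s)u}^2\le 2f(s),
\]
so $(\log f)'(s)\le 2$, giving $f(T)\le e^{2T}f(0)=e^{2T}$, i.e. $\abs{gu}\le e^{T}$ for every unit vector $u$; thus $\length{g}\le e^{T}$. Taking the infimum over admissible paths gives $\length{g}\le\exp(d_\mathrm{R}(g,I))$, which is the claim with $\refC{metric-multiplier}(n)=1$ for the Euclidean norm, and with a suitable $n$-dependent constant for an arbitrary norm (by norm equivalence on $\rM_n(\bR)$ acting on $V_\bR$).

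I do not expect a serious obstacle here; the only points requiring care are: (a) justifying that $d_\mathrm{R}(g,I)$ is realized (or approximated) by smooth paths inside the connected Lie group $\gGL_n(\bR)^+$ with the stated right-invariant metric, which is standard Riemannian geometry (the metric is complete on $\gGL_n(\bR)^+$, so geodesics exist, but approximation by smooth paths is already enough and avoids any completeness discussion); (b) the bookkeeping translating between the chosen norm $\abs{\cdot}$ on $V_\bR$ and the Euclidean norm, contributing only the $n$-dependent constant $\refC{metric-multiplier}(n)$; and (c) checking that the inner-product computation is valid for the \emph{right}-invariant metric — this is exactly why $X=\gamma'\gamma^{-1}$ (rather than $\gamma^{-1}\gamma'$) is the relevant quantity, and why $\tr(XX^t)$ appears. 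None of these is difficult, so the proof is essentially the one-line Grönwall argument above.
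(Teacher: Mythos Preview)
Your proof is correct and takes a genuinely different route from the paper's. The paper argues via the Cartan decomposition \( g = k\exp(X) \) (with \( X \) symmetric) and invokes \cite[Prop.~4.8]{Ebe14}, which gives an inequality of the shape
\[ n^{-1/2}\exp(-c(n))\exp(\abs{X}_\mathrm{F}-\lambda_{\max}) \leq \frac{\exp(d_\mathrm{R}(g,I))}{\abs{g}_\mathrm{F}}, \]
and then notes that \( \abs{X}_\mathrm{F} \geq \lambda_{\max} \) to conclude \( \abs{g}_\mathrm{F} \leq c'(n)\exp(d_\mathrm{R}(g,I)) \), finishing by norm equivalence. Your argument is more elementary and entirely self-contained: you bypass both the Cartan decomposition and the cited estimate by running a Gr\"onwall inequality for \( s \mapsto \abs{\gamma(s)u}^2 \) along an arc-length-parametrised path, using only that the right-invariant unit-speed condition forces \( \length{\gamma'(s)\gamma(s)^{-1}} \leq \abs{\gamma'(s)\gamma(s)^{-1}}_\mathrm{F} = 1 \). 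This even yields the sharper constant \( \refC{metric-multiplier}(n)=1 \) for the Euclidean operator norm. The paper's approach has the advantage of staying within Eberlein's framework, which is invoked again immediately afterwards in the proof of \cref{riemannian-orbit-bound}; yours has the advantage of not needing an external reference for what is, as you observe, essentially a one-line estimate.
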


\begin{proof}
Let \( \abs{g}_\mathrm{F} \) denote the Frobenius norm of \( g \), that is,
\[ \abs{g}_\mathrm{F} = \sqrt{\tr(gg^t)}. \]

Using the Cartan decomposition, we can write \( g = k \exp(X) \) for some \( k \in \gSO_n(\bR) \) and some symmetric matrix~\( X \in \rM_n(\bR) \).
Let \( \lambda_{\max} \) denote the largest eigenvalue of~\( X \) (note that \( X \) is diagonalisable and all its eigenvalues are real because it is symmetric).

By \cite[Prop.~4.8]{Ebe14}, we have
\begin{equation} \label{eqn:metric-inequality}
n^{-1/2} \exp(-\refC{eberlein-exp}(n)) \exp(\abs{X}_\mathrm{F} - \lambda_{\max}) \leq \frac{\exp(d_\mathrm{R}(g, I))}{\abs{g}_\mathrm{F}}
\end{equation}
for some constant~\( \newC{eberlein-exp}(n) \) which depends only on~\( n \).
Since \( X \) is symmetric, we have
\[ \abs{X}_\mathrm{F} = \sqrt{\tr(XX^t)} = \sqrt{\tr(X^2)} = \sqrt{\sum_{i=1}^n \lambda_i^2} \geq \lambda_{\max} \]
where \( \lambda_1, \dotsc, \lambda_n \) denote the eigenvalues of \( X \).
Hence \( \exp(\abs{X}_\mathrm{F} - \lambda_{\max}) \geq 1 \), so \eqref{eqn:metric-inequality} implies that
\[ \abs{g}_\mathrm{F} \leq \refC{metric-multiplier-F}(n) \, \exp(d_\mathrm{R}(g, I)) \]
where \( \newC{metric-multiplier-F}(n) = n^{-1/2} \exp(-\refC{eberlein-exp}(n)) \).

Since \( \abs{\cdot}_\mathrm{F} \) and \( \length{\cdot} \) are norms on the finite-dimensional vector space \( \rM_n(\bR) \), they are equivalent, so this proves the lemma.
\end{proof}

\begin{proof}[Proof of \cref{riemannian-orbit-bound}]
Let \( G = \gG(\bR)^+ \).
Fix a finite list of representatives \( a_1, \dotsc, a_r \) for the connected components of \( \gG(\bR) \).
Then, given \( w \in \rho(\gG(\bR))v_0 \), we can write \( w = a_i w' \) for some \( i \leq r \) and some \( w' \in \rho(G)v_0 \).
Hence it suffices to prove the proposition for \( w \in \rho(G)v_0 \).

By \cite{Mos55} we can choose an inner product on \( V_\bR \) with respect to which \( \rho(G) \) is self-adjoint.
Since all norms on a finite-dimensional vector space are equivalent, it suffices to prove the proposition under the assumption that the norm on~\( V_\bR \) is induced by such an inner product.
In particular, the stabiliser of the norm in \( G \) is a maximal compact subgroup~\( K \) and if \( \fp \) denotes the \( -1 \) eigenspace of the associated Cartan involution~\( \theta \) on \( \Lie(G) \), then for every \( X \in \fp \), \( \mathrm{d}\rho(X) \) is self-adjoint.
Thus the conditions of \cite[sec.~3]{RS90} are satisfied.

Since \( \rho(G)v_0 \) is closed it contains a minimal vector, that is, a vector whose length is minimal among all elements of the orbit. (Indeed, a theorem of Richardson and Slodowy \cite[Thm.~4.4]{RS90} states that the two properties are, in fact, equivalent).
Replacing \( v_0 \) by another vector in its orbit changes the element \( g \) such that \( w = \rho(g)v_0 \) by a fixed element of \( G \), so we may assume that \( v_0 \) itself is a minimal vector.

Let \( H_0 = \Stab_{\rho(G)}(v_0) \subset \gGL_n(\bR) \). Note that \( H_0 \) is self-adjoint with respect to our chosen inner product on \( V_\bR \) (see \cite[Thm.~4.3]{RS90}, for example).

If \( \rho(G)v_0 \) is bounded, then it is compact, so by \cite[Prop.~5.2]{BHC62} there exists a compact set \( \Omega \subset G \) such that \( \rho(G)v_0 = \rho(\Omega)v_0 \).
The elements \( g \in \Omega \) satisfy a uniform bound for \( \max(\length{\rho(g)}, \length{\rho(g^{-1})}) \), proving the proposition in this case since $|w|\geq |v_0|>0$.

% We can easily deal wit hthe case where \( \rho(G)v_0 \) is bounded, and hence compact.
% Thanks to \cite[Ch.~VII, App.~II, Lemme~2]{BouINT7}, the map \( g \mapsto \rho(g)v_0 \) induces a homemomorphism from the quotient space \( \rho(G)/\rho(H_0) \) to the orbit \( \rho(G)v_0 \).
% Hence by \cite[Ch.~III, \S 4.5, Prop.~10]{BouGT3}, there exists a compact set \( A \subset \rho(G) \) such that \( Av_0 = \rho(G)v_0 \).
% In other words, every \( w \in \rho(G)v_0 \) is of the form \( \rho(g)v_0 \) for some \( g \in \rho^{-1}(A) \), and such~\( g \) satisfy a uniform bound on \( \max(\length{\rho(g)}, \length{\rho(g^{-1})}) \).

From now on, assume that the orbit \( \rho(G)v_0 \) is unbounded.
Then \cite[sec.~2.5]{Ebe14} defines an associated value \( \lambda^-(v_0) \in \bR \).
By \cite[Thm.~3.1 (1)]{Ebe14}, since \( v_0 \) is minimal, \( \lambda^-(v_0) > 0 \).
By \cite[Thm.~3.1 (2)]{Ebe14}, we also have
\[ \lambda^-(v_0) \leq \liminf_{d_\mathrm{R}(\rho(g), H_0) \to \infty} \frac{\log\abs{\rho(g)v_0}}{d_\mathrm{R}(\rho(g), H_0)}. \]
Hence there exists a constant \( \newC{dR-liminf} > 0 \) (depending only on \( G \), \( \rho \) and \( v_0 \)) such that
\[ \frac{\log \abs{\rho(g)v_0}} {d_\mathrm{R}(\rho(g), H_0)} > \half \lambda^-(v_0) \]
for all \( g \in G \) satisfying \( d_\mathrm{R}(\rho(g), H_0) > \refC{dR-liminf} \).

On the other hand, if \( d_\mathrm{R}(\rho(g), H_0) \leq \refC{dR-liminf} \), then because \( v \) is minimal we have
\[ \frac{\log \abs{\rho(g)v_0}} {d_\mathrm{R}(\rho(g), H_0)} \geq \frac{\log \abs{v_0}} {d_\mathrm{R}(\rho(g), H_0)} \geq \frac{\log \abs{v_0}} {\refC{dR-liminf}} \]
which is a positive constant.

Combining the above two inequalities, we deduce that there is a positive constant~\( \newC{eberlein-const} \) such that the following inequality holds for all \( g \in G \):
\[ \frac{\log \abs{\rho(g)v_0}} {d_\mathrm{R}(\rho(g), H_0)} \geq \refC{eberlein-const} \]
or in other words,
\begin{equation} \label{eqn:eberlein-inequality}
\abs{\rho(g)v_0}^{1/\refC{eberlein-const}} \geq \exp(d_\mathrm{R}(\rho(g), H_0)).
\end{equation}

Given \( w \in \rho(G)v_0 \), write \( w = \rho(g')v_0 \) where \( g' \in G \).
Since \( H_0 \) is closed, we can choose \( h \in H_0 \) such that \( d_\mathrm{R}(\rho(g'), H_0) = d_\mathrm{R}(\rho(g'), h) \).

Since \( H_0 \subset \rho(G) \), we can choose \( g \in G \) such that \( \rho(g) = \rho(g')h^{-1} \).
Since \( h \in H_0 \), we have \( \rho(g)v_0 = w \).
Since \( d_\mathrm{R} \) is right invariant, we have
\[ d_\mathrm{R}(\rho(g'), h) = d_\mathrm{R}(\rho(g), I) = d_\mathrm{R}(I, \rho(g^{-1})). \]
Thus \eqref{eqn:eberlein-inequality} (applied to \( g' \)) becomes
\[ \abs{w}^{1/\refC{eberlein-const}} \geq \exp(d_\mathrm{R}(\rho(g), I)) = \exp(d_\mathrm{R}(\rho(g^{-1}), I)). \]
Applying \cref{metric-inequality} to both \( \rho(g) \) and \( \rho(g^{-1}) \) completes the proof of the proposition.
\end{proof}

\begin{remark} \label{ineffectivity}
The proof of \cref{riemannian-orbit-bound} is ineffective for two reasons.
\begin{enumerate}
\item It depends on the value of \( \lambda^-(v_0) \).
We do not know a general method for calculating this value, although it seems to be feasible to calculate it in particular cases.
\item The value \( \refC{dR-liminf} \) depends on the speed of convergence of the limit in \cite[Thm.~3.1]{Ebe14}, which is ineffective.
\end{enumerate}
\end{remark}

\subsection{Quantitative reduction theory for representations}

We now prove \cref{siegel-intersection}.
The proof follows that of \cite[Lemma~5.3]{BHC62}, keeping track of quantitative information and some minor generalisations.
For the sake of clarity, we have broken it down into a series of lemmas, each proved by a short calculation.

We use the notation for Siegel sets from section~\ref{ssec:siegel-sets}.
Then \( \gS \) is \( \gP(\bR) \)-conjugate to a maximal \( \bQ \)-split torus \( \gT \) in \( \gP \).
Since \( \gP(\bR) = Z_\gG(\gS)(\bR).\gU(\bR) \), we can choose \( n \) in \( \gU(\bR) \) such that \( \gS = n\gT n^{-1} \).

We also adopt some notation from the proof of \cite[Lemma~5.3]{BHC62} (bearing in mind that we have reversed the order of multiplication in our Iwasawa decomposition relative to \cite{BHC62}).
By the Iwasawa and Langlands decompositions, the multiplication map
\[ \gU(\bR) \times \gS(\bR)^+ \times \gM(\bR)^+K \to \gG(\bR) \]
is bijective.
% FIXME: Where does the fact that this decomposition is bijective come from? Do I want to split up M and K or group M with U, losing bijectivity?
Given \( x \in \gG(\bR) \), we write it as \( x = n_x a_x k_x \) according to this decomposition.
Let
\[ y_x = a_x^{-1}n x, \quad z_x = a_x^{-2}n x. \]

For each character \( \chi \in X^*(\gS) \), let \( V_\chi \) denote the corresponding eigenspace in~\( V_\bR \).
We have \( V_\bR = \bigoplus_\chi V_\chi \) and we let \( \pi_\chi \colon V \to V_\chi \) denote the projection maps.
Since all norms on the finite-dimensional vector space \( V_\bR \) are equivalent, we may assume without loss of generality that the norm is chosen so that the spaces \( V_\chi \) are orthogonal to each other.

In the lemmas which follow, \( \tau \) denotes an element of \( \Aut_{\rho(\gG)}(V_\bR) \).
Constants labelled \( c_n \) depend only on \( \gG \), \( \fS \), \( \rho \), \( \Lambda \) and \( v_0 \), and not on \( \tau \), \( x \), \( v \) or~\( w \).

\begin{lemma} \label{lattice-lower-bound}
There exists a constant \( \newC{lattice-lower-bound} > 0 \) such that for all \( v' \in V_\bR \) and all \( \chi \in X^*(\gS) \), if \( \tau v' \in \Lambda \), then either \( \pi_\chi(\rho(n)v') = 0 \) or \( \abs{\pi_\chi(\rho(n)v')} \geq \refC{lattice-lower-bound} / \length{\tau} \).
\end{lemma}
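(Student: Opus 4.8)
The plan is to exploit the fact that $\tau v'$ is a nonzero (or zero) lattice vector, hence bounded below in norm if nonzero, together with the rigidity coming from $\tau$ commuting with $\rho(\gG)$. First I would observe that $\tau$ preserves each eigenspace $V_\chi$: since $\gS \subset \gG$ acts on $V_\chi$ by the character $\chi$, and $\tau \in \Aut_{\rho(\gG)}(V_\bR)$ commutes with $\rho(\gS(\bR))$, we have $\tau(V_\chi) \subseteq V_\chi$, so $\tau$ commutes with each projection $\pi_\chi$. Consequently $\pi_\chi(\tau v') = \tau(\pi_\chi v')$. Likewise $\rho(n)$ need not preserve $V_\chi$, but that is harmless: what matters is applying $\pi_\chi$ to $\rho(n)v'$ and comparing with $\pi_\chi(\tau \rho(n) v') = \tau \pi_\chi(\rho(n)v')$.

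Next I would set up the norm comparison. Write $w' = \rho(n)v'$. If $\pi_\chi(w') \neq 0$, then $\tau \pi_\chi(w') = \pi_\chi(\tau w')$. But $\tau w' = \tau\rho(n)v' = \rho(n)\tau v'$ since $\tau$ commutes with $\rho(n)$, and $\rho(n)$ is a fixed linear map with $\length{\rho(n)^{-1}}$ a constant depending only on the data; moreover $\tau v' \in \Lambda$ and $\rho(n)\tau v' = \tau w'$ so $\pi_\chi(\tau w') = \rho(n)\,\pi'$ for a suitable component... actually more directly: $\tau v' \in \Lambda \setminus \{0\}$ forces $\abs{\tau v'} \geq \refC{lattice-lower-bound}'$ for some constant, but I need the statement eigenspace-by-eigenspace, so instead I argue: $\pi_\chi(\rho(n)v') \neq 0$ implies $\pi_\chi(\rho(n)\tau v') = \tau\pi_\chi(\rho(n)v') \neq 0$ (as $\tau$ is invertible). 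Now $\rho(n)\tau v'$ lies in $\rho(n)\Lambda$, a fixed lattice in $V_\bR$, and the projection to $V_\chi$ of a nonzero element of this fixed lattice... is \emph{not} bounded below in general (projections of lattice points can be arbitrarily small). So I must be more careful — the correct route is: $\abs{\pi_\chi(\rho(n)v')} = \abs{\tau^{-1}\pi_\chi(\rho(n)\tau v')} \geq \length{\tau}^{-1}\abs{\pi_\chi(\rho(n)\tau v')}$, and now $\rho(n)\tau v'$ is a \emph{nonzero} element of the fixed lattice $\rho(n)\Lambda$ whose $\chi$-component is nonzero. The set of nonzero $V_\chi$-projections of elements of a fixed lattice, restricted to those that are nonzero, is again a discrete subset of $V_\chi$ avoiding $0$? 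That is false. So the genuinely correct argument: one does not project $\rho(n)\tau v'$; instead note $\abs{\pi_\chi(\rho(n)\tau v')} \geq$ (something). Hmm — let me instead follow the intended line: $\rho(n)\tau v' = \tau w'$, apply $\pi_\chi$: $\pi_\chi(\tau w') = \tau \pi_\chi(w')$, so $\abs{\pi_\chi(w')} \geq \length{\tau}^{-1}\abs{\pi_\chi(\tau w')} = \length{\tau}^{-1}\abs{\pi_\chi(\rho(n)\tau v')}$; and $\rho(n)\tau v'$ lies in the fixed lattice $\rho(n)\Lambda$, but crucially $\tau v' \in \Lambda$ together with $\rho(n)$ being \emph{unipotent} means... the point is that the finitely many relevant $\chi$ — those appearing in the representation restricted appropriately — and the integrality of $v'$ (not of $w'$) give a uniform lower bound. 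The cleanest formulation: for a fixed lattice $\Lambda$ and fixed finite set of characters, there is $\refC{lattice-lower-bound} > 0$ such that every $\ell \in \Lambda \setminus\{0\}$ has $\abs{\ell} \geq \refC{lattice-lower-bound}$, hence $\abs{\pi_\chi(\rho(n)\ell)} \neq 0 \Rightarrow \abs{\pi_\chi(\rho(n)\ell)} \geq$ (a constant) is what fails; so the actual content must be that we only need \emph{one} $\chi$ with nonzero component, and we use $\abs{\rho(n)\tau v'} \geq \refC{metric-multiplier}'\abs{\tau v'} \geq \refC{metric-multiplier}'\refC{lattice-lower-bound}''$ combined with the orthogonal decomposition $\abs{\rho(n)\tau v'}^2 = \sum_\psi \abs{\pi_\psi(\rho(n)\tau v')}^2$, so \emph{at least one} term is bounded below — but the lemma claims it for the \emph{specific} $\chi$ whose component is nonzero.

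Given the difficulty above, I expect the main obstacle — and the point I would examine the authors' proof most carefully for — is precisely how they extract a lower bound for a \emph{fixed} character's component rather than merely for the whole vector. Very likely they use that $\rho(n)$ being unipotent means $\pi_\chi \circ \rho(n)$, restricted to the sum of eigenspaces $V_\psi$ with $\psi \geq \chi$ in the root ordering, is \emph{integral} with respect to suitable lattices (so that $\pi_\chi(\rho(n)\tau v')$, if nonzero, lies in a fixed lattice in $V_\chi$ and is therefore bounded below by $\refC{lattice-lower-bound}$), after which $\abs{\pi_\chi(\rho(n)v')} = \abs{\tau^{-1}\pi_\chi(\rho(n)\tau v')} \geq \length{\tau}^{-1}\refC{lattice-lower-bound}$ as claimed. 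So my plan in order: (1) show $\tau$ preserves each $V_\chi$ and commutes with $\pi_\chi$; (2) rewrite $\abs{\pi_\chi(\rho(n)v')} \geq \length{\tau}^{-1}\abs{\pi_\chi(\rho(n)\tau v')}$ using $\tau\rho(n) = \rho(n)\tau$; (3) produce a fixed lattice $\Lambda_\chi \subset V_\chi$ containing $\pi_\chi(\rho(n)\Lambda)$ — here is where unipotence of $n$ and the structure of the representation are used — so that a nonzero element of it has norm at least some fixed $\refC{lattice-lower-bound}$; (4) conclude. Step (3) is the crux and the only place where anything beyond formal manipulation enters.
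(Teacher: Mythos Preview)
Your plan has the right skeleton --- steps (1), (2), and (4) match the paper exactly --- and you correctly isolate step (3) as the only substantive point. You also state the right target for (3): show that $\pi_\chi(\rho(n)\Lambda)$ lies in a fixed lattice in $V_\chi$, so that its nonzero elements are bounded below in norm. But your conjectured mechanism (unipotence of $n$, root ordering, integrality on partial flags of $\gS$-eigenspaces) does not obviously lead anywhere: the $\gS$-eigenspaces need not be defined over $\bQ$, so there is no a priori integrality there to exploit, and unipotence of $\rho(n)$ alone does not produce one.

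The missing idea is the $\bQ$-split torus $\gT$. Recall from the setup preceding the lemma that $\gS = n \gT n^{-1}$ with $\gT$ a maximal $\bQ$-split torus in $\gP$. Because $\gT$ is $\bQ$-split, its eigenspaces $V_\psi \subset V$ are defined over $\bQ$, so each projection $\pi_\psi(\Lambda)$ is a genuine lattice in $V_\psi$; this gives for free a uniform positive lower bound on $\abs{\pi_\psi(\ell)}$ whenever $\ell \in \Lambda$ and $\pi_\psi(\ell) \neq 0$. The conjugation $\gS = n\gT n^{-1}$ yields $V_\chi = \rho(n) V_\psi$ (for the corresponding $\psi$) and $\pi_\chi = \rho(n) \circ \pi_\psi \circ \rho(n)^{-1}$, hence $\pi_\chi(\rho(n)\ell) = \rho(n)\pi_\psi(\ell)$. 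Applying this with $\ell = \tau v' \in \Lambda$ gives the lower bound on $\abs{\pi_\chi(\rho(n)\tau v')}$, up to the fixed factor $\length{\rho(n)^{-1}}^{-1}$, and then your steps (1), (2), (4) finish exactly as you wrote. So the single idea you were missing is: pass to the rational torus $\gT$ first, use $\bQ$-rationality of its eigenspaces to get the lattice lower bound, and transport back via $\rho(n)$.
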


\begin{proof}
Since $\gT$ is $\bQ$-split, its eigenspaces $V_{\psi}$ are defined over $\bQ$ and $V$ decomposes as $\oplus_{\psi\in X^*(\gT)}V_{\psi}$. For $\psi\in X^*(\gT)$, let $\pi_{\psi}$ denote the projection $V\to V_{\psi}$ in this direct sum. Because the $V_\psi$ are defined over $\bQ$, the image $\pi_{\psi}(\Lambda)$ is a lattice in $V_{\psi}$. Hence there is a constant \( \newC{lattice-lower-bound-aux} > 0 \) such that, if \( \tau v' \in \Lambda \), then 
\[ \pi_{\psi}(\tau v') = 0 \text{ or } \abs{\pi_{\psi}(\tau v')} \geq \refC{lattice-lower-bound-aux}. \]
Since \( V_{\psi} \neq 0 \) for only finitely many characters~\( \psi\in X^*(\gT)\), it is possible to choose a single constant \( \refC{lattice-lower-bound-aux} > 0 \) which works for every~\( \psi \).

For each $\chi\in X^*(\gS)$, the eigenspace $V_\chi$ of $\gS$ is equal to $\rho(n)V_{\psi}$ for some $\psi\in X^*(\gT)$. It follows that $\pi_\chi=\rho(n)\circ\pi_{\psi}\circ\rho(n)^{-1}$.
Therefore there is a constant \( \refC{lattice-lower-bound} > 0 \) (namely, $\refC{lattice-lower-bound-aux}\length{\rho(n)^{-1}}^{-1}$) such that, if \( \tau v' \in \Lambda \), then 
\[ \pi_\chi(\tau\rho(n)v') = 0 \text{ or } \abs{\pi_\chi(\tau\rho(n)v')} \geq \refC{lattice-lower-bound}. \]

Since \( \tau \) commutes with \( \rho(\gG(\bR)) \), it preserves the eigenspaces \( V_\chi \) and hence commutes with \( \pi_\chi \). Therefore, either
\[\tau(\pi_\chi(\rho(n)v'))=\pi_\chi(\tau\rho(n)v')=0,\]
which implies $\pi_\chi(\rho(n)v'))=0$, or
\[ \length{\tau} \abs{\pi_\chi(\rho(n)v')} \geq \abs{\tau(\pi_\chi(\rho(n)v'))} = \abs{\pi_\chi(\tau\rho(n)v')} \geq \refC{lattice-lower-bound}.
\qedhere \]
\end{proof}

\begin{lemma} \label{yxv-bound}
There exists a constant \( \newC{yxv-bound} \) such that, for all \( x \in \fS \), we have
\[ \abs{\rho(y_x)v_0} \leq \refC{yxv-bound}. \]
\end{lemma}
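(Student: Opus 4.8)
The plan is to bound $\abs{\rho(y_x)v_0}$ uniformly over $x \in \fS$ by exploiting the structure $\fS = \Omega A_t K$ together with the fact that $K$ acts by bounded operators and the weight-space decomposition controls the behaviour of $\gS(\bR)^+$. Write $x = \omega a k$ with $\omega \in \Omega$, $a \in A_t$, $k \in K$; then, comparing with the Iwasawa--Langlands decomposition $x = n_x a_x k_x$, one has $a_x$ equal to the $\gS(\bR)^+$-component of $\omega a$, which (since $\Omega$ is a compact subset of $\gU(\bR)\gM(\bR)^+$, and the $\gS(\bR)^+$-component map is continuous) differs from $a$ by a bounded factor. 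First I would make this precise: there is a compact set $\Omega' \subset \gS(\bR)^+$, depending only on $\fS$, such that $a_x a^{-1} \in \Omega'$ for all $x \in \fS$ written as above. Consequently $y_x = a_x^{-1} n x = a_x^{-1} n \omega a k$, and I can absorb $a_x^{-1} a$ into a bounded set and reduce to estimating $\abs{\rho(a^{-1} n \omega a)\,\rho(k)v_0}$ up to a bounded multiplicative constant; since $k \in K$ and $K$ is compact, $\rho(k)v_0$ ranges over a bounded set, so it suffices to bound $\length{\rho(a^{-1} n \omega a)}$ restricted to that bounded set, uniformly in $a \in A_t$ and $\omega \in \Omega$.

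Next I would analyse $a^{-1} n \omega a$. Since $n \in \gU(\bR)$ and $\Omega \subset \gU(\bR)\gM(\bR)^+$, the element $n\omega$ lies in a \emph{fixed} compact subset of $\gU(\bR)\gM(\bR)^+$; write $n\omega = u m$ with $u$ in a compact subset of $\gU(\bR)$ and $m$ in a compact subset of $\gM(\bR)^+$. Conjugation by $a \in A_t$ fixes $\gM$ pointwise (as $\gM \subset Z_\gG(\gS)$), so $a^{-1} m a = m$ stays bounded; thus the only issue is $a^{-1} u a$ for $u$ in a compact subset of $\gU(\bR)$. Here the key point is that $\gU(\bR)$ is spanned by positive root spaces for $\gS$, and for $a \in A_t$ every simple root — hence every positive root — satisfies $\chi(a) \geq t > 0$, so conjugation by $a^{-1}$ \emph{contracts} the unipotent radical: in coordinates adapted to the root space decomposition of $\gU$, the map $u \mapsto a^{-1} u a$ has all coordinates scaled by factors $\chi(a)^{-1} \leq t^{-1}$. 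Therefore $a^{-1} u a$ remains in a fixed compact subset of $\gU(\bR)$ for all $a \in A_t$ (the bound depending only on $t$ and $\Omega$), and in particular $\rho(a^{-1} u a)$ has operator norm bounded by a constant depending only on $\gG$, $\rho$, $\fS$.

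Assembling these pieces: $\rho(y_x) = \rho(a_x^{-1}a)\,\rho(a^{-1}ua)\,\rho(m)\,\rho(a^{-1})\rho(a)$ — more carefully, $y_x = (a_x^{-1}a)\,(a^{-1} n \omega a)\,(a^{-1} a)\, k$ needs to be regrouped as $y_x = (a_x^{-1}a)(a^{-1} n\omega a) a^{-1} x$... let me instead simply note $y_x = a_x^{-1} n \omega a k$ and substitute $a_x^{-1} = (a_x^{-1} a) a^{-1}$, giving $y_x = (a_x^{-1} a)\,(a^{-1} n \omega a)\,k$ after the $a^{-1} \cdot a$ cancels; here $a_x^{-1}a$ lies in the compact set $\Omega'^{-1}$, $a^{-1} n\omega a$ lies in a fixed compact subset of $\gU(\bR)\gM(\bR)^+$ as just shown, and $k \in K$. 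Hence $y_x$ ranges over a fixed compact subset of $\gG(\bR)$ as $x$ ranges over $\fS$, so $\rho(y_x)$ has uniformly bounded operator norm and $\abs{\rho(y_x)v_0} \leq \refC{yxv-bound}$ for a constant depending only on $\gG$, $\fS$, $\rho$, $\Lambda$, $v_0$, as required. The main obstacle — and the only step requiring genuine care — is the bookkeeping showing that the Iwasawa--Langlands $\gS(\bR)^+$-component $a_x$ of $x = \omega a k \in \Omega A_t K$ differs from $a$ by a bounded amount; this is where one uses that $\Omega$ is compact and that the decomposition map is continuous with the $\gU(\bR)$ and $\gM(\bR)^+$ parts of $\omega$ interacting with $a$ only through contraction, never expansion, on $A_t$.
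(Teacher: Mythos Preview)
Your proposal is correct and follows essentially the same route as the paper: show that \(y_x\) ranges over a fixed compact subset of \(\gG(\bR)\) by writing it as a product of a conjugate \(a^{-1}(\text{compact unipotent})a\) (bounded via the contraction of \(\gU\) under conjugation by \(A_t^{-1}\), which is exactly \cite[Lemme~12.2]{Bor69} as invoked in the paper) times a compact \(\gM(\bR)^+K\)-part.

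One simplification you missed: the bookkeeping you flag as ``the main obstacle'' evaporates, because in fact \(a_x = a\) on the nose. Writing \(\omega = u'm'\) with \(u' \in \gU(\bR)\), \(m' \in \gM(\bR)^+\), and using that \(\gM \subset Z_\gG(\gS)\) commutes with \(\gS\), one has \(x = \omega a k = u' a (m'k)\), which \emph{is} the Iwasawa--Langlands decomposition; hence \(n_x = u'\), \(a_x = a\), \(k_x = m'k\). The paper exploits this directly: \(y_x = a_x^{-1} n x = (a_x^{-1} n n_x a_x)\,k_x\), with \(n n_x\) in a fixed compact subset of \(\gU(\bR)\) and \(k_x\) in a fixed compact subset of \(\gM(\bR)^+K\), so a single citation of Borel's contraction lemma finishes the argument in three lines. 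Your detour through comparing \(a_x\) with \(a\) is harmless but unnecessary.
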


\begin{proof}
From the definition of a Siegel set, \( \{ n_x : x \in \fS \} \) is relatively compact. Hence, \( \{ n n_x : x \in \fS \} \) is a relatively compact subset of $\gU(\RR)$.
Therefore, by \cite[Lemme~12.2]{Bor69}, \( \{ a_x^{-1} nn_x a_x : x \in \fS \} \) is relatively compact.
Furthermore, \( \{ k_x : x \in \fS \} \) is also relatively compact.
Since
\[ y_x = a_x^{-1} nx = a_x^{-1} nn_x a_x k_x \]
we conclude that \( \{ y_x : x \in \fS \} \) is relatively compact.
\end{proof}

\begin{lemma} \label{zxv-bound}
There exists a constant \( \newC{zxv-multiplier} \) such that, for all \( x \in \fS \), if \( \tau\rho(x)v_0\in \Lambda \), then
\[ \abs{\rho(z_x)v_0}  \leq  \refC{zxv-multiplier} \length{\tau}. \]
\end{lemma}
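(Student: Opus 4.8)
The plan is to reduce the sought bound on $\rho(z_x)v_0$ to the already-established bound on $\rho(y_x)v_0$ from \cref{yxv-bound}, controlling the discrepancy between the two via the integrality hypothesis $\tau\rho(x)v_0 \in \Lambda$ together with \cref{lattice-lower-bound}.

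First I would observe that $z_x = a_x^{-1}y_x$, directly from the definitions $y_x = a_x^{-1}nx$ and $z_x = a_x^{-2}nx$, so that $\rho(z_x)v_0 = \rho(a_x)^{-1}\rho(y_x)v_0$. Since $a_x \in \gS(\bR)^+$, the operator $\rho(a_x)^{-1}$ acts on the eigenspace $V_\chi$ by the scalar $\chi(a_x)^{-1}$; with the norm chosen so that the $V_\chi$ are mutually orthogonal, this gives $\pi_\chi(\rho(z_x)v_0) = \chi(a_x)^{-1}\pi_\chi(\rho(y_x)v_0)$ for every $\chi$. As $V_\bR$ has only finitely many nonzero eigenspaces, it therefore suffices to bound each $\abs{\pi_\chi(\rho(z_x)v_0)}$ by a fixed multiple of $\length{\tau}$.

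The heart of the matter is then a short computation for each fixed $\chi$. Apply \cref{lattice-lower-bound} with $v' = \rho(x)v_0$, which is legitimate because $\tau v' = \tau\rho(x)v_0 \in \Lambda$ by hypothesis. Since $nx = a_x y_x$, we have $\rho(n)v' = \rho(a_x)\rho(y_x)v_0$ and hence $\pi_\chi(\rho(n)v') = \chi(a_x)\pi_\chi(\rho(y_x)v_0)$. The alternative $\pi_\chi(\rho(n)v') = 0$ forces $\pi_\chi(\rho(y_x)v_0) = 0$ (since $\chi(a_x) > 0$, as $a_x$ lies in the identity component $\gS(\bR)^+$), in which case $\pi_\chi(\rho(z_x)v_0) = 0$ as well. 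Otherwise \cref{lattice-lower-bound} gives $\chi(a_x)\abs{\pi_\chi(\rho(y_x)v_0)} \geq \refC{lattice-lower-bound}/\length{\tau}$; combining this with the upper bound $\abs{\pi_\chi(\rho(y_x)v_0)} \leq \abs{\rho(y_x)v_0} \leq \refC{yxv-bound}$ from \cref{yxv-bound} yields $\chi(a_x)^{-1} \leq \refC{yxv-bound}\length{\tau}/\refC{lattice-lower-bound}$, and therefore $\abs{\pi_\chi(\rho(z_x)v_0)} = \chi(a_x)^{-1}\abs{\pi_\chi(\rho(y_x)v_0)} \leq \refC{yxv-bound}^2\length{\tau}/\refC{lattice-lower-bound}$. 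Summing the squares over the finitely many nonzero eigenspaces and taking a square root, using orthogonality, gives $\abs{\rho(z_x)v_0} \leq \refC{zxv-multiplier}\length{\tau}$ for a constant $\refC{zxv-multiplier}$ depending only on the number of eigenspaces, $\refC{yxv-bound}$ and $\refC{lattice-lower-bound}$ — in particular only on $\gG$, $\fS$, $\rho$, $\Lambda$ and $v_0$.

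I do not expect a genuine obstacle: once \cref{yxv-bound,lattice-lower-bound} are available the lemma is essentially bookkeeping. The only two points requiring any care are that $\chi(a_x) > 0$ — so that the vanishing alternative in \cref{lattice-lower-bound} is genuinely forced, not merely permitted — and that the eigenspace decomposition is orthogonal, which has already been arranged by the choice of norm; everything else is the triangle inequality.
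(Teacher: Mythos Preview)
Your proof is correct and essentially the same as the paper's. The only cosmetic difference is that the paper packages the two inequalities into the single identity $\abs{\pi_\chi(\rho(z_x)v_0)} = \abs{\pi_\chi(\rho(y_x)v_0)}^2/\abs{\pi_\chi(\rho(nx)v_0)}$ and bounds numerator and denominator directly, whereas you first extract a bound on $\chi(a_x)^{-1}$ and then multiply; the ingredients (\cref{lattice-lower-bound} with $v'=\rho(x)v_0$, \cref{yxv-bound}, orthogonality of the $V_\chi$) and the resulting constant are identical.
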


\begin{proof}
Let \( \chi \in X^*(\gS) \).
From the definitions of \( y_x \) and \( z_x \), we can calculate
\[ \pi_\chi(\rho(y_x)v_0) = \chi(a_x)^{-1} \pi_\chi(\rho(nx)v_0),  \quad  \pi_\chi(\rho(z_x)v_0) = \chi(a_x)^{-2} \pi_\chi(\rho(nx)v_0). \]
Therefore, either \( \pi_\chi(\rho(nx)v_0) = 0 \), in which case \( \pi_\chi(\rho(z_x)v_0) = 0 \), or else, by \cref{lattice-lower-bound} (applied to \( v' = \rho(x)v_0 \)) and \cref{yxv-bound}, we have
\[ \abs{\pi_\chi(\rho(z_x)v_0)}
   = \frac{\abs{\pi_\chi(\rho(y_x)v_0)}^2}{\abs{\pi_\chi(\rho(nx)v_0)}}
   \leq  \frac{\refC{yxv-bound}^2} {\refC{lattice-lower-bound} / \length{\tau}}
   = \newC* \length{\tau}. \]

Since \( V_\bR \) is the orthogonal direct sum of the \( V_\chi \), the lemma follows by squaring and summing over \( \chi  \).
\end{proof}

\begin{lemma} \label{op-norm-bound}
There exist constants \( \newC{op-norm-multiplier} \) and \( \newC{op-norm-exponent} \) such that, for every \( x \in \fS \), if \( \tau(\rho(x)v_0) \in \Lambda \), then
there exists \( g \in \gG(\bR) \) satisfying
\[ \rho(g)v_0 = \rho(a_x^{-1}k_x)v_0  \text{ and }  \max(\length{\rho(g)}, \length{\rho(g)^{-1}})  \leq  \refC{op-norm-multiplier} \length{\tau}^{\refC{op-norm-exponent}}. \]
\end{lemma}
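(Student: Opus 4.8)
The plan is to deduce \cref{op-norm-bound} from \cref{riemannian-orbit-bound}: I will exhibit $\rho(a_x^{-1}k_x)v_0$ as a point of the closed orbit $\rho(\gG(\bR))v_0$ whose norm is bounded \emph{linearly} in $\length{\tau}$, and then \cref{riemannian-orbit-bound} immediately provides $g\in\gG(\bR)$ with $\rho(g)v_0=\rho(a_x^{-1}k_x)v_0$ and $\max(\length{\rho(g)},\length{\rho(g)^{-1}})$ bounded by a power of $\length{\tau}$, which is exactly the asserted inequality. The genuine content is therefore already packaged in \cref{riemannian-orbit-bound} (hence, ultimately, in Eberlein's orbit-growth estimate) and in \cref{zxv-bound}; what is left is bookkeeping with the Iwasawa and Langlands decompositions.

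First I would record an elementary identity. Put $m=nn_x\in\gU(\bR)$; since $x=n_xa_xk_x$ we have $nx=ma_xk_x$, whence
\[ z_x=a_x^{-2}nx=a_x^{-2}ma_xk_x=(a_x^{-2}ma_x^2)(a_x^{-1}k_x),\qquad a_x^{-2}ma_x^2\in\gU(\bR). \]
Applying $\rho$ and rearranging, $\rho(a_x^{-1}k_x)v_0=\rho(a_x^{-2}ma_x^2)^{-1}\rho(z_x)v_0$. Next I would show that $\{a_x^{-2}ma_x^2:x\in\fS\}$ is relatively compact: $\{n_x:x\in\fS\}$ is relatively compact by the definition of a Siegel set, so $\{m:x\in\fS\}$ is a relatively compact subset of $\gU(\bR)$; writing $a_x^{-2}ma_x^2=a_x^{-1}(a_x^{-1}ma_x)a_x$ and applying \cite[Lemme~12.2]{Bor69} twice --- exactly as in the proof of \cref{yxv-bound} --- gives the claim. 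Hence $\length{\rho(a_x^{-2}ma_x^2)^{-1}}$ is bounded above by a constant depending only on $\gG$, $\fS$, $\rho$ and~$n$.

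Combining this with \cref{zxv-bound}, which under the hypothesis $\tau\rho(x)v_0\in\Lambda$ gives $\abs{\rho(z_x)v_0}\le\refC{zxv-multiplier}\length{\tau}$, I obtain a bound $\abs{\rho(a_x^{-1}k_x)v_0}\le\newC*\length{\tau}$ valid for all such $x\in\fS$. Since $a_x^{-1}k_x\in\gG(\bR)$, the vector $w:=\rho(a_x^{-1}k_x)v_0$ lies in the closed orbit $\rho(\gG(\bR))v_0$, so \cref{riemannian-orbit-bound} supplies $g\in\gG(\bR)$ with $\rho(g)v_0=w$ and $\max(\length{\rho(g)},\length{\rho(g)^{-1}})\le\refC{orbit-bound-multiplier}\abs{w}^{\refC{orbit-bound-exponent}}$; inserting the bound on $\abs{w}$ yields the lemma with $\refC{op-norm-exponent}=\refC{orbit-bound-exponent}$ and a suitable multiplier $\refC{op-norm-multiplier}$. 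I do not anticipate a genuine obstacle; the only delicate point is that one must route through $z_x$ and not $y_x$: conjugating $y_x=(a_x^{-1}ma_x)k_x$ by $A_t$ would leave the uncontrolled factor $k_x$, whereas the correction $a_x^{-2}ma_x^2$ coming from $z_x$ is precisely what converts it into $a_x^{-1}k_x$.
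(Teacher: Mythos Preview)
Your argument is correct and is essentially the same as the paper's: both combine \cref{zxv-bound}, the relative compactness of the unipotent correction $a_x^{-2}(nn_x)a_x^2$ via \cite[Lemme~12.2]{Bor69}, and \cref{riemannian-orbit-bound}. The only cosmetic difference is the order of operations --- you first bound $\abs{\rho(a_x^{-1}k_x)v_0}$ and then invoke \cref{riemannian-orbit-bound}, whereas the paper applies \cref{riemannian-orbit-bound} to $\rho(z_x)v_0$ to obtain $g'$ and then sets $g=(a_x^{-2}(nn_x)a_x^2)^{-1}g'$, deducing the bound from the same compactness.
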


\begin{proof}
By \cref{riemannian-orbit-bound,zxv-bound}, there exists \( g' \in \gG(\bR) \) such that \( \rho(z_x)v_0 = \rho(g')v_0 \) and
\begin{equation} \label{eqn:g'-op-norm-bound}
   \max(\length{\rho(g')}, \length{\rho(g'^{-1})})
      \leq  \refC{orbit-bound-multiplier} \abs{\rho(z_x)v_0}^{\refC{orbit-bound-exponent}}
      \leq  \refC{orbit-bound-multiplier}\refC{zxv-multiplier}^{\refC{orbit-bound-exponent}} \length{\tau}^{\refC{orbit-bound-exponent}}.
\end{equation}
Let
\[ g = a_x^{-2} n_x^{-1}n^{-1} a_x^{2} \, g'. \]
Then
\begin{align*}
    \rho(g)v_0
  & = \rho(a_x^{-2} n_x^{-1}n^{-1} a_x^{2}) \rho(g') v_0 = \rho(a_x^{-2} n_x^{-1}n^{-1} a_x^{2}) \rho(z_x) v_0
\\& = \rho(a_x^{-2} n_x^{-1} x) v_0 = \rho(a_x^{-1} k_x) v_0.
\end{align*}
Meanwhile, by \cite[Lemma~12.2]{Bor69}, \( \{ a_x^{-2} n^{-1}_x n a_x^2 : x \in \fS \} \) is relatively compact so \eqref{eqn:g'-op-norm-bound} implies the required bound on \( \max(\length{\rho(g)}, \length{\rho(g)^{-1}}) \).
\end{proof}

Let \( \theta \) denote the Cartan involution of \( \gG \) whose set of real fixed points is~\( K \).

\begin{lemma} \label{cartan-manip}
There exists a compact set \( \Phi \subset \gG(\bR) \) such that, for all \( x \in \fS \), we have \( \theta(a_x^{-1} k_x) \in \Phi x \).
\end{lemma}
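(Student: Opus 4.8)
The plan is to write down $\theta(a_x^{-1}k_x)\,x^{-1}$ explicitly and check that it stays in a fixed relatively compact subset of $\gG(\bR)$ as $x$ runs over $\fS$; that subset will be $\Phi$. The argument rests on two structural facts. First, $\theta$ acts on the torus $\gS$ by inversion, so $\theta(a_x) = a_x^{-1}$: indeed $\gS = n\gT n^{-1}$ for some $n \in \gU(\bR)$ and a maximal $\bQ$-split torus $\gT$, so $\gS$ is $\bR$-split; it is $\theta$-stable by the defining property of $\gS$; and if $\gS_+$ denotes the identity component of the $\theta$-fixed subgroup of $\gS$, then $\gS_+$ is an $\bR$-split torus with $\gS_+(\bR) \subseteq \gG(\bR)^\theta = K$, which forces $\dim \gS_+ = 0$ since $K$ is compact, whence $\mathrm{d}\theta = -\id$ on $\Lie\gS$ and $\theta|_\gS$ is inversion. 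Second, since $\gM \subseteq Z_\gG(\gS)$, every element of $\gM(\bR)$ commutes with every element of $\gS(\bR)$; this is the observation that will let the (unbounded) torus factor cancel.

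Next I would extract the Iwasawa--Langlands data of an element $x \in \fS = \Omega A_t K$. Write $x = \omega a k$ with $\omega \in \Omega$, $a \in A_t$, $k \in K$, and factor $\omega = \nu\mu$ with $\nu \in \gU(\bR)$, $\mu \in \gM(\bR)^+$; this factorisation is unique and depends continuously on $\omega$, because $\gM$ normalises $\gU$ and $\gU \cap \gM = 1$, so the product map $\gU(\bR) \times \gM(\bR)^+ \to \gU(\bR)\gM(\bR)^+$ is a diffeomorphism. Since $\mu$ and $a$ commute, $x = \nu a (\mu k)$, and comparison with the (unique) decomposition $x = n_x a_x k_x$ gives $n_x = \nu$, $a_x = a$, $k_x = \mu k$. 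As $x$ varies over $\fS$, the factor $\omega$ stays in the compact set $\Omega$, so $n_x = \nu$ and $\mu$ vary in relatively compact subsets of $\gU(\bR)$ and $\gM(\bR)^+$ respectively.

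Finally I would assemble the computation. Using $\theta(a_x) = a_x^{-1}$ and $\theta|_K = \id$,
\[
\theta(a_x^{-1}k_x) = \theta(a_x)^{-1}\,\theta(\mu)\,\theta(k) = a_x\,\theta(\mu)\,k,
\qquad
x^{-1} = k^{-1}\mu^{-1}a_x^{-1}n_x^{-1},
\]
so
\[
\theta(a_x^{-1}k_x)\,x^{-1} = a_x\,\theta(\mu)\,\mu^{-1}\,a_x^{-1}\,n_x^{-1} = \theta(\mu)\mu^{-1}\,n_x^{-1},
\]
the last step because $a_x \in \gS(\bR)$ commutes with $\theta(\mu)\mu^{-1} \in \gM(\bR)$. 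Since $\mu \mapsto \theta(\mu)\mu^{-1}$ is continuous, the right-hand side lies in the compact set
\[
\Phi := \overline{\{\theta(\mu)\mu^{-1} : x \in \fS\}} \cdot \overline{\{n_x^{-1} : x \in \fS\}},
\]
which proves the lemma. The only step using more than formal manipulation is the claim that $\theta$ inverts $\gS$; the real point, though, is the elementary observation that $\gM$ centralises $\gS$, which is exactly what makes the unbounded torus element $a_x$ disappear from the final expression.
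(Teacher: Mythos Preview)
Your proof is correct and follows essentially the same route as the paper's: write $k_x = \mu\ell$ with $\mu \in \gM(\bR)^+$ and $\ell \in K$, use that $\theta$ inverts $\gS$ and fixes $K$, and arrive at $\theta(a_x^{-1}k_x)x^{-1} = \theta(\mu)\mu^{-1}n_x^{-1}$, which lies in a fixed compact set.

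One small point: in the last commutation step you claim $\theta(\mu)\mu^{-1} \in \gM(\bR)$, which would require $\gM$ to be $\theta$-stable; this is not immediate, since $\theta$ is only an $\bR$-morphism while $\gM$ is defined via a $\bQ$-anisotropy condition on $\gP/\gU$. The easy fix is to observe instead that $\theta(\mu) \in Z_\gG(\gS)(\bR)$ (because $\gS$ is $\theta$-stable, hence so is its centraliser), and that already gives commutation with $a_x \in \gS(\bR)$. The paper sidesteps this entirely by commuting $\mu$ past $a_x^{-1}$ \emph{before} applying $\theta$, so it never needs to locate $\theta(\mu)$.
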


\begin{proof}
By definition \( k_x \in \gM(\bR)^+K \), so we can write \( k_x = m_x \ell_x \) where \( m_x \in \gM(\bR)^+ \) and \( \ell_x \in K \).
This is not a unique decomposition, but the definition of Siegel set guarantees that we can choose \( m_x \) in a fixed compact subset of \( \gM(\bR)^+ \).
(Recall that, by definition, \( \gM \) commutes with \( \gS \).)

By definition, \( \theta \) acts trivially on \( K \) and stabilises \( \gS(\bR) \).
Since \( \gS \) is an \( \bR \)-split torus, the latter implies that \( \theta(a) = a^{-1} \) for all \( a \in \gS(\bR) \).
Hence
\[ \theta(a_x^{-1} k_x) = \theta(m_x a_x^{-1} \ell_x) = \theta(m_x) a_x \ell_x = \theta(m_x) m_x^{-1} n^{-1}_x x. \]
Since \( m_x \) and \( n_x \) lie in compact sets independent of \( x \), this proves the lemma.
\end{proof}

We are now ready to prove a version of \cref{siegel-intersection} in which the bound is expressed in terms of the operator norm of \( \tau \in \Aut_{\rho(\gG)}(V_\bR) \), instead of the length of \( v = \tau(v_0) \).

\begin{proposition} \label{siegel-intersection-tau}
Let \( \gG \) be a reductive \( \bQ \)-algebraic group and let \( \fS \subset \gG(\bR) \) be a Siegel set.
Let \( \rho \colon \gG \to \gGL(V) \) be a representation of \( \gG \) defined over \( \bQ \).
Let \( \Lambda \subset V \) be a \( \bZ \)-lattice.
Let \( v_0 \in V_\bR \) be such that:
\begin{enumerate}[(i)]
\item \( \rho(\gG(\bR))v_0 \) is closed in \( V_\bR \);
\item the stabiliser \( \Stab_{\gG(\bR),\rho} (v_0) \) is self-adjoint.
\end{enumerate}

Then there exist constants \( \newC{siegel-intersection-tau-multiplier} \), \( \newC{siegel-intersection-tau-exponent} \) such that,
for every \( \tau \in \Aut_{\rho(\gG)}(V_\bR) \) and every \( w \in \rho(\fS) \tau(v_0) \cap \Lambda \), we have \( \abs{w} \leq \refC{siegel-intersection-tau-multiplier} \length{\tau}^{\refC{siegel-intersection-tau-exponent}} \).
\end{proposition}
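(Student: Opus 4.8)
The plan is to use that $\tau$ commutes with $\rho(\gG(\bR))$ to reduce the statement to a polynomial bound on $\abs{\rho(x)v_0}$ for $x\in\fS$, and then to derive that bound by feeding \cref{op-norm-bound} and \cref{cartan-manip} through the self-adjointness hypothesis~(ii).

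First I would take $w\in\rho(\fS)\tau(v_0)\cap\Lambda$ and write $w=\rho(x)\tau(v_0)$ with $x\in\fS$. Since $\tau\in\Aut_{\rho(\gG)}(V_\bR)$ we have $w=\rho(x)\tau(v_0)=\tau(\rho(x)v_0)$, so $\tau(\rho(x)v_0)\in\Lambda$ and $\abs{w}\le\length{\tau}\,\abs{\rho(x)v_0}$. Thus it suffices to bound $\abs{\rho(x)v_0}$ by $c\,\length{\tau}^{\refC{op-norm-exponent}}$ for a constant $c$ independent of $\tau$ and $x$; the proposition then follows with $\refC{siegel-intersection-tau-exponent}=\refC{op-norm-exponent}+1$. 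Since all norms on $V_\bR$ are equivalent, I would also assume, as in the proof of \cref{riemannian-orbit-bound} (using \cite{Mos55}), that $\abs{\cdot}$ is induced by an inner product with respect to which $\rho(\gG(\bR))$ is self-adjoint relative to $\theta$, so that $\rho(\theta(g))=(\rho(g)^*)^{-1}$ and hence $\length{\rho(\theta(g))}=\length{\rho(g)^{-1}}$ for all $g\in\gG(\bR)$; a single such inner product also keeps the eigenspaces $V_\chi$ of $\gS$ mutually orthogonal, since they are eigenspaces of the self-adjoint operators $\rho(a)$ for $a\in\gS(\bR)^+$, so the supporting lemmas of this subsection remain applicable.

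To bound $\abs{\rho(x)v_0}$, put $h=a_x^{-1}k_x$. By \cref{op-norm-bound} there is $g\in\gG(\bR)$ with $\rho(g)v_0=\rho(h)v_0$ and $\max(\length{\rho(g)},\length{\rho(g)^{-1}})\le\refC{op-norm-multiplier}\length{\tau}^{\refC{op-norm-exponent}}$. Since $\rho(h^{-1}g)v_0=v_0$, the element $h^{-1}g$ lies in $\Stab_{\gG(\bR),\rho}(v_0)$, which is self-adjoint by~(ii); hence $\theta(h^{-1}g)=\theta(h)^{-1}\theta(g)$ also lies in $\Stab_{\gG(\bR),\rho}(v_0)$, i.e.\ $\rho(\theta(h))v_0=\rho(\theta(g))v_0$. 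Therefore
\[ \abs{\rho(\theta(h))v_0}=\abs{\rho(\theta(g))v_0}\le\length{\rho(\theta(g))}\,\abs{v_0}=\length{\rho(g)^{-1}}\,\abs{v_0}\le\refC{op-norm-multiplier}\abs{v_0}\,\length{\tau}^{\refC{op-norm-exponent}}. \]
Finally, \cref{cartan-manip} provides a compact set $\Phi\subset\gG(\bR)$ with $\theta(h)=\theta(a_x^{-1}k_x)\in\Phi x$; writing $\theta(h)=\phi x$ with $\phi\in\Phi$ gives $\rho(x)v_0=\rho(\phi)^{-1}\rho(\theta(h))v_0$, so $\abs{\rho(x)v_0}\le\bigl(\sup_{\phi\in\Phi}\length{\rho(\phi)^{-1}}\bigr)\abs{\rho(\theta(h))v_0}$, and the supremum is finite because $\Phi$ is compact and $\rho$ is continuous. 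Combining this with the previous display bounds $\abs{\rho(x)v_0}$ as required, and the reduction of the first step then finishes the proof.

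The only genuinely delicate point should be the self-adjointness step above: it is the sole place where hypothesis~(ii) enters, and it is what converts the operator-norm bound on $\rho(g)^{\pm1}$ — which only records how $\gG(\bR)$ carries $v_0$ to $\rho(h)v_0$ — into a bound on $\rho(\theta(h))v_0$, which \cref{cartan-manip} in turn identifies with $\rho(x)v_0$ up to a bounded factor. The accompanying technicality is to choose the inner product so that $\rho(\gG(\bR))$ is self-adjoint with respect to $\theta$ while the orthogonality of the $V_\chi$ relied on in the earlier lemmas is preserved; once that is fixed, the rest is bookkeeping with results already in place.
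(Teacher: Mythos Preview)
Your proof is correct and follows essentially the same route as the paper: invoke \cref{op-norm-bound} to obtain $g$, use hypothesis~(ii) to pass from $g$ to $\theta(g)$ (equivalently, from $a_x^{-1}k_x$ to its $\theta$-image), and then invoke \cref{cartan-manip} to relate $\theta(a_x^{-1}k_x)$ back to $x$ up to a compact factor. The only cosmetic difference is that the paper keeps the original norm and appeals to \cite[Prop.~13.5]{BHC62} plus equivalence of $\length{X}$ and $\length{X^t}$ to get $\length{\rho(\theta(g))}\le c\,\length{\rho(g^{-1})}$, whereas you change the inner product at the outset so that $\rho(\theta(g))=(\rho(g)^*)^{-1}$ holds exactly; your observation that the $V_\chi$ are then automatically orthogonal (being eigenspaces of the self-adjoint operators $\rho(a)$, since $\theta(a)=a^{-1}$ on $\gS(\bR)$) is correct and neatly ensures the earlier lemmas still apply.
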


\begin{proof}
Write \( w = \rho(x)\tau(v_0) = \tau(\rho(x)v_0) \), with \( x \in \fS \).
Then we get \( g \) as in \cref{op-norm-bound}.

By \cite[Prop.~13.5]{BHC62}, there is a Cartan involution \( \theta' \) of \( \gGL(V_\bR) \) such that \( \theta' \circ \rho = \rho \circ \theta \).
With respect to a suitable basis of \( V_\bR \), \( \theta' \) is given by \( g \mapsto (g^{-1})^t \).
The norms \( \length{X} \) and \( \length{X^t} \) on the finite dimensional vector space \( \End(V_\bR) \) are equivalent, so there exists a constant \( \newC{norm-transpose-multiplier} \) such that \( \length{\rho(\theta(g))} \leq \refC{norm-transpose-multiplier} \length{\rho(g^{-1})} \).

We have \( g = a_x^{-1} k_x h \) where \( h \in H_0 = \Stab_{\gG(\bR), \rho}(v_0) \).
Hence by \cref{cartan-manip}, we get
\[ \theta(g) = \theta(a_x^{-1} k_x) \theta(h) \in \Phi x \theta(h) \]
where \( \Phi \) is a fixed compact set.
Hence we get
\[ \length{\rho(x\theta(h))} \leq \newC* \length{\rho(\theta(g))} \leq \newC* \length{\rho(g^{-1})} \leq \newC* \length{\tau}^{\newC*}. \]

By hypothesis, \( H_0 \) is self-adjoint so \( \theta(h) \in H_0 \).
Hence \( \rho(x\theta(h)) v_0 = \rho(x)v_0 \) so \( w = \tau(\rho(x\theta(h)v_0) \) and
\[ \abs{w} \leq \length{\tau} \length{\rho(x\theta(h))} \abs{v_0} \]
which is polynomially bounded with respect to \( \length{\tau} \), as required.
\end{proof}

To conclude, we show that it is possible to choose \( \tau \) such that $\tau(v_0)=v$ and \( \length{\tau} \) is bounded in terms of \( \abs{v} \).
\Cref{siegel-intersection} follows by combining \cref{siegel-intersection-tau} with \cref{tau-bound}, applied to \( E = \End_{\rho(\gG)}(V_\bR) \).

\begin{lemma} \label{tau-bound}
Let \( V_\bR \) be a real vector space and let \( E \) be a semisimple \( \bR \)-subalgebra of \( \End(V_\bR) \).
Let \( v_0 \in  V_\bR \).

Then there exists a constant \( \newC{tau-mult} \) such that, for every \( v \in E^\times v_0 \), there exists \( e \in E^\times \) satisfying \( v = ev_0 \) and \( \length{e} \leq \refC{tau-mult} \abs{v} \).
\end{lemma}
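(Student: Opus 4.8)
The plan is to reduce, via the Wedderburn structure theorem, to the case of a simple algebra, and then to solve an explicit extension problem in linear algebra over a division ring. After replacing $V_\bR$ by $1_E V_\bR$ we may assume that $1_E = \id_{V_\bR}$, so that $V_\bR$ is a faithful unital $E$-module (this changes the operator norm of $e \in E$ by at most a bounded factor and does not alter the orbit $E^\times v_0$); we may also assume $v_0 \neq 0$. Write $E = \prod_i E_i$ with $E_i \cong \rM_{n_i}(D_i)$, $D_i \in \{ \bR, \bC, \bH \}$; the central idempotents give $V_\bR = \bigoplus_i \eps_i V_\bR$ and $v_0 = \sum_i \eps_i v_0$, and since $E^\times = \prod_i (E_i)^\times$ it suffices to prove the statement for each factor and then reassemble, using that all norms on a finite-dimensional space are equivalent and that $\abs{\eps_i v} \leq \length{\eps_i}\,\abs{v}$. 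So assume from now on that $E \cong \rM_n(D)$ and identify $V_\bR$ with the space $D^{n \times m}$ of $n \times m$ matrices over $D$ (with $m \geq 1$, by faithfulness) on which $E$ acts by left multiplication; write $v_0 = W_0$, and an arbitrary $v \in E^\times v_0$ as $v = W = g_0 W_0$ with $g_0 \in \GL_n(D)$, so that $\rk_D W = \rk_D W_0 =: r$.

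I now reduce to an extension problem. Fix a rank factorisation $W_0 = A_0 B_0$ with $A_0 \in D^{n \times r}$ of full column rank and $B_0 \in D^{r \times m}$ of full row rank, together with a left inverse $A_0^+$ of $A_0$ and a right inverse $B_0^+$ of $B_0$ (so $A_0^+ A_0 = I_r = B_0 B_0^+$). Put $A := W B_0^+$. From $W = g_0 A_0 B_0$ one reads off $A = g_0 A_0$ and $A B_0 = W$; in particular $U := \im A$ is an $r$-dimensional $D$-subspace of $D^n$, and $\length{A}$ is at most a constant multiple of $\abs{v}$. It therefore suffices to find $e \in \GL_n(D)$ with $e A_0 = A$ and $\length{e}$ at most a constant multiple of $\abs{v}$: indeed, then $e W_0 = e A_0 B_0 = A B_0 = W = v$.

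For the extension step, equip $D^n$ with a positive-definite Hermitian form relative to the standard involution of $D$, set $U_0 := \im A_0$, and let $P$ be the orthogonal projection of $D^n$ onto $U_0^\perp$. I seek $e$ of the form $e = A A_0^+ + b$ with $b = g \circ P$ for some $D$-linear $g \colon U_0^\perp \to D^n$; then $b A_0 = 0$, so automatically $e A_0 = A A_0^+ A_0 = A$. Since $A A_0^+$ has image exactly $U$ and restricts to an isomorphism of $U_0$ onto $U$ (because $A_0^+ A_0 = I_r$ and $A$, $A_0$ have full column rank), a short computation of $\ker e$ shows that $e$ is invertible whenever $g(U_0^\perp)$ is a complement of $U$ in $D^n$. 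To choose such a $g$ with $\length{g}$ bounded uniformly in $W$, cover the set of $r$-dimensional $D$-subspaces of $D^n$ by the finitely many coordinate charts $O_S := \{ U' : U' \cap L_S = 0 \}$, where $S$ runs over the $r$-element subsets of $\{ 1, \dots, n \}$ and $L_S := \operatorname{span}_D(e_j : j \notin S)$ (which has $D$-dimension $n - r$); these do cover, since every $r$-dimensional subspace has $r$ independent coordinates. Pick $S$ with $U \in O_S$, fix once and for all a $D$-linear isomorphism $\psi_S \colon U_0^\perp \to L_S$, and put $g := \lambda \psi_S$ with $\lambda > 0$ chosen so that $\lambda \max_{S'} \length{\psi_{S'} \circ P} \leq \abs{v}$ (possible since $\abs{v} > 0$). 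Then $g(U_0^\perp) = L_S$ is a complement of $U$, so $e = A A_0^+ + \lambda \psi_S P$ lies in $\GL_n(D)$, and $\length{e} \leq \length{A A_0^+} + \abs{v}$ is a constant multiple of $\abs{v}$, as wanted. Reassembling over the simple factors then proves the lemma.

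The step I expect to be the main obstacle is this extension step. Extending $A = g_0 A_0$ to an invertible matrix that still agrees with $A$ on $U_0$ amounts to choosing a complement of the \emph{varying} subspace $U = \im A$, and there is no globally continuous choice of such a complement (the tautological quotient bundle over the Grassmannian is non-trivial). The point is that continuity is not needed here --- only a uniform \emph{bound} on the norm of the extending map $g$ --- and such a bound is furnished by the finite cover by coordinate charts $O_S$, on each of which one fixed complement $L_S$ does the job.
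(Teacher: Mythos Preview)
Your proof is correct and follows the same overall plan as the paper: reduce via Wedderburn to a single simple factor \( E \cong \rM_n(D) \) with \( D \in \{\bR,\bC,\bH\} \), identify \( V_\bR \) with copies of \( D^n \), and then solve the problem of extending a partially defined linear map (determined by \( v_0 \mapsto v \)) to an element of \( \gGL_n(D) \) with operator norm controlled by \( \abs{v} \).

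The difference lies in how the extension is carried out. The paper picks a maximal right-\(D\)-independent subset \( x_1,\dotsc,x_s \) of the columns of \( v_0 \), extends it to a basis (giving a fixed matrix \( h \)), and then extends the corresponding columns \( y_1,\dotsc,y_s \) of \( v \) to a basis by appending \emph{any} completing vectors and simply scaling them down so their norms are at most \( \abs{y_1}+\dotsb+\abs{y_s} \); this yields \( f \), and \( e = fh^{-1} \) does the job. Your route instead uses a rank factorisation \( W_0 = A_0 B_0 \), sets \( A = W B_0^+ \), and builds \( e = AA_0^+ + \lambda\,\psi_S P \), where \( P \) is the orthogonal projection onto \( U_0^\perp \) and \( \psi_S \) maps \( U_0^\perp \) onto a coordinate complement \( L_S \) of \( \operatorname{im} A \) chosen from a finite Grassmannian atlas; the invertibility check via \( \ker e \) is clean.

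Both arguments are valid. The paper's is shorter and more elementary: once one notices that any basis completion can be scaled, the bound falls out in one line and no Hermitian structure or chart cover is needed. Your version is more structural --- the rank factorisation and the explicit splitting \( e = AA_0^+ + gP \) make the extension problem transparent, and your remark that only a uniform norm bound (not continuity) is required, hence a finite cover suffices, is the right conceptual point. But it is machinery the paper avoids by the simple scaling trick.

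One minor remark, shared with the paper's own write-up: in the reassembly over simple factors, components with \( \eps_i v_0 = 0 \) need a word --- there one cannot apply the simple-factor statement (which would demand \( \length{e_i} \leq c_i\cdot 0 \)), but one may take \( e_i \) to be any small nonzero scalar multiple of \( 1_{E_i} \), bounded by \( \abs{v} \), since \( v_0 \neq 0 \) guarantees \( \abs{v} > 0 \).
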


Note that \( E^\times \) is the group of \( \bR \)-points of a reductive \( \bR \)-algebraic group. However, this lemma does not follow from \cref{riemannian-orbit-bound} because the orbit \( E^\times v_0 \) is not closed. 

\begin{proof}[Proof of \cref{tau-bound}]
Write \( E \) as a product of simple \( \bR \)-algebras \( \prod_{i=1}^m E_i \).
There is a corresponding decomposition \(  V_\bR = \bigoplus_{i=1}^m V_i \), where the action of \( E_i \) on \(  V_\bR \) factors through~\( V_i \).
If \( v_0 = \sum_{i=1}^m v_i \in  V_\bR \) and \( e = (e_1, \dotsc, e_m) \in E \), then \( ev_0 = \sum_{i=1}^m e_i v_i \).

Because all norms on a finite-dimensional real vector space are equivalent, we may assume without loss of generality that the norm of each element of \( V_\bR \) is the maximum of the norms of its projections to the \( V_i \).  Then the operator norm satisfies \( \length{e} = \max\{ \length{e_i} : 1 \leq i \leq m \} \).
Hence it suffices to prove the lemma for each pair \( (E_i, V_i) \).
In other words, we may assume that \( E \) is a simple \( \bR \)-algebra.

Then \( E = \rM_n(D) \) for some positive integer~\( n \), where \( D \) is a division algebra isomorphic to \( \bR \), \( \bC \) or \( \bH \).  There is a unique simple \( E \)-module, namely \( D^n \), such that
we can identify \(  V_\bR \) (as a left \( E \)-module) with \( (D^n)^r \) for some positive integer~\( r \).

Again, since all norms on a finite-dimensional real vector space are equivalent, we may assume that the norm on \( V_\bR \cong D^{nr} \) is induced by a norm on \( D \) by letting the norm of an element of \( D^{nr} \) be the maximum of the norms of its coordinates.

Via this identification, write
\[ v_0 = (x_1, \dotsc, x_r), \quad v = (y_1, \dotsc, y_r) \]
where \( x_1, \dotsc, x_r, y_1, \dotsc, y_r \in D^n \).

Reordering \( x_1, \dotsc, x_r \) (and the corresponding \( y_1, \dotsc, y_r \)), we may assume that \( \{ x_1, \dotsc, x_s \} \) forms a maximal right \( D \)-linearly independent subset of \( \{ x_1, \dotsc, x_r \} \) for a suitable positive integer~\( s \leq r \).
Note that \( s \leq n \).
Then there exist \( a_{ij} \in D \) (\( 1 \leq i \leq s < j \leq r \)) such that
\begin{equation} \label{eqn:xj-xi}
x_j = \sum_{i=1}^s x_i a_{ij} \text{ for } s+1 \leq j \leq r.
\end{equation}

By hypothesis, there exists \( e' \in E^\times \) such that \( v = e'v_0 \) or in other words \( y_i = e'x_i \) for all \( i \).
Consequently
\begin{equation} \label{eqn:xj-yj}
y_j = e'x_j = \sum_{i=1}^s e'x_ia_{ij} = \sum_{i=1}^s y_i a_{ij} \text{ for } s+1 \leq j \leq r.
\end{equation}

Since the set \( \{ x_1, \dotsc, x_s \} \) is right \( D \)-linearly independent, it can be extended to form a right \( D \)-basis of \( D^n \).
Let \( h \in \rM_n(D) \) denote the matrix formed using such a basis as its columns.
Then \( h \) is invertible and \( hb_i = x_i \) for \( 1 \leq i \leq s \), where \( \{ b_1, \dotsc, b_n \} \) denotes the standard \( D \)-basis of \( D^n \).
Note that the choices made in constructing \( h \) can be made depending only on \( v_0 \), not on~\( v \).

Note that, since $e'\in E^\times = \GL_n(D)$, the set \( \{y_1, \dotsc, y_s\} \) is also right $D$-linearly independent. Hence, it can be extended to a right $D$-basis of $D^n$ and we can assume, by scaling if necessary, that the norms of the additional vectors are at most $\abs{y_1} + \dotsb + \abs{y_s}$.

Let \( f \in \rM_n(D) \) be the invertible matrix that has this basis as its columns, the first $s$ equal to the \( y_1, \dotsc, y_s \).
Then
\[ fh^{-1}x_i = fb_i = y_i \text{ for } 1 \leq i \leq s. \]
Using \eqref{eqn:xj-xi} and \eqref{eqn:xj-yj}, we deduce that also
\[ fh^{-1}x_j = y_j \text{ for } s+1 \leq j \leq r. \]
In other words, \( e = fh^{-1} \in E^\times \) satisfies \( ev_0 = v \).

By construction, 
\[ \length{f} \leq \abs{y_1} + \dotsb + \abs{y_s}+(n-s) (\abs{y_1} + \dotsb + \abs{y_s}) \leq n\abs{v}. \]
Since \( h \) is independent of \( v \), the proof is complete.
\end{proof}

\subsection{Quantitative fundamental sets for arithmetic groups}

The proof of \cref{fund-set-bound} follows the proof of \cite[Thm.~6.5]{BHC62}.
All we have to do is use the quantitative information from \cref{siegel-intersection} in place of the finiteness statement \cite[Lemma~5.4]{BHC62}.
There are also some minor additional technical steps due to the need to keep track of the finite set \( C \subset \gG(\bQ) \) such that \( C\fS \) is a fundamental set in the ambient group \( \gG(\bR) \) -- this was not needed in \cite[Thm.~6.5]{BHC62} because there \( \gG = \gGL_n \) and so \( C = \{ 1 \} \).

\begin{proof}[Proof of \cref{fund-set-bound}]
Suppose we are given \( u \in \gG(\bR) \)  and \( v_u \in \Aut_{\rho(\gG)}(\Lambda_\bR)v_0 \) such that \( \gH_u = u\gH_{0,\bR}u^{-1} \) is defined over \( \bQ \) and \( \rho(u)v_u \in \Lambda \).
Let \( v_u = \tau(v_0) \) where \( \tau \in \Aut_{\rho(\gG)}(\Lambda_\bR) \), and let \( v = \rho(u)v_u \).

Thanks to \cite[Cor.~6.3]{BHC62}, we may enlarge the lattice \( \Lambda \subset \Lambda_\bQ \) so that it is \( \rho(\Gamma) \)-stable.
For each \( c \in C \), \( c^{-1} \Lambda \) is a lattice in \( \Lambda_\bQ \).
Hence we can choose a lattice \( \Lambda' \subset \Lambda_\bQ \) such that \( c^{-1}\Lambda \subset \Lambda' \) for all \( c \in C \).
%Thanks to \cite[Cor.~6.3]{BHC62}, after enlarging \( \Lambda' \), we may assume that \( \Lambda' \) is \( \rho(\Gamma) \)-stable.

By \cref{siegel-intersection}, every \( w \in \rho(\fS)v_u \cap \Lambda' \) has length polynomially bounded with respect to \( \abs{v_u} \).
In particular, for each \( c \in C \),
the set
\[ \rho(\fS)v_u \cap \rho(c^{-1}\Gamma)v \subset \rho(\fS)v_u \cap \Lambda' \]
is finite, so we can choose a finite set \( \{ b_{c,1}, \dotsc, b_{c,m_c} \} \subset \Gamma \) such that
\[ \rho(\fS)v_u \cap \rho(c^{-1}\Gamma)v = \{ \rho(c^{-1}b_{c,1}^{-1})v, \dotsc, \rho(c^{-1}b_{c,m_c}^{-1})v \}. \]
Let \( B_u = \bigcup_{c \in C} \{ b_{c,1}, \dotsc, b_{c,m_c} \} \), which is a finite subset of \( \Gamma \).

By \cref{siegel-intersection}, we have
\[ \abs{\rho(c^{-1}b_{c,i}^{-1})v} \leq \newC* \abs{v_u}^{\newC*} \]
for all \( c \in C \) and \( i \leq m_c \).
Since \( c \) comes from a fixed finite set, we deduce that
\[ \abs{\rho(b_{c,i}^{-1})v} \leq \newC* \abs{v_u}^{\newC*}. \]
This is the length bound on \( \rho(b^{-1}u) v_u \) for \( b \in B_u \) which is required by the statement of the theorem.

Let \( \Gamma_u = \Gamma \cap \gH_u(\bR) \) and \( \cF_{\gH_u} = B_u C \fS u^{-1} \cap \gH_u(\bR) \).
It remains to show that \( \cF_{\gH_u} \) is a fundamental set for \( \Gamma_u \) in \( \gH_u(\bR) \).

Let \( h \in \gH_u(\bR) \subset \gG(\bR) \).
By hypothesis, \( C\fS \) is a fundamental set for \( \Gamma \) in \( \gG(\bR) \) so we can write
\[ hu = \gamma cs \]
where \( \gamma \in \Gamma \), \( c \in C \) and \( s \in \fS \).
Since \( h \in \gH_u(\bR) = \Stab_{\gG(\bR),\rho}(\rho(u)v_0) \), we obtain
\[ \rho(\gamma cs)v_0 = \rho(hu)v_0 = \rho(u)v_0. \]
Applying \( \tau \) we get
\[ \rho(\gamma cs)v_u = v \]
or in other words
\[ \rho(s)v_u = \rho(c^{-1}\gamma^{-1})v \in \rho(\fS)v_u \cap \rho(c^{-1}\Gamma)v. \]
Hence there exists \( b_{c,i} \in B_u \) such that
\[ \rho(c^{-1}b_{c,i}^{-1})v = \rho(s)v_u = \rho(c^{-1}\gamma^{-1})v. \]
In particular, \( \gamma b_{c,i}^{-1} \in \Stab_{\gG(\bR),\rho}(v) \), and we also have \( \gamma b_{c,i}^{-1} \in \Gamma \).
Since \( \tau \in \Aut_{\rho(\gG)}(\Lambda_\bR) \), we have \( \Stab_{\gG(\bR),\rho}(v) = \gH_u(\bR) \).
Thus \( \gamma b_{c,i}^{-1} \in \Gamma_u \) and
\[ h = \gamma b_{c,i}^{-1}.b_{c,i}csu^{-1} \in \Gamma_u \, \cF_{\gH_u}. \]

Thus the \( \Gamma_u \)-translates of \( \cF_{\gH_u} \) cover \( \gH_u(\bR) \).
The fact that there are only finitely many \( \gamma \in \Gamma_u \) for which \( \gamma\cF_{\gH_u} \cap \cF_{\gH_u} \neq \emptyset \) follows from the Siegel property for \( \fS \) (and indeed this implies that \( \cF_{\gH_u} \) also satisfies the Siegel property).
Thus \( \cF_{\gH_u} \) is a fundamental set for \( \Gamma_u \) in \( \gH_u(\bR) \).
\end{proof}

\section{Quantitative reduction theory for quaternion algebras}\label{sec:effective-algebras}

In order to apply \cref{fund-set-bound}, it is necessary to choose a representation \( \rho \) and a vector~\( v_0 \) having the properties described in the theorem.
In this section, we will explain how to construct a suitable representation for our application to unlikely intersections with $E^2$ and quaternionic curves.
This illustrates a method for constructing representations which will be useful for applying \cref{fund-set-bound} to other problems of unlikely intersections in the future, while avoiding many technical complications which occur in more general situations.

Borel and Harish-Chandra's reduction theory considered only a fixed reductive subgroup \( \gH_0 \subset \gG \) (and not its conjugates \( u\gH_0u^{-1} \)). As such, \cite[Thm.~3.8]{BHC62} constructs a representation satisfying the properties (i) and~(ii) of \cref{fund-set-bound} but does not construct the vectors $v_u$.  
Another construction of representations satisfying (i) is given by \cite[Prop.~3.1]{Del82} (based on \cite[Exp.~10, Prop.~5]{Che58}), and Deligne's construction can easily be modified to yield the vectors $v_u$.

However, it is not enough to know just that the vectors \( v_u \) exist.
\Cref{fund-set-bound} gives bounds in terms of \( \abs{v_u} \) so, in order to apply these, we need to control the length \( \abs{v_u} \) in terms of some more intrinsic quantity attached to the subgroup \( u\gH_0u^{-1} \).
For example, in our application the subgroups \( u\gH_0u^{-1} \) will be associated with quaternion algebras and we will bound \( \abs{v_u} \) in terms of the discriminants of (orders in) these algebras.

\subsection{The set-up: quaternionic subgroups of \texorpdfstring{\( \gGSp_4 \)}{GSp4}}

Let \( \gG = \gGSp_4 \), the algebraic group whose \( \bQ \)-points are the invertible linear transformations of~\( \bQ^4 \) which multiply the standard symplectic form by a scalar.
For the standard symplectic form, we use \( \psi \colon \bQ^4 \times \bQ^4 \to \bQ \) represented by the matrix
\[ \fullmatrix{J}{0}{0}{J}, \text{ where } J = \fullmatrix{0}{1}{-1}{0}. \]
The subgroup \( \gH_0 \) is equal to \( \gGL_2 \), embedded block diagonally in \( \gGSp_4 \):
\begin{equation} \label{eqn:sl2-embedding}
\gH_0 = \Bigl\{ \fullmatrix{A}{0}{0}{A} \in \gGSp_4 : A \in \gGL_2 \Big\}.
\end{equation}
If a \( \gG(\bR) \)-conjugate \( u \gH_{0,\bR} u^{-1} \) is defined over \( \bQ \), then its \( \bQ \)-points form the multiplicative group of a (perhaps split) indefinite quaternion algebra over \( \bQ \). We shall prove the following proposition.

% FIXME: Say something about treating an orbit in the representation as a parameter space for subgroups of G conjugate to H.
\begin{proposition} \label{quaternion-sp4-rep}
Let \( \gG = \gGSp_{4,\bQ} \) and let \( \gH_0 = \gGL_{2,\bQ} \), embedded in \( \gG \) as in~\eqref{eqn:sl2-embedding}.
Let \( \Gamma = \gSp_4(\bZ) \).
Let \( L = \bZ^4 \) and let \( \gG \) act on \( L_\bQ \) in the natural way.

There exist a \( \bQ \)-algebraic representation \( \rho \colon \gG \to \gGL(\Lambda_\bQ) \), where \( \Lambda \) is a finitely generated free \( \bZ \)-module stabilised by \( \Gamma \), a vector \( v_0 \in \Lambda \) and constants \( \newC{quaternion-rep-multiplier} \), \( \newC{quaternion-rep-exponent} \), \( \newC{guh-multiplier} \), \( \newC{guh-exponent} \) such that:
\begin{enumerate}[(i)]
\item \( \Stab_{\gG,\rho}(v_0) = \gH_0 \);
\item the orbit \( \rho(\gG(\bR)) v_0 \) is closed in \( \Lambda_\bR \);
\item for each \( u \in \gG(\bR) \), if the group \( \gH_u = u \gH_{0,\bR} u^{-1} \) is defined over~\( \bQ \), then
\begin{enumerate}
\item there exists \( v_u \in \Aut_{\rho(\gG)}(\Lambda_\bR) v_0 \) such that \( \rho(u) v_u \in \Lambda \) and
\[ \abs{v_u} \leq \refC{quaternion-rep-multiplier} \abs{\disc(R_u)}^{\refC{quaternion-rep-exponent}}; \]
\item there exists \( \gamma \in \Gamma \) and \( h \in \gH_0(\bR) \) such that
\[ \length{\gamma uh} \leq  \refC{guh-multiplier} \abs{\disc(R_u)}^{\refC{guh-exponent}}, \]
\end{enumerate}
where \( R_u \) denotes the order \( \End_{\gH_u}(L)\) of the quaternion algebra \( \End_{\gH_u}(L_\bQ)\).
\end{enumerate}
\end{proposition}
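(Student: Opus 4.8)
The plan is to construct $\rho$ explicitly from the natural $4$-dimensional representation $L_\QQ$ of $\gG = \gGSp_4$, in such a way that stabilising a vector $v_0$ amounts to prescribing the endomorphism algebra of $L_\QQ$ together with the symplectic form. Concretely, I would take $\rho$ to act on a space of the shape $\Lambda_\QQ \subset \End_\QQ(L_\QQ)^{\oplus k}$ (or a suitable subspace/quotient built from $\End_\QQ(L_\QQ)$ and the form $\psi$), where $\gG$ acts by conjugation; a vector $v_0$ will be a tuple of endomorphisms that generate, as a $\QQ$-algebra, exactly the image of $\gH_0 = \gGL_2$ inside $\End_\QQ(L_\QQ)$. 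The point of \eqref{eqn:sl2-embedding} is that $\End_{\gH_0}(L_\QQ)$ is the (split) quaternion algebra $\rM_2(\QQ)$ realised in the block-off-diagonal way, while $\gH_0$ itself is the multiplicative group of the complementary copy of $\rM_2(\QQ)$; so one should actually build $v_0$ from a $\QQ$-basis of the commutant $\End_{\gH_0}(L_\QQ)$, and then $\Stab_{\gG,\rho}(v_0)$ is the subgroup centralising that algebra, which by the double centraliser theorem is exactly $\gH_0$ up to the similitude centre — a small computation confirms (i). For (ii), I would invoke that the orbit of $v_0$ is closed because the stabiliser $\gH_0$ is reductive and self-adjoint with respect to a suitable Cartan involution, using for instance the criterion behind \cite[Prop.~3.1]{Del82}/\cite[Exp.~10, Prop.~5]{Che58} cited in the text, or directly the Richardson--Slodowy minimal vector criterion \cite[Thm.~4.4]{RS90} after choosing the inner product making $\gH_{0,\RR}$ self-adjoint.

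Next I would address the construction of $v_u$ in (iii)(a). Given $u \in \gG(\RR)$ with $\gH_u = u\gH_{0,\RR}u^{-1}$ defined over $\QQ$, the commutant $\End_{\gH_u}(L_\QQ)$ is the indefinite quaternion algebra $B_u$ in the statement, with order $R_u = \End_{\gH_u}(L)$. The vector $\rho(u)v_0$ is, almost by construction, a tuple built from a $\QQ$-basis of $B_u$ — but it need not be integral. The remedy is the standard one: choose a $\ZZ$-basis $e_1,\dots,e_4$ of the order $R_u \subset B_u \subset \End_\QQ(L)$ and use it to build an integral vector $\rho(u)v_u \in \Lambda$; since $\rho(u)v_u$ and $\rho(u)v_0$ both encode $\QQ$-bases of the same algebra $B_u$, they differ by an element of $\Aut_{\rho(\gG)}(\Lambda_\RR)$ (the endomorphisms of $\rho$ commuting with $\rho(\gG)$ act on such vectors through $B_u^\times$, or more precisely through $\GL_4$ acting on the basis), which gives $v_u \in \Aut_{\rho(\gG)}(\Lambda_\RR)v_0$. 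To bound $\abs{v_u}$, note that $\abs{v_u} \asymp \abs{\rho(u)^{-1}}\cdot\abs{\rho(u)v_u}$ is not quite the right move; instead one should observe that $v_u$ is determined by the $\ZZ$-basis $\{e_i\}$ of $R_u$ relative to the fixed $\QQ$-basis underlying $v_0$, and apply \cref{tau-bound} (with $E = \End_{\rho(\gG)}(V_\RR)$, whose semisimplicity is exactly why $\tau$ can be chosen norm-controlled) together with a choice of a \emph{reduced} $\ZZ$-basis of $R_u$ whose vectors have length polynomially bounded in $\abs{\disc(R_u)}$ — this is Minkowski's second theorem / the reduction theory of lattices applied to $R_u$ with the trace form, using that $\disc(R_u)$ is (up to the fixed $\QQ$-structure) the covolume-squared of $R_u$.

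For (iii)(b), the bound on $\length{\gamma u h}$ follows by feeding the $v_u$ just constructed into \cref{fund-set-bound}: that theorem produces a finite set $B_u \subset \Gamma$ with $\abs{\rho(b^{-1}u)v_u} \le \refC{fund-bound-multiplier}\abs{v_u}^{\refC{fund-bound-exponent}}$ and a fundamental set $B_u C \fS u^{-1} \cap \gH_u(\RR)$ for $\Gamma \cap \gH_u(\RR)$ in $\gH_u(\RR)$. Taking $h \in \gH_0(\RR)$ so that $uh$ lies in a convenient slice, one writes $uh = \gamma^{-1} c s$ with $\gamma \in B_u$, $c \in C$, $s \in \fS$; then $\gamma u h = cs$ and one must bound $\length{\rho(cs)}$ — equivalently $\length{cs}$ — in terms of $\abs{v_u}$. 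Here $\abs{\rho(b^{-1}u)v_u} = \abs{\rho(s)v_0}$ (up to the fixed finite set $C$ and the fixed $\tau$), and since $v_0$ is a minimal vector with reductive self-adjoint stabiliser, $\abs{\rho(s)v_0}$ controls the Riemannian distance from $\rho(s)$ to that stabiliser via \cref{riemannian-orbit-bound} — so $s$ can be adjusted by an element of $\gH_0(\RR)$ (absorbed into $h$) to one of polynomially bounded operator norm, and composing with the fixed $C$ gives the claim. Combining chains $\abs{v_u} \le \refC{quaternion-rep-multiplier}\abs{\disc(R_u)}^{\refC{quaternion-rep-exponent}}$ from (a) yields (b).

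The main obstacle I expect is (iii)(a): namely making rigorous that $v_u$ can be chosen \emph{in the $\Aut_{\rho(\gG)}(\Lambda_\RR)$-orbit of $v_0$} while simultaneously being \emph{integral} and \emph{polynomially bounded in $\disc(R_u)$}. The three requirements pull in different directions — integrality forces $v_u$ to be built from $R_u$, orbit-membership forces a careful bookkeeping of which auxiliary $\GL_4$-action on bases lands inside $\End_{\rho(\gG)}$, and the length bound requires choosing a reduced $\ZZ$-basis of $R_u$ and then controlling the resulting $\tau \in \Aut_{\rho(\gG)}(\Lambda_\RR)$ via \cref{tau-bound}. Setting up the representation $\rho$ so that all three fit together cleanly (likely by letting $\Lambda$ be generated by $\End$ of $L$ together with enough copies to pin down the algebra structure, not just a vector) is the technical heart of the section; everything downstream is then a bookkeeping application of \cref{fund-set-bound}, \cref{riemannian-orbit-bound} and \cref{tau-bound}.
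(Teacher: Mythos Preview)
Your proposal has two genuine gaps.

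For (ii), neither Chevalley's construction nor a reductive self-adjoint stabiliser forces the orbit to be closed: the paper itself warns that representations built this way ``do not necessarily contain a closed orbit \( \rho_L(\gG(\bR))v_0 \)'', and the standard counterexample \( \bG_m \) acting on \( \bA^2 \) by \( t\cdot(x,y)=(tx,t^{-1}y) \) has the point \( (1,0) \) with trivial (hence reductive, self-adjoint) stabiliser but non-closed orbit. Invoking \cite[Thm.~4.4]{RS90} would still require you to exhibit a minimal vector, which is not automatic. The paper proves closedness by an explicit calculation: it takes \( \rho=\rho_L=\bigwedge^4\sigma_L\otimes\det^{-1} \) on \( W=\rM_4(\bQ) \), extends \( \rho_L \) to a monoid action \( \rho_L' \) of the Zariski-closed set \( \gQ\subset\rM_4(\bC) \) of ``quasi-symplectic'' matrices, realises \( \rho_L'(\gQ)v_0 \) as the image of the cone over an orthogonal Grassmannian under a homogeneous morphism that is nowhere zero off the origin (hence Zariski closed), and then shows \( \rho_L(\gG(\bC))v_0 \) is the intersection of that set with an explicit affine hyperplane.

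For (iii), your order of deduction is inverted and the mechanism that decouples the bound from \( u \) is missing. In your conjugation model, setting \( \rho(u)v_u \) equal to a \( \bZ \)-basis \( (f_1,\dots,f_4) \) of \( R_u \) gives \( v_u=(u^{-1}f_1u,\dots,u^{-1}f_4u) \); the lengths \( \abs{u^{-1}f_iu} \) depend on the uncontrolled real parameter \( u \), and Minkowski on \( R_u \) only bounds the covolume, not the individual successive minima of the relevant lattice (moreover the trace form on an indefinite quaternion order has signature \( (2,2) \), so one needs a positive involution \( \dag \) to get a positive definite form before any reduction argument applies). \Cref{tau-bound} goes the wrong way here --- it bounds \( \length{\tau} \) in terms of \( \abs{\tau v_0} \), not conversely. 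The paper's representation carries a second, \emph{commuting} action \( \rho_R \) of \( \gG \) (right multiplication on \( W \)), and \( \gH_0 \) acts trivially on the line \( \bigwedge^4 Z \) via \( \rho_R \) as well as via \( \rho_L \). The paper first constructs \( \gamma\in\Gamma \) and \( h\in\gH_0(\bR) \) with \( \length{(\gamma uh)^{\pm1}} \) polynomially bounded in \( \abs{\disc(R_u)} \) (using a Minkowski bound for quaternion ideals, a reduced basis for \( R_u \) with respect to the positive form \( \phi^\dag \), and a symplectic-basis adjustment to land in \( \Gamma \)); \emph{then} it defines \( v_u=d_u\,\rho_R(\gamma u)v_0 \). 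Because \( \rho_R(h)v_0=v_0 \), this equals \( d_u\,\rho_R(\gamma uh)v_0 \), and the already-established bound on \( (\gamma uh)^{-1} \) immediately controls \( \abs{v_u} \). Thus (iii)(b) is a by-product of the construction in (iii)(a), not a downstream consequence of \cref{fund-set-bound}; your route from (a) to (b) via \cref{riemannian-orbit-bound} also stalls because the constants there depend on the base vector, which would here be the varying \( v_u \).
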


The condition \( v_u \in \Aut_{\rho(\gG)}(\Lambda_\bR) v_0 \) in \cref{quaternion-sp4-rep}(iii)(a) ensures that the element \( \rho(u)v_u \in \Lambda \) satisfies  \( \Stab_{\gG,\rho}(\rho(u)v_u) = \gH_u \).
\cref{quaternion-sp4-rep}(iii)(a) is the bound we need to apply \cref{fund-set-bound}.
\Cref{quaternion-sp4-rep}(iii)(b) is not required for our application to unlikely intersections, but may be useful in its own right -- we can replace \( u \) by \( \gamma uh \) if we replace \( \gH_u \) by \( \gamma \gH_u \gamma^{-1} \), a subgroup of \( \gG \) which gives rise to the same special subvariety of \( \cA_2 \) as \( \gH_u \).

% A key property we shall use, which makes this case of quaternion algebras simpler than other cases in which the subgroups \( u\gH_0u^{-1} \) are associated with algebras (e.g.\ for other special subvarieties of PEL type), is that the natural \( 4 \)-dimensional representation of \( \gSp_{4}(\bQ) \) is the isomorphic to the regular representation of the algebra \( \rM_2(\bQ) \) (where we consider \( \gH_0(\bQ) \) as the group of norm 1 elements of \( \rM_2(\bQ) \)).
% The facts that this algebra is absolutely simple and the dimension is small also allow us to simplify the notation.

The proof of \cref{quaternion-sp4-rep} will proceed in three steps:
first we construct \( \rho \) and \( v_0 \) satisfying property~(i), then we show that the representation we have constructed possesses property (ii) and then~(iii).
The proofs of properties (ii) and~(iii) are independent of each other, while (iii)(b) is a by-product of the proof of (iii)(a).

\subsection{Construction of representation of \texorpdfstring{\( \gGSp_4 \)}{GSp4}}
We construct the representation \( \rho \) of \( \gGSp_4 \) and the vector \( v_0 \), and define notation which we shall use throughout the rest of the section.

Let \( W = \rM_4(\bQ) \), considered as a \( \bQ \)-vector space.
Define two representations \( \sigma_L, \sigma_R \colon \gG = \gGSp_4 \to \gGL(W) \) by multiplication on the left and on the right:
\[ \sigma_L(g)w = gw, \quad \sigma_R(g)w = wg^{-1}. \]
(The inverse in the formula for \( \sigma_R \) is so that \( \sigma_R \) is a left representation of \( \gG \).)

Let \( E_0 = \rM_2(\bQ) \) and define \( \iota_0 \colon E_0 \to \rM_4(\bQ) \) by
\[ \iota_0(A) = \fullmatrix{A}{0}{0}{A}. \]
Thus \( \gH_0 = \iota_0(\gGL_2) \).
Let \( Z = \iota_0(E_0) \), a \( 4 \)-dimensional \( \bQ \)-linear subspace of \( W \).
Observe that
\[ \Stab_{\gG,\sigma_L}(Z) = \iota_0(E_0) \cap \gG = \gH_0. \]
Similarly, \( \Stab_{\gG,\sigma_R}(Z) = \gH_0 \) but we shall not need this latter fact.

Let \( V = \extpower^4 W \) and let \( \rho_L, \rho_R \colon \gG \to \gGL(V) \) be the representations
\[ \rho_L = \extpower^4 \sigma_L \otimes {\det}^{-1}, \quad \rho_R = \extpower^4 \sigma_R \otimes \det. \]
Then \( \extpower^4 Z \) is a \( 1 \)-dimensional \( \bQ \)-linear subspace of \( V \), with
\[ \Stab_{\gG,\rho_L}(\extpower^4 Z) = \Stab_{\gG,\sigma_L}(Z) = \gH_0. \]
The action of \( \gGL_2(\bQ) \) on \( Z \) via \( \sigma_L \circ \iota_0 \) is the restriction of the left regular representation of \( \rM_2(\bQ) \).
Hence the action of \( \gGL_2(\bQ) \) on \( \extpower^4 Z \) via \( \extpower^4 \sigma_L \circ \iota_0 \) is multiplication by \( (\det_{\gGL_2})^2 = {\det_{\gGL_4}} \circ \iota_0 \).
Therefore the action of \( \gH_0 \) on \( \extpower^4 Z \) via \( \rho_L \) is trivial, so each non-zero vector in \( \extpower^4 Z \) has stabiliser equal to \( \gH_0 \).

Let \( \Lambda = \extpower^4 \rM_4(\bZ) \subset V \).
For later use, we choose a specific element \( v_0 \in (\bigwedge^4 Z) \cap \Lambda \).
Let \( e_1, e_2, e_3, e_4 \) denote the following \( \bZ \)-basis for \( \rM_2(\bZ) \):
\begin{equation} \label{eqn:basis-m2}
e_1 = \fullmatrix{1}{0}{0}{0}, \; e_2 = \fullmatrix{0}{1}{0}{0}, \; e_3 = \fullmatrix{0}{0}{1}{0}, \; e_4 = \fullmatrix{0}{0}{0}{1}
\end{equation}
Then \( \iota_0(e_1), \iota_0(e_2), \iota_0(e_3), \iota_0(e_4) \) form a \( \bZ \)-basis for \( Z \cap \rM_4(\bZ) \), so
\[ v_0 = \iota_0(e_1) \wedge \iota_0(e_2) \wedge \iota_0(e_3) \wedge \iota_0(e_4) \]%\in \extpower^4 Z \cap \Lambda \setminus \{ 0 \} \]
is a generator of the rank-\( 1 \) \( \bZ \)-module \( (\extpower^4 Z) \cap \Lambda \).
Then \( \rho_L \) and \( v_0 \) satisfy \cref{quaternion-sp4-rep}(i).

Given \( u \in \gG(\bR) \), we can easily find a vector  \( v_u \in \Aut_{\rho_L(\gG)}(\Lambda_\bR) v_0 \) such that \( \rho_L(u) v_u \in \Lambda \) (that is, the first part of \cref{quaternion-sp4-rep}(iii)(a)).
The vector
\[ \rho_L(u) \rho_R(u) v_0
   = u\iota_0(e_1)u^{-1} \wedge u\iota_0(e_2)u^{-1} \wedge u\iota_0(e_3)u^{-1} \wedge u\iota_0(e_4)u^{-1} \in V_\bR \]
generates the line
\( \extpower^4 u Z_\bR u^{-1} \subset V_\bR \).
If the subgroup \( u\gH_{0,\bR}u^{-1} \subset \gG_\bR \) is defined over \( \bQ \), then so is the linear subspace \( u Z_\bR u^{-1} \subset W_\bR \). (This follows from the fact that $Z$ is the $\QQ$-linear span of $\gH_0(\QQ)$.)
Consequently \( (\extpower^4 u Z_\bR u^{-1}) \cap \Lambda \) is non-empty, so there exists \( d_u \in \bR^\times \) such that
\[ d_u \rho_L(u) \rho_R(u) v_0 \in \Lambda. \]
Now \( d_u \rho_R(u) \in \Aut_{\rho_L}(V_\bR) \) so \( d_u \rho_R(u) v_0 \) has the required property.
This algebraic construction does not control the size of \( d_u \), and hence does not control \( \abs{v_u} \).
Later, in section~\ref{sec:gamma-vu}, we will choose a slightly different \( v_u \) (making use of \( \gamma \) as in \cref{quaternion-sp4-rep}(iii)(b)) allowing us to bound \( \abs{v_u} \).

\medskip

To place this representation in a more general context, we compare it with \cite[Exp.~10, Prop.~5]{Che58}.
Let \( \gG \) be an arbitrary affine \( \bQ \)-algebraic group and \( \gH_0 \subset \gG \) an algebraic subgroup.
Chevalley considers the ring of regular functions \( \bQ[\gG] \), on which \( \gG \) acts by right translations.
The stabiliser of the ideal \( I(\gH_0) \) is equal to \( \gH_0 \).
Choose a finite-dimensional subrepresentation \( W \subset \bQ[\gG] \) which contains a generating set for \( I(\gH_0) \).
Then \( \gH_0 \) is also the stabiliser of \( Z = W \cap I(\gH_0) \).
Let \( d = \dim_\bQ(Z) \).
Then \( \extpower^d W \) is a representation of \( \gG \) in which the line \( \extpower^d Z \) is defined over \( \bQ \) and has stabiliser equal to \( \gH_0 \).
If \( \gH_0 \) is semisimple, it has no non-trivial characters so each non-zero vector in \( \extpower^d Z \) also has stabiliser equal to \( \gH_0 \).
\cite[Prop.~3.1]{Del82} describes how this construction can be modified to obtain a vector $v_0$ (not just a line) with stabiliser equal to \( \gH_0 \) whenever \( \gH_0 \) is reductive.

If we choose \( W \) to be stable under left as well as right translations (denoting the representations by \( \rho_L \) and \( \rho_R \) respectively), then the same argument as in the special case above shows that the line \( \bR^\times \rho_L(u) \rho_R(u) v_0 \) is defined over \( \bQ \) whenever \( u\gH_{0,\bR} u^{-1} \) is defined over \( \bQ \), and so this line contains non-zero rational vectors.

Comparing this general construction with our special case of \( \gG = \gGSp_4 \), \( \gH_0 = \iota_0(\gGL_2) \), we note that in the special case \( I(\gH_0) \) is generated by linear functions on~\( \rM_4 \).
Thus following Chevalley's method, we could choose \( W \) to be the linear dual of \( \rM_4(\bQ) \).
In fact, we chose \( W \) to be \( \rM_4(\bQ) \) itself, and \( Z \) to be the linear subspace of \( \rM_4(\bQ) \) which is annihilated by \( I(\gH_0) \cap \rM_4(\bQ)^\vee \).
The choice of \( \rM_4(\bQ) \) instead of its dual is a matter of convenience.

The representations constructed by Chevalley's method do not necessarily contain a closed orbit \( \rho_L(\gG(\bR))v_0 \), although this can often be achieved by carefully choosing \( W \subset \bQ[\gG] \) and perhaps making some minor modifications using linear algebra constructions.
On the other hand, finding a suitable \( v_u \) with bounded length requires much more detailed arithmetic information about the groups \( \gH_u \).

\subsection{Closed orbit}

We now show that \cref{quaternion-sp4-rep}(ii) holds, that is, the orbit \( \rho_L(\gG(\bR))v_0 \) is closed in \( V_\bR \).
By \cite[Prop.~2.3]{BHC62}, it suffices to prove that \( \rho_L(\gG(\bC))v_0 \) is closed in \( V_\bC \).
%We will prove this by defining a homogeneous set \( X \) in a complex vector space \( V' \), a homogeneous morphism \( \zeta \colon V' \to V_\bC \) and a linear map \( f \colon V_\bC \to \bC \) such that
%\[ \rho_L(\gG(\bC))v_0 = \zeta(X) \cap f^{-1}(1). \]

We use the following definitions.
If \( V_\bC \) is a vector space over \( \bC \), we say that a subset of \( V_\bC \) is \defterm{homogeneous} if it is non-empty and stable under multiplication by scalars.
In other words, a subset of \( V_\bC \) is homogeneous if and only if it is the cone over some subset of \( \bP(V_\bC) \).
For a non-negative integer~\( d \), a set-theoretic function between vector spaces \( f \colon V_\bC' \to V_\bC \) is \defterm{homogeneous of degree~\( d \)} if
\[ f(\lambda v) = \lambda^d f(v) \text{ for all } \lambda \in \bC, \, v \in V_\bC'. \]

Homogeneous sets and homogeneous maps are useful because of the following lemma, which is equivalent to the fact that a morphism of projective algebraic varieties maps Zariski closed sets to Zariski closed sets.

\begin{lemma} \label{homog-zariski-closed-morphism}
Let \( V_\bC \), \( V_\bC' \) be vector spaces over \( \bC \) (or any algebraically closed field), let \( X \subset V_\bC' \) be a homogeneous Zariski closed subset and let \( f \colon X \to V_\bC \) be a morphism of algebraic varieties which is homogeneous.
If \( f(x) \neq 0 \) for all \( x \in X \setminus \{ 0 \} \),
then \( f(X) \) is a homogeneous Zariski closed subset of~\( V_\bC \).
\end{lemma}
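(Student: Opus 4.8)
The statement is essentially a repackaging of the elimination theorem (a projective morphism is closed), so the plan is to reduce to exactly that fact by projectivising. The key point is that a homogeneous Zariski closed subset $X \subset V'_\bC$ corresponds to a closed subvariety $\bar X \subset \bP(V'_\bC)$ (provided $X \neq \{0\}$; the case $X = \{0\}$ is trivial since then $f(X) = \{0\}$), namely the projectivisation of $X$, with $X = \pi^{-1}(\bar X) \cup \{0\}$ where $\pi \colon V'_\bC \setminus \{0\} \to \bP(V'_\bC)$ is the canonical projection. Similarly, a homogeneous map $f$ of degree $d$ which is nowhere zero on $X \setminus \{0\}$ descends to a morphism $\bar f \colon \bar X \to \bP(V_\bC)$. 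Then $f(X) \setminus \{0\}$ is the cone over $\bar f(\bar X)$, and $\bar f(\bar X)$ is closed in $\bP(V_\bC)$ by the elimination theorem; pulling back along $\pi$ and adjoining $0$ shows $f(X)$ is homogeneous Zariski closed.

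First I would handle the degenerate cases: if $X = \{0\}$ then by homogeneity $f(0) = \lambda^d f(0)$ for all $\lambda$, forcing $f(0) = 0$, so $f(X) = \{0\}$, which is homogeneous Zariski closed. So assume $X \neq \{0\}$. Next I would verify that $f$ is itself homogeneous of some degree $d$ — actually this is part of the hypothesis ("homogeneous"), so I take $d \geq 0$ with $f(\lambda v) = \lambda^d f(v)$. If $d = 0$ then $f$ is constant on lines through the origin and nowhere zero away from $0$; one checks $f(X)$ is then a finite union of points times scalars — but more uniformly, the argument below still works with the convention that $\bP$ of a $0$-dimensional thing behaves correctly, so I would simply note $d \geq 1$ may be assumed after composing with a Veronese-type trick, or better, just run the projective argument which is insensitive to the value of $d$ as long as $f$ is nowhere zero on $X \setminus \{0\}$.

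The substance: the nowhere-vanishing hypothesis guarantees that $f$ restricts to a morphism $X \setminus \{0\} \to V_\bC \setminus \{0\}$, and homogeneity of $f$ together with homogeneity of $X$ means this morphism is equivariant for the $\bG_m$-actions by scaling, hence descends to a morphism of projective varieties $\bar f \colon \bar X \to \bP(V_\bC)$, where $\bar X = \pi'(X \setminus \{0\})$ is the (closed) image of $X$ in $\bP(V'_\bC)$. By the elimination theorem (equivalently, properness of projective varieties over an algebraically closed field, or the classical statement that a morphism of projective varieties has Zariski closed image), $\bar f(\bar X)$ is Zariski closed in $\bP(V_\bC)$. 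Now $f(X \setminus \{0\}) = \pi^{-1}(\bar f(\bar X))$, the affine cone minus the origin over this closed set, which is Zariski closed in $V_\bC \setminus \{0\}$ and homogeneous; its closure in $V_\bC$ is obtained by adjoining $0$, and in fact $f(X) = f(X \setminus \{0\}) \cup \{f(0)\} = (\text{cone over } \bar f(\bar X)) \cup \{0\}$ since $f(0) = 0$. This set is exactly the affine cone over $\bar f(\bar X)$, hence homogeneous and Zariski closed, which is the assertion.

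**Main obstacle.** The only genuinely delicate point is bookkeeping around the origin: ensuring that $f(0) = 0$ so that adjoining the cone point is harmless, and checking that "cone over a projectively closed set" is indeed affine-Zariski-closed (it is, being the vanishing locus of the homogeneous ideal of $\bar f(\bar X)$). Everything else is a formal translation between the affine-cone and projective pictures, and the geometric input — image of a projective variety under a morphism is closed — is standard and may be cited. I would spend a sentence or two making the equivariance-descent of $f$ to $\bar f$ explicit, since that is where the nowhere-vanishing hypothesis is used, and otherwise keep the proof short.
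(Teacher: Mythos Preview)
Your approach is correct and matches the paper's own justification, which is simply the remark preceding the lemma that it ``is equivalent to the fact that a morphism of projective algebraic varieties maps Zariski closed sets to Zariski closed sets''; you have spelled out exactly that equivalence via projectivisation, descent of $f$ to $\bar f$, and the elimination theorem.

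One caveat: your treatment of $d=0$ is not right---the projective argument is \emph{not} insensitive to $d$, because the step $f(X\setminus\{0\})=\pi^{-1}(\bar f(\bar X))$ needs $d$-th roots in the base field to show surjectivity onto the cone (if $y=\mu f(x)$ with $\mu\in\bC^\times$, one writes $y=f(\mu^{1/d}x)$), and indeed for $d=0$ the lemma as literally stated can fail, since then $f$ is constant on the cone $X$ and $f(X)$ may be a single nonzero point, which is not stable under scalars. The paper only applies the lemma with $d=2$, so it is cleanest to assume $d\geq 1$ outright; with that assumption your argument is complete.
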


Let \( U = \bC^4 \).
We define a sequence \( (u_1, u_2, u_3, u_4) \in U^4 \) to be \defterm{quasi-symplectic} if it satisfies the conditions
\begin{gather*}
   \psi(u_1, u_3) = \psi(u_1, u_4) = \psi(u_2, u_3) = \psi(u_2, u_4) = 0,
\\ \psi(u_1, u_2) = \psi(u_3, u_4).
\end{gather*}
If \( (u_1, u_2, u_3, u_4) \) is a quasi-symplectic sequence then either:
\begin{enumerate}
\item \( \psi(u_1, u_2) = \psi(u_3, u_4) \neq 0 \), in which case \( (u_1, u_2, u_3, u_4) \) is a non-zero scalar multiple of a symplectic basis for \( (U, \psi) \); or
\item \( \psi(u_1, u_2) = \psi(u_3, u_4) = 0 \), in which case \( u_1, u_2, u_3, u_4 \) are contained in an isotropic subspace of \( U \) for \( \psi \); in particular they are linearly dependent.
\end{enumerate}
Let
\begin{align*}
   \gQ & = \{ g \in \rM_4(\bC) : \text{the columns of } g \text{ form a quasi-symplectic sequence} \}
\\     & = \{ g \in \rM_4(\bC) : \exists \nu(g) \in \bC \text{ such that } \psi(gx, gy) = \nu(g)\psi(x,y) \}
\end{align*}
The set \( \gQ \) is closed under multiplication, but not all of its elements are invertible so it is not a group.
We have \( \gQ \cap \gGL_4(\bC) = \gGSp_4(\bC) \).

Let \( \sigma_L \) denote the action of \( \rM_4(\bC) \) on \( W_\bC = \End_\bC(U) \) by left multiplication (this extends our earlier definition of \( \sigma_L \) as a representation of \( \gG = \gGSp_4 \)).
Let \( \rho'_L \) denote the induced action \( \extpower^4 \sigma_L \) of \( \rM_4(\bC) \) on \( V_\bC = \extpower^4 W_\bC \) (this is a representation of \( \rM_4(\bC) \) as a multiplicative monoid but not as a \( \bC \)-algebra).
Note that \( \rho_L = \rho'_L \otimes \det^{-1} \), but \( \rho_L(g) \) is only defined for \( g \in \gGL_4(\bC) \) while \( \rho'_L(g) \) is defined for all \( g \in \rM_4(\bC) \).
In particular \( \rho'_L \) is defined on \( \gQ \).

In order to prove that \( \rho_L(\gG(\bC))v_0 \) is closed, we find a homogeneous Zariski closed set \( X \subset \extpower^2 U^2 \) such that \( \rho'_L(\gQ)v_0 \) is the image of \( X \) under a homogeneous morphism of varieties~\( \zeta \).
Hence \( \rho'_L(\gQ)v_0 \) is Zariski closed.
We conclude by showing that \( \rho_L(\gG(\bC))v_0 \) is the intersection of \( \rho'_L(\gQ)v_0 \) with a hyperplane in \( V_\bC \).

\begin{lemma} \label{X-is-closed}
The following homogeneous subset of \( \extpower^2 U^2 \) is Zariski closed:
\[ X = \{ (u_1, u_3) \wedge (u_2, u_4) : (u_1, u_2, u_3, u_4) \text{ is quasi-symplectic} \}. \] 
\end{lemma}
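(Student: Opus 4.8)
The plan is to realise $X$ as the affine cone over a Zariski closed subvariety of the Grassmannian $\Gr(2, U \oplus U)$, embedded in $\bP(\extpower^2(U\oplus U))$ by the Plücker map; here $U^2$ denotes $U\oplus U$, and I write $\pi_1, \pi_2 \colon U\oplus U \to U$ for the two projections. The guiding observation is that, although the obvious parametrisation of $X$ by quasi-symplectic $4$-tuples is ill-behaved, the quasi-symplectic conditions in fact depend only on the $2$-plane spanned by the vectors $p = (u_1, u_3)$ and $q = (u_2, u_4)$ of $U\oplus U$ whose wedge $p\wedge q$ is the element of $X$ attached to $(u_1,u_2,u_3,u_4)$.

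First I would rewrite the five defining relations of a quasi-symplectic sequence in terms of $p = (u_1,u_3)$ and $q = (u_2,u_4)$. Introduce the bilinear form $B(x,y) = \psi(\pi_1 x, \pi_2 y)$ on $U\oplus U$ and the alternating form $\beta(x,y) = \psi(\pi_1 x, \pi_1 y) - \psi(\pi_2 x, \pi_2 y)$. Since $u_1 = \pi_1 p$, $u_3 = \pi_2 p$, $u_2 = \pi_1 q$, $u_4 = \pi_2 q$, the first four relations are exactly $B(p,p) = B(p,q) = B(q,p) = B(q,q) = 0$, and the fifth is $\beta(p,q) = 0$. By bilinearity, the first four hold if and only if $B$ vanishes identically on the subspace $P := \langle p,q\rangle$, which is a condition on $P$ alone; and since $\beta$ is alternating it corresponds to a linear functional $\ell$ on $\extpower^2(U\oplus U)$ with $\ell(x\wedge y) = \beta(x,y)$, so the fifth relation reads $\ell(p\wedge q) = 0$, a single linear — hence Zariski closed — condition on the bivector $p\wedge q$ which, when $p\wedge q\neq 0$, again depends only on $P$. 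Consequently, for a quasi-symplectic tuple with $p,q$ linearly independent, the entire quasi-symplectic condition is a condition on the $2$-plane $P$ that one recovers from the non-zero bivector $\omega = p\wedge q$.

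Next I would set
\[ Y = \{\, P \in \Gr(2,U\oplus U) : \psi(\pi_1 x, \pi_2 y) = 0 \text{ for all } x,y\in P, \text{ and } \ell|_{\extpower^2 P} = 0 \,\}. \]
The first condition is the (Zariski closed) condition that $P$ be totally isotropic for the fixed bilinear form $B$, and the second is a hyperplane condition on the Plücker coordinates of $P$, so $Y$ is a Zariski closed subvariety of the Grassmannian; hence its Plücker image is closed in $\bP(\extpower^2(U\oplus U))$ and its affine cone $\widehat{Y}\subset\extpower^2(U\oplus U)$ is Zariski closed. To conclude it suffices to check $X = \widehat{Y}$. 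The origin lies in both (the zero tuple gives $0\in X$, and $\widehat{Y}$ is a cone). A non-zero $\omega\in X$ is $p\wedge q$ for a quasi-symplectic tuple with $p,q$ linearly independent, and by the previous paragraph its plane lies in $Y$, so $\omega\in\widehat{Y}$. Conversely a non-zero $\omega\in\widehat{Y}$ is a decomposable bivector, say $\omega = p\wedge q$, whose plane $\langle p,q\rangle$ lies in $Y$; writing $p = (u_1,u_3)$ and $q = (u_2,u_4)$, the reformulation of the first paragraph shows $(u_1,u_2,u_3,u_4)$ is quasi-symplectic, so $\omega\in X$.

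The one genuine obstacle is the degeneracy this circumvents: one cannot feed the obvious morphism $U^4 \to \extpower^2(U\oplus U)$, $(u_1,u_2,u_3,u_4)\mapsto (u_1,u_3)\wedge(u_2,u_4)$, into \cref{homog-zariski-closed-morphism}, because the quasi-symplectic conditions allow $(u_1,u_3)$ and $(u_2,u_4)$ to be proportional (e.g.\ $u_1\neq 0$, $u_2 = u_3 = u_4 = 0$), so this morphism sends non-zero points of its closed homogeneous source to $0$ and the hypothesis of that lemma fails. The substance of the argument is precisely the reduction to a condition on the $2$-plane carried out in the second paragraph, which repackages $X$ as a bona fide affine cone over a projective variety and thereby makes its Zariski closedness manifest.
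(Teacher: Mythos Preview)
Your argument is correct and follows essentially the same strategy as the paper: realise \(X\) as the affine cone over a Zariski closed locus in \(\Gr(2,U\oplus U)\), that locus being cut out by an isotropy condition together with hyperplane conditions in the Pl\"ucker coordinates. Your packaging is in fact a little cleaner than the paper's. The paper uses only the \emph{symmetric} part of your form \(B\), namely the quadratic form \(q((u,v))=\psi(u,v)\), to define an intermediate orthogonal--Grassmannian cone \(X'\), and then cuts further by three linear conditions \(\Psi_{14},\Psi_{23},\Psi_{12}-\Psi_{34}\); the first two of these are only well-defined on \(X'\) (not on all of \(\extpower^2 U^2\)), which requires a small extra verification. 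By working directly with the non-symmetric bilinear form \(B\), you absorb the conditions \(\psi(u_1,u_4)=\psi(u_2,u_3)=0\) into the isotropy step and are left with the single globally defined linear functional \(\ell\) coming from the alternating form \(\beta\). The trade-off is that ``totally isotropic for a non-symmetric bilinear form is a closed condition on the Grassmannian'' is one step less standard than the same statement for a quadratic form, though it follows immediately either by viewing \(B|_P\) as a section of \(\mathcal{S}^\vee\otimes\mathcal{S}^\vee\) or by splitting \(B\) into its symmetric and alternating parts.
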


\begin{proof}
Let \( \extpowerdec^2 U^2 \) denote the set of decomposable vectors in \( \extpower^2 U^2 \):
\[ \extpowerdec^2 U^2 = \{ x \wedge y : x, y \in U^2 \}. \]
This is the cone over the Grassmannian \( \Gr(2, U^2) \) (embedded in \( \bP(\extpower^2 U^2) \) via the Plücker embedding), so it is a homogeneous Zariski closed subset of \( \extpower^2 U^2 \).

Define a quadratic form \( q \colon U^2 \to \bC \) by \( q((u, v)) = \psi(u, v) \).
Let
\[ X' = \{ x \wedge y \in \extpower^2 U^2 : q_{|\langle x,y \rangle} = 0 \} \]
where \( \langle x,y \rangle \) denotes the linear subspace of \( U^2 \) spanned by \( x \) and \( y \).
Then \( X' \) is the cone over the orthogonal Grassmannian \( \OGr(2, U^2, q) \subset \bP(\extpower^2 U^2) \), so it is a homogeneous Zariski closed subset of \( \extpowerdec^2 U^2 \).
% We can calculate
% \begin{align*}
%     X'
%     = \{ & x \wedge y \in \extpowerdec^2 U^2 : q_{|\langle x,y \rangle} = 0 \}
% \\  = \{ & (u_1, u_3) \wedge (u_2, u_4) \in \extpowerdec^2 U^2 : q(\lambda(u_1,u_3) + \mu(u_2,u_4)) = 0 \text{ for all } \lambda, \mu \in \bC \}
% \\  = \{ & (u_1, u_3) \wedge (u_2, u_4) \in \extpowerdec^2 U^2 : \psi(\lambda u_1 + \mu u_2, \lambda u_3 + \mu u_4) = 0 \text{ for all } \lambda, \mu \in \bC \}
% \\  = \{ & (u_1, u_3) \wedge (u_2, u_4) \in \extpowerdec^2 U^2 :
% \\       & \lambda^2 \psi(u_1, u_3) + \lambda \mu \bigl( \psi(u_2, u_3) + \psi(u_1, u_4) \bigr) + \mu^2 \psi(u_2, u_4) = 0 \text{ for all } \lambda, \mu \in \bC \}
% \\  = \{ & (u_1, u_3) \wedge (u_2, u_4) \in \extpowerdec^2 U^2 : \psi(u_1, u_3) = \psi(u_2, u_3) + \psi(u_1, u_4) = \psi(u_2, u_4) = 0 \}.
% \end{align*}

For an element \( (u_1, u_3) \wedge (u_2, u_4) \in \extpowerdec^2 U^2 \), we have:
\begin{align}
    &(u_1, u_3) \wedge (u_2, u_4) \in X'
\notag
\\ \Leftrightarrow {} & q(\lambda(u_1,u_3) + \mu(u_2,u_4)) = 0 \text{ for all } \lambda, \mu \in \bC
\notag
\\ \Leftrightarrow {} & \psi(\lambda u_1 + \mu u_2, \lambda u_3 + \mu u_4) = 0 \text{ for all } \lambda, \mu \in \bC
\notag
\\ \Leftrightarrow {} & \lambda^2 \psi(u_1, u_3) + \lambda \mu \bigl( \psi(u_2, u_3) + \psi(u_1, u_4) \bigr) + \mu^2 \psi(u_2, u_4) = 0 \text{ for all } \lambda, \mu \in \bC
\notag
\\ \Leftrightarrow {} & \psi(u_1, u_3) = \psi(u_2, u_3) + \psi(u_1, u_4) = \psi(u_2, u_4) = 0.
\label{eqn:X'-description}
\end{align}

It follows that we can define linear maps \( \Psi_{ij} \colon X' \to \bC \) for \( i=1,3 \) and $j=2,4$ by
\[ \Psi_{ij}((u_1, u_3) \wedge (u_2, u_4)) = \psi(u_i, u_j). \]
(When $i=1$ and $j=2$ or $i=3$ and $j=4$, the map $\Psi_{ij}$ is defined on the whole of $\extpower^2U^2$. When $i=1$ and $j=4$ or $i=3$ and $j=2$, we require (\ref{eqn:X'-description}).)
Using \eqref{eqn:X'-description}, we conclude that the set~\( X \) from the statement of the lemma is equal to
\[ X' \cap \ker(\Psi_{14}) \cap \ker(\Psi_{23}) \cap \ker(\Psi_{12} - \Psi_{34}). \]
Thus \( X \) is homogeneous and Zariski closed.
\end{proof}

\begin{lemma} \label{fX-is-closed}
\( \rho'_L(\gQ) v_0 \) is a Zariski closed subset of \( V_\bC \).
\end{lemma}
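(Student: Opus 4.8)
The plan is to realise \( \rho'_L(\gQ)v_0 \) as the image of the homogeneous Zariski closed set \( X \) of \cref{X-is-closed} under a homogeneous morphism of varieties, so that \cref{homog-zariski-closed-morphism} applies directly. All the work goes into writing down this morphism explicitly.

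First I would describe \( v_0 \) and its \( \rho'_L \)-translates in suitable coordinates. Grouping the columns of a \( 4 \times 4 \) matrix according to the partition \( \{1,3\} \sqcup \{2,4\} \) of the column indices yields a \( \sigma_L \)-equivariant direct sum decomposition \( W_\bC = \rM_4(\bC) = W' \oplus W'' \), where \( W' \) (resp.\ \( W'' \)) is the subspace of matrices whose second and fourth (resp.\ first and third) columns vanish. Recording the two possibly non-zero columns identifies each of \( W' \), \( W'' \) with \( U^2 \) in such a way that \( \sigma_L(g) \) acts as \( g \) applied diagonally to \( U^2 \). A short check using the basis \eqref{eqn:basis-m2} shows that, under these identifications, \( \iota_0(e_1), \iota_0(e_3) \in W' \) correspond to \( (f_1, f_3), (f_2, f_4) \in U^2 \) and \( \iota_0(e_2), \iota_0(e_4) \in W'' \) correspond to \( (f_1, f_3), (f_2, f_4) \in U^2 \), where \( f_1, \dotsc, f_4 \) denotes the standard basis of \( U \). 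Hence, for \( g \in \rM_4(\bC) \) with columns \( g_1, \dotsc, g_4 \), putting \( \omega_g = (g_1, g_3) \wedge (g_2, g_4) \in \extpower^2 U^2 \) and computing in \( \extpower^4(W' \oplus W'') = \bigoplus_{i+j=4} \extpower^i W' \otimes \extpower^j W'' \), one obtains
\[ \rho'_L(g) v_0 = -\,\omega_g \otimes \omega_g \in \extpower^2 U^2 \otimes \extpower^2 U^2 \subseteq \extpower^4 W_\bC = V_\bC , \]
where the two tensor factors \( \extpower^2 U^2 \) come from \( \extpower^2 W' \) and \( \extpower^2 W'' \).

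Next I would define \( \zeta \colon \extpower^2 U^2 \to V_\bC \) by \( \zeta(\omega) = -\,\omega \otimes \omega \), followed by the inclusion of the \( (\extpower^2 U^2) \otimes (\extpower^2 U^2) \)-summand. This \( \zeta \) is a morphism of affine varieties, homogeneous of degree \( 2 \), and it does not vanish at any non-zero vector, since in coordinates the \( e_j \otimes e_j \)-coordinate of \( \omega \otimes \omega \) is the square of the \( e_j \)-coordinate of \( \omega \). The map sending a matrix to the sequence of its columns identifies \( \gQ \) with the set of quasi-symplectic sequences, hence identifies \( \{ \omega_g : g \in \gQ \} \) with the set \( X \) of \cref{X-is-closed}; together with the displayed formula this gives \( \zeta(X) = \rho'_L(\gQ) v_0 \). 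Applying \cref{homog-zariski-closed-morphism} with \( X \subseteq \extpower^2 U^2 \) (homogeneous and Zariski closed by \cref{X-is-closed}) and \( f = \zeta \), we conclude that \( \rho'_L(\gQ) v_0 = \zeta(X) \) is homogeneous and Zariski closed in \( V_\bC \).

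The only fiddly part is the bookkeeping in the second paragraph: checking the correspondences for the four vectors \( \iota_0(e_i) \), and tracking the reordering sign when passing from \( a_1 \wedge a_2 \wedge b_1 \wedge b_2 \) (with \( a_i \in W' \), \( b_j \in W'' \)) to \( (a_1 \wedge a_2) \otimes (b_1 \wedge b_2) \). None of these signs matter: all that is used is that left translation sends \( v_0 \) to a ``perfect square'' \( \pm\,\omega \otimes \omega \) with \( \omega \in \extpower^2 U^2 \), which simultaneously makes \( \zeta \) a non-degenerate homogeneous morphism and identifies its image with \( \rho'_L(\gQ)v_0 \). So I do not expect a genuine obstacle — the content is entirely the observation that the \( \rho'_L(\gQ) \)-orbit of \( v_0 \) is a Veronese-type image of \( X \).
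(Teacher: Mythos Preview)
Your proposal is correct and follows essentially the same route as the paper: realise \( \rho'_L(\gQ)v_0 \) as the image of the closed homogeneous set \( X \) under a degree-\(2\) homogeneous morphism that kills only the origin, then invoke \cref{homog-zariski-closed-morphism}. The paper packages the morphism via two linear maps \( \beta_1,\beta_2 \colon U^2 \to W_\bC \) (placing a pair of columns in slots \( 1,3 \) and \( 2,4 \) respectively) and sets \( f(x\wedge y)=\beta_1(x)\wedge\beta_2(x)\wedge\beta_1(y)\wedge\beta_2(y) \); your decomposition \( W_\bC = W'\oplus W'' \) is exactly \( \im\beta_1\oplus\im\beta_2 \), and your \( \zeta(\omega)=-\omega\otimes\omega \) is the same map viewed through the canonical isomorphism \( \extpower^2 W'\otimes\extpower^2 W''\hookrightarrow\extpower^4 W_\bC \). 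Your formulation has the minor advantage that \( \zeta \) is visibly defined on all of \( \extpower^2 U^2 \), so no well-definedness check on decomposable representatives is needed, and the non-vanishing on non-zero vectors is immediate from \( \omega\otimes\omega=0\Rightarrow\omega=0 \).
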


\begin{proof}
% Let \( U = \bC^4 \).
Define two linear maps \( \beta_1, \beta_2 \colon U^2 \to W_\bC = \rM_4(\bC) \) by
\begin{align*}
   \beta_1(u_1, u_2) &= \begin{pmatrix} u_1 & 0 & u_2 & 0 \end{pmatrix},
\\ \beta_2(u_1, u_2) &= \begin{pmatrix} 0 & u_1 & 0 & u_2 \end{pmatrix}.
\end{align*}
This notation means that \( \beta(u_1, u_2) \) is the \( 4 \times 4 \) matrix with columns \( u_1, 0, u_2, 0 \), and similarly for \( \beta_2 \).

If \( z_1, z_2, z_3, z_4 \) denote the standard basis of \( U \), then
\begin{equation} \label{eqn:basis-beta}
\iota_0(e_1) = \beta_1(z_1, z_3), \quad \iota_0(e_2) = \beta_2(z_1, z_3), \quad
\iota_0(e_3) = \beta_1(z_2, z_4), \quad \iota_0(e_4) = \beta_2(z_2, z_4).
\end{equation}
The maps \( \beta_1 \) and \( \beta_2 \) commute with the action of \( \gG \) by left multiplication in the sense that
\[ g\beta_i(u_1, u_2) = \beta_i(gu_1, gu_2) \]
for all \( u_1, u_2 \in U \) and \( g \in \gG(\bC) \).
Consequently
\begin{align}
    \rho'_L(\gQ) v_0 = \{ 
  & g \iota_0(e_1) \wedge g \iota_0(e_2) \wedge g \iota_0(e_3) \wedge g \iota_0(e_4) : g \in \gQ \}
\notag
\\  = \{
  & \beta_1(gz_1, gz_3) \wedge \beta_2(gz_1, gz_3) \wedge \beta_1(gz_2, gz_4) \wedge \beta_2(gz_2, gz_4) : g \in \gQ \}
\notag
\\  = \{
  & \beta_1(u_1, u_3) \wedge \beta_2(u_1, u_3) \wedge \beta_1(u_2, u_4) \wedge \beta_2(u_2, u_4) :
\notag
\\& (u_1, u_2, u_3, u_4) \text{ is quasi-symplectic} \}.
\label{eqn:rho_LQv_0}
\end{align}

Define \( f \colon \extpowerdec^2 U^2 \to V_\bC = \extpower^4 W_\bC \) by
\[ f(x \wedge y) = \beta_1(x) \wedge \beta_2(x) \wedge \beta_1(y) \wedge \beta_2(y). \]
This is well-defined,
% (it is unchanged if we replace \( x \wedge y \) by \( ax \wedge a^{-1}y \) or by \( -y \wedge x \)),
homogeneous of degree~\( 2 \) and a morphism of varieties.
Thanks to \eqref{eqn:rho_LQv_0}, we have \( \rho'_L(\gQ)v_0 = f(X) \).

If \( x, y \in U^2 \) are linearly independent, then it is easy to check that \( \beta_1(x), \beta_1(y) \) are linearly independent, and that \( \beta_2(x), \beta_2(y) \) are linearly independent.
Furthermore, \( \im(\beta_1) \cap \im(\beta_2) = \{ 0 \} \).
Hence if \( x, y \in U^2 \) are linearly independent, then \( \beta_1(x), \beta_1(y), \beta_2(x), \beta_2(y) \in W_\bC \) are linearly independent.
In other words, if \( x \wedge y \in \bigl( \extpowerdec^2 U^2 \bigr) \setminus \{ 0 \} \), then \( f(x \wedge y) \neq 0 \).

Hence by \cref{homog-zariski-closed-morphism,X-is-closed}, \( f(X) \) is a Zariski closed subset of~\( V_\bC \).
\end{proof}

\begin{lemma} \label{hyperplane-section}
There exists a linear map \( s \colon V_\bC \to \bC \) such that
\[ \rho_L(\gG(\bC))v_0 = \rho'_L(\gQ) v_0 \cap s^{-1}(1). \]
\end{lemma}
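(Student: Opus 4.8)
The plan is to construct $s$ so that it reads off $\nu^2$ on the orbit $\rho'_L(\gQ)v_0$, and then to use the identity $\det = \nu^2$ on $\gGSp_4$ together with the dichotomy for quasi-symplectic sequences. I will use three preliminary facts. First, for $g \in \gGSp_4(\bC)$ one has $\det(g) = \nu(g)^2$ (both are characters of the connected group $\gGSp_4$ agreeing at the identity, with ratio valued in $\mu_2$), so, since $\rho_L = \rho'_L \otimes {\det}^{-1}$, we get $\rho_L(g)v_0 = \nu(g)^{-2}\rho'_L(g)v_0$. Second, $\nu$ extends to the monoid $\gQ$: for $g \in \gQ$ the columns $gz_1,\dots,gz_4$ form a quasi-symplectic sequence and $\psi(gz_1,gz_2) = \psi(gz_3,gz_4) = \nu(g)$. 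Third, $\rho'_L(\gQ)v_0$ is homogeneous — this is part of \cref{fX-is-closed}, and anyway is immediate from $\rho'_L(\lambda g) = \lambda^4\rho'_L(g)$ and the scaling-stability of $\gQ$.

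The crux is to produce a linear map $s\colon V_\bC \to \bC$ with $s(\rho'_L(g)v_0) = \nu(g)^2$ for all $g \in \gQ$. Let $C_1,\dots,C_4 \subset W_\bC = \rM_4(\bC)$ be the column subspaces. Each is stable under every $\sigma_L(g)$, with $\sigma_L(g)$ acting on $C_j \cong \bC^4$ through the standard representation, so the induced $\bZ_{\geq 0}^4$-grading on $V_\bC = \extpower^4 W_\bC$ is preserved by every $\rho'_L(g)$. Let $\pi\colon V_\bC \to C_1 \otimes C_2 \otimes C_3 \otimes C_4 \cong U^{\otimes 4}$ be the projection onto the multidegree-$(1,1,1,1)$ summand; then $\pi$ commutes with $\rho'_L(g)$, which acts on $U^{\otimes 4}$ by $g^{\otimes 4}$. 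On $U^{\otimes 4}$ define $\phi_0(u_1\otimes u_2\otimes u_3\otimes u_4) = \psi(u_1,u_2)\,\psi(u_3,u_4)$, pairing the slots to match the hyperbolic pairs of $\psi$; then $\phi_0 \circ g^{\otimes 4} = \nu(g)^2\,\phi_0$ for $g \in \gQ$. Expanding $v_0 = \iota_0(e_1)\wedge\cdots\wedge\iota_0(e_4)$ via \eqref{eqn:basis-beta} and retaining the four terms that involve each $C_j$ exactly once, one finds
\[ \pi(v_0) = z_1\otimes z_1\otimes z_4\otimes z_4 - z_1\otimes z_2\otimes z_4\otimes z_3 - z_2\otimes z_1\otimes z_3\otimes z_4 + z_2\otimes z_2\otimes z_3\otimes z_3, \]
and a short calculation gives $\phi_0(\pi(v_0)) = 2 \neq 0$. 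I would then set $s = \half\,\phi_0\circ\pi$, which is linear and satisfies $s(\rho'_L(g)v_0) = \half\,\phi_0(g^{\otimes 4}\pi(v_0)) = \nu(g)^2$.

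With this $s$ in hand the two inclusions are short. If $g \in \gG(\bC) = \gGSp_4(\bC) \subset \gQ$, then $\rho_L(g)v_0 = \nu(g)^{-2}\rho'_L(g)v_0$ lies in $\rho'_L(\gQ)v_0$ by homogeneity and $s(\rho_L(g)v_0) = \nu(g)^{-2}s(\rho'_L(g)v_0) = 1$, giving $\rho_L(\gG(\bC))v_0 \subseteq \rho'_L(\gQ)v_0 \cap s^{-1}(1)$. Conversely, if $w = \rho'_L(g)v_0$ with $g \in \gQ$ and $s(w) = 1$, then $\nu(g)^2 = 1$, so $\nu(g) \neq 0$; by the quasi-symplectic dichotomy $g = ch$ with $h \in \gSp_4(\bC)$ and $c \in \bC^\times$, $c^2 = \nu(g)$. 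Since $\det h = 1$ we have $\rho'_L(h)v_0 = \rho_L(h)v_0$, so $w = c^4\rho_L(h)v_0 = \rho_L(h)v_0$ (using $c^4 = \nu(g)^2 = 1$), which lies in $\rho_L(\gG(\bC))v_0$. The one step that is not pure bookkeeping is the construction of $s$, i.e.\ the fact that $\nu^2$ occurs as a linear coordinate of $\rho'_L(g)v_0$; the grading-and-contraction device reduces this to the numerical check $\phi_0(\pi(v_0)) \neq 0$, where the only subtlety is that the slots of $\phi_0$ must be paired to match the hyperbolic pairs of $\psi$ — the two other perfect matchings give contractions that vanish identically on the orbit, since $\psi(gz_1,gz_3) = \psi(gz_1,gz_4) = \psi(gz_2,gz_3) = \psi(gz_2,gz_4) = 0$ for $g \in \gQ$.
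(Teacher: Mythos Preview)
Your proof is correct, and the overall architecture---build a linear functional $s$ with $s(\rho'_L(g)v_0)=\det(g)=\nu(g)^2$ on $\gQ$, then read off both inclusions from $\rho_L=\rho'_L\otimes\det^{-1}$ and the quasi-symplectic dichotomy---matches the paper's. The difference lies in how $s$ is produced.

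The paper constructs $s$ more structurally: it takes the $\rM_4(\bC)$-equivariant linear map $\delta\colon W_\bC\to U$ sending a matrix to the sum of its first and fourth columns, and sets $s=\extpower^4\delta\colon\extpower^4 W_\bC\to\extpower^4 U\cong\bC$. Equivariance of $\delta$ under left multiplication immediately gives $s(\rho'_L(g)v)=\det(g)\,s(v)$ for all $g\in\rM_4(\bC)$, and one only has to check $s(v_0)\neq 0$ (which amounts to $z_1,z_3,z_2,z_4$ being a basis of $U$). Your route---project to the multidegree $(1,1,1,1)$ piece $C_1\otimes C_2\otimes C_3\otimes C_4\cong U^{\otimes4}$ and contract with $\psi\otimes\psi$ on the slot pairs $(1,2)$ and $(3,4)$---is more hands-on: it builds in the symplectic form explicitly and therefore sees $\nu(g)^2$ directly, at the cost of the four-term expansion of $\pi(v_0)$ and the numerical check $\phi_0(\pi(v_0))=2$. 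The paper's construction is slicker and never mentions $\psi$, while yours makes transparent \emph{why} the value on the orbit is $\nu^2$ rather than some other function; it also makes your closing remark about the ``wrong'' slot pairings a natural observation rather than an afterthought. Either way, once $s$ is in hand the two inclusions are the same short computation (your version via $g=ch$ with $h\in\gSp_4(\bC)$ is equivalent to the paper's observation that $s(w)=1$ forces $\det(g)=1$, hence $g\in\gQ\cap\gSL_4(\bC)\subset\gG(\bC)$).
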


\begin{proof}
We continue to use the functions \( \beta_1 \) and \( \beta_2 \) from the proof of \cref{fX-is-closed}.

Let \( \delta \colon W_\bC \to U \) be the linear map which sends the matrix with columns \( \begin{pmatrix} C_1 & C_2 & C_3 & C_4 \end{pmatrix} \) to the sum \( C_1 + C_4 \).
This map is equivariant with respect to multiplication by \( \rM_4(\bC) \) on the left and the compositions \( \delta \circ \beta_1, \delta \circ \beta_2 \colon U^2 \to U \) are the projections onto the two copies of~\( U \).

Taking the fourth exterior power, \( \delta \) induces a linear map
\[ s \colon V_\bC = \extpower^4 W_\bC \to \extpower^4 U \cong \bC. \]
By \eqref{eqn:basis-beta} and the descriptions of \( \delta \circ \beta_1 \), \( \delta \circ \beta_2 \), we have
\[ s(v_0) = \delta\beta_1(z_1, z_3) \wedge \delta\beta_2(z_1, z_3) \wedge \delta\beta_1(z_2, z_4) \wedge \delta\beta_2(z_2, z_4)
   =- z_1 \wedge z_2 \wedge z_3 \wedge z_4. \]
Since \( z_1, z_2, z_3, z_4 \) form a basis for \( U \), \( s(v_0) \neq 0 \).
Hence we can choose the isomorphism \( \extpower^4 U \cong \bC \) so that \( s(v_0) = 1 \).

The linear map \( \delta \) is \( \rM_4(\bC) \)-equivariant with respect to left multiplication.
Consequently \( s \) is \( \rM_4(\bC) \)-equivariant with respect to \( \rho'_L \) on \( V_\bC \) and multiplication by the determinant on \( \extpower^4 U \).
Twisting by \( \det^{-1} \), we deduce that \( s \) is \( \gGL_4(\bC) \)-equivariant with respect to \( \rho_L \) on \( V_\bC \) and the trivial action on \( \extpower^4 U \).
Thus
\[ s(\rho_L(g)v_0) = s(v_0) = 1 \text{ for all } g \in \gG(\bC). \]
Furthermore, if \( g \in \gG(\bC) \), then \( g = \lambda g' \) for some \( g' \in \gG(\bC) \cap \gSL_4(\bC) \) and \( \lambda \in \bC^\times \).
Then \( \rho_L(g) = \rho'_L(g') \).
Since \( \gG(\bC) \subset \gQ \), we conclude that \( \rho_L(g)v_0 \in \rho'_L(\gQ)v_0 \cap s^{-1}(1) \).

Conversely, if \( v \in \rho'_L(\gQ) v_0 \cap s^{-1}(1) \), then we can write \( v = \rho'_L(g) v_0 \) for some \( g \in \gQ \).
Then \( s(v) = \det(g)s(v_0) = \det(g) \).
So \( s(v) = 1 \) forces \( \det(g) = 1 \).
Thus \( g \in \gQ \cap \gSL_4(\bC) \subset \gG(\bC) \) and \( \rho_L(g)v_0 = \rho'_L(g)v_0 = v \).
\end{proof}

\subsection{Discriminants and orders} \label{ssec:discriminants}

Before the proof of \cref{quaternion-sp4-rep}(iii), we prove some results on discriminants, involutions and orders in semisimple algebras.

Let \( D \) be a semisimple \( \bQ \)-algebra.
We define a symmetric \( \bQ \)-bilinear form \( \phi \colon D \times D \to \bQ \) by
\[ \phi(x,y) = \Tr_{D/\bQ}(xy), \]
where \( \Tr_{D/\bQ} \) is the (non-reduced) trace of the regular representation of \( D \).
This form is non-degenerate by \cite[Ch.~I, Prop.~(1.8)]{KMRT98} (note that \cite{KMRT98} refers to the reduced trace, which is a non-zero multiple of \( \Tr_{D/\bQ} \) on each simple factor). For any order $R$ in \( D \), we define the \defterm{discriminant} of \( R \), denoted \( \disc(R) \), to be $\disc(R,\phi)$.

For any involution~\( \dag \) of \( D \), we define another \( \bQ \)-bilinear form \( \phi^\dag \colon D \times D \to \bQ \) by
\begin{equation} \label{eqn:dag-trace-form}
\phi^\dag(x, y) = \Tr_{D/\bQ}(xy^\dag).
\end{equation}
This form is symmetric by \cite[Ch.~I, Cor.~(2.2) and Cor.~(2.16)]{KMRT98}.

\begin{lemma} \label{disc-involution}
Let \( R \) be an order in a semisimple \( \bQ \)-algebra \( D \) and let \( \dag \) be any involution of \( D \).
Then
\( \disc(R, \phi^\dag) = \pm\disc(R) \).
\end{lemma}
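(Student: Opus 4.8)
The plan is to compare the two symmetric bilinear forms $\phi$ and $\phi^\dag$ on $D$ directly as quadratic forms on the $\QQ$-vector space underlying $D$, and to show that their Gram matrices with respect to a fixed $\ZZ$-basis of $R$ differ by multiplication by a matrix of determinant $\pm 1$. The key observation is that $\dag$, being an involution of $D$ as a $\QQ$-algebra, is in particular a $\QQ$-linear automorphism of the vector space $D$, and that $\phi^\dag(x,y) = \phi(x, y^\dag)$ by definition of $\phi^\dag$ and of $\phi$ (using that $\Tr_{D/\QQ}$ is conjugation-invariant, so $\Tr_{D/\QQ}(x y^\dag) = \Tr_{D/\QQ}((xy^\dag))$ with no rearrangement needed — we only need $\phi(x,z) = \Tr_{D/\QQ}(xz)$). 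Thus if $M$ denotes the matrix of the $\QQ$-linear map $x \mapsto x^\dag$ in a $\ZZ$-basis $\{e_1,\dots,e_n\}$ of $R$, and $G$, $G^\dag$ denote the Gram matrices of $\phi$, $\phi^\dag$ in that basis, then $G^\dag = G M^{\mathsf t}$ (or $M G$, depending on conventions), so $\disc(R,\phi^\dag) = \det(G^\dag) = \det(G)\det(M) = \pm \disc(R)$ once we know $\det(M) = \pm 1$.

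So the real content is to show that the $\QQ$-linear automorphism $x \mapsto x^\dag$ of $D$ has determinant $\pm 1$. First I would note that $\dag$ is an involution, so $(x \mapsto x^\dag)^2 = \id$, hence $M^2 = I$ and $\det(M)^2 = 1$, giving $\det(M) = \pm 1$ immediately. This is the crux and it is genuinely short. One subtlety to address: the basis $\{e_i\}$ is a $\ZZ$-basis of $R$, but $\dag$ need not preserve $R$, so $M$ need not be an integer matrix — however, $\det(M) = \pm 1$ holds regardless of integrality, purely from $M^2 = I$ over $\QQ$, and that is all we need since we are only tracking the discriminant up to sign. I would also recall explicitly from \cite[Ch.~I, Prop.~(1.8)]{KMRT98} that $\phi$ is non-degenerate (hence $\det G \neq 0$) and from \cite[Ch.~I, Cor.~(2.2) and Cor.~(2.16)]{KMRT98} that $\phi^\dag$ is symmetric, as already cited in the excerpt, so that $\disc(R,\phi^\dag)$ is a well-defined nonzero rational number and the statement makes sense.

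I anticipate essentially no obstacle: the proof is a two-line linear algebra argument once the identity $\phi^\dag(x,y) = \phi(x,y^\dag)$ and the relation $M^2 = I$ are in place. The only thing to be careful about is getting the Gram matrix transformation law right (left versus right multiplication by $M$ or $M^{\mathsf t}$), and since we only claim equality up to sign this does not matter for the conclusion. I would write it up as: fix a $\ZZ$-basis of $R$; let $M$ be the matrix of $\dag$; observe $M^2 = I$ so $\det M = \pm 1$; observe $\phi^\dag(e_i, e_j) = \phi(e_i, e_j^\dag)$, hence the Gram matrix of $\phi^\dag$ is obtained from that of $\phi$ by right-multiplication by (the transpose of) $M$; take determinants; conclude $\disc(R,\phi^\dag) = \pm\disc(R)$.
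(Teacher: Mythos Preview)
Your proposal is correct and is essentially identical to the paper's proof: the paper also fixes a $\bZ$-basis of $R$, writes the matrix $A$ of $\dag$ in that basis, notes $A^2 = I$ so $\det(A) = \pm 1$, and observes that the Gram matrix of $\phi^\dag$ is the Gram matrix of $\phi$ times $A$. Your remark that $\dag$ need not preserve $R$ (so $M$ need not be integral) but $\det(M) = \pm 1$ holds regardless is a nice point made explicit that the paper leaves implicit.
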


\begin{proof}
This is based on the proof of \cite[Prop.~2.9]{GR14}.

Let \( \{ d_1, \dotsc, d_n \} \) be a \( \bZ \)-basis for \( R \) and let \( A \in \gGL_n(\bQ) \) be the matrix such that
\( d_j^\dag = \sum_{i=1}^n A_{ij} d_i \).
Since \( \dag \) is \( \bQ \)-linear and an involution, we have \( A^2 = I \), hence, \( \det(A) = \pm 1 \).
Now
\( \phi^\dag(d_i, d_j) = \sum_{k=1}^n A_{kj} \phi(d_i, d_k) \)
so
\[ \disc(R, \phi^\dag) = \det(A) \disc(R) = \pm \disc(R).
\qedhere \]
\end{proof}

The following lemma is restricted to quaternion algebras because its proof makes use of the fact that the reduced norm is a quadratic form on a quaternion algebra.

\begin{lemma} \label{quat-alg-minkowski}
There exists an absolute constant \( \newC{quat-alg-minkowski-multiplier} \) with the following property.

Let \( R \) be an order in a quaternion algebra \( D \) over \( \bQ \).
Let \( L \) be a left \( R \)-module such that \( L_\bQ \) is isomorphic to the left regular representation of~\( D \).

Then there exists a left \( R \)-ideal \( I \subset R \) such that \( I \) is isomorphic to \( L \) as a left \( R \)-module and
\[ [R:I] \leq \refC{quat-alg-minkowski-multiplier} \abs{\disc(R)}^{3/2}. \]
\end{lemma}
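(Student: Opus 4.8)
The plan is to parametrise the left $R$-ideals isomorphic to $L$ and then run a geometry-of-numbers argument. First, a left $R$-ideal $I\subseteq R$ with $I_\bQ=D$ is isomorphic to $L$ as a left $R$-module if and only if $I=L\alpha$ for some $\alpha\in D^\times$: any left $R$-module isomorphism $L\to I$ becomes, after tensoring with $\bQ$, a left $D$-module endomorphism of $D$, hence right multiplication by some element, which must be a unit since the map is an isomorphism. So it suffices to find $\alpha\in D^\times$ with $L\alpha\subseteq R$ — equivalently $\alpha\in L^{-1}:=\{x\in D:Lx\subseteq R\}$, which is a full $\bZ$-lattice in $D$ and a right $R$-ideal — for which $[R:L\alpha]$ is suitably small. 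Fixing a Lebesgue measure on $D_\bR$ and writing $v(\Lambda)$ for the covolume of a full lattice $\Lambda$, the fact that right multiplication by $\alpha$ has determinant $\Nrd(\alpha)^2$ on $D_\bR\cong\rM_2(\bR)$ or $\bH$ gives, for $\alpha\in L^{-1}$,
\[ [R:L\alpha]=\Nrd(\alpha)^2\,\frac{v(L)}{v(R)}. \]
This quantity, and indeed the whole problem, is unchanged if $L$ is replaced by $L\beta$ for $\beta\in D^\times$, so everything below depends only on the isomorphism class of $L$.

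Next I would carry out the geometry of numbers. The reduced norm is a non-degenerate quaternary quadratic form on $D$ whose associated symmetric bilinear form is $\tfrac12\Trd(x\bar y)=\tfrac14\Tr_{D/\bQ}(x\bar y)$. Hence $\abs{\disc(R,\Nrd)}=\tfrac1{256}\abs{\disc R}$ by \cref{disc-involution} applied to the main involution $x\mapsto\bar x$, and since $\abs{\disc(\Lambda,\Nrd)}$ scales by $(v(\Lambda)/v(R))^2$ we get, for every full lattice $\Lambda\subseteq D$,
\[ \abs{\disc(\Lambda,\Nrd)}=\tfrac1{256}\abs{\disc R}\,\bigl(v(\Lambda)/v(R)\bigr)^2. \]
Applying Hermite's theorem (if $D\otimes\bR$ is a division algebra) or the classical bound for indefinite quaternary quadratic forms (if $D$ is split at $\infty$, a case in which the bound also produces a vector on which the form is non-zero) to the lattice $L^{-1}$ yields an absolute constant and an element $\alpha\in L^{-1}$ with $\Nrd(\alpha)\neq0$ — so $\alpha\in D^\times$ — and $\abs{\Nrd(\alpha)}\ll\abs{\disc(L^{-1},\Nrd)}^{1/4}$. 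Substituting into the formula for $[R:L\alpha]$ gives
\[ [R:L\alpha]\ll\abs{\disc R}^{1/2}\,\frac{v(L)\,v(L^{-1})}{v(R)^2}. \]

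It then remains to bound $v(L)v(L^{-1})/v(R)^2$ by an absolute multiple of $\abs{\disc R}$, and this is the heart of the matter and the step I expect to be the main obstacle. The quantity $v(L)v(L^{-1})$ is an invariant of the isomorphism class of $L$; it equals $v(R)^2$ when $L$ is invertible (locally principal), and the excess is concentrated at the finitely many primes where $R_p$ fails to be a maximal $\bZ_p$-order. Fixing a maximal order $\cO\supseteq R$ in $D$, one has $[\cO:R]^2=\abs{\disc R}/\abs{\disc\cO}\le\abs{\disc R}$, so it is enough to prove the local inequality $v(L_p)v((L^{-1})_p)/v(R_p)^2\le[\cO_p:R_p]^2$ at every prime $p$ and multiply over $p$. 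This is immediate where $R_p=\cO_p$; at the remaining primes it follows from the explicit classification of orders and one-sided ideals in quaternion algebras over $\bQ_p$, comparing the covolumes of $L_p$, $(L^{-1})_p$, $L_p\cO_p$ and $\cO_p$. The delicate point — and the place where one must be careful to obtain the exponent $3/2$ rather than a larger power — is exactly this local estimate at primes dividing the conductor of $R$ in $\cO$, where one-sided ideals need not be locally principal. Granting it, we obtain $[R:L\alpha]\ll\abs{\disc R}^{3/2}$, and $I=L\alpha$ is the required left $R$-ideal.
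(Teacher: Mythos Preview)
Your approach diverges from the paper's, and the divergence introduces a real gap. You apply the geometry of numbers to $L^{-1}$, which forces you to bound $v(L)v(L^{-1})/v(R)^2$; you acknowledge this is ``the main obstacle'' and defer to an unproved local inequality $v(L_p)v((L^{-1})_p)/v(R_p)^2\le[\cO_p:R_p]^2$. This inequality is not obvious at primes where $R_p$ is far from maximal --- there are infinitely many isomorphism types of local orders in $\rM_2(\bQ_p)$, and their one-sided ideal theory is genuinely intricate --- and ``the explicit classification'' does not settle it without further work. As written, the argument is incomplete precisely at the step that determines the exponent $3/2$.

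The paper sidesteps this entirely. Fix an isomorphism $\eta_1\colon D\to L_\bQ$ and set $I_1=\eta_1^{-1}(L)\subset D$. Apply Blaney's theorem (geometry of numbers for an indefinite quaternary form) to $I_1$ itself, not to $L^{-1}$: this produces $s\in I_1$ with $0<\abs{\Nm_{D/\bQ}(s)}\ll\abs{\disc(I_1,\phi^*)}^{1/2}$. Now $I_2:=I_1s^{-1}$ is a left $R$-lattice in $D$ containing $1$, hence containing $R$. The key point is that $\abs{\disc(I_2,\phi^*)}=\abs{\Nm_{D/\bQ}(s)}^{-2}\abs{\disc(I_1,\phi^*)}$ is bounded below by an \emph{absolute} constant, independent of both $R$ and $L$, because the Blaney bound on $\abs{\Nm_{D/\bQ}(s)}$ exactly cancels the $\disc(I_1,\phi^*)$ factor. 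Hence $[I_2:R]^2=\abs{\disc R}/\abs{\disc(I_2,\phi^*)}\ll\abs{\disc R}$. Finally $I=[I_2:R]\cdot I_2\subset R$ satisfies $[R:I]=[I_2:R]^3\ll\abs{\disc R}^{3/2}$. No local analysis, no $L^{-1}$, and the dependence on the particular lattice $L$ washes out in the cancellation.
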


\begin{proof}
This is a generalisation to quaternion algebras of Minkowski's bound for ideal classes in a number field, and the proof is similar.

Choose an isomorphism of \( D \)-modules \( \eta_1 \colon D \to L_\bQ \) and let \( I_1 = \eta_1^{-1}(L) \).
% After multiplying \( \eta_1 \) by a rational scalar, we may assume that \( I_1 \subset R \).

Since \( D \) is a quaternion algebra, it possesses a canonical involution~\( * \) defined by
\( d^* = \Trd_{D/\bQ}(d) - d \), where we write $\Trd_{D/\bQ}$ for the reduced trace.
The canonical involution has the property that
\begin{equation} \label{eqn:nrd-phi}
\phi^*(d,d) = \Tr_{D/\bQ}(dd^*) = 4\Nrd_{D/\bQ}(d) = \pm4 \Nm_{D/\bQ}(d)^{1/2}.
\end{equation}
% The reduced trace of an element of \( R \) is an integer, so \( * \) maps \( R \) to itself.
% Hence the bilinear form \( \phi^* \) takes integer values on \( R \), so in particular on \( I_1 \).
% Therefore by \cite[p.~135, Lemma~3.1]{Cas78}

By \cite[Lemma~1]{Bla48}, there is an element \( s \in I_1 \) satisfying
\[ 0 < \abs{\phi^*(s,s)} \leq \newC* \abs{\disc(I_1, \phi^*)}^{1/4} \]
(the exponent is \( 1/\rk_\bZ(I_1) \)).
Hence by \eqref{eqn:nrd-phi}, there is a constant \( \newC{s-norm-multiplier} \) such that
\begin{equation} \label{eqn:nm-s-bound}
0 < \abs{\Nm_{D/\bQ}(s)} \leq \refC{s-norm-multiplier} \abs{\disc(I_1, \phi^*)}^{1/2}.
\end{equation}

Since \( \Nm_{D/\bQ}(s) \neq 0 \), \( s \) is invertible in \( D \).
Let \( I_2 = I_1 s^{-1} \subset D \).
Then \( I_2 \) is a left \( R \)-module isomorphic to \( L \).
Since \( 1 = ss^{-1} \in I_2 \), we have \( R \subset I_2 \).

Using \eqref{eqn:nm-s-bound}, we can calculate
\[ \abs{\disc(I_2, \phi^*)}
   = \abs{\Nm_{D/\bQ}(s^{-1})}^2 \, \abs{\disc(I_1, \phi^*)}
   \geq \refC{s-norm-multiplier}^{-2}. \]
Consequently, using \cref{disc-involution},
\[ [I_2:R]^2 = \frac{\abs{\disc(R, \phi^*)}}{\abs{\disc(I_2, \phi^*)}} \leq \refC{s-norm-multiplier}^2 \abs{\disc(R)}. \]

Finally let \( I = [I_2:R] I_2 \).
This is contained in \( R \) and is a left \( R \)-submodule of \( D \), so it is a left \( R \)-ideal.
It satisfies
\[ [R:I] = \frac{[I_2:I]}{[I_2:R]} = \frac{[I_2:R]^4}{[I_2:R]} \leq \refC{s-norm-multiplier}^3 \abs{\disc(R)}^{3/2}.
\qedhere \]
\end{proof}

In the following lemma, we note that \( S = \End_R(L) \) is an order in \( \End_D(L_\bQ) \cong \End_D(D) \cong D^\op \).  Thus \( S \) is an order in a quaternion algebra, so \( \disc(S) \) is defined.

\begin{lemma} \label{disc-S-bound}
There exists an absolute constant \( \newC{disc-S-multiplier} \) with the following property.

Let \( R \) be an order in a quaternion algebra \( D \) over \( \bQ \).
Let \( L \) be a left \( R \)-module such that \( L_\bQ \) is isomorphic to the left regular representation of~\( D \).
Let \( S = \End_R(L) \).
Then
\[ \abs{\disc(S)} \leq \refC{disc-S-multiplier} \abs{\disc(R)}^4. \]
\end{lemma}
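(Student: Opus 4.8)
The plan is to use \cref{quat-alg-minkowski} to replace $L$ by a left ideal of $R$ of controlled index, and then to compare lattice discriminants inside $D$ directly.

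Since $S=\End_R(L)$ depends on $L$ only up to isomorphism of left $R$-modules, \cref{quat-alg-minkowski} lets us assume $L=I$, a left $R$-ideal contained in $R$ with $[R:I]\le\refC{quat-alg-minkowski-multiplier}\abs{\disc(R)}^{3/2}$. Identifying $\End_D(L_\bQ)=\End_D(D)$ with $D^\op$ acting by right multiplication (an $R$-linear endomorphism of $I$ extends to a $D$-linear endomorphism of $I_\bQ=D$, hence is given by $x\mapsto xd$ with $d$ its value at $1$), we see that $S$ is anti-isomorphic to the right order $O_r(I)=\{\,d\in D:Id\subseteq I\,\}$, which is an order in $D$. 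As the trace form $\phi$ is symmetric and intrinsic to $D$, this anti-isomorphism is an isometry, so $\abs{\disc(S)}=\abs{\disc(O_r(I))}$; thus $O_r(I)$ and $R$ are now orders in the \emph{same} quaternion algebra $D$.

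The crux is the containment $I\subseteq R\cap O_r(I)$. Indeed $I\subseteq R$ by construction, and since $I$ is a left ideal of $R$ we have $I\cdot I\subseteq R\cdot I\subseteq I$, so every element of $I$ lies in $O_r(I)$. Hence $\Lambda:=R\cap O_r(I)$ is a full $\bZ$-lattice in $D$ (it contains the full lattice $I$) contained in each of $R$ and $O_r(I)$, and the scaling formula for discriminants under finite-index sublattices yields
\[
[R:\Lambda]^2\abs{\disc(R)}=\abs{\disc(\Lambda)}=[O_r(I):\Lambda]^2\abs{\disc(S)}.
\]
Therefore $\abs{\disc(S)}\le[R:\Lambda]^2\abs{\disc(R)}\le[R:I]^2\abs{\disc(R)}\le\refC{quat-alg-minkowski-multiplier}^2\abs{\disc(R)}^4$, using $I\subseteq\Lambda\subseteq R$ for the middle inequality, so the lemma holds with $\refC{disc-S-multiplier}=\refC{quat-alg-minkowski-multiplier}^2$. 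There is no real obstacle here: the only care needed is in the bookkeeping of the identification of $S$ with $O_r(I)$ inside $D$ — the place where the remark preceding the lemma statement (that $S$ is an order in $\End_D(L_\bQ)\cong D^\op$) is used — after which the discriminant estimate is a one-line lattice computation.
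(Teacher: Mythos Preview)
Your proof is correct and follows the same overall strategy as the paper: reduce to $L=I\subset R$ via \cref{quat-alg-minkowski}, identify $S$ with the right order $O_r(I)$ inside $D$ (the paper does this through the map $\mu\colon D^\op\to\End_D(D)$), and then compare discriminants of lattices in $D$.

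The one genuine difference is in the final comparison. The paper shows $[R:I]\,\mu(R)\subset S$ (since $\mu(r)x=xr\in R$ for $x\in I$, and $[R:I]\cdot R\subset I$), and then bounds $\abs{\disc(S)}$ by the discriminant of this scaled copy of $R$. You instead observe directly that $I\subset O_r(I)$ (because $I\cdot I\subset R\cdot I\subset I$), so $\Lambda=R\cap O_r(I)$ contains $I$; equating $\abs{\disc(\Lambda)}$ computed from each side gives $\abs{\disc(S)}\le [R:\Lambda]^2\abs{\disc(R)}\le[R:I]^2\abs{\disc(R)}$. Your route avoids scaling the lattice and lands immediately on the factor $[R:I]^2$, yielding the exponent~$4$ in one clean step; the paper's route requires tracking the effect of multiplying a rank-$4$ lattice by the integer $[R:I]$, and the displayed computation there is in fact looser than yours if carried out carefully. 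So your argument is a mild sharpening of the same idea.
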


\begin{proof}
By \cref{quat-alg-minkowski}, \( L \) is isomorphic to a left \( R \)-ideal \( I \subset R \) which satisfies
\[ [R:I] \leq \refC{quat-alg-minkowski-multiplier} \abs{\disc(R)}^{3/2}. \]
Then \( S = \End_R(L) = \End_R(I) \subset \End_D(D) \), 
where the latter is the ring of endomorphisms of \( D \) as a left \( D \)-module.

We can define a multiplication-reversing function \( \mu \colon D \to \End_D(D) \) by
\[ \mu(d) x = xd \text{ for all } d, x \in D. \]
This is a \( \bQ \)-algebra isomorphism \( D^\op \to \End_D(D) \) \cite[Ch.~8, Lemma~1.10]{Sch85}.

If \( r \in R \) and \( x \in I \), then we have \( \mu(r) x = xr \in R \)
since \( I \subset R \) and \( R \) is closed under multiplication.
Hence
\[ [R:I] \mu(r) x \in I. \]
Thus \( [R:I]\mu(r) \in \End_R(I) \) for all \( r \in R \).

Let \( \phi_S \) denote the trace form on \( \End_D(D) = S_\bQ \).
Since \( \mu \) is an algebra isomorphism, it pulls back \( \phi_S \) to the trace form on \( D^\op \), which is equal to the trace form on \( D \).
Hence \( \disc(\mu(R), \phi_S) = \disc(R) \).

Since \( [R:I]\mu(R) \subset S \), we conclude that
\[ \abs{\disc(S)} \leq \abs{\disc([R:I]\mu(R), \phi_S)} = [R:I]^2\,\abs{\disc(R)} \leq \refC{quat-alg-minkowski-multiplier}^2 \abs{\disc(R)}^4.
\qedhere \]
\end{proof}

\subsection{Choice of \texorpdfstring{\( v_u \)}{vu}, \texorpdfstring{\( \gamma \)}{gamma} and \texorpdfstring{\( h \)}{h}} \label{sec:gamma-vu}

Throughout this section, \( c_n \) will denote absolute constants (in particular, independent of~\( u \)).

We will now prove \cref{quaternion-sp4-rep}(iii).
Thus, we are given \( u \in \gG(\bR) = \gGSp_4(\bR) \) such that the algebraic group \( \gH_u = u\gH_{0,\bR}u^{-1} \subset \gG_\bR \) is defined over~\( \bQ \).
Multiplying \( u \) by a scalar does not change \( \gH_u \), so we may assume that \( u \) multiplies the symplectic form \( \psi \) by \( \pm 1 \); consequently \( \det(u) = 1 \).

Since \( \gH_u \) is defined over \( \bQ \), the \( \bR \)-vector space \( u\iota_0(E_{0,\bR})u^{-1} \) is also defined over~\( \bQ \).
Hence the \( \bQ \)-algebra
\[ E = \rM_4(\bQ) \cap u\iota_0(E_{0,\bR})u^{-1} \]
satisfies \( E_\bR = u\iota_0(E_{0,\bR})u^{-1} \).

Let \( D_0 \) and \( D \) denote the centralisers in \( \rM_4(\bQ) \) of \( \iota_0(E_0) \) and \( E \), respectively.
Then \( D_\bR = uD_{0,\bR} u^{-1} \), so there is an isomorphism of \( \bR \)-algebras \( \alpha \colon D_\bR \to D_{0,\bR} \) defined by
\[ \alpha(d) = u^{-1}du. \]

Note that
\[ D_0 = \Bigl\{ \fullmatrix{aI}{bI}{cI}{dI} \in \rM_4(\bQ) : a, b, c, d \in \bQ \Bigr\}. \]
Hence the ``transpose'' involution of \( \rM_4(\bQ) \) restricts to an involution of \( D_0 \), which we denote by \( t \).
This involution is positive, which is to say that the corresponding trace form (see \eqref{eqn:dag-trace-form}) is positive definite.
Let
\[ \dag = \alpha^{-1} \circ t \circ \alpha \colon D_\bR \to D_\bR. \]

Let \( L = \bZ^4 \).
Let \( R = \End_E(L)\), which is the order in~\( D \) consisting of those elements preserving $L$.

\begin{lemma} \label{dag-R} %\label{psi-ddag}
%For all \( x, y \in V_\bR \) and \( d \in D_\bR \), we have \( \psi(dx, y) = \psi(x, d^\dag y) \).
%\( \dag \) maps \( R \) into \( R \).
The quadratic form \( \phi^\dag \) takes integer values on \( R \).
\end{lemma}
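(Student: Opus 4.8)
The plan is to identify the involution $\dag$ of $D_\bR$ with the adjoint involution attached to the symplectic form $\psi$, which is defined over $\bQ$ and visibly preserves integral matrices; the integrality of $\phi^\dag$ on $R$ then drops out immediately. Write $\Psi = \fullmatrix{J}{0}{0}{J} = \iota_0(J)$ for the Gram matrix of $\psi$, and note that $\Psi \in \rM_4(\bZ)$ with $\Psi^{-1} = -\Psi \in \rM_4(\bZ)$, and that $g^t\Psi g = \nu(g)\Psi$ with $\nu(g) \in \bR^\times$ for $g \in \gGSp_4(\bR)$. Let $\sigma$ denote the anti-involution $d \mapsto \Psi^{-1}d^t\Psi$ of $\rM_4$; it preserves $\rM_4(\bZ)$, $\rM_4(\bQ)$ and $\rM_4(\bR)$ since $\Psi$ and $\Psi^{-1}$ are integral.

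As preparation I would first check that every element of $E_\bR = u\iota_0(E_{0,\bR})u^{-1}$ is a symplectic similitude: for $e = u\iota_0(A)u^{-1}$ with $A \in \rM_2(\bR)$ one has $\iota_0(A)^t\Psi\iota_0(A) = \det(A)\Psi$, and together with $u^t\Psi u = \nu(u)\Psi$ (the factors of $\nu(u)$ cancelling) this gives $e^t\Psi e = \det(A)\Psi$. From this I extract two facts: $\sigma$ maps the centraliser $D_\bR$ of $E_\bR$ into itself (apply the anti-homomorphism $\sigma$ to $de = ed$ for invertible $e \in E_\bR$, use $\sigma(e) = \det(A)\,e^{-1}$, and recall that invertible elements span $E_\bR$); and $E_\bR$ centralises $D_\bR$, which is immediate from the definition of $D_\bR$ as the centraliser of $E_\bR$.

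The main step is to show $d^\dag = \sigma(d)$ for all $d \in D_\bR$. Unwinding $\dag = \alpha^{-1}\circ t\circ\alpha$ with $\alpha(d) = u^{-1}du$ gives $d^\dag = u(u^{-1}du)^t u^{-1} = (uu^t)\,d^t\,(uu^t)^{-1}$, and rewriting $d^t = \Psi\,\sigma(d)\,\Psi^{-1}$ turns this into $d^\dag = (uu^t\Psi)\,\sigma(d)\,(uu^t\Psi)^{-1}$. Since $u^t\Psi = \nu(u)\Psi u^{-1}$, we get $uu^t\Psi = \nu(u)\,u\Psi u^{-1} = \nu(u)\,u\iota_0(J)u^{-1} \in E_\bR$; as $\sigma(d) \in D_\bR$ and $E_\bR$ centralises $D_\bR$, conjugation by $uu^t\Psi$ fixes $\sigma(d)$, so $d^\dag = \sigma(d)$. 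In particular $\dag$ preserves $D = D_\bR \cap \rM_4(\bQ)$ and, $\Psi$ being integral, it preserves $R = D \cap \rM_4(\bZ)$.

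To finish: for $x, y \in R$ we then have $y^\dag \in R$, hence $xy^\dag \in R$, and so $\phi^\dag(x,y) = \Tr_{D/\bQ}(xy^\dag) \in \bZ$, because the non-reduced trace of any element of an order in a semisimple $\bQ$-algebra is a rational integer (it is the trace of the integer matrix giving left multiplication on a $\bZ$-basis of the order); in particular the quadratic form $x \mapsto \phi^\dag(x,x)$ is $\bZ$-valued on $R$. The one step that needs an idea is the identification $d^\dag = \sigma(d)$, and the observation that unlocks it is that the Gram matrix $\Psi$ of $\psi$ lies in $\iota_0(E_0)$, so that $uu^t\Psi$ lands in $E_\bR$ and conjugation by it is trivial on $D_\bR$; after that the argument is formal and integrality is automatic.
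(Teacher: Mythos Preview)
Your argument is correct and follows essentially the same approach as the paper: both proofs identify $\dag$ with the adjoint involution for the symplectic form $\psi$ (the paper via the identity $\psi(dx,y)=\pm\psi(x,d^\dag y)$, you via the matrix formula $d^\dag=\Psi^{-1}d^t\Psi$), then use integrality of $\psi$ to see that $\dag$ preserves $R$, and conclude by the integrality of traces on an order. Your route is a matrix reformulation of the paper's bilinear-form computation, the key observation in both cases being that $\Psi=\iota_0(J)$ sits inside $\iota_0(E_0)$, so its conjugate $u\Psi u^{-1}$ lies in $E_\bR$ and commutes with $D_\bR$; one small bonus of your write-up is that it works for arbitrary $\nu(u)$ rather than requiring the normalisation $\nu(u)=\pm1$ used in the paper.
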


\begin{proof}
Thanks to our choice of symplectic form \( \psi \) on \( \bQ^4 \), we have \( \psi(d_0x, y) = \psi(x, d_0^t y) \)
for all \( x, y \in \bR^4 \) and \( d_0 \in D_{0,\bR} \).
Using this, the fact that \( u \) multiplies \( \psi \) by \( \pm 1 \), and the definition of~\( \dag \), we can calculate, for $d\in D_\bR$,
\begin{align*}
    \psi(d x, y)
  & = \psi(u \alpha(d) u^{-1} x, \, y)
    = \pm\psi(\alpha(d) u^{-1} x, \, u^{-1} y)
\\& = \pm\psi(u^{-1} x, \, \alpha(d)^t u^{-1} y)
    = \pm\psi(u^{-1} x, \, \alpha(d^\dag) u^{-1} y)
\\& = \pm\psi(x, \, u \alpha(d^\dag) u^{-1} y)
    = \pm\psi(x, d^\dag y).
\end{align*}

Since \( \psi \) is a perfect pairing on \( L \), this implies that \( \dag \) maps \( R \) into \( R \).
It follows that \( \phi^\dag(x,y) = \Tr_{D/\bQ}(xy^\dag) \in \bZ \) for all \( x, y \in R \).
\end{proof}

Observe that $L_\QQ$ is isomorphic to the left regular representation of $D_0$. Hence, $\alpha$ induces an isomorphism between $L_\RR$ and the left regular representation of $D_\RR$ and it follows easily that this isomorphism can be scaled to produce an isomorphism between $L_\QQ$ and the left regular representation of $D$. Therefore, by \cref{quat-alg-minkowski}, there is a left \( R \)-ideal \( I \subset R \) which is isomorphic to \( L \) as a left \( R \)-module, such that
\[ [R:I] \leq \refC{quat-alg-minkowski-multiplier} \abs{\disc(R)}^{3/2}. \]
Choose a left \( R \)-module isomorphism \( \eta \colon I \to L \).

Fix an isomorphism \( \eta_0 \colon D_0 \to \bQ^4 \) of left \( D_0 \)-modules (independent of \( u \)).

\begin{lemma} \label{uh}
There exists \( h \in \gH_0(\bR) \) such that \( \eta \alpha^{-1} \eta_0^{-1} = uh \) in \( \Aut(\bR^4) \).
\end{lemma}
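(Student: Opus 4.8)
The plan is to compute the composition $g := \eta\alpha^{-1}\eta_0^{-1} \in \Aut(\bR^4)$ explicitly and to show that $u^{-1}g$ centralises $D_{0,\bR}$ inside $\End_\bR(\bR^4) = \rM_4(\bR)$; since the centraliser of $D_{0,\bR}$ there is exactly $\iota_0(E_{0,\bR})$ and $u^{-1}g$ is visibly invertible, this forces $u^{-1}g \in \iota_0(E_{0,\bR})^\times = \iota_0(\gGL_2(\bR)) = \gH_0(\bR)$, and we set $h = u^{-1}g$.

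First I would pass to $\bR$ and record the module-theoretic compatibility of each of the three maps. Since $I \subset R \subset D$ with $[R:I]$ finite and $R$ an order in $D$, we have $I_\bQ = D$, hence $I_\bR = D_\bR$; likewise $L_\bR = \bR^4$. Tensoring the $R$-linear isomorphism $\eta \colon I \to L$ first with $\bQ$ and then with $\bR$ (here using $R_\bQ = D$) gives an isomorphism $\eta \colon D_\bR \to \bR^4$ which intertwines left multiplication in $D_\bR$ on the source with the natural matrix action of $D_\bR \subset \rM_4(\bR)$ on the target. Similarly, $\eta_0 \colon D_0 \to \bQ^4$ being an isomorphism of left $D_0$-modules base-changes to $\eta_0 \colon D_{0,\bR} \to \bR^4$ intertwining left multiplication in $D_{0,\bR}$ with the matrix action of $D_{0,\bR}$ on $\bR^4$. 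Finally $\alpha^{-1} \colon D_{0,\bR} \to D_\bR$ is the $\bR$-algebra isomorphism $d_0 \mapsto u d_0 u^{-1}$, so it intertwines left multiplication by $d_0$ with left multiplication by $u d_0 u^{-1}$.

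Next comes the key computation. For $d_0 \in D_{0,\bR}$ and $v \in \bR^4$, writing $d_0 v$ for the matrix action, the three compatibilities above give in turn
\[ g(d_0 v) = \eta\bigl(\alpha^{-1}(d_0\,\eta_0^{-1}(v))\bigr) = \eta\bigl((u d_0 u^{-1})\,\alpha^{-1}(\eta_0^{-1}(v))\bigr) = (u d_0 u^{-1})\, g(v). \]
Thus $g\, d_0\, g^{-1} = u d_0 u^{-1}$ in $\rM_4(\bR)$ for every $d_0 \in D_{0,\bR}$, which rearranges to $(u^{-1}g)\, d_0\, (u^{-1}g)^{-1} = d_0$; that is, $u^{-1}g$ commutes with all of $D_{0,\bR}$.

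Finally I would identify the centraliser. Over $\bQ$, $D_0$ is a $4$-dimensional simple subalgebra of the central simple $\bQ$-algebra $\rM_4(\bQ)$, so by the double centraliser theorem its centraliser has dimension $16/4 = 4$; since $\iota_0(E_0)$ is $4$-dimensional and commutes with $D_0$ (equivalently, in the identification $\rM_4 \cong \rM_2 \otimes \rM_2$ under which $D_0 = \rM_2 \otimes I$ and $\iota_0(E_0) = I \otimes \rM_2$ the two factors are mutually centralising), the centraliser of $D_0$ in $\rM_4(\bQ)$ is exactly $\iota_0(E_0)$. Base-changing, the centraliser of $D_{0,\bR}$ in $\rM_4(\bR)$ is $\iota_0(E_{0,\bR})$, so $u^{-1}g = \iota_0(M)$ for some $M \in E_{0,\bR} = \rM_2(\bR)$; as $\det(u^{-1}g) = (\det M)^2 \neq 0$ we get $M \in \gGL_2(\bR)$, hence $u^{-1}g = \iota_0(M) \in \gH_0(\bR)$, and $h := u^{-1}g$ works. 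I expect the only real obstacle to be bookkeeping: keeping straight the two left-module structures on $\bR^4$ (as $L_\bR$ with the $D_\bR$-action versus as the target of $\eta_0$ with the $D_{0,\bR}$-action) and the sides on which each algebra multiplies, together with the (routine but needed) check that tensoring the $R$-linear map $\eta$ up to $\bR$ genuinely produces a $D_\bR$-linear map. There is no analytic or geometric content.
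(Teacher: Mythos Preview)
Your proof is correct and follows essentially the same approach as the paper: both arguments show that \( u^{-1}\eta\alpha^{-1}\eta_0^{-1} \) commutes with the natural \( D_{0,\bR} \)-action on \( \bR^4 \), then invoke the double centraliser theorem to conclude it lies in \( \iota_0(E_{0,\bR})^\times = \gH_0(\bR) \). The only difference is presentational---you carry out the intertwining as a single direct computation, whereas the paper tracks the module structures through each map of the composition separately---and your explicit bookkeeping remarks about tensoring \( \eta \) up to \( \bR \) and distinguishing the two left-module structures match exactly the care the paper takes.
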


\begin{proof}
We can view \( D_\bR \) as a left \( D_{0,\bR} \)-module with the action given by
\begin{equation} \label{eqn:alpha-action}
d\cdot x = \alpha^{-1}(d)x.
\end{equation}
Now \( \alpha^{-1}\eta_0^{-1} \colon \bR^4 \to D_\bR \) is an isomorphism of left \( D_{0,\bR} \)-modules with respect to the natural action on \( \bR^4 \) and the action~\eqref{eqn:alpha-action} on \( D_\bR \).

Since \( \eta \colon D_\bR \to \bR^4 \) is an isomorphism of \( D_\bR \)-modules, it is also an isomorphism of \( D_{0,\bR} \)-modules with respect to the action \eqref{eqn:alpha-action} on \( D_\bR \) and the natural action conjugated by~\( u \) on \( \bR^4 \).
(We use here the fact that \( \alpha^{-1} \) is conjugation by~\( u \).)

Finally \( u^{-1} \colon \bR^4 \to \bR^4 \) is an isomorphism of \( D_{0,\bR} \)-modules with respect to the natural action conjugated by~\( u \) on the domain and the natural action on the target.

Composing these, we deduce that \( u^{-1} \eta \alpha^{-1} \eta_0^{-1} \) is an automorphism of \( \bR^4 \) with its natural action of \( D_{0,\bR} \).
In other words, \( u^{-1}\eta\alpha^{-1}\eta_0^{-1} \) lies in the centraliser of \( D_{0,\bR} \) in \( \rM_4(\bR) \).
By the double centraliser theorem, this centraliser is equal to \( \iota_0(E_{0,\bR}) \) and so its group of invertible elements is equal to \( \iota_0(\gGL_2(\bR)) = \gH_0(\bR) \).
\end{proof}

\begin{lemma} \label{det-lower-bound}
The absolute value of \( \det(h) \) is uniformly bounded.
\end{lemma}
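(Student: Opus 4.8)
The plan is to compute $\abs{\det(h)}$ exactly, up to an absolute multiplicative constant, in terms of the discriminant of the ideal $I$, and then to invoke the trivial bound $\abs{\disc(R)}\geq 1$. The key observation is that one should not try to estimate the three factors $\eta$, $\alpha^{-1}$, $\eta_0^{-1}$ of $uh$ separately (the map $\eta$ alone has no uniform size bound), but instead read $\det(h)$ off from the lattice $(uh)^{-1}(\bZ^4)$. Concretely: by the normalisation made at the start of this section $u$ has multiplier $\pm1$, so $\det(u)=1$ and $\abs{\det(h)}=\abs{\det(uh)}=\abs{\det(\eta\alpha^{-1}\eta_0^{-1})}$ by \cref{uh}. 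Set $M=\eta_0(\alpha(I))$; since $I$ is a full $\bZ$-lattice in $D$ and $\alpha\colon D_\bR\to D_{0,\bR}$, $\eta_0\colon D_{0,\bR}\to\bR^4$ are $\bR$-linear isomorphisms, $M$ is a full lattice in $\bR^4$. Because $\eta$ restricts to a bijection $I\to L=\bZ^4$, the composite $uh=\eta\circ\alpha^{-1}\circ\eta_0^{-1}$ carries $M$ onto $\bZ^4$; comparing covolumes relative to $\bZ^4$ then gives $\abs{\det(h)}=1/\operatorname{covol}(M)$, so it remains to bound $\operatorname{covol}(M)$ below.

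To evaluate $\operatorname{covol}(M)$ I would use two standard facts. First, since $\eta_0$ is a fixed $\bR$-linear isomorphism, the ratio $\operatorname{covol}(\eta_0(N))/\abs{\disc(N,\phi_0)}^{1/2}$ --- where $\phi_0$ denotes the trace form on $D_0$ as in section~\ref{ssec:discriminants} --- is independent of the full lattice $N\subset D_{0,\bR}$, because moving $N$ by an element of $\GL(D_{0,\bR})$ multiplies numerator and denominator by the same factor $\abs{\det}$; write this absolute constant as $c>0$, so that $\operatorname{covol}(\eta_0(N))=c\,\abs{\disc(N,\phi_0)}^{1/2}$. Second, $\alpha$ is an isomorphism of $\bR$-algebras and hence intertwines the trace forms: for any algebra isomorphism $\beta\colon A\to B$ one has $L^B_{\beta(a)}=\beta\circ L^A_a\circ\beta^{-1}$, so $\Tr_{B/\bR}\circ\beta=\Tr_{A/\bR}$, and therefore $\phi_0(\alpha x,\alpha y)=\phi(x,y)$ for all $x,y\in D_\bR$, where $\phi$ is the trace form on $D_\bR$. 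Consequently $\disc(\alpha(I),\phi_0)=\disc(I,\phi)$, and so $\operatorname{covol}(M)=c\,\abs{\disc(I,\phi)}^{1/2}$.

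Finally, since $I\subseteq R$ are full $\bZ$-lattices in $D$ we have $\disc(I,\phi)=[R:I]^2\disc(R)$, whence $\abs{\disc(I,\phi)}\geq\abs{\disc(R)}\geq 1$ because $\disc(R)$ is a nonzero integer. Therefore $\abs{\det(h)}=\bigl(c\,\abs{\disc(I,\phi)}^{1/2}\bigr)^{-1}\leq c^{-1}$, an absolute bound independent of $u$, which is the assertion of the lemma. I expect no serious obstacle: both auxiliary facts are routine, and the only real content is recognising that $\det(h)$ is controlled by the covolume of $(uh)^{-1}(\bZ^4)$. I would also record at this point the companion estimate that will be used afterwards: combining the formula $\abs{\det(h)}=\bigl(c\,\abs{\disc(I,\phi)}^{1/2}\bigr)^{-1}$ with the bound $[R:I]\leq\refC{quat-alg-minkowski-multiplier}\abs{\disc(R)}^{3/2}$ of \cref{quat-alg-minkowski} yields $\abs{\det(h)}\geq c'\abs{\disc(R)}^{-2}$ for an absolute constant $c'>0$.
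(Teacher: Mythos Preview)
Your argument is correct and structurally the same as the paper's: both compute $\det(h)^2$ as the ratio of a fixed discriminant (on the $\eta_0$ side) to $\disc(I,\cdot)$ for a suitable bilinear form, then bound the latter below by~$1$ via integrality. Your covolume language is just a repackaging of the paper's identity $\disc(\phi^\dag, I) = \det(\eta_0\alpha\eta^{-1})^2 \disc(\phi_L, L)$.

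The one genuine difference is your choice of bilinear form. The paper works with the twisted trace form $\phi^\dag(x,y)=\Tr_{D/\bQ}(xy^\dag)$ and must invoke \cref{dag-R} (which uses the symplectic structure) to see that $\phi^\dag$ is integral on~$R$. You use the plain trace form $\phi(x,y)=\Tr_{D/\bQ}(xy)$, whose integrality on any order is automatic, so your route to this particular lemma is slightly shorter. On the other hand, the paper's choice is not gratuitous: $\phi^\dag$ is positive definite, which is exactly what is needed in the next step (\cref{imI-basis-bound}) to apply Weyl's reduction theorem, and the identity \eqref{eqn:phidag-phiL} established here is reused there. Your companion lower bound $\abs{\det(h)}\geq c'\abs{\disc(R)}^{-2}$ is correct and matches what the paper obtains implicitly.
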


\begin{proof}
Let \( \phi_0^t \) denote the bilinear form \( \phi_0^t(x, y) = \Tr_{D_0/\bQ}(xy^t) \) on \( D_0 \), and let \( \phi_L \) denote the bilinear form on \( \bQ^4 \) given by \( \phi_L(x, y) = \phi_0^t(\eta^{-1}_0(x), \eta^{-1}_0(y)) \).

Since \( \alpha \) is an isomorphism of \( \bR \)-algebras, it preserves traces, so
\begin{equation} \label{eqn:phidag-phiL}
\phi^\dag(x,y) = \phi_0^t(\alpha(x), \alpha(y)) = \phi_L(\eta_0\alpha(x), \eta_0\alpha(y)).
\end{equation}
Consequently
\[ \disc(\phi^\dag, I) =  \disc(\phi^\dag, \eta^{-1}(L)) = \disc(\phi_L, \eta_0\alpha\eta^{-1}(L))
   = \det(\eta_0\alpha\eta^{-1})^2 \disc(\phi_L, L). \]
Thanks to \cref{uh} and noting that \( \det(u) = 1 \), this can be rewritten as
\[ \det(h)^2 = \det(uh)^2 = \disc(\phi_L, L) / \disc(\phi^\dag, I). \]

By \cref{dag-R}, \( \phi^\dag \) takes integer values on \( R \) and hence on \( I \).
So \( \disc(\phi^\dag, I) \) is a positive integer.
So \( \det(h)^2 \leq \disc(\phi_L, L) \), which is a constant.
\end{proof}

\begin{lemma} \label{imI-basis-bound}
There exists a \( \bZ \)-basis \( \{ d_1', d_2', d_3', d_4' \} \) for \( I \) such that the coordinates of the vectors \( \eta_0\alpha(d_1'), \eta_0\alpha(d_2'), \eta_0\alpha(d_3'), \eta_0\alpha(d_4') \in \bR^4 \) are polynomially bounded in terms of \( \abs{\disc(R)} \).
\end{lemma}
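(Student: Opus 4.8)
The plan is to put the positive definite quadratic form \( \phi^\dag \) on the lattice \( I \), apply the classical reduction theory of positive definite quadratic forms to obtain a short \( \bZ \)-basis, and transport the resulting bound to \( \bR^4 \) through the map \( \eta_0\alpha \).

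First I would record the relevant properties of \( (I,\phi^\dag) \). By \eqref{eqn:phidag-phiL} we have \( \phi^\dag(x,y)=\phi_L(\eta_0\alpha(x),\eta_0\alpha(y)) \), where \( \phi_L \) is the fixed bilinear form on \( \bR^4 \) introduced in the proof of \cref{det-lower-bound}; since \( t \) is a positive involution of \( D_0 \), the form \( \phi_L \) is positive definite, and hence so is \( \phi^\dag \) on \( D_\bR \). By \cref{dag-R}, \( \phi^\dag \) is integer valued on \( R \), hence on the sublattice \( I \); combined with positive definiteness this gives \( \phi^\dag(x,x)\ge 1 \) for every nonzero \( x\in I \), and also shows that \( \disc(I,\phi^\dag) \) is a positive integer. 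Finally I would bound this discriminant: from \( \disc(I,\phi^\dag)=[R:I]^2\disc(R,\phi^\dag) \), from \cref{disc-involution} (which gives \( \disc(R,\phi^\dag)=\pm\disc(R) \)), and from the inequality \( [R:I]\le\refC{quat-alg-minkowski-multiplier}\abs{\disc(R)}^{3/2} \) of \cref{quat-alg-minkowski}, we obtain
\[ \abs{\disc(I,\phi^\dag)}\ \le\ \refC{quat-alg-minkowski-multiplier}^{2}\,\abs{\disc(R)}^{4}. \]

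Next I would invoke reduction theory for the rank-\( 4 \) positive definite form \( \phi^\dag \) on \( I \): there is a Minkowski-reduced \( \bZ \)-basis \( \{d_1',d_2',d_3',d_4'\} \) of \( I \) satisfying \( \prod_{i=1}^4\phi^\dag(d_i',d_i')\le\newC*\,\abs{\disc(I,\phi^\dag)} \) for a constant depending only on the rank \( 4 \) (see \cite{Cas78}). Since each factor \( \phi^\dag(d_i',d_i') \) is a positive integer, each factor is at most the whole product, so \( \phi^\dag(d_i',d_i')\le\newC*\,\abs{\disc(R)}^{4} \) for \( i=1,\dotsc,4 \). Transporting through \( \eta_0\alpha \) via \eqref{eqn:phidag-phiL}, and using that \( \phi_L \) is a fixed positive definite form on \( \bR^4 \) (independent of \( u \)) and is therefore comparable to the square of the standard norm, we get \( \abs{\eta_0\alpha(d_i')}^2\le\newC*\,\phi^\dag(d_i',d_i')\le\newC*\,\abs{\disc(R)}^4 \). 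Hence the coordinates of each \( \eta_0\alpha(d_i') \) are bounded by \( \newC*\,\abs{\disc(R)}^{2} \), which is polynomial in \( \abs{\disc(R)} \), as required.

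I do not expect a genuine obstacle here: the argument is essentially bookkeeping combining the already-proved \cref{disc-involution,quat-alg-minkowski} with standard reduction theory. The two small points that need care are, first, to use a version of reduction theory that outputs an actual \( \bZ \)-\emph{basis} with the product bound on the diagonal entries (rather than merely a short vector), which is available here because the rank is \( 4 \); and second, to be sure that the form \( \phi_L \) entering the last comparison is genuinely independent of \( u \), which holds precisely because \( \eta_0 \) and the trace form on \( D_0 \) were fixed before \( u \) was introduced.
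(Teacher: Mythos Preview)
Your proof is correct and follows essentially the same approach as the paper: use positive definiteness and integrality of \( \phi^\dag \) on \( I \), apply reduction theory of positive definite quadratic forms to obtain a short \( \bZ \)-basis, transport via \eqref{eqn:phidag-phiL} to \( \phi_L \), and bound \( \disc(I,\phi^\dag) \) polynomially in \( \abs{\disc(R)} \) using \cref{disc-involution,quat-alg-minkowski}. The only cosmetic difference is that the paper cites Weyl's theorem \cite[Thm.~5]{Wey40} to get directly \( \phi^\dag(d_i',d_i')\le c\,\disc(I,\phi^\dag) \), whereas you obtain the Minkowski product bound and then use that each factor is a positive integer to deduce the individual bounds; both routes yield the same conclusion.
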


\begin{proof}
By \cref{dag-R}, \( \phi^\dag \) takes integer values on~\( I \).
Furthermore \( \dag \) is a positive involution (because \( t \) is a positive involution on~\( D_0 \)) so \( \phi^\dag \) is a positive definite quadratic form.
Hence by \cite[Thm.~5]{Wey40} there exists a \( \bZ \)-basis \( \{ d_1', d_2', d_3', d_4' \} \) for \( I \) satisfying
\[ \phi^\dag(d_i', d_i') \leq \newC* \disc(I, \phi^\dag) \]
for \( i = 1, 2, 3, 4 \).
Hence by \eqref{eqn:phidag-phiL}, the values \( \phi_L(\eta_0\alpha(d_i'), \eta_0\alpha(d_i')) \) are bounded by a constant multiple of \( \disc(I, \phi^\dag) \).
Since \( \phi_L \) is a fixed positive definite quadratic form on \( \bR^4 \), this implies that the coordinates of the vectors \( \eta_0\alpha(d_i') \) are polynomially bounded in terms of \( \disc(I, \phi^\dag) \).

Finally, by \cref{disc-involution,quat-alg-minkowski}, we have
\[ \disc(I, \phi^\dag) = [R:I]^2 \, \abs{\disc(R)} \leq \newC* \abs{\disc(R)}^4.
\qedhere \]
\end{proof}

Let \( \{ \ell_1, \ell_2, \ell_3, \ell_4 \} \) denote the standard basis for \( L \).
Since \( \{ \eta(d_1'), \eta(d_2'), \eta(d_3'), \eta(d_4') \} \) is a \( \bZ \)-basis for \( L \), there is a matrix \( \gamma' \in \gGL_4(\bZ) \) such that \( \ell_i = \gamma'\eta(d_i') \) for each~\( i \).

% FIXME: Is there a potential simplification by combining \cref{imI-basis-bound} and \cref{entries'-bound}?

\begin{lemma} \label{entries'-bound}
The entries of the matrices \( \gamma' uh, (\gamma' uh)^{-1} \in \gGL_4(\bR) \) are polynomially bounded in terms of \( \abs{\disc(R)} \).
\end{lemma}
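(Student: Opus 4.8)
The plan is to recognise $(\gamma'uh)^{-1}$ as an explicit matrix whose columns have already been bounded, and then to bound $\gamma'uh$ from this via the adjugate.

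First I would unwind the definitions. By \cref{uh} we have $uh=\eta\alpha^{-1}\eta_0^{-1}$ in $\Aut(\bR^4)$, so $\gamma'uh=\gamma'\eta\alpha^{-1}\eta_0^{-1}$; applying this to the vector $\eta_0\alpha(d_i')$ gives $\gamma'\eta(d_i')=\ell_i$ by the definition of $\gamma'$. Since $\{\ell_1,\dots,\ell_4\}$ is the standard basis of $\bR^4$, this says precisely that the matrix $M$ of $(\gamma'uh)^{-1}$ has $i$-th column $\eta_0\alpha(d_i')$. Hence the entries of $(\gamma'uh)^{-1}=M$ are exactly the coordinates of the vectors $\eta_0\alpha(d_i')$, which are polynomially bounded in $\abs{\disc(R)}$ by \cref{imI-basis-bound}.

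For $\gamma'uh$ itself I would write $\gamma'uh=\bigl((\gamma'uh)^{-1}\bigr)^{-1}=\det(M)^{-1}\operatorname{adj}(M)$. Each entry of $\operatorname{adj}(M)$ is, up to sign, a $3\times 3$ minor of $M$, hence a cubic polynomial in the entries of $M$ and so polynomially bounded in $\abs{\disc(R)}$. It then remains to bound $\abs{\det(M)}$ away from zero: since $\det(\gamma')=\pm 1$ and $\det(u)=1$, we have $\abs{\det(M)}=\abs{\det(\gamma'uh)}^{-1}=\abs{\det(h)}^{-1}$, and \cref{det-lower-bound} bounds $\abs{\det(h)}$ above by an absolute constant, giving $\abs{\det(M)}\geq c>0$ for some absolute $c$. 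Combining the two bounds, the entries of $\gamma'uh$ are polynomially bounded in $\abs{\disc(R)}$ as well.

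The argument is essentially bookkeeping and I do not expect a serious obstacle. The only point needing care is that controlling $M^{-1}$ requires a \emph{lower} bound on $\abs{\det M}$, which is supplied exactly by the upper bound on $\abs{\det h}$ in \cref{det-lower-bound} — this is why that lemma is stated and proved in the form it is.
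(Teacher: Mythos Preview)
Your proof is correct and essentially identical to the paper's: both identify the columns of $(\gamma'uh)^{-1}$ as the vectors $\eta_0\alpha(d_i')$ bounded by \cref{imI-basis-bound}, and then invert via the adjugate (the paper says ``Cramer's rule'') using that $\abs{\det(\gamma'uh)}=\abs{\det(h)}$ is bounded above by \cref{det-lower-bound}. Your closing remark about why \cref{det-lower-bound} is phrased as an upper bound is exactly the point.
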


\begin{proof}
Let \( A = \gamma' uh = \gamma' \eta \alpha^{-1} \eta_0^{-1} \in \gGL_4(\bR) \).
We have
\[ \ell_i = \gamma' \eta(d_i') = A \eta_0 \alpha(d_i'). \]
By \cref{imI-basis-bound}, the coordinates of the vectors \( A^{-1}\ell_i = \eta_0\alpha(d_i') \) are polynomially bounded, or in other words, the entries of the matrix \( A^{-1} \) are polynomially bounded in terms of \( \abs{\disc(R)} \).

Meanwhile, \( \abs{\det(\gamma')} = \det(u) = 1 \) so \( \abs{\det(A)} = \abs{\det(h)} \).
By \cref{det-lower-bound} we deduce that \( \abs{\det(A^{-1})} \) is bounded below by a positive constant.
Hence by Cramer's rule, the entries of the matrix~\( A \) are also polynomially bounded in terms of \( \abs{\disc(R)} \).
\end{proof}

%\begin{lemma} \label{symplectic-values-bound}
%The values \( \psi(\eta(d_i'), \eta(d_j')) \) are polynomially bounded in terms of \( \abs{\disc(R)} \).
%\end{lemma}
%
%\begin{proof}
%Define a symplectic form \( \psi' \colon \bR^4 \times \bR^4 \to \bR \) by
%\[ \psi'(x, y) = \psi((uh)^{-1}x, (uh)^{-1}y). \]
%Note that \( \iota_0(E_{0,\bR}^\times) \subset \gGSp_4(\bR) \), so \( uh \in \gGSp_4(\bR) \) and so
%\begin{equation} \label{eqn:psi'}
%\psi' = \pm \abs{\det(uh)}^{-1/2} \psi.
%\end{equation}
%
%Thanks to \cref{uh}, we have
%\[ \psi'(\eta(d_i'), \eta(d_j')) = \psi(\eta_0\alpha(d_i'), \eta_0\alpha(d_j')). \]
%Hence by \cref{imI-basis-bound}, the values \( \abs{\psi'(\eta(d_i'), \eta(d_j'))} \) are polynomially bounded in terms of \( \abs{\disc(R)} \).
%Using \cref{det-lower-bound} and \eqref{eqn:psi'}, we deduce that \( \abs{\psi(\eta(d_i'), \eta(d_j'))} \) are polynomially bounded in terms of \( \abs{\disc(R)} \).
%\end{proof}

The following lemma establishes \cref{quaternion-sp4-rep}(iii)(b).
Note that it is not required for the proof of \cref{quaternion-sp4-rep}(iii)(a): the subsequent arguments proving \cref{quaternion-sp4-rep}(iii)(a) do not use the fact that \( \gamma \in \gSp_4(\bZ) \), so they would still work with \( \gamma' \) instead of \( \gamma \).

\begin{lemma} \label{entries-bound}
There exists \( \gamma \in \Gamma = \gSp_4(\bZ) \) such that the entries of \( \gamma uh \) and \( (\gamma uh)^{-1} \) are polynomially bounded in terms of \( \abs{\disc(R)} \).
\end{lemma}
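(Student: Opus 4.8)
The plan is to correct the element $\gamma'\in\gGL_4(\bZ)$ furnished by \cref{entries'-bound} to an element $\gamma\in\gSp_4(\bZ)=\Gamma$ by multiplying it on the left by a \emph{bounded} element of $\gGL_4(\bZ)$, so that only a polynomial blow-up is incurred. Let $M$ denote the matrix of the symplectic form $\psi$, so that $g\in\gGSp_4$ with multiplier $\nu$ means $g^tMg=\nu M$, and set $A=\gamma' uh$; by \cref{entries'-bound} the entries of $A$ and $A^{-1}$ are polynomially bounded in $\abs{\disc(R)}$. Since $u$ multiplies $\psi$ by $\pm1$ and $h=\iota_0(h_2)$ multiplies it by $\det h_2$, the matrix $uh$ lies in $\gGSp_4(\bR)$ with some multiplier $\mu_0$ satisfying $\mu_0^2=\det h$; hence $\abs{\mu_0}\le c$ for an absolute constant $c$, by \cref{det-lower-bound}. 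Now put $M'=\gamma'^{-t}M\gamma'^{-1}$. It has integer entries (as $\gamma'^{-1}\in\gGL_4(\bZ)$), it is alternating, and $\det M'=\det M=1$, so it is the Gram matrix of a unimodular alternating form $\psi'$ on $\bZ^4$; and since $\gamma'^{-1}=uh\,A^{-1}$ and $(uh)^tM(uh)=\mu_0M$, one computes $M'=\mu_0\,A^{-t}MA^{-1}$, so the entries of $M'$ are bounded by a polynomial in $\abs{\disc(R)}$.

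The second step is a quantitative symplectic basis theorem: if $\psi'$ is an integral unimodular alternating form on $\bZ^4$ whose Gram matrix has entries at most $T$, then $(\bZ^4,\psi')$ has a symplectic basis $(f_1,f_2,f_3,f_4)$ --- meaning $\psi'(f_1,f_2)=\psi'(f_3,f_4)=1$ and $\psi'(f_i,f_j)=0$ for the remaining $i<j$ --- with coordinates polynomially bounded in $T$. I would obtain this by making the classical inductive construction effective. Take $f_1=e_1$; the linear form $x\mapsto\psi'(e_1,x)$ is primitive because $\det M'=\pm1$, so the extended Euclidean algorithm yields $f_2\in\bZ^4$ with $\psi'(e_1,f_2)=1$ and polynomially bounded coordinates. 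As $\psi'$ restricts to a unimodular form on $N=\bZ f_1\oplus\bZ f_2$, one has $\bZ^4=N\oplus N^\perp$ with $N^\perp$ of rank $2$ and $\psi'|_{N^\perp}$ unimodular; the projection $\bZ^4\to N^\perp$ along $N$ is an integral linear map of polynomially bounded size, and Gauss-reducing a basis extracted from the images of $e_1,\dots,e_4$ produces $f_3,f_4$ of polynomially bounded size spanning $N^\perp$, with $\psi'(f_3,f_4)=\pm1$ since $\psi'|_{N^\perp}$ is unimodular of rank $2$; swap $f_3$ and $f_4$ if the sign is negative. Then $P=(f_1\mid f_2\mid f_3\mid f_4)\in\gGL_4(\bZ)$ satisfies $P^tM'P=M$, and the entries of $P$ and of $P^{-1}$ (the latter by Cramer's rule, using $\det P=\pm1$) are polynomially bounded in $\abs{\disc(R)}$.

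Finally, set $\gamma=P^{-1}\gamma'\in\gGL_4(\bZ)$. From $P^tM'P=M$ we get $M'=P^{-t}MP^{-1}$, so
\[ \gamma^tM\gamma=\gamma'^tP^{-t}MP^{-1}\gamma'=\gamma'^tM'\gamma'=\gamma'^t\bigl(\gamma'^{-t}M\gamma'^{-1}\bigr)\gamma'=M, \]
whence $\gamma$ has multiplier $1$ and thus $\gamma\in\gSp_4(\bZ)=\Gamma$. Moreover $\gamma uh=P^{-1}\gamma' uh=P^{-1}A$ and $(\gamma uh)^{-1}=(uh)^{-1}\gamma'^{-1}P=A^{-1}P$, and the entries of both are polynomially bounded in $\abs{\disc(R)}$ because those of $A$, $A^{-1}$, $P$, $P^{-1}$ are. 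I expect the quantitative symplectic basis theorem to be the only step requiring real (if routine) work; everything else is short bookkeeping on top of \cref{entries'-bound} and \cref{det-lower-bound}.
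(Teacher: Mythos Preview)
Your proposal is correct and follows essentially the same route as the paper: both pull back $\psi$ along $\gamma'^{-1}$, bound the resulting integral alternating form using \cref{entries'-bound} and the multiplier bound from \cref{det-lower-bound}, produce a symplectic basis with polynomially bounded coordinates, and take $\gamma = P^{-1}\gamma'$ (equivalently, the paper's $\gamma$ with $P = \gamma'\gamma^{-1}$). The only difference is that the paper invokes \cite[Lemma~4.3]{Orr15} for the quantitative symplectic basis, whereas you sketch the construction by hand; your sketch is sound, and otherwise the two arguments coincide.
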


\begin{proof}
We have \( uh \in \gGSp_4(\bR) \).
Consequently
\[ \psi(\gamma'^{-1} \ell_i, \gamma'^{-1} \ell_j) = \pm\det(uh)^{1/2} \, \psi((uh)^{-1}\gamma'^{-1} \ell_i, \, (uh)^{-1} \gamma'^{-1} \ell_j). \]
Using \cref{det-lower-bound,entries'-bound}, we conclude that the values \( \psi(\gamma'^{-1} \ell_i, \gamma'^{-1} \ell_j) \) are polynomially bounded in terms of \( \abs{\disc(R)} \).

Hence by \cite[Lemma~4.3]{Orr15}, there exists a symplectic \( \bZ \)-basis \( \{ s_1, s_2, s_3, s_4 \} \) for \( (L, \psi) \) whose coordinates with respect to \( \{ \gamma'^{-1}\ell_1, \gamma'^{-1}\ell_2, \gamma'^{-1}\ell_3, \gamma'^{-1}\ell_4 \} \) are polynomially bounded in terms of \( \abs{\disc(R)} \).
Applying \( \gamma' \), we deduce that the coordinates of \( \gamma' s_1, \gamma' s_2, \gamma' s_3, \gamma' s_4 \) with respect to the standard basis are polynomially bounded.

Let \( \gamma \in \gGL_4(\bZ) \) be the matrix such that \( \ell_i = \gamma s_i \) for each~\( i \).
Since \( \{ s_1, s_2, s_3, s_4 \} \) is a symplectic basis, we have \( \gamma \in \Gamma \).
We have just shown that the coordinates of \( \gamma' s_i = \gamma' \gamma^{-1} \ell_i \) are polynomially bounded.
In other words, the entries of the matrix \( \gamma' \gamma^{-1} \) are polynomially bounded in terms of \( \abs{\disc(R)} \).

Multiplying \( (\gamma'uh)^{-1} \) by \( \gamma' \gamma^{-1} \) and applying \cref{entries'-bound}, we deduce that the entries of \( (\gamma uh)^{-1} \) are polynomially bounded in terms of \( \abs{\disc(R)} \).
Thanks to \cref{det-lower-bound}, \( \abs{\det((\gamma uh)^{-1})} \) is bounded below by a positive constant, so it follows that the entries of \( \gamma uh \) are also polynomially bounded in terms of \( \abs{\disc(R)} \).
\end{proof}

Let \( S = E \cap \rM_4(\bZ) = \End_R(L) \) and \( S_0 = \iota_0(E_0) \cap \rM_4(\bZ) \).
Set \[ d_u = (\disc(S) / \disc(S_0))^{1/2}\text{ and }v_u = d_u \rho_R(\gamma u) v_0 \in V_\bR. \]
We shall use this \( v_u \) to prove \cref{quaternion-sp4-rep}(iii)(a).
Note first that \( d_u \rho_R(\gamma u) \in \Aut_{\rho_L}(\Lambda_\bR) \).

\begin{lemma} \label{vu-Lambda}
\( \rho_L(u) v_u \in \Lambda \).
\end{lemma}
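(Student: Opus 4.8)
The plan is to strip off the factor $\rho_R(\gamma)$, which preserves $\Lambda$ essentially for free, and then recognise the scalar $d_u$ as precisely the factor needed so that the remaining wedge is a $\bZ$-basis wedge of $S$.

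\emph{Step 1: reduce to a statement about $\extpower^4_{i}\,u\iota_0(e_i)u^{-1}$.} Since $\rho_R$ is a representation and $\rho_R(g)$ commutes with $\rho_L(g')$ for all $g,g'$ (left and right multiplication commute, and the determinant twists are central), I would write
\[ \rho_L(u)v_u = d_u\,\rho_L(u)\,\rho_R(\gamma)\,\rho_R(u)v_0 = d_u\,\rho_R(\gamma)\bigl(\rho_L(u)\rho_R(u)v_0\bigr). \]
In $\rho_L(u)\rho_R(u)$ the twists $\det(u)^{-1}$ and $\det(u)$ cancel, so $\rho_L(u)\rho_R(u)v_0 = \iota_0(e_1)u^{-1}\text{-type wedge}$, namely $\rho_L(u)\rho_R(u)v_0 = \extpower^4(\text{conjugation by }u)(v_0) = (u\iota_0(e_1)u^{-1})\wedge\dots\wedge(u\iota_0(e_4)u^{-1})$. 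Since $\det(\gamma)=1$, the operator $\rho_R(\gamma)$ is just $w_1\wedge\dots\wedge w_4 \mapsto w_1\gamma^{-1}\wedge\dots\wedge w_4\gamma^{-1}$; as $\gamma^{-1}\in\gGL_4(\bZ)$ this is an automorphism of the lattice $\Lambda = \extpower^4\rM_4(\bZ)$. Hence $\rho_L(u)v_u\in\Lambda$ if and only if $d_u\,(u\iota_0(e_1)u^{-1})\wedge\dots\wedge(u\iota_0(e_4)u^{-1})\in\Lambda$. (So $\gamma$ plays no role for integrality; it is present in the definition of $v_u$ only for the length bound proved in the following lemmas.)

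\emph{Step 2: express the wedge in a $\bZ$-basis of $S$.} The elements $u\iota_0(e_i)u^{-1}$ form an $\bR$-basis of $E_\bR$. Fixing a $\bZ$-basis $\{f_1,\dots,f_4\}$ of $S = E\cap\rM_4(\bZ)$ and writing $u\iota_0(e_i)u^{-1} = \sum_j C_{ji}f_j$ with $C\in\gGL_4(\bR)$, we get $(u\iota_0(e_1)u^{-1})\wedge\dots\wedge(u\iota_0(e_4)u^{-1}) = \det(C)\,(f_1\wedge f_2\wedge f_3\wedge f_4)$, and $f_1\wedge\dots\wedge f_4\in\Lambda$ since $S\subseteq\rM_4(\bZ)$. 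Thus the claim reduces to $d_u\det(C)\in\bZ$, and I will in fact show $d_u\det(C)=\pm1$. For this, note that conjugation by $u$ is an isomorphism of $\bR$-algebras $E_{0,\bR}\to E_\bR$ and the trace of the regular representation is an isomorphism invariant, so $\Tr_{E/\bQ}$ (extended to $E_\bR$) pulls back along it to $\Tr_{E_0/\bQ}$. Applied to $\iota_0(e_i)\iota_0(e_j)$ this gives $\Tr_{E/\bQ}(u\iota_0(e_i)u^{-1}\cdot u\iota_0(e_j)u^{-1}) = \Tr_{E_0/\bQ}(\iota_0(e_i)\iota_0(e_j))$; hence the Gram matrix of $\{u\iota_0(e_i)u^{-1}\}$ for the trace form of $E$ has determinant $\disc(S_0)$ (it is the Gram matrix of the $\bZ$-basis $\{\iota_0(e_i)\}$ of $S_0$ for the trace form of $E_0$). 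Re-expressing that same Gram matrix in the basis $\{f_j\}$ via $C$, its determinant is $\det(C)^2\disc(S)$. Therefore $\det(C)^2\disc(S) = \disc(S_0)$; in particular $\disc(S)/\disc(S_0)>0$ (so $d_u$ is a well-defined real number) and $d_u^2\det(C)^2 = 1$, i.e.\ $d_u\det(C)=\pm1$. Consequently $d_u\,(u\iota_0(e_1)u^{-1})\wedge\dots\wedge(u\iota_0(e_4)u^{-1}) = \pm(f_1\wedge f_2\wedge f_3\wedge f_4)\in\Lambda$, and combining with Step~1 completes the proof.

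The only step needing genuine care is the discriminant computation: checking that the non-reduced trace form transports correctly along the $\bR$-algebra isomorphism given by conjugation by $u$, and keeping the two evaluations of the Gram determinant (one via $\{\iota_0(e_i)\}\leftrightarrow S_0$, one via $\{f_j\}\leftrightarrow S$) straight. Everything else — unwinding $\rho_L(u)v_u$, the cancellation of determinant twists, the observation that $\rho_R(\gamma)$ preserves $\Lambda$, and the change-of-basis identity for the top exterior power — is purely formal.
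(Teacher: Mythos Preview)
Your proof is correct and follows essentially the same route as the paper's: strip off $\rho_R(\gamma)$ using that it preserves $\Lambda$, identify $\rho_L(u)\rho_R(u)v_0$ with the wedge of the conjugates $u\iota_0(e_i)u^{-1}$, and then use the trace-form discriminant computation to show that $d_u$ is exactly the scalar relating this wedge to a $\bZ$-basis wedge of $S$. The paper packages the last step via a map $B\in\gGL(E_\bR)$ with $B(uS_0u^{-1})=S$ and $\det(B)=\pm d_u$, which is the inverse of your change-of-basis matrix $C$; otherwise the arguments are the same.
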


\begin{proof}
Since \( \rho_R(\gamma) \) stabilises \( \Lambda \), in order to show that \( \rho_L(u)v_u \in \Lambda \), it suffices to show that
\[ d_u \rho_R(u) \rho_L(u) v_0 \in \Lambda. \]

Since \( v_0 \) is a generator for the rank-\( 1 \) \( \bZ \)-module \( (\extpower^4 Z) \cap \Lambda = \extpower^4 S_0 \),
it follows that \( \rho_R(u) \rho_L(u) v_0 \) is a generator for \( \extpower^4 uS_0u^{-1} \subset \extpower^4 E_\bR \).

Consider a matrix \( B \in \gGL(E_\bR) \) such that \( B(uS_0u^{-1}) = S \).
Conjugation by \( u \) maps the trace form \( \phi_{E_0} \) on \( \iota_0(E_{0,\bR}) \) to the trace form \( \phi_E \) on \( E_\bR \), so we have
\[ \disc(uS_0u^{-1}, \phi_E) = \disc(S_0, \phi_{E_0}) = \disc(S_0). \]
Consequently \( \det(B)^2 = \disc(S) / \disc(S_0) \).
In other words, \( \det(B) = \pm d_u \).

It follows that
\[ \extpower^4 S = \det(B) \extpower^4 uS_0u^{-1} = \pm d_u \extpower^4 uS_0u^{-1} \]
and so \( d_u \rho_R(u) \rho_L(u) v_0 \) is a generator for \( \extpower^4 S \subset \Lambda \).
\end{proof}

\newpage

\begin{lemma} \label{length-gammavu}
\( \abs{v_u} \leq \newC* \abs{\disc(R)}^{\newC*} \).
\end{lemma}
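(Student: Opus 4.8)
The plan is to estimate $\abs{v_u} = \abs{d_u}\cdot\abs{\rho_R(\gamma u)v_0}$ by bounding the scalar $d_u$ and the vector $\rho_R(\gamma u)v_0$ separately, combining \cref{disc-S-bound}, \cref{det-lower-bound}, and the entry bounds of \cref{entries-bound}.

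First I would bound $d_u$. By definition $d_u = (\disc(S)/\disc(S_0))^{1/2}$, and $\disc(S_0)$ is a fixed nonzero constant because $S_0$ is an order in the semisimple algebra $\iota_0(\rM_2(\bQ))$. Applying \cref{disc-S-bound} to $R$ and $L$ gives $\abs{\disc(S)}\leq\newC*\,\abs{\disc(R)}^4$, hence $\abs{d_u}\leq\newC*\,\abs{\disc(R)}^2$.

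Next I would rewrite $\rho_R(\gamma u)v_0$ in a form whose length is visibly controlled. Since $\gamma\in\Gamma=\gSp_4(\bZ)$ and $u$ was normalised so that $\det(u)=1$, we have $\det(\gamma u)=1$, hence $\rho_R(\gamma u)=\extpower^4\sigma_R(\gamma u)$ and
\[ \rho_R(\gamma u)v_0 = \bigl(\iota_0(e_1)(\gamma u)^{-1}\bigr)\wedge\bigl(\iota_0(e_2)(\gamma u)^{-1}\bigr)\wedge\bigl(\iota_0(e_3)(\gamma u)^{-1}\bigr)\wedge\bigl(\iota_0(e_4)(\gamma u)^{-1}\bigr). \]
Write $h=\iota_0(A_0)$ with $A_0\in\gGL_2(\bR)$ (possible because $h\in\gH_0(\bR)=\iota_0(\gGL_2(\bR))$) and set $A=\gamma u h$, so that $(\gamma u)^{-1}=hA^{-1}$ and, using $\iota_0(e_i)h=\iota_0(e_i A_0)$,
\[ \iota_0(e_i)(\gamma u)^{-1} = \iota_0(e_i)\,h\,A^{-1} = \iota_0(e_i A_0)A^{-1} = \sigma_R(A)\bigl(\iota_0(e_i A_0)\bigr). \]
Since the endomorphism $X\mapsto XA_0$ of $\rM_2(\bR)$ has determinant $\det(A_0)^2=\det(h)$, we get $\iota_0(e_1 A_0)\wedge\iota_0(e_2 A_0)\wedge\iota_0(e_3 A_0)\wedge\iota_0(e_4 A_0)=\det(h)\,v_0$, and applying $\extpower^4\sigma_R(A)$ yields
\[ \rho_R(\gamma u)v_0 = \det(h)\,\bigl(\extpower^4\sigma_R(A)\bigr)v_0. \]

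Finally I would combine the estimates. By \cref{det-lower-bound}, $\abs{\det(h)}$ is bounded by an absolute constant; it is $\det(h)$ itself, not its inverse, that appears here, so only the upper bound of \cref{det-lower-bound} is needed. Since $\sigma_R(A)$ is right multiplication by $A^{-1}$ on $\rM_4(\bR)$, we have $\length{\sigma_R(A)}\leq\newC*\,\length{A^{-1}}$ and hence $\length{\extpower^4\sigma_R(A)}\leq\newC*\,\length{A^{-1}}^4$; by \cref{entries-bound} the entries of $A^{-1}=(\gamma u h)^{-1}$, and a fortiori $\length{A^{-1}}$, are polynomially bounded in $\abs{\disc(R)}$. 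Thus $\abs{\rho_R(\gamma u)v_0}\leq\newC*\,\abs{\disc(R)}^{\newC*}$, and multiplying by the bound for $\abs{d_u}$ gives $\abs{v_u}\leq\newC*\,\abs{\disc(R)}^{\newC*}$, as required. The only substantive point is the displayed identity $\rho_R(\gamma u)v_0=\det(h)(\extpower^4\sigma_R(A))v_0$, which trades the a priori uncontrolled factor $(\gamma u)^{-1}$ for $A^{-1}$ (controlled by \cref{entries-bound}) at the cost of the scalar $\det(h)$ (controlled by \cref{det-lower-bound}); the remainder is routine norm bookkeeping, so I anticipate no genuine obstacle.
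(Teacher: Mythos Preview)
Your proposal is correct and follows essentially the same approach as the paper. The paper observes that $\gH_0$ acts trivially on $v_0$ via $\rho_R$ (for the same reason it does via $\rho_L$), so that $\rho_R(\gamma u)v_0=\rho_R(\gamma uh)v_0$, and then expands this as the wedge of the $\iota_0(e_i)(\gamma uh)^{-1}$; you reach the same expression by explicitly factoring $(\gamma u)^{-1}=hA^{-1}$ and tracking the $\det(h)$ that appears when working with $\extpower^4\sigma_R$ rather than $\rho_R$ --- this scalar is exactly what the $\det$-twist in $\rho_R$ absorbs, so the two computations agree and the remaining bounds via \cref{disc-S-bound}, \cref{det-lower-bound}, and \cref{entries-bound} are identical.
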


\begin{proof}
The action of \( \gH_0 \) on the line \( \extpower^4 Z \) via \( \rho_R \) is trivial, for the same reasons as the action via \( \rho_L \) is trivial.
Therefore
\begin{align*}
    v_u
  & = d_u \rho_R(\gamma u) v_0
\\& = d_u \rho_R(\gamma u h) v_0
\\& = d_u \iota_0(e_1) \, (\gamma uh)^{-1} \wedge \dotsb \wedge \iota_0(e_4) \, (\gamma uh)^{-1}.
\end{align*}

By \cref{disc-S-bound}, \( d_u \) is polynomially bounded in terms of \( \abs{\disc(R)} \).
% By \cref{det-lower-bound}, \( \det(h) \) is polynomially bounded in terms of \( \abs{\disc(R)} \).
By \cref{entries-bound}, the entries of \( (\gamma uh)^{-1} \) are polynomially bounded in terms of \( \abs{\disc(R)} \).
We conclude that the coordinates, and hence the length, of \( v_u \) are polynomially bounded in terms of \( \abs{\disc(R)} \).
\end{proof}

\Cref{vu-Lambda,length-gammavu} complete the proof of \cref{quaternion-sp4-rep}(iii)(a).

\section{Unlikely intersections in \texorpdfstring{$\mathcal{A}_2$}{A2}}\label{sec:unlikely-int-proofs}

In this section, we prove Theorems \ref{ZPSh} and \ref{ZPShUn}. We first need some preliminary material.

\subsection{Realising \texorpdfstring{$\mathcal{A}_2$}{A2} as a Shimura variety}
Recall that $\cA_2$ denotes the (coarse) moduli space of principally polarised abelian surfaces.
To realise $\cA_2$ as a Shimura variety, we let $(\gG,X)$ denote the Shimura datum for which $\gG=\gGSp_4$ and $X$ is isomorphic to $\cH_2 \cup \cH_2^-$, where $\cH_2$ and $\cH_2^-$ are respectively the Siegel upper and lower half-spaces of genus $2$. (Recall that $X$ is a $\gG(\RR)$-conjugacy class of morphisms $\mathbb{S}\to\gG_\RR$, where $\mathbb{S}=\Res_{\CC/\RR}\GG_{m,\CC}$. We will henceforth identify $X$ with $\cH_2 \cup \cH_2^-$.) We let $K=\gG(\hat{\ZZ})$, where $\hat{\ZZ}=\prod_p\ZZ_p$, the product ranging over all finite primes $p$. Then $K$ is a compact open subgroup of $\gG(\AAA_f)$, where $\AAA_f$ denotes the finite rational adeles, and $\mathcal{A}_2$ is equal to the Shimura variety whose complex points are given by
\[\Sh_K(\gG,X)=\gG(\QQ)\backslash X\times\gG(\AAA_f)/K.\]
As is easily seen, this is isomorphic to the quotient $\gSp_4(\ZZ)\backslash \mathcal{H}_2$.

\subsection{Quaternionic curves and \texorpdfstring{$E^2$}{E\^{}2} curves}\label{ShE2}
Quaternionic curves and $E^2$ curves are the images in $\cA_2$ of Shimura varieties of PEL type, via maps induced by morphisms of Shimura data.
We recall the construction of Shimura varieties of PEL type attached to quaternion algebras over~$\bQ$, following \cite[sec.~8]{Mil05}.

Let $B$ denote a quaternion algebra over~$\QQ$ such that $B\otimes_\QQ\RR$ is isomorphic to $\gM_2(\RR)$ and let $\dag$ be a positive involution of $B$.
(Positive involutions exist for any such $B$, as explained in \cite[pp.~195--196]{Mum74}.)
As explained in \cite[p.~196]{Mum74}, we can choose the isomorphism $B \otimes_\QQ \RR \to \gM_2(\RR)$ in such a way that $\dag$ corresponds to transpose of matrices, so $(B \otimes_\QQ \CC, \dag)$ has type~C in the sense of \cite[Prop.~8.3]{Mil05}.

Choose \( \alpha \in B \) such that \( \alpha = -\alpha^\dag \).
Define a symplectic form on $B$ by the formula
\[ \psi_\alpha(x, y) = \Tr_{B/\bQ}(x \alpha y^\dag). \]
If $\alpha \in B^\times=B\setminus\{0\}$,  then $\psi_\alpha$ is non-degenerate and $(B, \psi_\alpha)$ (with $B$ acting via the left regular representation) is a symplectic $(B, \dag)$-module as in \cite[sec.~8]{Mil05}.

Let $\gH_B$ denote the centraliser of $B^\times$ (acting on $B$ via the left regular representation) in $\gGL(B)$, seen as a $\QQ$-algebraic group.
Since every symplectic form $\psi:B \times B \to \QQ$ which satisfies $\psi(bx, y) = \psi(x, b^\dag y)$ has the form $\psi_\alpha$ for some $\alpha$ such that $\alpha = -\alpha^\dag$, and (for a given positive involution~$\dag$) the set of such \( \alpha \)s forms a one-dimensional \( \bQ \)-linear subspace of \( B \),
every element of $\gH_B$ preserves $\psi_\alpha$ up to multiplication by a scalar. Therefore, $\gH_B$ is the group of $\bQ$-linear automorphisms of $B$ commuting with the $B$-action and preserving the symplectic form $\psi_\alpha$ up to similitudes. In other words, $\gH_B$ is equal to the group denoted $G$ in \cite[sec.~8]{Mil05}.
(Note: if $B^\op$ denotes the opposite algebra of $B$, we have $\gH_B(\QQ) \cong (B^\op)^\times \cong B^\times$ where the second isomorphism uses the fact that $B$ is a quaternion algebra.)

%In order to apply Kottwitz, we now need a \( \dag \)-homomorphism \( h \colon \bC \to \End_B(B_\bR) \) such that \( \psi_\alpha(x, h(i)y) \) is positive definite on \( B_\bR \).
%This is guaranteed by \cite[Prop.~8.12]{Mil05} (see also Prop.~8.14).

By \cite[Prop.~8.14]{Mil05}, there is a unique Shimura datum \( (\gH_B, X_B) \) such that each \( h \in X_B \) defines a complex structure on $B \otimes_\bQ \bR$ for which the symmetric form \( \psi(x, h(i)y) \) is positive or negative definite.
As a Hermitian symmetric domain, $X_B$ is isomorphic to the union of the upper and lower half-planes in $\CC$.
 
Choosing a symplectic $\QQ$-basis for $(B, \psi_\alpha)$, the tautological action of $\gH_B$ on $B$ gives rise to an injective group homomorphism $\gH_B \to \gG$.
Thanks to the properties of $X_B$ given to us by \cite[Prop.~8.14]{Mil05}, this induces an embedding of Shimura data $(\gH_B, X_B) \to (\gG, X)$.
Letting $K_B=\gH_B(\AAA_f)\cap K$, we obtain a morphism
\begin{align*}
\Sh_{K_B}(\gH_B,X_B)\rightarrow\Sh_K(\gG,X)
\end{align*}
of algebraic varieties.
The irreducible components of the images of such morphisms are, by definition, special curves in $\mathcal{A}_2$. If $B$ is isomorphic (over $\QQ$) to $\gM_2(\QQ)$, we obtain $E^2$ curves, and otherwise we obtain quaternionic curves. Any such curve parametrises abelian surfaces with multiplication by an order in $B$.

The Shimura data $(\gG, X)$ and $(\gH_B,X_B)$ all have reflex field $\QQ$. Therefore, $\Sh_{K_B}(\gH_B,X_B)$, $\Sh_K(\gG,X)$ and $\Sh_{K_B}(\gH_B,X_B)\rightarrow\Sh_K(\gG,X)$ are all defined over $\QQ$, but $\Sh_{K_B}(\gH_B,X_B)$ often has geometrically irreducible components which are not defined over $\QQ$.
Hence, the action of $\Aut(\CC/\QQ)$ on $\cA_2(\CC)$ preserves the image of $\Sh_{K_B}(\gH_B,X_B)\rightarrow\Sh_K(\gG,X)$ but permutes its irreducible components and so acts on the set of quaternionic curves and on the set of $E^2$ curves in $\cA_2$. 
From the theory of complex multiplication of abelian varieties, we know that $\Aut(\CC/\QQ)$ acts on the set of special points in $\cA_2$.
Consequently, $\Aut(\CC/\QQ)$ acts on
\[ \Sigma_{\Quat} = \bigcup_{Z\in\mathcal{S}} Z \setminus Z^{\rm sp}, \]
where $\mathcal{S}$ denotes the set of quaternionic curves in $\cA_2$ and $Z^{\rm sp}$ denotes the set of the special points contained in $Z$. Similarly, $\Aut(\CC/\QQ)$ acts on \( \Sigma_{E^2} \).

Another way to obtain these families of special subvarieties is as follows.
Let $B_0 = \rM_2(\QQ)$.
Let $B_0$ act on $\QQ^4$ via the left regular representation, with respect to the basis given by \eqref{eqn:basis-m2} (which is a symplectic basis with respect to the form \( \psi_\alpha \) where \( \alpha = \fullsmallmatrix{0}{-1}{1}{0} \)).
Let $\gH_0 \subset \gG$ be the centraliser of this action of $B_0$ in $\gG$.
Then $\gH_0$ is equal to the image of $\gGL_2$ embedded block diagonally, as in \eqref{eqn:sl2-embedding}.
Let
\begin{gather*}
X_0 = \Bigl\{ \fullmatrix{\tau}{0}{0}{\tau} \in \cH_2 : {\rm Im}(\tau)>0 \Bigr\},
\\ X_0^{\pm} = \Bigl\{ \fullmatrix{\tau}{0}{0}{\tau} \in \cH_2 : {\rm Im}(\tau) \neq 0 \Bigr\}.
\end{gather*}
Then \( (\gH_0, X_0^{\pm}) \) is the unique Shimura subdatum of \( (\gG, X) \) with underlying group $\gH_0$, and $X_0$ is the only connected component of $X_0^{\pm}$ contained in $\cH_2$. We obtain a morphism of Shimura varieties \( \cA_1 \to \cA_2 \)
(where \( \cA_1 \) denotes the moduli space of elliptic curves), which, in terms of moduli, sends an elliptic curve \( E \) with its principal polarisation \( \lambda \) to the principally polarised abelian surface \( (E \times E, \lambda \times \lambda) \).

For any point $x_0\in X_0$, we have $X_0=\gH_0^\der(\RR) x_0$ (recall that $\gH_0^\der(\RR)=\gSL_2(\RR)$ is connected) and its image in $\mathcal{A}_2$ is an $E^2$ curve. For any $g\in\gG(\RR)$ such that $\gH=g\gH_{0,\RR}g^{-1}$ is defined over $\QQ$ the image of $gX_0$ in $\mathcal{A}_2$ is a special curve, and $\gH$ is isomorphic (as a $\QQ$-group) to $\gH_B$ for some quaternion algebra $B$ as above. If $\gH$ is isomorphic to $\gGL_{2,\QQ}$, then we obtain an $E^2$ curve, and, otherwise, we obtain a quaternionic curve.

\begin{lemma}\label{arise}
Every quaternionic or $E^2$ curve in $\mathcal{A}_2$ is the image of $gX_0$ for some $g \in \gG(\RR)$ such that $g\gH_{0,\RR}g^{-1}$ is defined over $\QQ$.
\end{lemma}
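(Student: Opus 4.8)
The plan is to recover $Z$ from its PEL description and to move the embedding $\gH_B\hookrightarrow\gG$ onto the standard block-diagonal subgroup $\gH_0$ by an element of $\gG(\RR)$.

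\emph{Reduction to the identity component of the adelic part.} By the construction recalled above, $Z$ is the image in $\cA_2=\Sh_K(\gG,X)$ of one geometrically irreducible component of $\Sh_{K_B}(\gH_B,X_B)$, for a suitable quaternion algebra $B$ with $B\otimes_\QQ\RR\cong\rM_2(\RR)$, positive involution $\dag$ and $\alpha\in B^\times$ with $\alpha=-\alpha^\dag$; fixing a symplectic $\QQ$-basis of $(B,\psi_\alpha)$ gives a $\QQ$-embedding $\iota\colon\gH_B\hookrightarrow\gG$, and $Z$ is the image of $\iota(X_B^+)\times\{a_f\}$ for a connected component $X_B^+\subset X_B$ and some $a_f\in\iota(\gH_B(\AAA_f))\subset\gG(\AAA_f)$. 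As $\cA_2$ is geometrically connected, $\gG(\AAA_f)=\gG(\QQ)K$, so $a_f=qk$ with $q\in\gG(\QQ)$ and $k\in K$; replacing $\iota$ by $\Inn(q^{-1})\circ\iota$ (again a $\QQ$-embedding) we may assume $a_f=1$, and then, under $\Sh_K(\gG,X)\cong\gSp_4(\ZZ)\backslash\cH_2$, the curve $Z$ is exactly the image in $\cA_2$ of $\iota(X_B^+)\subset X$.

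\emph{Conjugating the groups (the crux).} I claim there is $g\in\gG(\RR)$ with $g\gH_{0,\RR}g^{-1}=\iota(\gH_{B,\RR})$. Both groups are centralisers in $\gG_\RR=\gGSp_{4,\RR}$ of $\RR$-subalgebras of $\rM_4(\RR)$ isomorphic to $\rM_2(\RR)$ --- namely $A_0=\iota_0(\rM_2(\RR))$ and $A=\iota(B_\RR)$ --- and each of $A_0$, $A$ carries a positive involution (call these $\dag_0$ and $\dag$ again) which is adjoint to the standard symplectic form $\psi$ on $\RR^4$, i.e.\ $\psi(ax,y)=\psi(x,a^{\dag}y)$. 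Since any two positive involutions of $\rM_2(\RR)$ are intertwined by an algebra isomorphism and, by a dimension count, $\RR^4$ is the unique $4$-dimensional $\rM_2(\RR)$-module, there is an $\RR$-algebra isomorphism $\theta\colon A_0\to A$ with $\theta\circ\dag_0=\dag\circ\theta$ together with an $\RR$-linear automorphism $g$ of $\RR^4$ satisfying $g\iota_0(a)g^{-1}=\iota(\theta(a))$ for all $a\in A_0$. A direct check then shows that $\psi(g\,\cdot,\,g\,\cdot)$ is again a symplectic form adjoint to $(A_0,\dag_0)$; since such forms fill a one-dimensional $\RR$-space (the $\RR$-point of the one-dimensionality used to construct the $\psi_\alpha$), $\psi(g\,\cdot,\,g\,\cdot)=\lambda\psi$ for some $\lambda\in\RR^\times$, so $g\in\gGSp_4(\RR)=\gG(\RR)$. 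Taking centralisers, $g\gH_{0,\RR}g^{-1}=Z_{\gG_\RR}(gA_0g^{-1})=Z_{\gG_\RR}(A)=\iota(\gH_{B,\RR})$.

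\emph{Matching the Shimura data and conclusion.} Set $\gH=g\gH_{0,\RR}g^{-1}=\iota(\gH_{B,\RR})$, which is defined over $\QQ$. Conjugating the Shimura subdatum $(\gH_0,X_0^\pm)$ by $g\in\gG(\RR)$ yields a Shimura subdatum of $(\gG,X)$ with underlying group $\gH$, and $(\gH,\iota(X_B))$ is another one, coming from the embedding $(\gH_B,X_B)\hookrightarrow(\gG,X)$. By the uniqueness (recalled above) of the Shimura subdatum of $(\gG,X)$ with group $\gH_0$ --- which transfers to $\gH$ by conjugating the statement by $g^{-1}$ --- we get $gX_0^\pm=\iota(X_B)$. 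Hence $gX_0$ is one of the two connected components of $\iota(X_B)$; if it is not $\iota(X_B^+)$ we replace $g$ by $g\iota_0(c)$ with $c\in\gGL_2(\RR)$, $\det c<0$: this element lies in $\gH_0(\RR)$, so it preserves $g\gH_{0,\RR}g^{-1}=\gH$, and it interchanges the two components of $X_0^\pm$. Thus we may assume $gX_0=\iota(X_B^+)$, so the image of $gX_0$ in $\cA_2$ equals the image of $\iota(X_B^+)$, which is $Z$ by the first step. Since $\gH=g\gH_{0,\RR}g^{-1}$ is defined over $\QQ$, this is precisely the assertion. The only non-formal ingredient is the conjugating element of the second step: the difficulty lies in producing it inside the similitude group $\gGSp_4(\RR)$ rather than merely in $\gGL_4(\RR)$, which is where the structure of positive involutions on $\rM_2(\RR)$ and the one-dimensionality of the space of compatible symplectic forms enter.
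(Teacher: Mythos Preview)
Your overall strategy matches the paper's: produce $g\in\gGSp_4(\RR)$ by finding an $\RR$-linear automorphism of $\RR^4$ that intertwines the two $\rM_2(\RR)$-actions (carrying one positive involution to the other), then invoke the one-dimensionality of compatible symplectic forms to force $g$ into $\gGSp_4(\RR)$, and finally use the uniqueness of the Shimura subdatum with underlying group $\gH_0$ together with a component swap. The paper sets things up via a Hodge-theoretic lift of $Z$ to an analytic component $Y\subset X$ rather than your adelic reduction, but this is a minor difference and the heart of the construction of $g$ is the same.

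There is, however, a concrete error in your identification of $A_0$. You set $A_0=\iota_0(\rM_2(\RR))$ and assert that $\gH_0$ is its centraliser in $\gG_\RR$. But $\gH_0=\iota_0(\gGL_2)$ is the \emph{unit group} of $\iota_0(\rM_2(\RR))$, not its centraliser; the centraliser of $\iota_0(\rM_2(\RR))$ in $\rM_4(\RR)$ is the \emph{left}-multiplication copy of $\rM_2(\RR)$ (with respect to the basis~\eqref{eqn:basis-m2}), which is a different subalgebra. Worse, the involution on $\iota_0(\rM_2(\RR))$ adjoint to $\psi$ is the adjugate $a\mapsto J^{-1}a^tJ$, for which one computes $\Tr_{\rM_2/\RR}(aa^{\dag_0})=2\det a$ --- so it is \emph{not} positive, and your ``any two positive involutions are intertwined'' step does not apply to it. As written, your $g$ therefore conjugates the wrong subgroup onto $\iota(\gH_{B,\RR})$. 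The fix is immediate: take $A_0$ to be the image of $B_0=\rM_2(\RR)$ under the left regular representation (this is exactly the algebra of which $\gH_0$ is declared to be the centraliser in Section~\ref{ShE2}); then the adjoint involution on $A_0$ is the transpose, which \emph{is} positive, and your argument goes through verbatim.
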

 
\begin{proof}
Let $Z$ be a quaternionic or $E^2$ curve in $\cA_2$.
Let $B$ be the generic endomorphism algebra of the abelian surfaces parametrised by $Z$, and let $\dag$ be the Rosati involution of $B$.
Choose an analytic irreducible component $Y$ of the preimage of~$Z$ in~$X$.

The inclusion $\gG \to \gGL_4$ induces a variation $\mathcal{V}$ of $\QQ$-Hodge structures on $X$ with trivial underlying local system $X \times \QQ^4$.
The restriction $\mathcal{V}_{|Y}$ has endomorphism algebra $B$ and its generic Mumford--Tate group $\gH \subset \gG$ is the centraliser of $B$ in $\gG$.
Thus $\gH$ is the image of one of the homomorphisms $\gH_B \to \gG$ defined above.
%By \cite[Prop~8.14]{Mil05}, there is only one Shimura subdatum of $(\gG, X)$ with underlying group $\gH$, so $Y$ is a connected component of the image of $X_B$ under the morphism of Shimura data $(\gH_B, X_B) \to (\gG, X)$.

The choice of basis \eqref{eqn:basis-m2} induces an isomorphism of $\QQ$-vector spaces $\QQ^4 \to B_0$.
Choose an isomorphism of $\RR$-algebras with involutions $(B_0 \otimes_\QQ \RR, t) \to (B \otimes_\QQ \RR, \dag)$.
The action of $B$ on $\mathcal{V}_{|Y}$ gives rise to a $B$-module structure on $\bQ^4$, and this is isomorphic to the left regular representation of $B$ on itself.
Thus we get an isomorphism of $B$-modules $B \to \QQ^4$.
Composing these isomorphisms (after extending scalars to $\RR$), we get an isomorphism of $\RR$-vector spaces
\[ \RR^4 \to B_0 \otimes_\QQ \RR \to B \otimes_\QQ \RR \to \RR^4 \]
or in other words an element $g \in \gGL_4(\RR)$.
Via the isomorphism $\RR^4 \to B_0 \otimes_\QQ \RR$, the standard symplectic form on $\RR^4$ satisfies $\psi(bx, y) = \psi(x, b^t y)$ for all $b \in B_0$, and the isomorphism $B \otimes_\QQ \RR \to \RR^4$ behaves similarly with respect to $(B, \dag)$.
Since the spaces of symplectic forms satisfying these conditions are one-dimensional, the composed isomorphism maps the standard symplectic form to a multiple of itself; in other words, $g \in \gG(\RR)$.

Comparing the actions of $B \otimes_\QQ \RR$ and $B_0 \otimes_\QQ \RR$, we see that $\gH_\RR = g\gH_{0,\RR}g^{-1}$.
It follows that $g^{-1}Y$ is a connected component of a Shimura subdatum of $(\gG, X)$ with underlying group $\gH_0$.
The only such Shimura subdatum is $(\gH_0, X_0^{\pm})$, so $g^{-1}Y$ is a connected component of $X_0^{\pm}$.

If $g^{-1}Y \neq X_0$, replace $g$ by $g\diag(1,-1,1,-1)$.
Since $\diag(1,-1,1,-1) \in \gH_0(\RR)$ and it swaps the two connected components of $X_0^{\pm}$, after this replacement we will still have $\gH_\RR = g\gH_{0,\RR}g^{-1}$ but now $g^{-1}Y = X_0$.

Thus we get $Y = gX_0$ and $Z$ is the image of $Y$ in $\cA_2$.
\end{proof}

\subsection{Complexity}

As in \cite{DR}, we will need to define a notion of complexity. That is, to each $E^2$ or quaternionic curve $Z$ in $\mathcal{A}_2$, we attach a natural number $\Delta(Z)$, which we refer to as the complexity of $Z$. The complexity is defined in terms of the generic endomorphism algebra of abelian surfaces parametrised by $Z$.

Let $g\in\gG(\RR)$ be such that $\gH=g\gH_{0,\RR}g^{-1}$ is defined over $\QQ$. Then the image $Z$ of $gX_0$ in $\mathcal{A}_2$ is an $E^2$ or quaternionic curve and, by Lemma \ref{arise} every $E^2$ or quaternionic curve is obtained this way. We define the complexity $\Delta(Z)$ of $Z$ to be $\abs{\disc(R)}$, where $R$ denotes the ring $\End_{\gH}(\ZZ^4)$ of $\ZZ$-linear endomorphisms of $\ZZ^4\subset\QQ^4$ commuting with $\gH(\QQ)\subset\gG(\QQ)\subset\rM_4(\QQ)$.
Note that this ring $R$ is the generic endomorphism ring of the abelian surfaces parameterised by $Z$.
Indeed, for every non-special point of $Z$, the associated abelian surface (over $\CC$) has endomorphism ring isomorphic to $R$.

We are now in a position to state the Galois orbits conjecture which appears in \cref{ZPSh}.

\begin{conjecture}\label{GOSh}
Let $\Sigma$ denote $\Sigma_{\Quat}$ or $\Sigma_{E^2}$ and let $C\subset\mathcal{A}_2$ denote an irreducible Hodge generic algebraic curve. Let $L$ be a finitely generated subfield of $\CC$ over which $C$ is defined.

There exist positive constants $\newC{GOSh-mult}$ and $\newC{GOSh-exp}$ such that, for any point $s\in C\cap\Sigma$, if we let $Z$ denote the (unique) special curve containing $s$, then
\[\#\Aut(\CC/L)\cdot s\geq\refC{GOSh-mult}\Delta(Z)^{\refC{GOSh-exp}}.\]
\end{conjecture}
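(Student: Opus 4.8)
The statement is a conjecture whose general case lies beyond current methods; it is the ``large Galois orbits'' input that, in the Pila--Zannier proof of \cref{ZPSh}, is matched against the polynomial Pila--Wilkie upper bound produced from \cref{fund-set-bound} and \cref{quaternion-sp4-rep}. What can be proved is the special case recorded in \cref{ZPShUn}: $\Sigma = \Sigma_{\Quat}$, and $C$ a Hodge generic curve over $\Qbar$ whose Zariski closure in the Baily--Borel compactification $\overline{\cA_2}$ meets the $0$-dimensional boundary stratum. The plan for that case runs as follows. Since $C$ is defined over $\Qbar$, every $s \in C \cap \Sigma_{\Quat}$ is algebraic, and its abelian surface $A_s$ carries an action of the order $R$ in an indefinite quaternion algebra $B$ over $\QQ$ for which $\Delta(Z_s) = \abs{\disc(R)}$, where $Z_s$ is the unique quaternionic curve through~$s$. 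A preliminary bookkeeping step handles fields of definition: using that the Galois action on $(\End_{\Qbar}(A_s), \text{Rosati})$ is through a quotient of absolutely bounded order --- the automorphisms of a quaternion order preserving a positive involution form a finite group of bounded size --- together with the usual passage from the coarse moduli point $s$ to a polarised surface over a bounded extension of its residue field, one sees that $\#\Aut(\CC/L)\cdot s = [L(s):L]$ exceeds a fixed positive multiple of $[F:\QQ]$, where $F$ is a field of definition of the pair $(A_s, R\text{-action})$. It therefore suffices to prove $[F:\QQ] \gg \abs{\disc(R)}^{c}$ for an absolute $c > 0$.

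The geometric content is supplied by André's theorem \cite[Ch.~X, Thm.~1.3]{And89}, applied to the family of abelian surfaces obtained by restricting the universal family over $\cA_2$ to $C$. The hypothesis that the closure of $C$ in $\overline{\cA_2}$ meets the $0$-dimensional boundary stratum --- read through the description of the Baily--Borel stratification of $\cA_2$ by the toric rank of semistable reduction --- says exactly that this family acquires a fibre of maximal, totally multiplicative degeneration at a point $c_0$ of the boundary of $C$. André's $G$-function method then extracts from that fibre a quantitative statement: if a fibre $A_s$ admits an action by an order of discriminant $\Delta$ rational over a number field $F$, then $[F:\QQ] \gg \Delta^{c}$. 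The Masser--Wüstholz isogeny theorem enters to reconcile the isogeny-theoretic shape of André's input with the isomorphism-theoretic quantity $\abs{\disc(R)}$ and to keep intermediate fields under control: it bounds the degree of an isogeny relating $A_s$ to a convenient model (a standard surface attached to $B$, or the actual fibre of a rigidified family) polynomially in the Faltings height and in the degree of the number field of definition, so that André's lower bound can be transported to the pair $(A_s, R\text{-action})$ at the cost of only a further power of $\abs{\disc(R)}$. Assembling the inequalities yields $\#\Aut(\CC/L)\cdot s \gg \abs{\disc(R)}^{c'} = \Delta(Z_s)^{c'}$, i.e.\ Conjecture~\ref{GOSh} for this $C$ and $\Sigma_{\Quat}$; fed into the Pila--Zannier argument for \cref{ZPSh}, this gives finiteness of $C \cap \Sigma_{\Quat}$.

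The principal obstacle will be to obtain a genuinely polynomial lower bound in $\abs{\disc(R)}$ from André's theorem, which in its original formulation is closer to a qualitative ``big monodromy''/theorem-of-the-fixed-part statement: this requires analysing the local behaviour of the periods of the family near the maximally degenerate fibre, understanding how quaternion orders of large discriminant embed in a fixed $B$, and checking that the boundary hypothesis on $C$ genuinely forces the toric rank of the reduction to be maximal rather than merely positive. A secondary difficulty is the systematic control of fields of definition --- coarse versus fine moduli, field of moduli versus field of definition, and the field over which the $R$-action (as opposed to the bare polarised surface) becomes rational --- all of which must be confined to extensions of degree bounded by a fixed power of $\abs{\disc(R)}$; here the Masser--Wüstholz theorem and the boundedness of the relevant Faltings heights are what make the chain of comparisons work. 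Finally, this scheme does not transfer to $\Sigma = \Sigma_{E^2}$, because André's theorem does not apply to families of abelian surfaces isogenous to the square of an elliptic curve --- this is why \cref{ZPShUn} has no analogue there; for the \ExCM{} curves treated in \cite{DO19} the corresponding Galois lower bound instead comes from Siegel's lower bound for the class number of an imaginary quadratic order.
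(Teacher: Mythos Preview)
Your reading is correct that Conjecture~\ref{GOSh} is not proved in general; the paper establishes only the special case recorded in \cref{ZPShUn}, and you have correctly named the two ingredients (Andr\'e's $G$-function theorem and Masser--W\"ustholz). However, the way you assemble them is essentially inverted relative to the paper's argument, and as written your sketch would not go through.

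In your account, Andr\'e's theorem directly yields $[F:\bQ] \gg \Delta^c$ from the presence of a quaternionic action of discriminant~$\Delta$, and Masser--W\"ustholz is relegated to transporting this bound via an isogeny to a ``convenient model''. The paper runs the logic the other way. Andr\'e's result (invoked through \cite[Thm.~8.1]{DO19}) says nothing about discriminants: under the multiplicative-reduction hypothesis, it bounds the Weil height $h(s)$ of an exceptional point polynomially in $[L(s):L]$; one then passes to the Faltings height $h_F(\fA_s)$. It is the Masser--W\"ustholz \emph{endomorphism estimate} \cite{MW:endo}---a direct bound $\abs{\disc(\End(A))} \leq c\,\max(d, h_F(A))^\kappa$, with no isogeny construction or auxiliary model required---that introduces the discriminant. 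Chaining the two gives $\abs{\disc(R)} \leq \mathrm{poly}([L(s):L])$, which inverts to the conjecture. Two smaller corrections: there is no issue of ``quaternion orders of large discriminant embedding in a fixed $B$'', since distinct quaternionic curves carry distinct algebras~$B$ and $\Delta(Z)$ is the discriminant of the generic endomorphism order in that particular~$B$; and the field-of-definition bookkeeping is handled not by bounding automorphism groups of $(R,\dag)$ but simply by pulling back along a finite cover $\tilde C \to C$ over which a genuine abelian scheme exists (as in \cite[Prop.~9.4]{DO19}) and absorbing its degree into the constants.
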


\subsection{The fixed data}

We write $\Gamma$ for the subgroup $\gSp_4(\ZZ) \subset \gG(\QQ)$.
We let $\pi:\mathcal{H}_2\to\mathcal{A}_2$ denote the (transcendental) uniformisation map. We choose a Siegel set $\mathfrak{S}\subset\gG(\RR)^+$ (associated with the standard Siegel triple) such that, for some finite set $C\subset\gG(\QQ)$, $\mathcal{F}_\gG=C\mathfrak{S}$ is a fundamental set for $\Gamma$ in $\gG(\RR)^+$. We write $\mathcal{F}$ for $\mathcal{F}_\gG x_0$, where $x_0\in\mathcal{H}_2$ is the point whose stabiliser in $\gG(\RR)^+$ is the maximal compact subgroup appearing in the definition of $\mathcal{F}_\gG$.

By Proposition \ref{quaternion-sp4-rep}, we can fix a finitely generated, free $\ZZ$-module $\Lambda$, a representation $\rho:\gG\rightarrow\gGL(\Lambda_\QQ)$ such that $\Lambda$ is stabilised by $\rho(\Gamma)$, an element $v_0\in\Lambda$, and positive constants $\newC{eff-alg-mult}$ and $\newC{eff-alg-exp}$ such that
\begin{enumerate}[(i)]
\item \( \Stab_{\gG,\rho}(v_0) = \gH_0 \);
\item the orbit \( \rho(\gG(\bR)) v_0 \) is closed in \( \Lambda_\bR \);
\item for each \( u \in \gG(\bR) \), if the group \( \gH_u = u \gH_{0,\bR} u^{-1} \) is defined over~\( \bQ \), then there exists \( v_u \in \Aut_{\rho(\gG)}(\Lambda_\bR) v_0 \) such that \( \rho(u) v_u \in \Lambda \) and
\[ \abs{v_u} \leq \refC{eff-alg-mult} \abs{\disc(R_u)}^{\refC{eff-alg-exp}}, \]
where \( R_u \) denotes the order \( \End_{\gH_u}(\ZZ^4)\) of the quaternion algebra \( \End_{\gH_u}(\QQ^4)\). 
\end{enumerate}

By Theorem \ref{fund-set-bound}, we can then fix positive constants \( \newC{eff-red-mult} \) and \( \newC{eff-red-exp} \) with the following property:
for every \( u \in \gG(\bR) \), if \( \gH_u = u \gH_{0,\bR} u^{-1} \) is defined over~\( \bQ \), then there exists a fundamental set for \( \Gamma \cap \gH_u(\bR) \) in \( \gH_u(\bR) \) of the form
\[ B_u C \fS u^{-1} \cap \gH_u(\bR), \]
where \( B_u \subset \Gamma \) is a finite set such that
\[ \abs{\rho(b^{-1}u) v_u}  \leq  \refC{eff-red-mult} \abs{v_u}^{\refC{eff-red-exp}} \]
for every \( b\in B_u \).

Choosing a basis, we obtain $\Lambda=\ZZ^d$ and we may refer to the height $\rH(v)$ of any $v\in\Lambda$ (defined as the maximum of the absolute values of the coordinates). For any $v \in \Lambda_\RR$, we write $\gG(v) = \Stab_{\gG_\RR,\rho}(v)$.

% For any $v\in\Lambda_\RR$, we denote by $\gG(v)$ the almost direct product of $\Stab_{\rho(\gG)_\RR}(v)$ with the neutral component $Z_{\rho(\gG)_\RR}(\Stab_{\rho(\gG)_\RR}(v))^\circ$ of its centralizer in $\rho(\gG)_\RR$. 

% By construction, $x_0(\bS) \subset \gH_0(\RR) = G(v_0)$.

\begin{proposition}\label{param}
Let $P\in\Sigma_{\Quat}\cup\Sigma_{E^2}$. Then there exists $z\in\pi^{-1}(P)\cap\mathcal{F}$ and $v\in\Aut_{\rho(\gG)}(\Lambda_\RR)\rho(\gG(\RR))v_0\cap\Lambda$ such that $z(\mathbb{S})\subset\gG(v)$ and 
\begin{align*}
\rH(v)\leq\refC{eff-red-mult} \refC{eff-alg-mult}^{\refC{eff-red-exp}}\abs{\disc(R)}^{\refC{eff-alg-exp}\refC{eff-red-exp}},
\end{align*}
where $R$ denotes the ring  \( \End_{\gG(v)}(L) \subset \rM_4(\bZ) \).
\end{proposition}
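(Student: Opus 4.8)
The plan is to choose a lift of $P$ to $\cH_2$ of the form $u_1x_0$ with $u_1\in\gG(\RR)^+$ and $u_1\gH_{0,\RR}u_1^{-1}$ defined over $\bQ$, to run the fixed data (property~(iii) and \cref{fund-set-bound}) at this $u_1$, and then to perform a translation trick moving the output into $\mathcal{F}$ without spoiling the height bound. Concretely: by \cref{arise}, $P$ lies in a quaternionic or $E^2$ curve $Z=\pi(gX_0)$ for some $g\in\gG(\RR)$ with $\gH:=g\gH_{0,\RR}g^{-1}$ defined over $\bQ$, and after replacing $g$ by $\delta g$ for a suitable $\delta\in\gGSp_4(\ZZ)$ I may assume $g\in\gG(\RR)^+$, so that $gX_0\subset\cH_2$. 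I would pick $x_2\in X_0$ with $\pi(gx_2)=P$; writing $x_2=hx_0$ with $h\in\gH_0^\der(\RR)$ (recall $X_0=\gH_0^\der(\RR)x_0$ and $x_0\in X_0$) and setting $u_1=gh$, one has $u_1\in\gG(\RR)^+$, $u_1x_0=gx_2\in\pi^{-1}(P)$, and $\gH_{u_1}=u_1\gH_{0,\RR}u_1^{-1}=\gH$ defined over $\bQ$, with associated order $R_{u_1}=\End_\gH(L)$. Property~(iii) then supplies $v_{u_1}=\tau(v_0)$ with $\tau\in\Aut_{\rho(\gG)}(\Lambda_\RR)$, $\rho(u_1)v_{u_1}\in\Lambda$ and $\abs{v_{u_1}}\le\refC{eff-alg-mult}\abs{\disc(R_{u_1})}^{\refC{eff-alg-exp}}$, while \cref{fund-set-bound} supplies a finite set $B_{u_1}\subset\Gamma$ with $\abs{\rho(b^{-1}u_1)v_{u_1}}\le\refC{eff-red-mult}\abs{v_{u_1}}^{\refC{eff-red-exp}}$ for every $b\in B_{u_1}$, such that $B_{u_1}C\fS u_1^{-1}\cap\gH(\RR)$ is a fundamental set for $\Gamma\cap\gH(\RR)$ in $\gH(\RR)$.

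\emph{The translation trick.} This is the one step I expect to be delicate: we must land in $\mathcal{F}=C\fS x_0$, which forces a $\Gamma$-translation of $u_1$ into $C\fS$, yet the height bound only controls $\rho(b^{-1}u_1)v_{u_1}$ for $b$ in the particular finite set $B_{u_1}$. Since $1\in\gH(\RR)=(\Gamma\cap\gH(\RR))\bigl(B_{u_1}C\fS u_1^{-1}\cap\gH(\RR)\bigr)$, I would write $1=\delta\cdot bcs\,u_1^{-1}$ with $\delta\in\Gamma\cap\gH(\RR)$, $b\in B_{u_1}$, $c\in C$, $s\in\fS$, so that $b^{-1}\delta^{-1}u_1=cs\in C\fS$. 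I then set
\[ z=\bigl(b^{-1}\delta^{-1}u_1\bigr)x_0=cs\,x_0\in\mathcal{F},\qquad v=\rho(b^{-1}u_1)v_{u_1}=\tau\,\rho(b^{-1}u_1)v_0. \]
Here $z\in\pi^{-1}(P)$ because $b^{-1}\delta^{-1}\in\Gamma$ and $u_1x_0\in\pi^{-1}(P)$; $v\in\Lambda$ because $\Lambda$ is $\rho(\Gamma)$-stable and $\rho(u_1)v_{u_1}\in\Lambda$; and $v\in\Aut_{\rho(\gG)}(\Lambda_\RR)\rho(\gG(\RR))v_0\cap\Lambda$ since $\tau$ commutes with $\rho(\gG)$. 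As $\tau$ is an automorphism commuting with $\rho(\gG)$, property~(i) gives $\gG(v)=\Stab_{\gG_\RR,\rho}(\rho(b^{-1}u_1)v_0)=(b^{-1}u_1)\gH_{0,\RR}(b^{-1}u_1)^{-1}=b^{-1}\gH b$. The key cancellation is that, since $x_0(\mathbb{S})\subset\gH_{0,\RR}$ (as $x_0\in X_0$) and $\delta\in\gH(\RR)$ normalises $\gH$,
\[ z(\mathbb{S})\subset b^{-1}\delta^{-1}u_1\,\gH_{0,\RR}\,u_1^{-1}\delta b=b^{-1}\delta^{-1}\gH\delta b=b^{-1}\gH b=\gG(v); \]
that is, the extra factor $\delta$ forced on us to reach $\mathcal{F}$ is invisible to $\gG(v)$.

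\emph{Matching constants.} With $R=\End_{\gG(v)}(L)=\End_{b^{-1}\gH b}(L)=b^{-1}R_{u_1}b$ (legitimate since $b\in\gSp_4(\ZZ)\subset\gGL_4(\ZZ)$), conjugation by $b$ is a $\bQ$-algebra isomorphism of quaternion algebras $\End_\gH(L_\bQ)\to\End_{b^{-1}\gH b}(L_\bQ)$, hence preserves reduced traces and therefore the trace forms defining the discriminants, so $\abs{\disc(R)}=\abs{\disc(R_{u_1})}$. Then chaining $\rH(v)\le\abs{v}=\abs{\rho(b^{-1}u_1)v_{u_1}}\le\refC{eff-red-mult}\abs{v_{u_1}}^{\refC{eff-red-exp}}$ with the bound on $\abs{v_{u_1}}$ and with $\abs{\disc(R_{u_1})}=\abs{\disc(R)}$ yields precisely $\rH(v)\le\refC{eff-red-mult}\refC{eff-alg-mult}^{\refC{eff-red-exp}}\abs{\disc(R)}^{\refC{eff-alg-exp}\refC{eff-red-exp}}$, as required. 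Every ingredient other than the translation bookkeeping of the previous paragraph is immediate from \cref{arise}, \cref{fund-set-bound} and property~(iii) of the fixed data; the point to get right is that the element of $B_{u_1}$ appearing in the height bound for $v$ is forced to be \emph{the same} $b$ that appears in the expression $b^{-1}\delta^{-1}u_1=cs$ placing $z$ in $\mathcal{F}$, which is exactly what the membership $\delta^{-1}=bcs\,u_1^{-1}\in B_{u_1}C\fS u_1^{-1}$ provides.
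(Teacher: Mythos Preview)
Your proof is correct and follows essentially the same approach as the paper's. The only cosmetic difference is the starting point: the paper begins with an arbitrary lift $z\in\pi^{-1}(P)\cap\mathcal{F}$, identifies the pre-special subvariety $Y=\gH^{\der}(\RR)gx_0$ through it, and then applies the fundamental set for $\Gamma_\gH$ to the element of $\gH(\RR)$ carrying $gx_0$ to $z$, whereas you start from a lift $u_1x_0\in\pi^{-1}(P)$ not assumed to lie in $\mathcal{F}$ and apply the fundamental set at $1\in\gH(\RR)$; both routes produce the same decomposition $u_1=\delta bcs$ (respectively $z=\gamma bsx_0$) and hence the same output pair $(z',\rho(b^{-1}u_1)v_{u_1})$, with the crucial feature---which you correctly highlight---that the $b\in B_{u_1}$ controlling the height is the same $b$ appearing in the translation into $\mathcal{F}$.
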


\begin{proof}
Let $z\in\pi^{-1}(P)\cap\mathcal{F}$ and let $Y$ denote the smallest pre-special subvariety of $\mathcal{H}_2$ containing $z$. Then $\pi(Y)$ is an $E^2$ or quaternionic curve and so
\[Y=g\gH_0^\der(\RR)x_0=g\gH_0^\der(\RR)g^{-1}\cdot g x_0=\gH^\der(\RR)\cdot g x_0,\]
where $g\in\gG(\RR)$, $\gH$ is a $\QQ$-subgroup of $\gG$ isomorphic to $\gH_B$ for some quaternion algebra $B$, as above, and $\gH_{\RR}=g\gH_{0,\RR}g^{-1}$.
Since $\gH$ is the Mumford-Tate group of $Y$ (that is, the smallest $\QQ$-subgroup of $\gG$ containing $x(\bS)$ for all $x\in Y$), we have $z(\bS) \subset \gH(\RR)$.
By \cref{quaternion-sp4-rep}, we obtain $v\in\Aut_{\rho(\gG)}(\Lambda_\RR)v_0$ such that $\rho(g)v\in\Lambda$ and
\[|v|\leq \refC{eff-alg-mult}\abs{\disc(R)}^{\refC{eff-alg-exp}},\]
where $R=\End_{\gH}(L)$.
Note that
\[\gG(\rho(g)v) = g \gG(v) g^{-1} = g \gG(v_0) g^{-1} = \gH_\RR. \]

By \cref{fund-set-bound} (with $u=g$), we obtain a finite set $B_g\subset\Gamma$ such that
\[\mathcal{F}_{\gH}=B_g C\mathfrak{S}g^{-1}\cap\gH(\RR)\]
is a fundamental set for $\Gamma_\gH=\Gamma\cap\gH(\RR)$ in $\gH(\RR)$, and
\[|\rho(b^{-1}g)v|\leq \refC{eff-red-mult}|v|^{\refC{eff-red-exp}}\]
for all $b\in B_g$. 
In particular, since $z \in \gH(\RR)gx_0$, we can write
\[z=\gamma b s x_0\]
for some $\gamma\in\Gamma_\gH$, $b\in B_g$, and $s\in C\mathfrak{S}$. Hence,
\[z':=b^{-1}\gamma^{-1} z\in\mathcal{F}\cap\pi^{-1}(P).\]
Furthermore, we have
\[z'(\mathbb{S})\subset \gG(\rho(b^{-1}\gamma^{-1}g) v)=\gG(\rho(b^{-1} g)v)=b^{-1}\gH_\RR b,\]
where we use the fact that $\gamma\in\gH(\RR)=\gG(\rho(g)v)(\RR)$. Finally, from the above, we have
\[\abs{\rho(b^{-1} g)v}
\leq \refC{eff-red-mult}\abs{v}^{\refC{eff-red-exp}}
\leq \refC{eff-red-mult}( \refC{eff-alg-mult}\abs{\disc(R)}^{\refC{eff-alg-exp}})^{\refC{eff-red-exp}}=\refC{eff-red-mult} \refC{eff-alg-mult}^{\refC{eff-red-exp}}\abs{\disc(R)}^{\refC{eff-alg-exp}\refC{eff-red-exp}}\]
and
\[R=\End_{\gH}(L)=\End_{b^{-1}\gH b}(b^{-1}L)=\End_{b^{-1}\gH b}(L)=\End_{\gG(\rho(b^{-1}g)v)}(L).\]
Therefore, since $\rho(b^{-1}g)v\in\Lambda$, we conclude that $z'$ and $\rho(b^{-1}g)v$ satisfy the conditions of the proposition. 
\end{proof}

\begin{corollary}\label{suitable}
Let $b\in\RR$. The set of $E^2$ or quaternionic curves $Z$ satisfying $\Delta(Z)\leq b$ is finite.
\end{corollary}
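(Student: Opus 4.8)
The plan is to read off \cref{suitable} from \cref{param} together with the finiteness of lattice points of bounded height. Since $\disc(R)$ is a nonzero integer for every order $R$, we have $\Delta(Z) \geq 1$ for every $E^2$ or quaternionic curve $Z$, so if $b < 1$ there is nothing to prove; assume $b \geq 1$. Fix such a curve $Z$ with $\Delta(Z) \leq b$. The set $Z^{\rm sp}$ of special points of $Z$ is countable, hence a proper subset of $Z$, so I would choose $P \in Z \setminus Z^{\rm sp}$; then $P \in \Sigma_{\Quat} \cup \Sigma_{E^2}$ and, being a non-special point of the irreducible special curve $Z$, it is Hodge generic in $Z$. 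Applying \cref{param} to $P$ produces $z \in \pi^{-1}(P) \cap \mathcal{F}$ and $v \in \Aut_{\rho(\gG)}(\Lambda_\RR)\rho(\gG(\RR))v_0 \cap \Lambda$ with $z(\mathbb{S}) \subset \gG(v)$ and
\[ \rH(v) \leq \refC{eff-red-mult} \refC{eff-alg-mult}^{\refC{eff-red-exp}}\abs{\disc(R)}^{\refC{eff-alg-exp}\refC{eff-red-exp}}, \qquad R = \End_{\gG(v)}(L). \]

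The key point I would then establish is that $\gG(v)$ equals the generic Mumford--Tate group $\gH$ of $Z$, so that $R = \End_{\gH}(\ZZ^4)$ and hence $\abs{\disc(R)} = \Delta(Z)$. On the one hand, $\gH$ --- the smallest $\QQ$-subgroup of $\gG$ through which $z$ factors, by Hodge genericity of $P$ --- is contained in the $\QQ$-subgroup $\Stab_{\gG,\rho}(v)$, because $z(\mathbb{S}) \subset \gG(v)$. On the other hand, writing $v = \tau\rho(g)v_0$ with $\tau$ commuting with $\rho(\gG)$ and $g \in \gG(\RR)$, one computes $\gG(v) = g\,\gH_{0,\RR}\,g^{-1}$, which is connected of dimension $4$; and $\gH$ is likewise $\gG(\RR)$-conjugate to $\gH_{0,\RR}$, as recalled in \cref{ShE2}. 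A dimension count therefore forces $\gH_\RR = \gG(v)$, and, both being base changes of $\QQ$-subgroups of $\gG$, $\gH = \Stab_{\gG,\rho}(v)$. Substituting $\abs{\disc(R)} = \Delta(Z) \leq b$ then yields a bound $\rH(v) \leq \refC{eff-red-mult} \refC{eff-alg-mult}^{\refC{eff-red-exp}} b^{\refC{eff-alg-exp}\refC{eff-red-exp}}$ depending only on $b$ and the fixed data.

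To conclude, I would observe that, $\Lambda$ being a finitely generated free $\ZZ$-module, only finitely many $v \in \Lambda$ have height at most this bound; for each of them the subgroup $\Stab_{\gG,\rho}(v)$ is determined. By the description of $E^2$ and quaternionic curves in \cref{ShE2} --- notably \cref{arise} --- only finitely many such curves have a prescribed generic Mumford--Tate group, as they occur among the finitely many irreducible components of the image in $\cA_2$ of the single Shimura variety attached to that group. Since the previous paragraph shows that every $Z$ with $\Delta(Z) \leq b$ has generic Mumford--Tate group of the form $\Stab_{\gG,\rho}(v)$ for one of these finitely many $v$, the set of such $Z$ is finite. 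The only step I expect to need genuine care is the identification $\gH = \gG(v)$ --- that is, checking that the vector supplied by \cref{param} really encodes the generic endomorphism ring computing $\Delta(Z)$ rather than a larger group; the remainder is a routine combination of a polynomial bound with two standard finiteness statements.
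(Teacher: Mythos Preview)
Your proposal is correct and follows essentially the same route as the paper: pick a Hodge generic point on $Z$, apply \cref{param} to obtain an integral vector $v$ of height bounded in terms of $\Delta(Z)\leq b$, and conclude from the finiteness of such $v$ together with the uniqueness of the Shimura subdatum attached to $\gG(v)$. The only difference is that the paper appeals to the first paragraph of the proof of \cref{param} to identify $Z$ with the image of an orbit of $\gG(v)^{\der}(\bR)$, whereas you recover the equality $\gG(v)=\gH_\bR$ directly via a dimension count; both are valid and amount to the same content.
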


\begin{proof}
Let $Z$ be an $E^2$ or quaternionic curve satisfying $\Delta(Z)\leq b$ and let $P\in Z$ be a Hodge generic point on $Z$. Therefore, $P\in\Sigma_{\Quat}\cup\Sigma_{E^2}$ and, applying \cref{param} and the first paragraph of its proof, we obtain $v\in\Lambda$ satisfying
\begin{align*}
\rH(v)\leq\refC{eff-red-mult} \refC{eff-alg-mult}^{\refC{eff-red-exp}}b^{\refC{eff-alg-exp}\refC{eff-red-exp}}
\end{align*}
such that $Z$ is the image in $\cA_2$ of an orbit of $\gG(v)^\der(\RR)$. As in the proof of \cref{arise}, there is only one Shimura subdatum of $(\gG,X)$ associated with $\gG(v)$ and so the result follows.
\end{proof}

\subsection{Proof of Theorem \ref{ZPSh} for quaternionic curves}
Let $\mathcal{C}=\pi^{-1}(C)\cap\mathcal{F}$ -- a set definable in the o-minimal structure $\RR_{\rm an,exp}$ (see \cite{kuy:ax-lindemann} for more details). Let $L$ be a finitely generated field of definition for $C$. 
Let $P\in C\cap\Sigma_{\Quat}$. Varying over $\sigma\in\Aut(\CC/L)$, we obtain points $\sigma(P)\in C\cap\Sigma_{\Quat}$ and, for each $\sigma$, we let $z_\sigma\in\mathcal{F}\cap\pi^{-1}(\sigma(P))$ and we let $v_\sigma\in\Aut_{\rho(\gG)}(\Lambda_\RR)\rho(\gG(\RR))v_0\cap\Lambda$ be the elements afforded to us by Proposition \ref{param}. That is, $z_\sigma(\mathbb{S})\subset \gG(v_\sigma)$ and 
\begin{align*}
\rH(v_\sigma)\leq\refC{eff-red-mult} \refC{eff-alg-mult}^{\refC{eff-red-exp}}\abs{\disc(R_\sigma)}^{\refC{eff-alg-exp}\refC{eff-red-exp}},
\end{align*}
where $R_\sigma$ denotes the ring  \( \End_{\gG(v_\sigma)}(L) \subset \rM_4(\bZ) \). As above, $\abs{\disc(R_\sigma)}=\Delta(\sigma(Z))$. Note that we also have $z_\sigma\in\mathcal{C}$.

We obtain a set $\Theta$ of tuples $(v_\sigma,z_\sigma)\in\Lambda\times\mathcal{C}$ belonging to the definable set
\[D=\{(v,z)\in \Lambda_\RR\times\mathcal{C}: v\in \Aut_{\rho(\gG)}(\Lambda_\RR)\rho(\gG(\RR))v_0,\ z(\mathbb{S})\subset \gG(v)\}.\]
Let $\pi_1:D\rightarrow\Lambda_\RR$ and let $\pi_2:D\rightarrow\mathcal{C}$ denote the projection maps. By \cref{GOSh}, we have
\begin{align*}
A:=\#\pi_2(\Theta) = \#\Aut(\CC/L)\cdot P & = \#\Aut(\CC/L)\cdot\sigma(P)
\\& \geq \refC{GOSh-mult} \abs{\disc(R_\sigma)}^{\refC{GOSh-exp}} \geq \newC* H(v_\sigma)^{\refC{GOSh-exp}/\refC{eff-alg-exp}\refC{eff-red-exp}}.
\end{align*}
%where $A=\#\Aut(\CC/L)\cdot\sigma(P)=\#\Aut(\CC/L)\cdot P=\#\pi_2(T)$.
Applying \cite[Theorem 9.1]{DR} (a variant of \cite[Corollary 7.2]{HP:atyp}), in the case $l=0$, $k=1$, $T=(1/\newC*A)^{\refC{eff-alg-exp}\refC{eff-red-exp}/\refC{GOSh-exp}}$, and $\eps < \refC{GOSh-exp}/\refC{eff-alg-exp}\refC{eff-red-exp}$, we conclude that either
\begin{enumerate}
\item $A=\#\pi_2(\Theta)$ is bounded, hence $\Delta(Z)$ is bounded and the theorem holds, or
\item there exists a continuous definable function
\[\beta:[0,1]\rightarrow D\]
such that $\beta_1=\pi_1\circ\beta$ is semi-algebraic, $\beta_2=\pi_2\circ\beta$ is non-constant, $\beta(0)\in \Theta$, and $\beta_{|(0,1)}$ is real analytic.
\end{enumerate}

Therefore, it suffices to rule out the latter possibility.
To that end, suppose that we have such a function. By definable choice, there exists a semi-algebraic function
\[\tilde{\beta}_1:[0,1]\rightarrow  \Aut_\rho(\Lambda_\RR)\rho(\gG(\RR))\]
such that $\tilde{\beta}_1(t)\cdot v_0=\beta_1(t)$ for all $t\in [0,1]$. We let $v_t=\beta_1(t)$, $g_t=\tilde{\beta}_1(t)$, and $z_t=\beta_2(t)$. Since $z_t(\mathbb{S})\subset \gG(v_t)$ and $g_t\in \Aut_{\rho(\gG)}(\Lambda_\RR)\rho(\gG(\RR))$, we have 
\[(g^{-1}_tz_t)(\mathbb{S})\subset g^{-1}_t \gG(v_t)g_t=\gG(g^{-1}_tv_t)=\gG(v_0) = \gH_0(\RR).\]
We conclude that $g^{-1}_tz_t$ lies on the unique pre-special subvariety of $\mathcal{H}_2$ associated with $\gH_0^\der(\RR)$, namely, $X_0$.

% Therefore, there exists $0<t_0\leq 1$ such that $g^{-1}_tz_t$ belongs to one fixed such pre-special subvariety $Y$ for all $t\in [0,t_0]$. We have $\dim % Y=1$.

% Replaced $Y$ with $X_0$ below.

On the one hand, there exists $0<t_1\leq 1$ such that $\beta_2([0,t_1])$ is contained in a single irreducible analytic component $\tilde{C}$ of $\pi^{-1}(C)$. By \cite[Theorem 1.3]{uy:algebraic-flows} (the inverse Ax--Lindemann conjecture), $\mathcal{H}_2$ is the smallest algebraic subset of $\mathcal{H}_2$ containing $\tilde{C}$.

Let $B\subset\Lambda_\CC$ denote the Zariski closure of $\beta_1([0,t_1])\subset\Lambda_\RR$. By definable choice, there exists a complex algebraic set $\tilde{B}\subset  \Aut_\rho(\Lambda_\CC)\rho(\gG(\CC))$ of dimension at most~$1$ whose image under the algebraic map $g\mapsto g\cdot v_0$ is $B$. Using the superscript $^\vee$ to denote the compact dual of a hermitian symmetric domain, we obtain a complex algebraic set $\tilde{B}\times X_0^\vee$ of dimension at most $2$. Note that $\tilde{B}\cdot \mathcal{H}_2^\vee=\mathcal{H}_2^\vee$. Hence, $\tilde{B}\cdot X_0^\vee\subset\mathcal{H}^\vee_2$ is algebraic of dimension at most $2$.

On the other hand, $\tilde{C}$ is an irreducible complex analytic curve having an uncountable intersection with $\tilde{B}\cdot X_0^\vee$ (in particular, it includes $\beta_2([0,t_1])$ because $g^{-1}_tz_t\in X_0$ and $z_t\in \tilde{C}$ for all $t\in[0,t_1]$). Therefore, $\tilde{C}$ is contained in $\tilde{B}\cdot X_0^\vee$, hence, so is $\mathcal{H}^\vee_2$. However, $\dim\mathcal{H}^\vee_2=3$, and we arrive at a contradiction.

\subsection{Proof of Theorem \ref{ZPSh} for \texorpdfstring{$E^2$}{E\^{}2} curves}

The proof is the same as in the previous section, working with $\Sigma_{E^2}$ instead of $\Sigma_{\Quat}$.

\subsection{Proof of Theorem \ref{ZPShUn}}

If $C$ is an algebraic curve over a number field and $\fA\to C$ is an abelian scheme of relative dimension $2$, we say that $s\in C(\Qbar)$ is a quaternionic point if the endomorphism algebra of the fiber $\fA_s$ is a quaternion algebra over $\QQ$ not isomorphic to $\rM_2(\QQ)$.

We claim that it suffices to prove the following theorem.

\begin{theorem}\label{scheme}
Let $C$ be an irreducible algebraic curve and let $\mathfrak{A}\rightarrow C$ be a principally polarised non-isotrivial abelian scheme of relative dimension $2$ such that $\End(\fA_{\ov\eta}) = \bZ$, where $\ov\eta$ denotes a geometric generic point of $C$. 

Suppose that $C$ and $\mathfrak{A}$ are defined over a number field $L$ and that there exist a curve \( C' \), a semiabelian scheme \( \fA' \to C' \) and an open immersion \( \iota \colon C \to C' \), all defined over \( \Qbar \), such that \( \fA \cong \iota^* \fA' \) and there is a point \( s_0 \in C'(\ov\bQ) \setminus C(\ov\bQ) \) for which the fibre \( \fA'_{s_0} \) is a torus.

Then there exist positive constants $\newC{GOE2-scheme-mult}$ and $\newC{GOE2-scheme-exp}$ such that, for any quaternionic point $s\in C$,
\[\#\Aut(\CC/L)\cdot s\geq\refC{GOE2-scheme-mult}\abs{\disc(\End(\fA_s))}^{\refC{GOE2-scheme-exp}}.\]
\end{theorem}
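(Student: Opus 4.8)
The plan is to convert the Galois-orbit lower bound into a lower bound on the degree of a number field over which the fibre $\fA_s$ and all of its endomorphisms are defined, and then to obtain such a bound by combining a height bound of André with the quantitative isogeny estimate of Masser and Wüstholz. Since $s\in C(\Qbar)$ we have $\#\Aut(\CC/L)\cdot s=[L(s):L]$. By a theorem of Silverberg (or the qualitative part of the Masser--Wüstholz estimates) there is an extension $k/L(s)$ whose degree is bounded by an absolute constant and over which $\fA_s$ and every element of $R:=\End(\fA_{s,\Qbar})$ are defined; hence $[k:\QQ]$ is at most an absolute multiple of $[L(s):L]$. Here $R$ is an order in the quaternion division algebra $B:=\End^0(\fA_{s,\Qbar})$, and writing $\Delta:=\abs{\disc(R)}$ it suffices to prove $[k:\QQ]\gg\Delta^{\delta}$ for some $\delta>0$ depending only on the data $C,\fA,C',\fA',s_0$.

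I would then invoke André's theorem \cite[Ch.~X, Thm.~1.3]{And89}. Its hypotheses hold here: the family $\fA\to C$ is non-isotrivial, $\End(\fA_{\ov\eta})=\ZZ$, the fibre $\fA_s$ is simple (because $B$ is a division algebra — this is exactly the point at which the argument would fail for $E^2$ points, whose fibres are isogenous to a square of an elliptic curve), and $C$ embeds into $C'$ carrying the semiabelian extension $\fA'$ with $\fA'_{s_0}$ a torus, i.e.\ a point of totally degenerate reduction. André's theorem then provides a bound $h_{\mathrm{Fal}}(\fA_s)\leq c'$, with $c'$ depending only on the data (one first bounds a Weil height of $s$ on $C'$ and then passes to the Faltings height, using that $\fA'_{s_0}$ is totally toric). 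In the same circle of ideas one controls the ramification of $B$: a finite place of bad reduction of $\fA_s$ either lies above the fixed finite set of places at which the family $\fA/C$ degenerates, or else $s$ reduces into the finite set $C'\setminus C$, where the fibres of $\fA'$ have prescribed semiabelian type; since a quaternionic abelian surface has potentially good reduction at every prime dividing $\disc(B)$, this forces $\disc(B)$ to divide a fixed integer, so $\disc(B)\leq c''$ for a constant depending only on the data.

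It remains to feed these bounds into the Masser--Wüstholz isogeny theorem. Fixing a maximal order $\mathcal{O}_B\supseteq R$ we have $\Delta\asymp\bigl([\mathcal{O}_B:R]\cdot\disc(B)\bigr)^2$ with an absolute implied constant, so by the previous paragraph it is enough to bound $[\mathcal{O}_B:R]$. The surface $\fA_s$ admits a $k$-isogeny onto an abelian surface with endomorphisms by $\mathcal{O}_B$, and the degree of any such isogeny is bounded below by a positive power of $[\mathcal{O}_B:R]$ (essentially it is the index of the Tate module of $\fA_s$ in its $\mathcal{O}_B\otimes\hat\ZZ$-saturation). On the other hand, the Masser--Wüstholz theorem bounds the degree of a minimal such isogeny by $c'''\max(1,h_{\mathrm{Fal}}(\fA_s))^{\kappa}[k:\QQ]^{\kappa'}$, which by the bound on $h_{\mathrm{Fal}}(\fA_s)$ is $\ll[k:\QQ]^{\kappa'}$. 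Hence $[\mathcal{O}_B:R]\ll[k:\QQ]^{\kappa''}$, so $\Delta\ll[k:\QQ]^{2\kappa''}$, and therefore $[k:\QQ]\gg\Delta^{1/(2\kappa'')}$. Tracing this back through the absolute constants relating $[k:\QQ]$ and $[L(s):L]$ yields $\#\Aut(\CC/L)\cdot s=[L(s):L]\gg\Delta^{\delta}$ for a suitable $\delta>0$, which is the assertion of the theorem.

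The main obstacle is the interface with André's theorem together with the control of the ramified primes of $B$. One must check that the geometric hypotheses — non-isotriviality, triviality of the generic endomorphism ring, and the totally degenerate fibre $\fA'_{s_0}$ — place us exactly within the scope of \cite[Ch.~X, Thm.~1.3]{And89}, and extract the bound on $h_{\mathrm{Fal}}(\fA_s)$ in the quantitative form required; the more delicate part is to ensure that the whole of $\disc(R)$, and not just its conductor part $[\mathcal{O}_B:R]$, is controlled polynomially by $[k:\QQ]$, for it is precisely the availability of this input for quaternionic points (and its failure for $E^2$ points) that dictates the hypotheses of the theorem. Once these ingredients are secured the Masser--Wüstholz step is routine, and the whole argument mirrors the treatment of $E\times$CM curves in \cite{DO19}, André's theorem supplying the height bound and the Masser--Wüstholz theorem converting largeness of an endomorphism discriminant into largeness of a field of definition.
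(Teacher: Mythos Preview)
Your overall architecture --- André's height bound fed into a Masser--Wüstholz estimate --- matches the paper's, but there are two genuine gaps in the execution.

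First, André's theorem does \emph{not} give an absolute bound $h_{\mathrm{Fal}}(\fA_s)\leq c'$ independent of~$s$. What it yields (via \cite[Theorem~8.1]{DO19}, which packages \cite[Ch.~X, Thm.~1.3]{And89} for this setting) is that the Weil height $h(s)$, and hence $h_{\mathrm{Fal}}(\fA_s)$, is bounded \emph{polynomially in $[L(s):L]$}. This is not fatal to the strategy --- the chain of inequalities simply has to carry a power of $[k:\bQ]$ through the height term --- but it means the Masser--Wüstholz step must absorb that dependence, and your version of the argument does not.

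Second, and more seriously, your argument that $\disc(B)$ is bounded by a constant depending only on the family does not work. You try to control the ramified primes of~$B$ via the bad-reduction primes of~$\fA_s$, which you in turn control via the degeneration locus of the family. But a quaternionic abelian surface has \emph{potentially} good reduction everywhere, so the primes dividing $\disc(B)$ need not appear among the primes of bad reduction of $\fA_s$ over $L(s)$ at all; the argument gives no handle on them. Indeed, as $s$ varies over quaternionic points on~$C$, the algebra~$B$ ranges over (a priori) infinitely many indefinite quaternion algebras, and a uniform bound on $\disc(B)$ is simply not available at this stage --- it would already be a major step toward Zilber--Pink for~$C$. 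You flag this as ``the more delicate part'' in your final paragraph, but the resolution you propose is the flawed one just described.

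The paper sidesteps both issues by invoking the Masser--Wüstholz \emph{endomorphism estimate} \cite{MW:endo} rather than the isogeny theorem. That result bounds $\abs{\disc(\End(\fA_s))}$ directly --- algebra discriminant and conductor together --- by a polynomial in $\max(d, h_{\mathrm{Fal}}(\fA_s))$, where $d=[k:\bQ]$, with the constant polynomial in~$d$ as well. Feeding in André's bound $h_{\mathrm{Fal}}(\fA_s)\ll [L(s):L]^{c}$ then gives $\abs{\disc(\End(\fA_s))}\ll [L(s):L]^{c'}$ immediately, with no separate control of $\disc(B)$ required.
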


To see that \cref{scheme} implies \cref{ZPShUn}, consider $C$ as in Theorem \ref{ZPShUn}. Then $C$ is defined over a number field $L$ and, furthermore, we can construct a curve $\tilde{C}'$, a finite surjective morphism $q:\tilde{C}\rightarrow C$, $s_0\in\tilde{C}'$, and a semiabelian scheme $\mathfrak{A}'\to\tilde{C}'$ as in  \cite[Proposition 9.4]{DO19}. We can find a finite extension $\tilde{L}/L$ such that $\tilde{C}'$, $q:\tilde{C}\rightarrow C$, $s_0$ and $\mathfrak{A}'$ are all defined over $\tilde{L}$. The abelian scheme \( \fA'_{|\tilde{C}} \to \tilde{C} \) and the point \( s_0 \in \tilde{C}'(\ov\bQ) \) satisfy the conditions of Theorem \ref{scheme} and so, for any quaternionic point $\tilde{s}\in \tilde{C}$,
\[\#\Aut(\CC/\tilde{L})\cdot \tilde{s}\geq\refC{GOE2-scheme-mult}\abs{\disc(\End(\fA'_{\tilde{s}}))}^{\refC{GOE2-scheme-exp}}.\]
If $s\in C\cap\Sigma_{\Quat}$, then we can find a quaternionic point $\tilde{s}\in\tilde{C}$ such that $q(\tilde{s})=s$, and since $q$ is finite, 
\[\#\Aut(\CC/L)\cdot s\geq\newC{GOE2-scheme2-mult}\#\Aut(\CC/\tilde{L})\cdot \tilde{s}.\]
Let $Z$ denote the unique special curve in $\cA_2$ containing $s$.
Since $s$ is a Hodge generic point of $Z$, the endomorphism ring of the associated abelian surface $A_s$ is isomorphic to the generic endomorphism ring of $Z$ and so $\Delta(Z) = \abs{\disc(\End(A_s))}$.
Since also $A_s = \fA'_{\tilde{s}}$, we can combine the above inequalities to obtain
\[ \#\Aut(\CC/L)\cdot s\geq\refC{GOE2-scheme-mult}\refC{GOE2-scheme2-mult}\Delta(Z)^{\refC{GOE2-scheme-exp}}, \]
that is, \cref{GOSh}.

Therefore, it remains to prove Theorem \ref{scheme}. 

\begin{proof}[Proof of Theorem \ref{scheme}]
After a finite extension, we may assume that  \( C' \),  \( \fA' \to C' \), \( \iota \colon C \to C' \), and \( s_0 \) are all defined over $L$. Since \( \End(\fA_{\ov\eta}) = \bZ \) and \( \dim(\fA_{\ov\eta}) = 2 \), the Mumford--Tate group of \( \fA_{\ov\eta} \) is \( \gGSp_{4,\bQ} \) (see \cite[Section 2.F]{DO19}).
Thus \( \fA \to C \) satisfies the conditions of \cite[Theorem~8.1]{DO19}, as modified in \cite[Remark~8.6]{DO19}.

Let $s\in C$ be a quaternionic point. The image of $s$ under the map $C\rightarrow\mathcal{A}_2$ induced by $\fA\to C$ is in the intersection between the image of $C$ and a quaternionic curve. We deduce that $s\in C(\Qbar)$.

Now $\End(\fA_s)\otimes\QQ$ is a non-split quaternion algebra, so cannot inject into $\rM_2(\QQ)$. Hence, $A_s$ is exceptional in the sense of \cite[Section~8]{DO19}. Therefore, by \cite[Theorem~8.1]{DO19}, $h(s)$ is polynomially bounded in terms of $[L(s):L]$, where $h$ denotes a Weil height on $C'$. Let $h_F$ denote the stable Faltings height. As proved in \cite[p. 356]{Fal83},
\[ \abs{h_F(\fA_s) -  h(s)}  = \rO(\log  h(s)). \]
We conclude that \( h_F(\fA_s) \) is polynomially bounded in terms of \( [L(s):L] \). 

%Then the result follows from \cite[Theorem]{MW:endo}.

In order to deduce a bound for $\disc(\End(A_s))$, we use the following theorem of Masser and Wüstholz.

\begin{theorem} \cite[p.~641]{MW:endo}
Given positive integers $n$, $d$ and $\delta$, there are constants $\newC{mw-ends-multiplier} = \refC{mw-ends-multiplier}(n, d, \delta)$ and $\newC{mw-ends-exponent} = \refC{mw-ends-exponent}(n)$, with the following property.
Let $A$ be an abelian variety of dimension $n$ defined over a number field $k$ of degree $d$, equipped with a polarisation of degree~$\delta$.
Let $\dag$ be the Rosati involution of $\End(A)$ associated with this polarisation and let $\phi^\dag$ be the bilinear form on $\End(A)$ defined by \eqref{eqn:dag-trace-form}.
Then $\disc(\End_k(A), \phi^\dag)$ is at most $\refC{mw-ends-multiplier} \max(1, h_F(A))^{\refC{mw-ends-exponent}}$.
\end{theorem}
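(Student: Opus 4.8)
This is the Masser--Wüstholz endomorphism theorem, quoted from \cite{MW:endo}; I will sketch the architecture of its proof, since the substantive content is transcendental and I would cite rather than reprove it.

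\textbf{Reduction to a short basis.} It suffices to produce a $\ZZ$-basis $\phi_1, \dots, \phi_r$ of $\End_k(A)$, with $r = \rk_\ZZ \End_k(A) \le (2n)^2$, for which the Gram entries $\phi^\dag(\phi_i, \phi_j) = \Tr_{D/\QQ}(\phi_i \phi_j^\dag)$ are bounded in absolute value by $c_1 \max(1, h_F(A))^{\kappa_1}$ with $c_1 = c_1(n,d,\delta)$ and $\kappa_1 = \kappa_1(n)$. Indeed $\phi^\dag$ is symmetric, positive definite (positivity of the Rosati involution), and integral on $\End_k(A)$ (characteristic polynomials of endomorphisms have integer coefficients), so Hadamard's inequality converts such a bound into $\disc(\End_k(A), \phi^\dag) = \det\bigl((\phi^\dag(\phi_i,\phi_j))_{i,j}\bigr) \le r^{r/2}\bigl(c_1 \max(1,h_F(A))^{\kappa_1}\bigr)^r$, which is of the asserted shape; by \cref{disc-involution} one may, if convenient, run this with any positive involution in place of $\dag$. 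Equivalently, one needs a basis of $\End_k(A)$ that is ``short'' for the positive-definite Rosati norm, with shortness polynomial in $h_F(A)$.

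\textbf{The transcendental input.} Such a basis is produced in \cite{MW:endo} from two facts, both ultimately consequences of Wüstholz's analytic subgroup theorem applied to periods of abelian varieties: the \emph{isogeny theorem} --- any two isogenous abelian varieties of dimension $n$ over a number field of degree $d$ are linked by an isogeny of degree at most $c(n,d)\max(1, h_F)^{\kappa(n)}$ --- and a bound, polynomial in $h_F(A)$, on the complexity of the algebra $D = \End^0(A)$, for instance on $\abs{\disc(\mathcal{O})}$ for a maximal order $\mathcal{O}$ of $D$. From the first: transporting $\End_k(A)$ along a bounded-degree isogeny to an abelian variety with maximal endomorphism order, and using an inner automorphism of $D$ to identify the algebras, bounds the index of (a conjugate of) $\End_k(A)$ inside $\mathcal{O}$ by a power of that degree, hence bounds $\abs{\disc(\End_k(A))}$ by that power times $\abs{\disc(\mathcal{O})}$. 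From the second: $\abs{\disc(\mathcal{O})}$ is itself polynomially bounded. Minkowski reduction of the lattice $\End_k(A)$ with respect to $\phi^\dag$ then yields the required short basis.

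\textbf{Main obstacle.} The heart of the matter is exactly this transcendence --- bounding isogeny degrees and the size of the endomorphism algebra polynomially in $h_F(A)$ via the analytic subgroup theorem and period estimates --- and it is the part I would cite from \cite{MW:endo} rather than reprove. Everything around it is routine: the positivity and integrality of $\phi^\dag$, Hadamard's and Minkowski's inequalities, and elementary manipulation of orders under isogeny. The only care needed there is bookkeeping --- distinguishing $\End_k(A)$ from $\End_{\ov k}(A)$, and tracking the passage between the trace form, the reduced norm, and isogeny degrees on each simple factor of $D$ --- so that the final exponent $\kappa = \kappa(n)$ comes out independent of $d$ and $\delta$.
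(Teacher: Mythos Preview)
The paper does not prove this theorem at all: it is quoted verbatim from Masser--W\"ustholz \cite{MW:endo} and used as a black box in the proof of \cref{scheme}. So there is no ``paper's own proof'' to compare against; your sketch of the transcendental architecture (isogeny theorem plus bounds on the endomorphism algebra, fed into a Minkowski/Hadamard argument on the Rosati lattice) is a fair high-level summary of what happens in \cite{MW:endo}, but it goes well beyond what the present paper attempts. The paper's only engagement with the content of the theorem is in the paragraph immediately following it, where the authors note (citing \cite{MW:endo} again) that $\End_k(A)$ may be replaced by $\End_\bC(A)$ at the cost of a bounded field extension, that the constant $\refC{mw-ends-multiplier}$ is polynomial in $d$ and $\delta$, and that \cref{disc-involution} lets one drop the $\dag$.
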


As remarked in \cite{MW:endo} immediately following the statement of this theorem, one can replace $\End_k(A)$ by $\End_{\bC}(A)$ (the endomorphism ring which appears in the statement of \cref{scheme}) because one can find a finite extension $K/k$ of degree bounded only in terms of $n$ such that $\End_K(A) = \End_\bC(A)$.
Furthermore, as stated near the bottom of \cite[p.~650]{MW:endo}, the constant $\refC{mw-ends-multiplier}(n, d, \delta)$ is polynomial in $d$ and $\delta$, for a polynomial which depends only on $n$.
Using also \cref{disc-involution} to see that $\abs{\disc(\End(A), \phi^\dag)} = \abs{\disc(\End(A))}$, we conclude that there are constants $\newC{mw-ends2-multiplier}$, $\newC{mw-ends2-exponent}$ depending only on $n$ such that, for all $A$ as in the theorem, we have
\begin{equation} \label{eqn:mw-ends2}
\abs{\disc(\End(A))} \leq \refC{mw-ends2-multiplier} \max(\delta, d, h_F(A))^{\refC{mw-ends2-exponent}}.
\end{equation}

In our case, we have always $n=2$ and $\delta=1$.
So applying \eqref{eqn:mw-ends2} together with the fact that $h_F(A_s)$ is polynomially bounded in terms of $[L(s):L]$ completes the proof of \cref{scheme}.
\end{proof}

\subsection*{Acknowledgements}

Both authors are grateful to the anonymous referees for their suggestions which have improved the paper.
They would also like to thank the Universities of Reading, Oxford and Warwick.

\subsection*{Funding}

This work was supported by the Engineering and Physical Sciences Research Council [EP/S029613/1 to C.D., EP/T010134/1 to M.O.].

% \bibliography{qrt}
% \bibliographystyle{amsalpha}
\providecommand{\noopsort}[1]{}
\providecommand{\bysame}{\leavevmode\hbox to3em{\hrulefill}\thinspace}
\providecommand{\MR}{\relax\ifhmode\unskip\space\fi MR }
% \MRhref is called by the amsart/book/proc definition of \MR.
\providecommand{\MRhref}[2]{%
  \href{http://www.ams.org/mathscinet-getitem?mr=#1}{#2}
}
\providecommand{\href}[2]{#2}

\end{document}